\newtheorem{theorem}{Theorem}[section]
\newtheorem{lemma}[theorem]{Lemma}
\newtheorem{proposition}[theorem]{Proposition}
\newtheorem{corollary}[theorem]{Corollary}
\newtheorem{remark}[theorem]{Remark}
\newtheorem{definition}{Definition}[section]
\def\Z{{\mathbb Z}}
\def\R{{\mathbb R}}
\def\N{{\mathbb N}}
\def\cA{{\mathcal A}}
\def\cH{{\mathcal H}}
\def\cL{{\mathcal L}}
\def\cQ{{\mathcal Q}}
\def\sQ{{\mathscr Q}}
\def\a{\alpha}
\def\b{\beta}
\def\e{\varepsilon}
\def\d{\delta}
\def\G{\Gamma}
\def\k{\kappa}
\def\l{\lambda}
\def\L{\Lambda}
\def\m{\mu}
\def\n{\nabla}
\def\p{\partial}
\def\r{\rho}
\def\s{\sigma}
\def\t{\tau}
\def\w{\omega}
\def\W{\Omega}
\def\g{\gamma}
\def\1{\left(}
\def\2{\right)}
\def\3{\left\{}
\def\4{\right\}}
\def\8{\infty}
\def\sm{\setminus}
\def\ss{\subseteq}
\def\cc{\subset\subset}
\def\uu{\overline{u}}
\def\ud{\underline{u}}
\def\vu{\overline{v}}
\def\vd{\underline{v}}
\newcommand{\mres}{\mathbin{\vrule height 1.6ex depth 0pt width
0.13ex\vrule height 0.13ex depth 0pt width 1.3ex}}
\DeclareMathOperator*{\dvg}{div}
\DeclareMathOperator*{\supp}{supp}
\DeclareMathOperator*{\osc}{osc}
\begin{document}

\title{A Free Boundary Problem Related to Thermal Insulation: Flat Implies Smooth}

\author{Dennis Kriventsov}
\address{Courant Institute of Mathematical Sciences, New York University, New York}
\email{dennisk@cims.nyu.edu}  
\date{September 4, 2017}

\begin{abstract}
 We study the regularity of the interface for a new free boundary problem introduced in \cite{CK}. We show that for minimizers of the functional
\[
 F_1(A,u) = \int_A |\n u|^2 d\cL^n + \int_{\p A} u^2 + \bar{C} \cL^n(A)
\]
over all pairs $(A,u)$ of open sets $A$ containing a fixed set $\W$ and functions $u\in H^1(A)$ which equal $1$ on $\W$, the boundary $\p A$ locally coincides with the union of the graphs of two $C^{1,\a}$ functions near most points. Specifically, this happens at all points where the interface is trapped between two planes which are sufficiently close together. The proof combines ideas introduced by Ambrosio, Fusco, and Pallara for the Mumford-Shah functional with new arguments specific to the problem considered.
\end{abstract}

\maketitle

\tableofcontents

\section{Introduction}

In \cite{CK}, the following problem is considered: let $\W$ be a bounded domain, $A$ a smooth domain containing $\W$, and $u$ a function in $H^1(A)$ with $u=1$ on $\p \W$. If we define
\begin{equation}\label{eq:F_1}
 F_1 (A,u) = \int_{A} |\n u|^2 d\cL^n + \int_{\p A} u^2 d\cH^{n-1} + \bar{C}\cL^n(A),
\end{equation}
is there a minimizing pair $(A,u)$ which attains the infimum in $F_1$ among all such pairs $(A,u)$? This problem is motivated by a question in thermal insulation, analogous to the much-studied Alt-Caffarelli problem \cite{AC} after taking a suitable scaling limit.

Minimizing pairs of $F_1$ will have the function $u$ be harmonic in the interior of $A\sm \W$, and $u\in (0,1]$. Furthermore, $u$ will satisfy the Robin condition $u_\nu = -u$ along $\p A$, where $\nu$ denotes the outward unit normal to $A$. There will also be a ``free boundary condition'' satisfied by $u$ on $\p A$, of the form
\[
 |\n u|^2 + u^2 (H_{\p A} -2) = -\bar{C},
\]
where $H_{\p A}$ is the mean curvature of the boundary of $A$, as oriented by the outward normal $\nu$. For the derivation of this condition, see the proof of Theorem \ref{thm:higherreg}.

The authors of \cite{CK} show that such minimizing pairs exist, at least if the smoothness assumption on $A$ is relaxed. The essential insight there is that an \emph{a priori} bound on $u$, which guarantees that $u\geq \d=\d(\W)>0$ on $A$, allows one to control the size of $\p A$, facilitating the application of the direct method of the calculus of variations. However, some care must be taken with regards to the boundary integral, as minimizing pairs $(A,u)$ of the functional may have the property that two separate (local) connected components of $A$ share boundary. To apply the direct method, the appropriate way of relaxing the notion of competitor is by identifying the pair $(A,u)$ with the function $u1_A$, which is in $BV$ (and, indeed, in the subspace $SBV$, see \cite[Proposition 4.4]{AFP}) provided $A$ is smooth and $u$ is bounded, and defining
\begin{equation}\label{eq:bvF_1}
 F_1(u) = \int_{\{u>0\}} |\n u|^2 +\bar{C} d\cL^n + \int_{J_u} \uu^2 +\ud^2 d\cH^{n-1}.
\end{equation}
Here $J_u$ is the jump set of $u$ and $\uu,\ud$ are the approximate upper and lower limits; these notions are defined in the next section.  In \cite{CK}, it is established that this functional admits a minimizer, and that any minimizer satisfies a density estimate which ensures that if one sets $A=\{u>0\}\sm \bar{J}_u$, then $\p A$ and $J_u$ agree up to a set of zero $\cH^{n-1}$ measure.

The purpose of this paper is to address the following regularity question: at generic points of $\p A$ (or $\bar{J}_u$), is the free boundary $\p A$ locally smooth? Due to the fact that two local connected components of $A$ may touch at some of their boundary points and then separate away from each other (and do so in ways over which we have little control, as there does not seem to be any obvious nondegeneracy property forcing them to separate at some particular rate), the best conclusion we may hope for is that $\p A$ is locally given by a pair of smooth graphs, each parametrizing the boundary of one of the components. The following theorem, to the proof of which the entire paper is devoted, says that, indeed, this happens at any point where $\p A$ is trapped between two hyperplanes which are very close together. We use the notation $Q_r= \{(x',x_n)\in \R^{n-1}\times \R : |x'|\leq r, |x_n|\leq r\}$ for a cylinder.

\begin{theorem}\label{thm:intro}Let $u\in SBV(Q_r)$ be such that $u\in \{0\}\cup [\d,1]$ for $\cL^n$-a.e. point, and assume that for any $v\in SBV(Q_r)$ with $\supp(u-v)\cc Q_r$, we have that
\[
 F_1(u)\leq F_1 (v).
\]
Assume also that $0\in \bar{J}_u$. Then there is a number $\e=\e(n,\d,\bar{C})$ such that if $r<\e$ and
\[
 \bar{J}_u\cap Q_r \ss \{|x_n|\leq \e r\},
\]
then $\bar{J}_u\cap Q_{r/2}$ coincides with the union of the graphs of a pair of $C^{1,\a}$ functions $g_\pm : \R^{n-1}\rightarrow \R$, with $\|g_\pm\|_{C^{1,\a}}\leq 1$ and $g_-\leq g_+$. If one of $u(0,\pm \frac{r}{2})=0$, then $g_+=g_-$ and is in fact $C^\8$.
\end{theorem}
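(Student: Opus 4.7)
My plan is to adapt the flatness-implies-regularity strategy developed by Ambrosio-Fusco-Pallara for the Mumford-Shah functional, modifying it to account for the extra boundary integrand $\int u^2 d\cH^{n-1}$, the volume penalty $\bar C \cL^n(A)$, and the Robin / free-boundary conditions. The main technical goal is to prove an $L^2$ excess-decay: with the tilt excess
\[
 E(x,r,\nu_0) = \frac{1}{r^{n-1}} \int_{\bar J_u \cap Q_r(x)} |\nu_{J_u} - \nu_0|^2 d\cH^{n-1},
\]
there should exist $\tau\in(0,1)$ and $\alpha\in(0,1)$ such that $E(0,r,e_n)$ sufficiently small and $r<\e$ imply the existence of a new unit vector $\nu_1$ with $E(0,\tau r,\nu_1) \leq \tau^{2\alpha} E(0,r,e_n)$. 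Iterating on dyadic scales and combining with a Campanato-type criterion identifies each connected piece of $\bar J_u\cap Q_{r/2}$ as a $C^{1,\alpha}$ graph with the quantitative bound of the theorem. Setting this up requires preliminary estimates extracted from minimality: density bounds (already in \cite{CK}), $L^\infty$ bounds for $u$, and Caccioppoli-type inequalities for $\n u$ and for $J_u$, all proved by comparison with competitors constructed by harmonic/Robin extension on each side of $\bar J_u$.

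The decay estimate is to be proved by contradiction and blow-up. Take a sequence of minimizers $u_k$ on $Q_1$ with excess $E_k\to 0$ and no decay at a fixed scale $\tau$. Rescale the vertical coordinate by $E_k^{-1/2}$ and use $SBV$ compactness together with the density lower bound from \cite{CK} to extract Lipschitz limits $\psi^\pm$ on $\{x_n=0\}$ describing the upper and lower portions of $\bar J_{u_k}$; simultaneously, extract limits $w^\pm$ of the renormalized deviations $(u_k - c_k^\pm)/\sqrt{E_k}$ on the two sides. Linearizing the Euler-Lagrange system — interior harmonicity of $u$, the Robin condition $u_\nu=-u$, and the free-boundary relation $|\n u|^2+u^2(H_{\p A}-2)=-\bar C$ — around the flat planar equilibrium should produce a coupled linear problem: $w^\pm$ harmonic in the half-spaces with a linear Robin condition on $\{x_n=0\}$ sourced by $\psi^\pm$, and a tangential elliptic equation for $\psi^\pm$ arising from the linearized free-boundary relation. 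Classical Schauder theory for this limit system yields $C^{1,\alpha}$ control of $\psi^\pm$, hence the required decay of $E$ at scale $\tau$, contradicting the failure of decay before passing to the limit.

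The principal obstacle is carrying out this linearization in the two-phase case, since no nondegeneracy forces the two components of $\{u>0\}$ to separate at a definite rate along $\bar J_u$. The blow-up must therefore be organized around the traces of $u_k$ on the interface rather than around bulk values, and the Caccioppoli and height inequalities must remain valid even when both sides are simultaneously active; the Robin term $\int u^2 d\cH^{n-1}$ couples the two sides in precisely the right way to make this work, but the bookkeeping — separating bulk and boundary contributions, and extracting from minimality the correct compensation between the $|\n u|^2$, $u^2$, and $\bar C$ terms — will be the technical core of the argument. For the final part of the statement, once $\bar J_u \cap Q_{r/2}$ is known to be a union of two $C^{1,\alpha}$ graphs, the hypothesis $u(0,\pm r/2)=0$ combined with $u\geq \d$ on $\{u>0\}$ forces one of the two phases to be empty in a neighborhood of $0$, so $g_+\equiv g_-$ and the problem reduces to a one-phase Bernoulli-Robin problem on a $C^{1,\alpha}$ interface; a hodograph-Legendre transform flattens the boundary and converts the Euler-Lagrange equations into an elliptic system with analytic coefficients, and Kinderlehrer-Nirenberg bootstrap then upgrades $g$ to $C^\infty$, which is the content of Theorem \ref{thm:higherreg}.
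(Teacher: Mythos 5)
Your proposal correctly identifies the ambient strategy (AFP-style excess decay adapted to the extra boundary integrand), and your blow-up-with-vertical-rescaling picture does match one half of the paper's scheme (the flatness-improvement step, where the rescaled graphs converge to harmonic functions). But there are two genuine gaps. First, initialization and the role of the bulk energy. The theorem assumes only that $\bar J_u\cap Q_r$ lies in a slab of height $\e r$; it does \emph{not} assume smallness of the renormalized energy $r^{1-n}\int_{Q_r}|\n u|^2$, and this energy is genuinely of order one in general, because minimality forces the trace jump across the interface at scale $r$ to be of size $\sqrt r$ (the paper's Lemma \ref{lem:lowerdensity}). Your decay scheme needs small tilt excess to start, and the first-variation error terms controlling the excess are proportional to this energy, so smallness of the excess cannot be extracted from the slab hypothesis alone. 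The paper bridges this with a separate blow-up argument (Section \ref{sec:blowup}): in the limit the function satisfies a Neumann condition on the flat interface, and Rellich's identity forces the limiting gradient to vanish, so flatness alone eventually makes the energy small. Moreover, inside the iteration the energy does not decay at the excess rate automatically; the paper runs a dichotomy at each scale (energy dominant: energy decays by a compactness/harmonic-replacement argument, Theorem \ref{thm:enimp}; flatness dominant: flatness improves, Theorem \ref{thm:flatimp}), tracking in addition the lower-order jump quantity $J(u,r)=\sqrt r/|u(0,r/2)-u(0,-r/2)|$. A single Campanato iteration on the tilt excess alone cannot close.

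Second, the two-sheet linearization. With the correct normalization the Robin condition and the free-boundary identity are lower order at the $C^{1,\a}$ stage: the linearized equations for the two graphs are simply Laplace's equation (Lemmas \ref{lem:harm1}--\ref{lem:harm2}), not a Robin-coupled system sourced by the height functions, and the relation $|\n u|^2+u^2(H-2)=-\bar C$ only enters the final smoothness bootstrap. More seriously, to derive any equation for the upper sheet you must perturb it independently of the lower one, while $\bar J_u$ need not separate $Q_r$ and $\{u>0\}$ may have residual components squeezed between the sheets; admissible competitors then require adding surface, which is a zero-order cost in a first-order computation unless the added $\cH^{n-1}$ measure is superlinearly small in the flatness. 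This is exactly what the paper's cutting lemma (Lemma \ref{lem:cut}, level-set surgery plus an isoperimetric iteration) provides, together with the two-plane flatness $ff$, the visibility sets $A^\pm$ and measures $\m^\pm$, and the density dichotomies feeding the two-graph Lipschitz approximation (Theorem \ref{thm:lip}). Dismissing this as bookkeeping hides the step where the argument would otherwise fail. Your final reduction for the $C^\8$ statement is sound in spirit, though a hodograph transform is unnecessary: once the interface is a single $C^{1,\a}$ graph bounding $\{u>0\}$, the first variation gives a distributional mean curvature equation $u^2H_\G-\bar C-|\n u|^2+2u^2=0$, and a direct Schauder bootstrap with the Robin problem yields smoothness, as in Theorem \ref{thm:higherreg}.
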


We will actually prove a more general version of this theorem applicable to quasiminimizers, which are introduced in the following section. We note that the hypotheses of the theorem are satisfied on a sufficiently small cylinder around every point in $J_u$, and hence at $\cH^{n-1}$-a.e. point of the free boundary. To what extent this may be improved is explored in \cite{CK}, where some geometric criteria are given for the flatness condition. It is also likely (although we do not pursue this point) that a slight reduction in the Hausdorff dimension of $\bar{J}_u\sm J_u$ is achievable by a generic argument involving uniform rectifiability; see the book of Guy David \cite[Sections 74-75]{D}. However, whether or not $\bar{J}_u \sm J_u$ is empty remains an interesting open question even in two dimensions. 

Similar results are known for a wide variety of minimization problems, although with the conclusion being that the boundary in question is locally a graph of just one function. To mention only a few motivating examples, for sets locally minimizing perimeter in $\R^n$, this is a celebrated result of De Giorgi \cite{DGms}. Federer and Fleming obtained such results in the more general framework of integer-rectifiable currents \cite{Federer}. The proof given here is more closely related to that of Allard in his work on stationary varifolds \cite{A}. For the one-phase free boundary problem which motivated this problem, such results were obtained in \cite{AC}.

The heaviest debt, however, is owed to the analogous theorem obtained by Ambrosio and Pallara in \cite{APpt1} and with Fusco in \cite{AFPpt2} for minimizers of the Mumford-Shah functional. The proof given in this paper follows very closely the exposition of that proof as recorded and simplified in the book \cite{AFP}. Notice, however, that the functional considered here introduces a variety of difficulties not present in the Mumford-Shah case, having to do with the $u$ dependence of the surface term, but also with the presence of distinct phases where $u$ is $0$ and where it is strictly positive. This is reflected in the difference in the statements of Theorem \ref{thm:intro} and the result of \cite{APpt1,AFPpt2}. Indeed, we do not require their hypothesis that the renormalized energy
\begin{equation}\label{eq:intro}
 \frac{1}{r^{n-1}}\int_{Q_r}|\n u|^2 d\cL^n 
\end{equation}
is smaller than $\e$, but we also only obtain that $\bar{J}_u$ is the union of two graphs. This removal of a hypothesis is achieved at the very end of the proof by means of a blow-up argument and is very specific to our problem; it is well known that such an improvement is impossible in the Mumford-Shah setting and has to do with ``crack-tip'' singularities. We also mention that there are other proofs of analogous results in the Mumford-Shah setting in two dimensions, due to Bonnet \cite{Bon} and David \cite{David}. 

The structure of the proof is as follows. Section \ref{sec:prelim} explains the notation and summarizes some relevant facts from geometric measure theory. Section \ref{sec:laplace} is concerned with regularity of $u$ away from $\bar{J}_u$. Section \ref{sec:EL1} derives the Euler-Lagrange equation and the first of two excess-flatness bounds central to the argument. A number of lemmas having to do with the density of the surface energy term on cylinders are proved in Section \ref{sec:gmt}. Then Section \ref{sec:lip} contains a Lipschitz parametrization theorem, Section \ref{sec:efbound2} derives the second, more delicate excess-flatness bound, and Section \ref{sec:graphs} combines the results of Sections \ref{sec:lip} and \ref{sec:efbound2}.

The key steps in the proof occur in Sections \ref{sec:energy} and \ref{sec:flatness}. Essentially, there are two governing quantities in the argument, and the goal is to show that they both decay geometrically in $r$. The first is the renormalized energy \eqref{eq:intro}, while the second measures how close $J_u$ lies to a pair of hyperplanes, which we call flatness (this is defined in Section 2). Section \ref{sec:energy} contains a compactness argument due to Ambrosio, Fusco, and Pallara that shows that if the energy is much larger than the flatness, then it will decay geometrically on a smaller cylinder. Section \ref{sec:flatness} addresses the other possibility (when the flatness dominates), in which case the flatness will decay. Section \ref{sec:EL2} is devoted to constructing auxiliary competitors used in Section \ref{sec:flatness}. Finally, Section \ref{sec:iteration} combines these ideas into one geometric decay lemma, and this is iterated in Section \ref{sec:mainpf} to obtain Theorem \ref{thm:intro}, under some extra assumptions. Section \ref{sec:blowup}  presents a blow-up argument which removes those assumptions, and finally Section \ref{sec:highreg} explains why the last sentence of Theorem \ref{thm:intro}, concerning higher regularity, is valid.

\section{Preliminaries}\label{sec:prelim}

We will use the notation $\cH^{s}$ for $s$-dimensional Hausdorff measure and $\cL^n$ for Lebesgue measure on $\R^n$. We fix a coordinate system for the Euclidean space $\R^n$, with basis vectors $e_1,\ldots, e_n$. Points $x\in \R^n$ will often be decomposed as $x=(x',x_n)$, where $x_n = e_n \cdot x \in \R$ and $x'\in \R^{n-1}$. We will use the letter $\pi$ to refer to the hyperplane $\{x_n = 0\}$, and other affine planes may be given names such as $\pi', \pi_*$, and so on. We will also use $\pi$ to denote the orthogonal projection of $\R^n$ onto $\R$; the typical use will be to write $\pi(E)$ for the set $\{x':(x',x_n)\in E \text{ for some } x_n \}.$

Given a smooth vector field $T: \R^n \rightarrow \R^n$, the divergence of $T$ will be expressed as $\dvg T = Tr(\n T) = \sum_i e_i \cdot \n T e_i$. Given a unit vector $\nu$, we write $\dvg\nolimits^\nu T = \dvg T - \nu \cdot \n T \nu$. Given a smooth hypersurface $S$ with unit normal vector $\nu_x$ at each point $x\in S$ (the orientation does not matter), the \emph{tangential divergence} is $\dvg\nolimits^{\nu_x} T$. If $T$ is only Lipschitz, then $\dvg T$ exists at $\cL^n$-a.e. point; if $S$ is countably $\cH^{n-1}$ rectifiable, then $\dvg\nolimits^{\nu_x}T$ exists at $\cH^{n-1}$-a.e. point of $S$ (see \cite[Section 2.11]{AFP} for details). 

The sets $B_r(x)$ are Euclidean balls in $\R^n$ centered at $x$ with radius $r$, while $D_r(x)=B_r(x)\cap \{x_n=0\}$ will be referred to as disks. The numbers $\w_{n}$ are the volumes of the unit balls: $\w_{n}=\cL^n (B_1)$. The set $Q_r = D_r \times (-r,r)$ represents the open cylinder of size $r$. More generally, the cylinder $Q_{r,\nu}(x)$ is the cylinder centered at $x$ with faces orthogonal to the unit vector $\nu$.

Given a set $E$, the function $1_E$ is $1$ on $E$ and $0$ otherwise. All integrals are Lebesgue integrals with respect to the given measure; we also use
\[
\fint_E f d\cL^n = \frac{1}{\cL^n(E)} \int_E f d\cL^n.
\]
 Given a measure $\mu$ and a measurable set $E$, the restriction of $\mu$ to $E$ is denoted by $\mu \mres E$:
\[
\mu \mres E (A) = \mu(E\cap A).
\]
Given a $\mu$-integrable function $f$, the measure $f d\mu$ is given by
\[
f d\mu (A) = \int_A f d\mu.
\]

We use standard notation for function spaces: for an open set $E$, $L^p(E)$ denotes the usual Lebesgue space, $H^1(E)$ the Sobolev space with norm
\[
\int_E u^2 + |\n u|^2 d\cL^n,
\]
and $H^1_0(E)$ the closure of $C^\8_c(E)$ (smooth functions with compact support) in $H^1(E)$. The spaces $C^\a(E) = C^{\lceil\a \rceil,\a -\lceil\a \rceil}(E)$ are H\"older spaces. The subscript $X_{\text{loc}}(E)$ is used to refer to spaces
\[
X_{\text{loc}}(E) = \bigcap_{K\cc E} X(K).
\] 

We use the terms supremum, infimum, support, and oscillation to refer to the essential supremum, essential infimum, essential support, and essential oscillation, respectively. They are abbreviated $\sup,\inf,\supp,\osc$; here $\osc_E u = \sup_E u - \inf_E u$.

\subsection{Functions of Bounded Variation}

The space $BV(\mathbb{R}^{n})$ contains functions $u$ in $L^1 (\R^n)$ with distributional gradients representable by finite Borel regular measures (which we write as $Du$), and is equipped with the usual norm:
\[
 \|u\|_{BV(\R^n)}=\|u\|_{L^1(\R^n)}+|Du|(\R^n).
\]
For a function of bounded variation, we define the \emph{measure-theoretic upper and lower limits} by
\[
 \uu(x)=\inf\3 t : \limsup_{r\searrow 0}\fint_{B_{r}(x)}(u-t)_{+}d\cL^n=0\4
\]
and
\[
 \ud(x)=\sup\3 t : \limsup_{r\searrow 0}\fint_{B_{r}(x)}(t-u)_{+}d\cL^n=0\4.\]
 Points for which $\uu(x)=\ud(x)$ are referred to as \emph{points of approximate continuity}, while $S_u=\{x: \uu(x)>\ud(x)\}$ is the singular set. Notice that points of approximate continuity are precisely those $x$ for which the blow-ups $v_{x,r}(y)=u(x+ry)$ converge to constant functions in $L^1_{\text{loc}}$ as $r\searrow 0$. Usually we will prefer to work with $K_u=\bar{S}_u$, the topological closure.
 
 The set $J_u\ss S_u$ of \emph{jump points} for a function of bounded variation consists of those points for which the blow-ups $v_{x,r}(y)=u(x+ry)$ converge to a function constant on each side of a hyperplane; i.e. there exists a unit vector $\nu_x$ such that
 \[
  v_{x,r}\xrightarrow{L^1_{\text{loc}}} 1_{\{y: \nu_x\cdot y < 0\}}\ud(x)+1_{ \{y: \nu_x\cdot y > 0\}}\uu(x)
 \]
as $r\searrow 0$. The vector $\nu_x$, oriented as above, will be referred to as the \emph{approximate jump vector}. For the properties of the sets $J_u$ and $S_u$ we refer to \cite[Sections 3.6-3.7]{AFP}; the most consequential fact about them for us is the Federer-Vol'pert theorem:

\begin{proposition}\label{prop:FedVol} Let $u\in BV(\R^n)$. Then $S_u$ is countably $\cH^{n-1}$-rectifiable, $\cH^{n-1}(S_u \sm J_u)=0$, and
\[
 Du \mres S_u = (\uu-\ud)\nu_x d\cH^{n-1}\mres J_u.
\]
\end{proposition}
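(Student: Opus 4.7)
The plan is to reduce this to De Giorgi's structure theorem for sets of finite perimeter by slicing $u$ along its superlevel sets $E_t = \{u>t\}$. The key tool is the BV coarea formula,
\[
Du = \int_{-\8}^{\8} D 1_{E_t}\, dt, \qquad |Du|(B) = \int_{-\8}^{\8} \cH^{n-1}(B\cap \p^* E_t)\, dt,
\]
combined with De Giorgi's result that for $\cL^1$-a.e.\ $t$ the set $E_t$ has locally finite perimeter, $\p^* E_t$ is countably $(n-1)$-rectifiable, and $D 1_{E_t} = \nu_{E_t}\, \cH^{n-1}\mres \p^* E_t$ for a measurable choice of inner unit normal $\nu_{E_t}$.

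For rectifiability of $S_u$, I would first note that if $x\in S_u$ and $t\in (\ud(x),\uu(x))$, then $x$ has strictly positive upper density in both $E_t$ and its complement, so $x$ belongs to the essential boundary $\p^e E_t$. Selecting a countable dense set $T$ of values for which $E_t$ has locally finite perimeter then yields $S_u \ss \bigcup_{t\in T} \p^e E_t$, and Federer's theorem $\cH^{n-1}(\p^e E_t \sm \p^* E_t)=0$ exhibits $S_u$ as a countable union of rectifiable sets up to an $\cH^{n-1}$-null set.

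The heart of the argument is showing $\cH^{n-1}(S_u\sm J_u)=0$ and simultaneously extracting the representation of $Du\mres S_u$. At $\cH^{n-1}$-a.e.\ $x\in S_u$, rectifiability furnishes an approximate tangent hyperplane with normal $\nu_x$, unique up to sign. A Fubini-type argument on the coarea identity should establish that for $\cH^{n-1}$-a.e.\ such $x$ and $\cL^1$-a.e.\ $t\in (\ud(x),\uu(x))$, the point $x$ lies in $\p^* E_t$ with reduced-boundary normal $\pm \nu_x$: indeed, since $\p^* E_t$ and $S_u$ are both rectifiable and their intersection has positive $\cH^{n-1}$ density near $x$, their tangent planes must coincide $\cH^{n-1}$-a.e.\ on the intersection. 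Orienting $\nu_x$ consistently, the blowups of every such $E_t$ converge in $L^1_{\text{loc}}$ to the same half-space $H=\{y\cdot \nu_x >0\}$. Integrating in $t$ via the coarea formula then identifies the blowups $v_{x,r}$ of $u$ with $\uu(x) 1_H + \ud(x) 1_{H^c}$, so $x\in J_u$; the same computation gives $Du\mres S_u = (\uu-\ud)\nu_x\, d\cH^{n-1}\mres J_u$.

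The main obstacle will be the Fubini step matching the common normal direction: De Giorgi's structural information is available only for $\cL^1$-a.e.\ level set, yet the conclusion must hold at $\cH^{n-1}$-a.e.\ singular point. Bridging that gap through careful double integration, together with the uniqueness of approximate tangents to rectifiable sets, is what makes the classical proof (due to Federer and Vol'pert, as recorded in \cite{AFP}) nontrivial; in practice I would simply cite that source rather than reproducing the argument.
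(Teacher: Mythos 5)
The paper does not prove this proposition at all: it is quoted as the Federer--Vol'pert theorem and referred to \cite[Sections 3.6--3.7]{AFP}, which is exactly where your sketch also ends up. Your coarea/De Giorgi outline (slicing by superlevel sets, Federer's $\cH^{n-1}(\p^e E\sm \p^* E)=0$, then matching the level-set normals at $\cH^{n-1}$-a.e.\ singular point to identify the blow-up of $u$ with a two-valued jump function) is a faithful summary of the argument recorded there, e.g.\ \cite[Theorem 3.78]{AFP}, so deferring the delicate Fubini/orientation step to that citation is consistent with the paper's own treatment.
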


Using this theorem and the Lebesgue decomposition, we arrive at a representation $Du=\n u d\cL^n+ (\uu-\ud)\nu_x d\cH^{n-1}\mres J_u + D^{c}u$, where $\n u$ is the density of the part of $Du$ which is absolutely continuous with respect to Lebesgue measure. The third term represents the remaining singular part of $Du$, and $u$ is said to be a \emph{special function of bounded variation} if that term vanishes. We set $SBV(\R^n)$ to be the linear subspace of $BV$ containing such functions.

The proposition below is a compactness and closure property of $SBV$, which is a special case of \cite[Theorem 4.7, 4.8]{AFP} (except the very last statement, which instead may be deduced from Theorem 5.22; the reason that the integrand is jointly convex is explained in Example 5.23b, while the extra assumption on the integrand being bounded from below is only used to show that $\cH^{n-1}(J_{u_i})$ is uniformly bounded, which we assume separately).

\begin{proposition}\label{prop:SBVcpt}
 Let $\{u_i\}_{i\in \N}$ be a sequence of functions in $SBV(B_R)\cap L^\8 (B_R)$. Then the following hold:
 \begin{itemize}
  \item[(i)] If
  \[ 
   \sup_i \1 \int_{B_R}|\n u_i|^2 d\cL^n+\|u_i\|_{L^\8 (B_R)}+\cH^{n-1}(J_{u_i})\2<\infty,
  \]
   then there is a subsequence $u_{i_k}\rightarrow u$ in $L^1(B_R)$, and $u\in SBV$.
  \item[(ii)] Furthermore, along the subsequence above we have
  \[
   \int_{B_R}|\n u|^2 d\cL^n\leq \liminf_k \int_{B_R}|\n u_{i_k}|^2 d\cL^n,
  \]
  \[
   \cH^{n-1}(J_{u})\leq \liminf_k \cH^{n-1}(J_{u_{i_k}}),
  \]
  and
  \[
   \int_{J_u}\uu^2+\ud^2 d\cH^{n-1}\leq \liminf_k \int_{J_{u_{i_k}}}\uu^2_{i_k}+\ud^2_{i_k} d\cH^{n-1}.
  \]
 \end{itemize}
\end{proposition}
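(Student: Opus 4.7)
The plan is to obtain this as a direct application of the compactness and lower semicontinuity machinery for $SBV$ developed by Ambrosio and presented in \cite{AFP}. First I would verify that $\{u_i\}$ is uniformly bounded in $BV(B_R)$: since each $u_i \in SBV(B_R)$, the Cantor part vanishes and
\[
|Du_i|(B_R) = \int_{B_R} |\n u_i| d\cL^n + \int_{J_{u_i}} (\uu_i - \ud_i) d\cH^{n-1},
\]
with the first term bounded by $(\cL^n(B_R))^{1/2} \|\n u_i\|_{L^2(B_R)}$ via Cauchy--Schwarz and the second by $2 \|u_i\|_{L^\8(B_R)} \cH^{n-1}(J_{u_i})$. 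The $L^\8$ bound on a ball also provides an $L^1$ bound, so the hypotheses give $\sup_i \|u_i\|_{BV(B_R)} < \8$. Standard $BV$ compactness then furnishes a subsequence $u_{i_k}\to u$ in $L^1(B_R)$ with $u\in BV(B_R)$.

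To promote $u$ to $SBV$ and obtain the first two inequalities of (ii), I would apply Ambrosio's $SBV$-closure theorem, \cite[Theorems 4.7--4.8]{AFP}. The superlinear growth bound on $\n u_i$ (here $L^2$) is exactly what rules out the formation of a Cantor part in the limit, while the uniform bound on $\cH^{n-1}(J_{u_i})$ controls the jump set. The lower semicontinuity of $\int |\n u|^2$ follows either from that theorem or, equivalently, from weak $L^2$ convergence $\n u_{i_k} \rightharpoonup \n u$ together with lower semicontinuity of the $L^2$ norm.

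The third inequality, involving the surface integrand $\phi(\uu,\ud) = \uu^2 + \ud^2$, would be obtained from the more delicate surface lower semicontinuity result \cite[Theorem 5.22]{AFP}. To apply it one checks that $\phi$ is nonnegative, symmetric in its two arguments (matching the orientation ambiguity of the jump vector), lower semicontinuous, and jointly convex in $(\uu,\ud)$. The last property is the substantive hypothesis, immediate here as a sum of squares; this is the point of \cite[Example 5.23b]{AFP}. The auxiliary assumption that $\phi$ be bounded below by a positive constant, used in \cite{AFP} only to force $\cH^{n-1}(J_{u_i})$ to be uniformly bounded, is superfluous here because that bound is already hypothesized in (i).

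If one wanted a self-contained argument rather than citing \cite{AFP}, the essential obstacle would lie in Ambrosio's $SBV$ closure result itself: proving that the singular part of $Du$ concentrates entirely on a countably rectifiable set, with no Cantor piece, using only an $L^p$ ($p>1$) bound on the absolutely continuous gradients together with an $\cH^{n-1}$ bound on the jump set. Everything else in the statement amounts to standard $BV$ compactness or routine convex lower semicontinuity applied separately to the gradient and jump parts.
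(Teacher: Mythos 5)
Your proposal is correct and follows essentially the same route as the paper, which states the proposition without proof as a special case of \cite[Theorems 4.7, 4.8]{AFP}, with the last inequality deduced from \cite[Theorem 5.22]{AFP} via the joint convexity of $\uu^2+\ud^2$ (Example 5.23b) and the observation that the lower bound on the integrand is only needed to control $\cH^{n-1}(J_{u_i})$, which is assumed separately. Your added verification of the uniform $BV$ bound is a fine (if routine) supplement to that citation.
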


\subsection{Sets of Finite Perimeter}

A Borel set $E\ss \R^n$ is said to have finite perimeter if $1_E$ lies in $BV$. Given such a set $E$, we define the Borel measure
\[
 P(E;A)=|D1_E|(A).
\]

For an arbitrary set $E$, let $E^{(0)}$ be the set of points with Lebesgue density $0$ for $E$, and $E^{(1)}$ the set of points of Lebesgue density $1$. The \emph{essential boundary} is defined by $\p^e E = \R^n \sm (E^{(0)} \cup E^{(1)})$, and if $E$ has finite perimeter, this agrees with $S_{1_E}$. The \emph{reduced boundary} $\partial^{*}E$ of a set of finite perimeter is the set of points whose blow-ups converge to half-spaces, or $J_{1_E}$. From Proposition \ref{prop:FedVol}, we have that $\cH^{n-1}(\p^e E\sm \p^* E)=0$.

\subsection{Minimizers and Related Quantities}

We define, for $u\in SBV$ and $\W$ a Borel set, the functional
\begin{equation}\label{eq:F}
 F(u;\W) = \int_{\W} |\n u|^2 d\cL^n + \int_{J_u\cap \W} \uu^2+\ud^2 d\cH^{n-1}. 
\end{equation}
Motivated by the minimizers constructed in \cite{CK}, we introduce the following notion of quasiminimizers:

\begin{definition} A nonnegative function $u\in SBV$ is called an \emph{$\L,r_0,s$-quasiminimizer} of $F$ on $\W$ if for every ball $B_r(x)\ss \W$ with $r<r_0$ and every $v\in SBV$ with $u-v$ supported on $B_r(x)$, we have
\begin{equation}\label{eq:min}
 F(u;\W)\leq F(v;\W) + \L r^{n-1+s}.
\end{equation}
Such a quasiminimizer is called \emph{$\d$-regular} if, in addition, it satisfies 
\begin{equation}\label{eq:dregsize}
u\in \{0\}\cup [\d,\d^{-1}] \qquad \text{for } \cL^n\text{-a.e. } x\in \W 
\end{equation}
and also
\begin{equation}\label{eq:dregden}
 \d r^{n-1} \leq \cH^{n-1}(J_u \cap B_r(x)) \leq \d^{-1} r^{n-1}
\end{equation}
for every $B_r(x)\ss \W$ with $x\in K_u$ and $r<r_0$. The set $\sQ(\W,\L,r_0,s,\d)$ contains the $\d$-regular $\L,r_0,s$-quasiminimizers on $\W$. As $s$ and $\delta$ will often be fixed, we may use the compact notation $\sQ(\W,\L,r_0)$ for the same. Constants which depend only on $n,s,$ and $\d$ are called \emph{universal}.
\end{definition}

Following the argument in \cite{CK}, it may be verified that if a quasiminimizer satisfies \eqref{eq:dregsize}, then it automatically satisfies the lower bound in \eqref{eq:dregden} (the argument there was written for a specific $\L,r_0,1$ quasiminimizer, but extends easily). The upper bound in \eqref{eq:dregden}, with a constant depending only on $\L r_0^s$ and universal quantities, holds for any such quasiminimizer satisfying \eqref{eq:dregsize} as well: on any ball $B_r(x)\ss\W$ with $r<r_0$ we have that
\begin{equation}\label{eq:remball}
 \int_{B_r}|\n u|^2 d\cL^n + \d^2 \cH^{n-1}(J_u \cap B_r) \leq \d^{-2} n\w_{n-1}r^{n-1} + \L r^{n-1+s}.
\end{equation}
This comes from using $u1_{\R^n \sm B_r(x)}$ as a competitor. Notice that we did not even need to assume $x\in K_u$.

On the other hand, at least the lower bound on $u$ in \eqref{eq:dregsize} is not a local property of quasiminimizers, and will come from the global structure of the problem. As it is also an essential nondegeneracy property, we will always assume it. For a $\d$-regular quasiminimizer, $\cH^{n-1}(K_u \sm J_u)=0$ (see \cite[Section 2.9]{AFP}), so we will prefer to work with the closed set $K_u$.


We will use the following notions of flatness, excess, and energy; the first two are analogous to the quantities Allard uses in his theory of varifolds.

\begin{definition} Given $u\in \sQ(\W,\L,r_0)$, a unit vector $\nu$, and a cylinder $Q_{r,\nu}(x)\ss \W$, the \emph{flatness} of $u$ is given by
\[
 f(u,x,r,\nu) = \frac{1}{r^{n+1}}\int_{Q_{r,\nu}(x)\cap K_u} |(y-x)\cdot \nu|^2 d\cH^{n-1}(y). 
\]
The \emph{excess} is given by
\[
 e(u,x,r,\nu) = \frac{1}{r^{n-1}}\int_{Q_{r,\nu}(x)\cap J_u} 1- (\nu\cdot \nu_y)^2 d\cH^{n-1}(y), 
\]
where $\nu_y$ denotes the approximate jump vector at $y$. The \emph{energy} is
\[
 E(u,x,r,\nu)=\frac{1}{r^{n-1}}\int_{Q_{r,\nu}(x)}|\n u|^2 d\cL^n.
\]
If $\nu$ is suppressed, it is assumed that $\nu = e_n$. If $x$ is suppressed, it is assumed that $x=0$.
\end{definition}
From \eqref{eq:remball}, we have that all three of these quantities are uniformly bounded in terms of universal constants and $\L r^s$. The general structure of our argument is to show that, if they start small enough, then they (eventually) decay geometrically in $r$.

An alternative choice is an $L^{\infty}$ version of the flatness, which is more convenient in places:
\[
 f_{\8}(u,x,r,\nu) = \frac{1}{r}\sup_{y\in K_{u}\cap Q_{r,\nu}(x)} |(y-x)\cdot \nu|.
\]
Let us explain briefly how this is related to $f$.
\begin{proposition}\label{prop:flatsame} Let $u\in \sQ(Q_{2r},\L,4r)$. Then
\[
 \frac{1}{C}f_{\8}^{n+1}(u,r)\leq f(u,2r) \leq C f_{\8}^2 (u,2r),
\]
where the constant $C$ is universal.
\end{proposition}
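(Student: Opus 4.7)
The two inequalities come from, respectively, pointwise control combined with the uniform upper density bound on $K_u$, and concentration at a single near-extremal point using the lower density bound. Both are short once the relevant coverings and ball-inclusions are arranged.

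For the upper bound, every $y \in K_u \cap Q_{2r}$ satisfies $|y_n|^{2} \leq (2r)^{2} f_\infty^{2}(u,2r)$ by definition, so
\[
 f(u,2r) = \frac{1}{(2r)^{n+1}}\int_{Q_{2r}\cap K_u}|y_n|^2\,d\cH^{n-1}
 \;\leq\; \frac{(2r)^{2} f_\infty^{2}(u,2r)}{(2r)^{n+1}}\cdot \cH^{n-1}(K_u\cap Q_{2r}).
\]
It therefore suffices to show the universal estimate $\cH^{n-1}(K_u\cap Q_{2r})\leq C r^{n-1}$. This follows from \eqref{eq:remball}, which gives $\cH^{n-1}(J_u\cap B_\rho(x))\leq C\rho^{n-1}$ on any ball $B_\rho(x)\subset Q_{2r}$ with $\rho<r_0=4r$, combined with a standard finite covering of $Q_{2r}$ by a dimensional number of balls of radius comparable to $r$ contained in $Q_{2r}$ (the factor of $2$ between the domain of quasiminimality $Q_{2r}$ and the cylinder $Q_r$ in which we will place the extremal point leaves enough room). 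Substituting gives $f(u,2r)\leq C' f_\infty^{2}(u,2r)$.

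For the lower bound, set $M:=f_\infty(u,r)$; we may assume $M>0$. Since $K_u$ is closed, there is a point $y^{*}\in K_u\cap \overline{Q_r}$ with $|y^{*}_n|=rM$, and necessarily $M\leq 1$. Take $\rho=rM/4$. Then $\rho<r_0$ and $B_\rho(y^{*})\subset Q_{r+\rho}\subset Q_{2r}$, so the lower density bound \eqref{eq:dregden} applies at $y^{*}\in K_u$ (using also $\cH^{n-1}(K_u\sm J_u)=0$) to yield $\cH^{n-1}(K_u\cap B_\rho(y^{*}))\geq \d\rho^{n-1}$. The triangle inequality gives $|y_n|\geq rM-\rho=3rM/4$ for every $y\in B_\rho(y^{*})$, so
\[
 f(u,2r)\geq \frac{1}{(2r)^{n+1}}\int_{B_\rho(y^{*})\cap K_u}|y_n|^2\,d\cH^{n-1}
 \geq \frac{1}{(2r)^{n+1}}\cdot \frac{9 r^{2}M^{2}}{16}\cdot \d\left(\frac{rM}{4}\right)^{n-1}
 = C^{-1}M^{n+1},
\]
which is the claimed bound.

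The conceptual content is thin: the upper bound is really the statement that the $L^2$ average of $|y_n|^2/r^2$ over a codimension-$1$ set of bounded density is controlled by its $L^\infty$ norm, while the lower bound uses that $K_u$ is nondegenerate, so a single point where $|y_n|$ is close to maximal contributes a full ball's worth of $\cH^{n-1}$ mass on which $|y_n|$ remains large. The only technicality is the covering needed to promote the ball-based estimate \eqref{eq:remball} to one on the cylinder $Q_{2r}$, which is where the mismatch in powers ($f_\infty^{n+1}$ versus $f_\infty^2$) originates: the lower bound shrinks the radius by a factor of $M$ in each of $n-1$ dimensions when invoking density, while the upper bound uses the same bounded $\cH^{n-1}$-mass regardless of $M$.
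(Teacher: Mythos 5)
Your proof is correct and follows essentially the same route as the paper's: the upper bound is the pointwise estimate $|y_n|\leq 2r f_\infty(u,2r)$ on $K_u\cap Q_{2r}$ combined with the $\cH^{n-1}$ mass bound coming from the density upper bound in \eqref{eq:dregden}, and the lower bound places a ball of radius comparable to $r f_\infty(u,r)$ around a point of $K_u\cap \bar{Q}_r$ realizing the supremum, on which $|y_n|$ stays comparable to its maximum, and invokes the lower density bound --- exactly the paper's argument (the paper takes radius $\tfrac{r}{2}f_\infty$, you take $\tfrac{r}{4}f_\infty$; immaterial). One small imprecision: an open cylinder cannot be covered by finitely many balls contained in itself, so your "standard finite covering of $Q_{2r}$ by balls contained in $Q_{2r}$" is not literally available up to $\p Q_{2r}$; this is the same boundary technicality the paper glosses over when it applies \eqref{eq:dregden} to all of $Q_{2r}$, and it is harmless in practice, since wherever the proposition is used $u$ quasiminimizes on a strictly larger set (or one integrates over a slightly smaller cylinder), so the covering balls may be taken with radius comparable to $r$ after all.
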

\begin{proof}
 The estimate
\[
 f(u,2r) \leq C f_{\8}^2 (u,2r)
\]
follows from the definitions and the density upper bound in \eqref{eq:dregden}. For the other direction, choose $x\in \bar{Q}_r\cap K_u$ assuming the supremum in the definition of $f_\8(u,r)$, and notice that the ball $B_{\frac{r}{2} f_\8(u,r)}(x)\ss Q_{2r} \cap \{|x_n|\geq r f_\8 (x,r)/2\}$. This gives
\begin{align*}
 f(u,2r)&\geq \frac{1}{r^{n+1}}\int_{B_{\frac{r}{2} f_\8(u,r)}(x)\cap K_u}|y_n|^2 d\cH^{n-1} \\
 &\geq \frac{r^2 f_\8^2(u,r)}{4r^{n+1}}\cH^{n-1}(B_{\frac{r}{2} f_\8(u,r)}(x)\cap K_u) \\
 &\geq \frac{\d}{2^{n+1}}f_\8^{n+1}(u,r). 
\end{align*}
\end{proof}

Another variant of the flatness which we will use measures the distance to two different hyperplanes. We say that a point $y\in K_u$ is \emph{visible from above} on  $Q_{r,\nu}(x)$ if the line segment $Q_{r,\nu}(x) \cap \{y + t \nu:t>0\} $ does not intersect $K_u$. Similarly, a point is \emph{visible from below} if $Q_{r,\nu}(x) \cap \{y + t \nu:t<0\}$ avoids $K_u$. Given unit vectors $\nu$, $\nu_+$, define $\nu_-$ to be the reflection of $\nu_+$ about $\nu$: $\nu_- = 2(\nu_+\cdot \nu)\nu - \nu_+$. Then set
\begin{equation}\label{eq:ffdef}
ff(u,x,r,\nu) =\inf_{\nu_+\in S^{n-1},h\in [0,\frac{r}{2}], |\nu-\nu_+|\leq \frac{h}{100 r}} \3\frac{1}{r^{n+1}}\int_{Q_{r,\nu}(x)\cap K_u} d_*^2(y;\nu_+,h) d\cH^{n-1}(y)\4,
\end{equation}
where
\[
 d_*(y;\nu_+,h) = \begin{cases}
           |(y- x - h \nu)\cdot \nu_+|  & y \text{ is visible only from above}\\ &\qquad \text{ and } \ud(y)=0\\
           |(y - x + h \nu)\cdot \nu_-| & y \text{ is visible only from below}\\ & \qquad \text{ and } \ud(y)=0 \\
	   |(y- x - h \nu)\cdot \nu_+| + |(y- x + h \nu)\cdot \nu_-| & \text{otherwise.}
           \end{cases}
\]
To understand this definition (for simplicity taking $x=0$ and $\nu=e_n$), consider the two affine planes $\pi_+ = \{y:(y-h e_n)\cdot \nu_+ = 0\}$ and $\pi_- = \{y:(y + h e_n)\cdot \nu_= = 0\}$. These are reflections of one another across $\pi$, and from the conditions on $h$ and $\nu_+$ in the infimum in \eqref{eq:ffdef} they do not intersect $\pi$ or one another. The quantity $d_*(y;\nu_+,h)$ measures the distance between $y$ and one (or both) of these planes, depending on whether $y$ is visible from above or below. This is averaged over $K_u$, and the infimum is then taken to find the pair of such affine planes which minimizes the average. See Figure \ref{fig:1} for an illustration.  In some stages of our proof, we expect that $K_u$ is very close to a pair of such planes with comparatively large $h$: in this case, $f$ will be large, but $ff$ will remain small.

\begin{figure}
	\centering
	\def\svgwidth{15cm}
	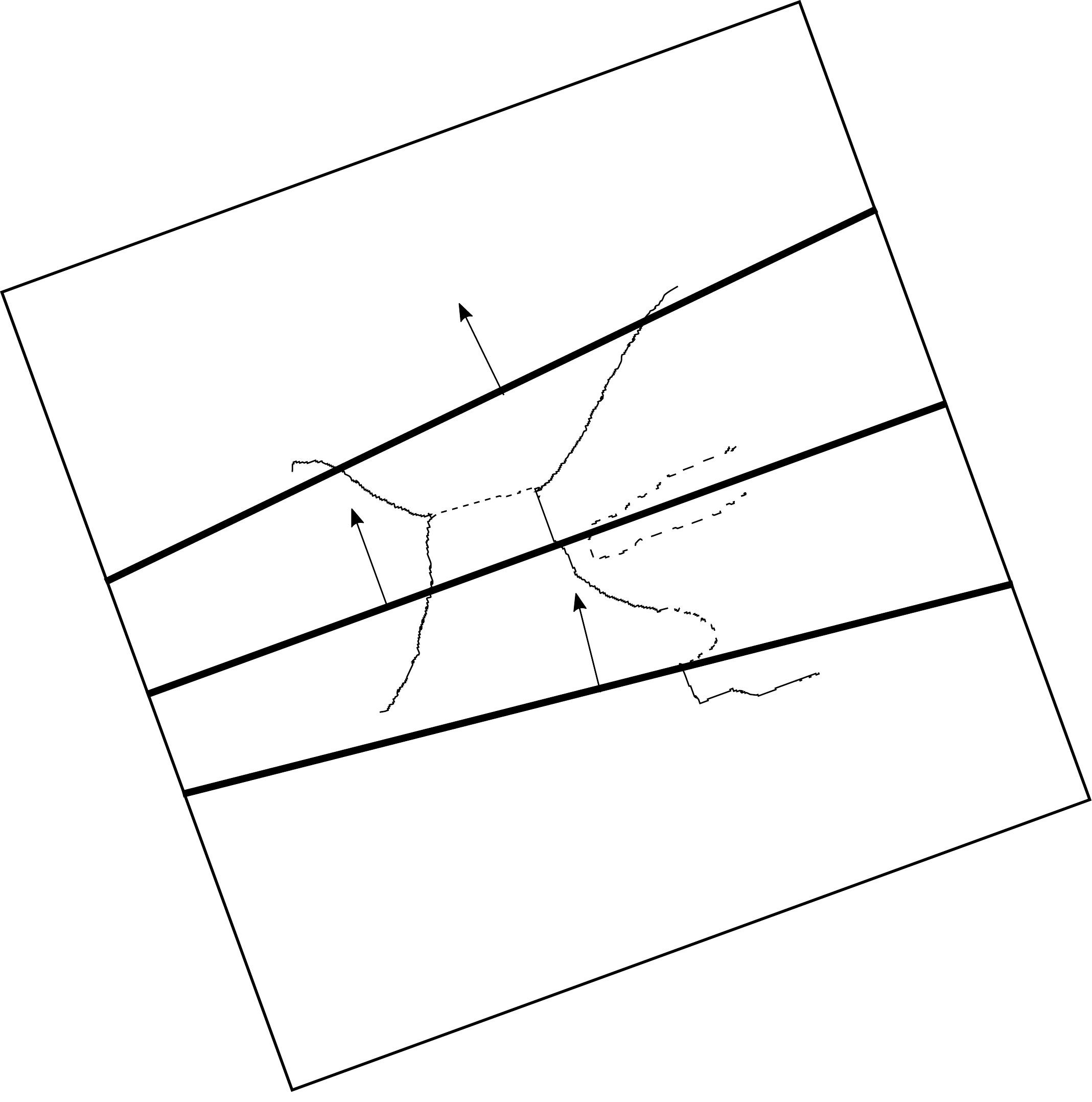
	\caption{The three bold lines are (from top to bottom) the planes $\pi_+,\pi,\pi_-$. The set $K_u$ of a function $u\in \cQ(Q_{r,\nu}(x),\L,2r)$ is represented by the thin irregular curves. The solid curves are those portions of $K_u$ which are visible only from above or below, and have $\uu=0$, while the dashed curves are the remaining portions. Note the different way in which $d_*(\cdot,\nu_+,h)$ is defined on the solid and dashed portions: on the solid portions, $d_*$ is the distance to whichever of $\pi_+,\pi_-$ the point is visible from, while on the dashed portions, it is the sum of the distances to the two planes.}\label{fig:1}
\end{figure}

When working with $ff$, we will frequently want to extract the normal vectors to the minimal planes $\nu_\pm$ and constant $h$ from the definition. Notice that the optimal $h/r \leq C f_\8$, and the infimum is always uniquely attained. We will use the notation $\nu_+(u,x,r,\nu)$ and $h(u,x,r,\nu)$ for these optimal choices, although when unambiguous we will suppress some, and indeed sometimes all, of the parameters. It will sometimes be convenient to express these planes as a graph over $\pi$ (when $\nu=e_n$, $x=0$), for which we use the notation $L_\pm [u,x,r,\nu]:\R^{n-1}\rightarrow \R$ (which we typically abbreviate this as $L_\pm$). In other words, for each point $y'\in D_r$, $(y',L_+(y'))$ is the unique point in $\{z:(z-h e_n)\cdot \nu_+=0\}$ with $z'=y'$. Note that 
\begin{equation}\label{eq:ffvf}
ff(u,r)\leq 4f(u,r)\leq C(n) [\frac{h^2}{r^2} + ff(u,r)].
\end{equation}

We also have a version of Proposition \ref{prop:flatsame} for $ff$, which states that $K_u$ is contained in a $C ff^{\frac{1}{n+1}}r$=neighborhood of the union of auxiliary planes $\pi_+,\pi_-$ with normal vectors $\nu_\pm$ and passing through $(0,\pm h)$.
\begin{proposition}\label{prop:fflatsame} Let $u\in \sQ(Q_{2r},\L,4r)$. Then
	\begin{align}
	K_u\cap Q_r &\ss \{ x\in Q_r: | (x - h e_n)\cdot \nu_+ | < C ff^{\frac{1}{n+1}}(u,2r)r \} \nonumber\\
	&\quad \quad \cup \{ x\in Q_r: | (x + h e_n)\cdot \nu_- | < C ff^{\frac{1}{n+1}}(u,2r)r \} \label{eq:fflatsamec}
	\end{align}
	where $\nu_+=\nu_+(u,x,r,\nu)$ and the constant $C$ is universal.
\end{proposition}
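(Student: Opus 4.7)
The approach mirrors that of Proposition \ref{prop:flatsame}. Fix an arbitrary point $y_0 \in K_u \cap Q_r$; the goal is to bound $\rho := \min\{d_+(y_0), d_-(y_0)\}$ by a universal multiple of $ff^{1/(n+1)}(u,2r) \cdot r$, where $d_\pm(y) := |(y \mp h e_n)\cdot \nu_\pm|$ denotes the distance from $y$ to the plane $\pi_\pm$ realizing the infimum in the definition of $ff(u,2r)$. Once such a bound is established, the containment \eqref{eq:fflatsamec} follows immediately by varying $y_0$.

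Set $\rho' := \min(\rho, 2r)$ and consider the ball $B_{\rho'/2}(y_0)$, which is contained in $Q_{2r}$ since $y_0 \in Q_r$ and $\rho'/2 \leq r$. The key pointwise lower bound is
\[
d_*(x; \nu_+, h) \geq \frac{\rho}{2} \qquad \text{for every } x \in B_{\rho'/2}(y_0) \cap K_u.
\]
To see this, the triangle inequality gives $d_\pm(x) \geq d_\pm(y_0) - \rho'/2 \geq \rho - \rho'/2 \geq \rho/2$, and in each of the three branches of the definition of $d_*$ one has $d_* \geq \min(d_+, d_-)$: the two single-plane branches give $d_* = d_+$ or $d_* = d_-$, and the ``otherwise'' branch gives $d_* = d_+ + d_-$. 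Crucially, this step does not require that $x$ share the visibility type of $y_0$.

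Applying the density lower bound \eqref{eq:dregden} (legitimate since $y_0 \in K_u$ and $\rho'/2 \leq r < 4r = r_0$) produces $\cH^{n-1}(K_u \cap B_{\rho'/2}(y_0)) \geq \delta (\rho'/2)^{n-1}$. Substituting into the defining integral for $ff(u,2r)$ gives
\[
(2r)^{n+1}\, ff(u, 2r) \geq \int_{K_u \cap B_{\rho'/2}(y_0)} d_*^2 \, d\cH^{n-1} \geq \left(\frac{\rho}{2}\right)^2 \cdot \delta \left(\frac{\rho'}{2}\right)^{n-1}.
\]
When $\rho \leq 2r$ one has $\rho' = \rho$ and this rearranges directly to $\rho \leq C(n,\delta)\, ff^{1/(n+1)}(u,2r)\, r$. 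The case $\rho > 2r$ is handled trivially: since $y_0 \in Q_r$ and both planes $\pi_\pm$ meet $Q_{2r}$, one has a universal \emph{a priori} bound $\rho \leq C r$, while \eqref{eq:ffvf} combined with \eqref{eq:remball} gives a universal upper bound on $ff(u,2r)$, so $\rho \leq C r \leq C' r\, ff^{1/(n+1)}$ after enlarging the constant.

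The only subtlety is the case analysis surrounding $d_*$, and the observation above—that $d_* \geq \min(d_+, d_-)$ in all branches—bypasses the need to track visibility or trace values of individual points $x$ in the ball, which is what makes the argument work despite the branching definition of $d_*$.
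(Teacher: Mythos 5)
Your argument is correct and is essentially the paper's own proof (which, like yours, mirrors Proposition \ref{prop:flatsame}): the same three ingredients appear — the pointwise bound $d_*\geq\min\{d_+,d_-\}$ valid in every branch, the density lower bound \eqref{eq:dregden} on a ball centered at the point, and comparison of $\int d_*^2$ with $(2r)^{n+1}ff(u,2r)$ — the only difference being that the paper runs it as a contradiction with a ball of the fixed radius $\tfrac{C}{2}ff^{\frac{1}{n+1}}(u,2r)r$ rather than your direct estimate with radius $\rho'/2$. One small correction: your justification of the edge case $\rho>2r$ is backwards (a universal \emph{upper} bound on $ff$ does not give $Cr\leq C'r\,ff^{\frac{1}{n+1}}$; you would need a \emph{lower} bound), but this case is harmless: it in fact cannot occur since $h\leq r$ and $|\nu_+-e_n|\leq\tfrac{1}{200}$ force $\rho<2r$, and in any event your own displayed inequality with $\rho'=2r$ already yields the universal lower bound $ff(u,2r)\geq \delta\,2^{-(n+1)}$, which is what that step actually requires.
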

\begin{proof}
	Choose an $x\in Q_r\cap K_u$ which is not in the set to the right of \eqref{eq:fflatsamec}. Then
	\[
	 B:=B_{\frac{C r}{2} ff^{\frac{1}{n+1}}(u,2r)}(x)\ss Q_{2r} \cap \{d_*(y;\nu_+,h)\geq \frac{C r}{2} ff^{\frac{1}{n+1}}(u,2r)\}.
	\] 
	This gives
	\begin{align*}
	ff(u,2r)&\geq \frac{1}{r^{n+1}}\int_{B\cap K_u}d_*^2 (y;\nu_+,h) d\cH^{n-1} \\
	&\geq \frac{C^2 r^2 ff^{\frac{2}{n+1}}(u,2r)}{4r^{n+1}}\cH^{n-1}(B \cap K_u) \\
	&\geq C^{n+1}\frac{\d}{2^{n+1}} ff(u,2r), 
	\end{align*}
	with the last step from \eqref{eq:dregden}. For a large enough $C$, this is a contradiction. 
\end{proof}

The following is the version of the excess modeled after the two-plane flatness, where once again given $\nu_+,\nu$, we define $\nu_-$ as the reflection of $\nu_+$ across $\nu$:
\[
 ee(u,x,r,\nu,\nu_+) = \frac{1}{r^{n-1}} \int_{Q_{r,\nu}(x) \cap J_u}\min\{1-(\nu_- \cdot \nu_y)^2,1-(\nu_+ \cdot \nu_y)^2\} d\cH^{n-1}.
\]
If the last parameter is suppressed, it will be presumed that the $\nu_+$ being used is $\nu_+(u,x,4r,e_n)$, the one which is minimal for the flatness $ff$ on the cylinder four times the size.

\section{Quasiminimizers of Laplace's Equation}\label{sec:laplace}

In this section we consider the regularity of $u\in \sQ(\W,\L,r_0)$ away from the singular set. Set $\sQ_0(\W,\L,r_0,s)$ to be functions in $H^1(\W)$ satisfying
\[
 \int_{B_r(x)}|\n u|^2 d\cL^n \leq \int_{B_r(x)} |\n w|^2 d\cL^n + \L r^{n-1+s}
\]
for every $w\in H^1 (\W)$ with $u-w\in H^1_0 (B_r)$ and $B_r(x)\ss \W$ with $r\leq r_0$. If $u\in \sQ(\W,\L,r_0)$, then $u\in \sQ_0(\W\sm K_u,\L.r_0)$.

The following lemma establishes the optimal regularity of quasiminimizers of this sort. For our applications, we will always take $\r$ some fixed multiple of $r$. The proof is completely standard, and we include it for the reader's convenience. Note that the conclusion is not valid when $s=1$.

\begin{lemma}\label{lem:intreg} Let $s\in [0,1)$ and $u\in \sQ_0(Q_r,\L,2r,s)$. Then we have that $u\in C^{\frac{1+s}{2}}(Q_\r)$ for each $\r<r$, and the following three estimates
 \begin{equation}\label{eq:intreg1}
 [u]_{C^{\frac{1+s}{2}}(Q_\r)}^2 \leq C \1 \L  + \frac{r^{n-1}}{(r-\r)^{n-1+s}} E(u,r) \2
\end{equation}
 \begin{equation}\label{eq:intreg2}
 [u]_{C^{\frac{1}{2}}(Q_\r)}^2 \leq C \1 \L r^s  + \frac{r^{n-1}}{(r-\r)^{n-1}} E(u,r) \2
\end{equation}
\begin{equation}\label{eq:intreg3}
 \1\osc_{Q_{\r}} u\2^2 \leq C \r \1 \L r^s  + \frac{r^{n-1}}{(r-\r)^{n-1}} E(u,r) \2.
\end{equation}
The constants depend only on $n$ and $s$.
\end{lemma}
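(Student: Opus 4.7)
My plan is to reduce to the standard Campanato--Morrey iteration by means of harmonic replacement, a classical technique for quasiminimizers of the Dirichlet energy. For each $x_0 \in Q_\r$ and each $\s \leq r - \r$ (so that $B_\s(x_0) \ss Q_r$), let $w$ be the harmonic replacement of $u$ on $B_\s(x_0)$: that is, $w$ is harmonic in $B_\s(x_0)$ with $u - w \in H^1_0(B_\s(x_0))$. Testing the $\sQ_0$-quasiminimality of $u$ against $w$ yields
\[
 \int_{B_\s(x_0)} |\n u|^2 \, d\cL^n \leq \int_{B_\s(x_0)} |\n w|^2 \, d\cL^n + \L \s^{n-1+s},
\]
while the orthogonality identity $\int |\n u|^2 = \int |\n w|^2 + \int |\n (u-w)|^2$ (valid since $w$ is harmonic and $u - w \in H^1_0$) rearranges this to the key bound $\int_{B_\s(x_0)} |\n (u-w)|^2 \, d\cL^n \leq \L \s^{n-1+s}$.

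Next, I would invoke the fact that $|\n w|^2$ is subharmonic, so $\int_{B_{t \s}(x_0)} |\n w|^2 \leq t^n \int_{B_\s(x_0)} |\n w|^2 \leq t^n \int_{B_\s(x_0)} |\n u|^2$ for any $t \in (0,1)$. Combining with the previous step, the non-decreasing quantity $\psi(\s) := \int_{B_\s(x_0)} |\n u|^2 \, d\cL^n$ obeys
\[
 \psi(t \s) \leq 2 t^n \psi(\s) + C \L \s^{n-1+s}
\]
for every $\s \leq r - \r$ and $t \in (0,1)$. This is the classical decay hypothesis for the Giaquinta--Giusti iteration lemma; it applies because $s < 1$ guarantees $n - 1 + s < n$, and it produces the two Morrey-type bounds
\[
 \psi(\sigma) \leq C \left[ (\sigma/R)^{n-1+s} \psi(R) + \L \sigma^{n-1+s} \right]
\]
and (using the trivial bound $\sigma^{n-1+s} \leq R^s \sigma^{n-1}$ valid for $\sigma \leq R$)
\[
 \psi(\sigma) \leq C \left[ (\sigma/R)^{n-1} \psi(R) + \L R^s \sigma^{n-1} \right]
\]
for all $\sigma \leq R \leq r - \r$.

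To conclude, I would take $R = r - \r$ and use $\psi(R) \leq r^{n-1} E(u, r)$; the two Morrey bounds then become decay estimates uniform in $x_0 \in Q_\r$, of the form $\int_{B_\sigma(x_0)} |\n u|^2 \leq M \sigma^{n - 2 + 2\a}$ with $\a = (1+s)/2$ and $\a = 1/2$ respectively, where the constants $M$ exactly match the right-hand sides of \eqref{eq:intreg1} and \eqref{eq:intreg2}. The standard Campanato characterization of H\"older spaces (combined with Poincar\'e's inequality $\int_{B_\sigma}|u - u_{B_\sigma}|^2 \leq C \sigma^2 \int_{B_\sigma} |\n u|^2$) then delivers \eqref{eq:intreg1} and \eqref{eq:intreg2}. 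Estimate \eqref{eq:intreg3} follows immediately from \eqref{eq:intreg2} and the elementary inequality $(\osc_{Q_\r} u)^2 \leq \diam(Q_\r) \, [u]^2_{C^{1/2}(Q_\r)} \leq C \r \, [u]^2_{C^{1/2}(Q_\r)}$.

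The argument has no real conceptual obstacle; the only thing to be careful about is bookkeeping the factors of $r$, $r-\r$, and the intermediate scales through the iteration, and keeping the two choices of Campanato exponent straight. The hypothesis $s < 1$ is sharp for this method: at $s=1$ the error exponent $n-1+s$ coincides with the decay exponent $n$ for harmonic functions, the Giaquinta--Giusti lemma fails, and one recovers only a logarithmic loss rather than H\"older continuity.
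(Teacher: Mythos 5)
Your proposal is correct and takes essentially the same route as the paper's proof: harmonic replacement combined with quasiminimality to get $\int_{B_\s}|\n(u-w)|^2\,d\cL^n\leq \L\s^{n-1+s}$, the decay of the Dirichlet energy of the harmonic part, an iteration in the scale (the paper runs the dyadic recurrence for $A_j$ explicitly instead of quoting the Giaquinta--Giusti lemma), and then Poincar\'e plus Campanato; the paper obtains \eqref{eq:intreg2} by applying \eqref{eq:intreg1} with $s=0$ via $\sQ_0(Q_r,\L,2r,s)\ss\sQ_0(Q_r,\L(2r)^s,2r,0)$, whereas you extract it directly from the iteration with $\s^{n-1+s}\leq R^s\s^{n-1}$ --- both are fine. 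The only step worth making explicit is the range of scales $\s\in(r-\r,\diam Q_\r]$ required by the Campanato criterion on $Q_\r$, which your growth bound does not directly cover but which follows at once from Poincar\'e on $B_\s(x_0)\cap Q_\r$ and the global energy bound, since $\s\geq r-\r$ there (this is the paper's $j\leq 0$ case).
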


\begin{proof}
 Note that \eqref{eq:intreg2} follows by applying \eqref{eq:intreg1} with $s=0$, for 
 \[
 \sQ_0(Q_r,\L,2r,s)\ss \sQ_0(Q_r,\L (2r)^s,2r,0). 
 \]
 Also, \eqref{eq:intreg3} follows directly from \eqref{eq:intreg2}. We thus only show \eqref{eq:intreg1}.
 
 We will make use of Campanato's criterion \cite{Camp}, for which we must estimate the mean deviations of $u$ on every set $B_\s(x)\cap Q_\r$ with $x\in Q_\r$ and $\s \leq \r$:
 \begin{equation}\label{eq:intregi}
  [u]_{C^{\frac{1+s}{2}}(Q_\r)}^2\leq C \sup_{x\in Q_\r, \s>0} \3 \frac{1}{\s^{n+1+s}}\int_{B_\s(x)\cap Q_\r} | u- m(u;x,\s)|^2 d\cL^n   \4
 \end{equation}
holds for a universal constant $C=C(n)$. This is valid with $m(u;x,\s)$ any numbers, but we will use
\begin{equation}\label{eq:intregi2}
m(u;x,\s) = \fint_{B_\s(x)\cap Q_\r} u d\cL^n.
\end{equation}
 Note that, up to increasing the constant in \eqref{eq:intregi}, it suffices to only consider $\s_j= 2^{-j}(r-\r)$ for $j\in \Z$. The rest of the proof involves showing 
 \begin{equation}\label{eq:intregi3}
 \frac{1}{\s_j^{n+1+s}}\int_{B_{\s_j}(x)\cap Q_\r} | u- m(u;x,\s_j)|^2 d\cL^n \leq  C \1 \L  + \frac{r^{n-1}}{(r-\r)^{n-1+s}} E(u,r) \2
 \end{equation}
for every $x\in Q_\r$ and $j\in \Z$, which would then imply \eqref{eq:intreg1}.

First we deal with the balls $B_{\s_j}(x)$ which have $j\leq 0$. We have, from the Poincar\'{e} inequality (which has a uniform constant for domains of the type $B_{\s_j}(x)\cap Q_\r)$), that
\[
 \int_{B_{\s_j} (x)\cap Q_\r}|u - \fint_{B_{\s_j}(x)\cap Q_\r}u d\cL^n|^2 d\cL^n \leq C\s^{2}_j\int_{B_{\s_j}(x) Q_\r} |\n u|^2 d\cL^n=C E(x,r)\s^2_j r^{n-1}.
\]
This gives
\begin{equation}\label{eq:intregi4}
 \int_{B_{\s_j} (x)\cap Q_\r}|u - m(u;x,\s_j)|^2 d\cL^n \leq C \s^{n+1+s}_j \frac{r^{n-1}}{(r-\r)^{n-1+s}} E(u,r)
\end{equation}
with $m(u;x,\s_j)$ as in \eqref{eq:intregi2}. This implies \eqref{eq:intregi3} for $j\leq 0$.

The remaining balls $B_{\s_j}(x)$ with $j\geq 0$ are fully contained in $Q_r$. Set
\[
A_j(x)= \frac{1}{\s_j^{n-1+s}} \int_{B_{\s_j}(x)} |\n u|^2 d\cL^n .
\]
From the previous estimate \eqref{eq:intregi4}, we have that 
\begin{equation}\label{eq:intregi5}
A_0(x)\leq C  \frac{r^{n-1}}{(r-\r)^{n-1+s}} E(u,r).
\end{equation}
We will now obtain a recurrence between the quantities $A_j$.

First, fix a ball $B:=B_{\s_{j-1}}(x)$ with $j\geq 1$, and take any function $v\in H^1_0 (B)$. We may use $u+tv$ as a competitor for $u$, which gives
\[
 \int_{B}|\n u|^2 d\cL^n \leq \int_{B} |\n u|^2 + 2t \n u \cdot \n v + t^2 |\n v|^2 d\cL^n + \L \s_{j-1}^{n-1+s}.
\]
Applying to $\pm v$, we see that for each $t> 0$ we have the inequality
\begin{equation}\label{eq:hel}
 \left| \int_B \n u \cdot \n v d\cL^n \right| \leq \frac{t}{2} \int_{B}|\n v|^2 d\cL^n + \frac{\L \s_{j-1}^{n-1+s}}{2t}.
\end{equation}

Now let $h$ be the solution to the Dirichlet problem
\[
 \begin{cases}
  \triangle h = 0 & \text{ on } B\\
  h=u & \text{ on } \p B.
 \end{cases}
\]
Then $h\in H^1(B)$, and indeed
\[
 \int_B |\n h|^2 d\cL^n \leq \int_B |\n u|^2 d\cL^n.
\]
Set $v=u-h\in H^1_0 (B)$, observing that as $h$ is harmonic, we have
\[
 \int_B \n h \cdot \n v d\cL^n =0.
\]
Inserting $v$ into \eqref{eq:hel} with $t=1$, we obtain
\[
 \int_B |\n v|^2 d\cL^n \leq \L \s_{j-1}^{n-1+s}.
\]

Now, for any harmonic function, $\frac{1}{r^n}\int_{B_r} |\n h|^2 d\cL^n$ is a nondecreasing quantity in $r$, and hence
\[
 \int_{B_{\s_j}(x)}|\n h|^2 d\cL^n \leq \frac{1}{2^n} \int_{B}|\n h|^2 d\cL^n.
\]
We therefore obtain
\begin{align*}
 \|\n u\|_{L^2(B_{\s_j}(x))} &\leq \|\n h\|_{L^2(B_{\s_j}(x))}+\|\n v\|_{L^2(B_{\s_j}(x))}\\
    &\leq \frac{1}{2^{n/2}}\| \n h\|_{L^2(B)} + \sqrt{\L} \s_{j-1}^{\frac{n-1+s}{2}}\\
    &\leq \frac{1}{2^{n/2}}\| \n u\|_{L^2(B)} + \sqrt{\L} \s_{j-1}^{\frac{n-1+s}{2}}
\end{align*}
We divide both sides by $\s_j^{\frac{n-1+s}{2}}$, to get that
\[
 \sqrt{A_j(x)} \leq \frac{1}{2^{\frac{1-s}{2}}} \sqrt{A_{j-1}(x)} +\sqrt{\L}\ 2^{\frac{n-1+s}{2}}.
\]
It is now simple to check by induction that this implies that
\[
 A_j(x) \leq A_{0}(x) + C\L.
\]

Applying the Poincar\'{e} inequality as before, we obtain that
\[
 \frac{1}{\s_j^{n+1+s}} \int_{B_{\s_j}(x)\cap Q_\r} |u - m(u;x,\s_j)|^2 d\cL^n \leq C A_{0}(x) +C\L,
\]
with $m(u;x,\s_j)$ from \eqref{eq:intregi2}. Together with our bound on $A_0$ from \eqref{eq:intregi5}, this gives \eqref{eq:intregi3} for $j\geq 0$ and so completes the proof.
\end{proof}

In particular, any $F$ quasiminimizer $u$ is continuous away from $K_u$. We will also make use of the following lemma about the convergence of quasiminimizers:

\begin{lemma}\label{lem:harmacon}Let $s\in [0,1)$, $u_k\in \sQ_0(Q_r,\L_k,2r,s)$ with $\L_k\rightarrow 0$, and $u_k \rightharpoonup u$ weakly in $H^1(Q_r)$. Then $u$ is harmonic and $u_k\rightarrow u$ strongly in $H^1_{\text{loc}}(Q_r)$.
\end{lemma}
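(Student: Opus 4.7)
The plan is to prove harmonicity first and then upgrade weak to strong convergence via harmonic replacement.

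To show $u$ is harmonic, I would re-use the argument leading to \eqref{eq:hel} from the proof of Lemma \ref{lem:intreg}. Given a test function $\phi \in C^\infty_c(Q_r)$ supported in some ball $B_\sigma(x) \subset Q_r$, I would use $u_k \pm t\phi$ as a competitor in the quasiminimality inequality for $u_k$, which yields
\[
\left| \int_{Q_r} \nabla u_k \cdot \nabla \phi \, d\cL^n \right| \leq \frac{t}{2} \int_{Q_r} |\nabla \phi|^2 \, d\cL^n + \frac{\L_k \sigma^{n-1+s}}{2t}
\]
for every $t > 0$. Optimizing in $t$ gives a bound proportional to $\sqrt{\L_k}$, which tends to zero. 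Combined with the weak convergence $u_k \rightharpoonup u$, this forces $\int \nabla u \cdot \nabla \phi = 0$ for all $\phi \in C^\infty_c(Q_r)$, so $u$ is harmonic.

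For strong $H^1_{\text{loc}}$ convergence, I would work on an arbitrary ball $B := B_\sigma(x) \cc Q_r$ and introduce $h_k \in H^1(B)$, the harmonic replacement of $u_k$ on $B$ (so that $u_k - h_k \in H^1_0(B)$ extended by zero). Since $h_k$ is the $H^1$-minimizer with its own boundary data, an orthogonality computation exactly as in the proof of Lemma \ref{lem:intreg} gives
\[
\int_B |\nabla(u_k - h_k)|^2 \, d\cL^n \leq \L_k \sigma^{n-1+s} \longrightarrow 0.
\]
Thus $u_k - h_k \to 0$ strongly in $H^1_0(B)$, and since $u_k \rightharpoonup u$ weakly in $H^1(Q_r)$, we deduce $h_k \rightharpoonup u$ weakly in $H^1(B)$.

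The last step is to upgrade the harmonic functions $h_k$ to strong convergence on interior compacta. Since each $h_k$ is harmonic on $B$ with uniformly bounded $H^1$ norm, standard interior estimates (or the mean value property applied to derivatives) yield uniform $C^\infty$ bounds on any $B' \cc B$; hence a subsequence converges in $C^\infty(B')$ to a harmonic limit, which must coincide with the weak limit $u$. As this applies to every subsequence, the whole sequence $h_k$ converges to $u$ in $H^1(B')$. Combining this with the strong convergence $u_k - h_k \to 0$ via the triangle inequality gives $u_k \to u$ in $H^1(B')$, and covering any $K \cc Q_r$ by finitely many such interior balls concludes the proof. The only step requiring genuine care is verifying the harmonic-replacement comparison, which is entirely analogous to the computation already carried out in Lemma \ref{lem:intreg}; the rest is packaging.
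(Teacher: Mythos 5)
Your proof is correct, and it takes a somewhat different route from the paper's, though both rest on the same quasiminimality inequality \eqref{eq:hel}. The paper works on a fixed subcylinder $Q_\rho$, tests \eqref{eq:hel} with $v=(\phi-u_k)\eta^2$ for a cutoff $\eta$ and an arbitrary Dirichlet competitor $\phi$, and with $t=\L_k^{1/2}$ obtains in one stroke the inequality $\limsup_k\int_{Q_\rho}|\n u_k|^2\,d\cL^n\le\int_{Q_\rho}\n u\cdot\n\phi\,d\cL^n$; taking $\phi=u$ and invoking weak lower semicontinuity gives norm convergence of the gradients (hence strong convergence), while taking general $\phi$ gives the Dirichlet minimality of $u$, hence harmonicity. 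You instead argue modularly: the Euler--Lagrange limit for harmonicity, then the harmonic replacement $h_k$ on balls with the orthogonality computation from Lemma \ref{lem:intreg} giving $\int_B|\n(u_k-h_k)|^2\le\L_k\s^{n-1+s}\to 0$, and finally interior estimates plus a subsequence argument to upgrade $h_k\rightharpoonup u$ to local smooth convergence, concluding by the triangle inequality and a covering. What the paper's argument buys is economy (harmonicity and strong convergence come from the same two lines, directly on $Q_\rho$ without a covering step); what yours buys is a more elementary, standard-machinery structure, and as a byproduct the smooth local convergence of the harmonic parts. Two small points to tidy: your Euler--Lagrange identity is derived only for $\phi$ supported in balls $B_\s(x)\ss Q_r$, so you should note that harmonicity is local (or use a partition of unity) before asserting it on all of $Q_r$; and the strong convergence $u_k-h_k\to 0$ in $H^1_0(B)$ uses Poincar\'e on $H^1_0(B)$ to control the $L^2$ part, which is worth a word. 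Neither is a real gap.
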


\begin{proof}
 Fix any $\r<r$, and take any $\phi\in H^1(Q_\r)$ with $\phi=u$ on $\p Q_\r$. Let $\eta$ be a smooth cutoff function with $\eta=1$ on $Q_\r$ and $\eta=0$ outside $Q_{\g\r}\cc Q_\r$. We use $v=(\phi -u_k)\eta^2 $ as the function in \eqref{eq:hel}; this gives
 \[
 | \int_{Q_r}\eta^2 \n u_k \cdot \n [\phi - u_k] + 2 \eta (\phi-u_k) \n u_k \cdot \n \eta d\cL^n |\leq \frac{t}{2} \int_{Q_r}|\n v|^2 d\cL^n + \frac{\L_k r^{n-1+s}}{2t}
 \]
Set $t=\L_k^{\frac{1}{2}}$ and take $k\rightarrow \8$. We obtain that
\[
 \lim \int_{Q_r}|\n u_k|^2 \eta^2 d\cL^n = \int_{Q_r} \eta^2 \n u \cdot \n \phi-2\eta (\phi-u) \n u \cdot \n \eta d\cL^n.
\]
Notice that the second term on the right vanishes, and we have
\[
\limsup \int_{Q_\r}|\n u_k|^2  d\cL^n \leq \lim \int_{Q_r}|\n u_k|^2 \eta^2 d\cL^n = \int_{Q_r} \eta^2 \n u \cdot \n \phi d\cL^n .
\]
 We may send $\eta \rightarrow 1_{Q_\r}$ to obtain
\begin{equation}\label{eq:harmacon}
 \limsup \int_{Q_\r}|\n u_k|^2 d\cL^n \leq \int_{Q_\r} \n u \cdot \n \phi d\cL^n. 
\end{equation}
Setting $\phi=u$ gives that
\[
 \limsup \int_{Q_\r}|\n u_k|^2 d\cL^n \leq  \int_{Q_\r}|\n u|^2 d\cL^n\leq \liminf \int_{Q_\r}|\n u_k|^2 d\cL^n ,
\]
(the second inequality from the weak convergence), and this implies strong convergence. Using $\pm \phi$ in \eqref{eq:harmacon} for any other $\phi$, we get
\[
\int_{Q_\r}|\n u|^2  d\cL^n = \int_{Q_\r} \n u \cdot \n \phi d\cL^n,
\]
which implies that $u$ is harmonic.
\end{proof}

\section{The First Variation and the First Excess-Flatness Bound}\label{sec:EL1}

In this section we derive the first variation formula for a quasiminimizer $u$, and use this to control the excess in terms of the flatness and energy. As we will see in future sections, the excess should be thought of as a higher-order quantity, and will be useful in controlling how much $K_u$ looks like the union of two Lipschitz graphs. 

\begin{lemma}\label{lem:EL} Let $u\in \sQ(Q_{2r},\L,4r)$. Then for every Lipschitz vector field $T\in C_c^{0,1}(Q_r;\R^n)$ with $\|T\|_{C^{0,1}}\leq 1$, and any $0<t<c_0$ with $c_0$ universal, we have
 \begin{align*}
  &\left| \int_{J_u}(\uu^2(x)+\ud^2(x))\dvg\nolimits^{\nu_x}T d\cH^{n-1}(x) + \int|\n u|^2 \dvg T - \n u \cdot \n T \n u d\cL^n\right| \\
  &\qquad\leq C(n) t  r^{n-1}\1E(u,r) + 1\2 + \frac{\L}{t}r^{n-1+s}.
 \end{align*}
 Here $\nu_x$ is the approximate jump vector. If $K\cap Q_{2r}$ is given by a pair of $C^{1}$ graphs $\G^+,\G^-$ of functions $g_+\geq g_-$ over $\pi$, with $u=0$ on $\{g_-<g_+\}$, we may also conclude that for $T$ as before with $T=\phi(x) e_n$ with $\phi$ a nonnegative function, we have
 \begin{align*}
  \int_{\G^+}&(\uu^2(x)+\ud^2(x))\dvg\nolimits^{\nu_x}T d\cH^{n-1}(x) + \int_{x_n> g_+(x')}|\n u|^2 \dvg T - \n u \cdot \n T \n u d\cL^n \\
  &\qquad\geq -C(n) t  r^{n-1}\1E(u,r) + 1\2 - \frac{\L}{t}r^{n-1+s}.
 \end{align*}
 \end{lemma}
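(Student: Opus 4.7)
The plan is to construct SBV competitors via a one-parameter family of diffeomorphisms generated by $T$ and then invoke the quasiminimizer inequality on an enclosing ball. Set $\Phi_t(y) = y + tT(y)$; since $\|T\|_{C^{0,1}} \leq 1$, for $|t| < c_0$ with $c_0$ a small universal constant, $\Phi_t$ is a bi-Lipschitz homeomorphism of $\R^n$ equal to the identity outside $\supp T \ss Q_r$. Take the competitor $v = u \circ \Phi_t^{-1}$. Since $\{u \neq v\} \ss Q_r \ss B_{\sqrt{2}r}(0)$ and $\sqrt{2}r < 4r = r_0$, the quasiminimizer property applies on the ball $B_{\sqrt{2}r}(0) \ss Q_{2r}$ and gives $F(u;Q_{2r}) \leq F(v;Q_{2r}) + C(n)\L r^{n-1+s}$.

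The next step is to expand $F(v;Q_{2r}) - F(u;Q_{2r})$ to first order in $t$. For the Dirichlet part, change variables $y = \Phi_t^{-1}(x)$, using $\n v(x) = (D\Phi_t(y))^{-T}\n u(y)$, $dx = |\det D\Phi_t(y)|dy$, and the Taylor expansions $D\Phi_t = I + t\n T + O(t^2)$, $\det D\Phi_t = 1 + t \dvg T + O(t^2)$; the linear term is a constant multiple of $t\int_{Q_{2r}}(|\n u|^2 \dvg T - \n u \cdot \n T \n u) d\cL^n$ and the $O(t^2)$ remainder is bounded by $Ct^2 \int_{Q_{2r}}|\n u|^2 d\cL^n$. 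For the jump part, $J_v = \Phi_t(J_u)$ and the traces transform trivially: $\vu(\Phi_t(x)) = \uu(x)$, $\vd(\Phi_t(x)) = \ud(x)$. The area formula on $\cH^{n-1}$-rectifiable sets applied to $\Phi_t|_{J_u}$ yields the tangential Jacobian expansion $J^{n-1}_{\Phi_t}(x) = 1 + t\dvg^{\nu_x} T + O(t^2)$, because $\dvg^{\nu_x} T = \dvg T - \nu_x \cdot \n T \nu_x$ is precisely the trace of the restriction of $\n T$ to the approximate tangent space $\nu_x^\perp$; this produces the linear contribution $t\int_{J_u}(\uu^2+\ud^2)\dvg^{\nu_x} T d\cH^{n-1}$ plus a remainder $O(t^2)\cH^{n-1}(J_u\cap Q_{2r})$. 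Both remainders are absorbed by \eqref{eq:remball}, which bounds $\int|\n u|^2 + \cH^{n-1}(J_u)$ on $Q_{2r}$ by $C r^{n-1}(E(u,r)+1)$. Combining these expansions with the quasiminimizer inequality and dividing by $t$ gives one side of the two-sided bound; repeating with $-T$ in place of $T$ gives the other.

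For the one-sided statement, take $T = \phi e_n$ with $\phi \geq 0$ and $t > 0$ only, and use an asymmetric competitor which only moves the upper graph upward:
\[
v(x) = \begin{cases} u(\Phi_t^{-1}(x)) & \text{if } \Phi_t^{-1}(x)_n > g_+(x'),\\ 0 & \text{if } g_-(x') < x_n \text{ and } \Phi_t^{-1}(x)_n \leq g_+(x'),\\ u(x) & \text{if } x_n \leq g_-(x'). \end{cases}
\]
Since $\Phi_t = \mathrm{id}$ outside $\supp T$, this is still a valid SBV competitor with $\{u \neq v\} \ss Q_r$, and its jump set inside $Q_{2r}$ is $\Phi_t(\G^+) \cup \G^-$; the lower graph and the region below it are untouched, so the first-order expansion of $F(v)-F(u)$ reduces to the bracketed expression in the statement localized to $\G^+$ and $\{x_n > g_+(x')\}$. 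Only the sign $t > 0$ produces a valid competitor of this form (reversing the flow would push the upper graph downward into the already-zero region and the naive formula would no longer apply), which is why only a one-sided lower bound is available. The main technical point throughout is justifying the area-formula expansion producing the tangential divergence on the rectifiable set $J_u$; this is standard in BV theory and is treated in \cite[Section 2.11]{AFP}.
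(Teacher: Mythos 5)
Your proposal is correct and follows essentially the same route as the paper: the competitor $u\circ\Phi_t^{-1}$ with $\Phi_t=\mathrm{id}+tT$, the area formula with the tangential Jacobian expansion $1+t\dvg^{\nu_x}T+O(t^2)$ for the surface term, the change of variables with $\det D\Phi_t$ and $(D\Phi_t)^{-1}$ for the Dirichlet term, absorption of the quadratic remainders via \eqref{eq:remball}, and the asymmetric competitor (moving only the region above $\G^+$, with $t>0$) for the one-sided inequality. The only cosmetic differences are the harmless constant $C(n)$ picked up from enclosing $Q_r$ in a ball of radius $\sqrt{2}r$ and your hedge about the coefficient of the cross term $\n u\cdot\n T\n u$, neither of which affects the argument.
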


 \begin{proof} We only sketch the proof. Consider the mapping $\phi_t(x)= x+tT(x)$ for small $t$. For $t<c_0$ small enough, the Lipschitz condition on $T$ ensures that this is a bilipschitz mapping $Q_{2r}\rightarrow Q_{2r}$, with $\phi_t(x)-x$ compactly supported on $Q_{2r}$. The function $u_t = u\circ \phi_t^{-1}$ is in $SBV$, and so we may use it as a competitor. We now compute the energy and surface terms.
 
 We know from the area formula for rectifiable sets (see \cite[Theorem 2.91]{AFP}) that $\phi_t$ is tangentially differentiable at $\cH^{n-1}$ -a.e. point of $J_u$, and that for a bounded Borel function $g$,
 \[
  \int_{\phi_t(J_u)} g\circ \phi^{-1}_t d\cH^{n-1} = \int_{J_u} g \text{\textbf{J}}_{n-1}(d^{\nu_x}\phi_t) d\cH^{n-1}, 
 \]
where $\text{\textbf{J}}_{n-1}(d^{\nu_x}\phi_t)$ is the $n-1$-dimensional Jacobian of the tangential derivative $d^{\nu_x}\phi_t$. At such a point of tangential differentiability, this Jacobian admits the approximation
\[
 |\text{\textbf{J}}_{n-1}(d^{\nu_x}\phi_t) - 1 - t\dvg\nolimits^{\nu_x}T|\leq Ct^2,
\]
with the constant controlled by the Lipschitz bound on $T$ (this is just a fact about matrices, and the computation may be found in \cite[Theorem 7.31]{AFP}). Setting $g=\uu^2+\ud^2$ gives
\[
 |\int_{J_u}(\uu^2+\ud^2) (1 + t\dvg\nolimits^{\nu_x} T) d\cH^{n-1} - \int_{J_{u_t}}\uu^2_t+\ud^2_t d\cH^{n-1}|\leq Cr^{n-1}t^2.
\]

On the other hand, we have that
\[
 \int |\n u_t|^2 d\cL^n = \int | \n\phi_t^{-1}\n u|^2 \circ \phi_t |\det\n \phi_t|d\cL^n
\]
from the area formula. Computing at differentiability points of $\phi_t$,
\[
 |\det (\n \phi_t) - I - t \dvg\nolimits T|\leq Ct^2
\]
and
\[
 |(\n \phi_t)^{-1} - 1 + t \n T|\leq Ct^2,
\]
so
\[
\left| \int |\n u_t|^2 - |\n u|^2 - t \dvg T |\n u|^2 + 2t \n u \cdot \n T\n u d\cL^n\right|\leq CE(u,2r)t^2.
\]

Combining these relations and using the quasiminimality of $u$ gives the conclusion. To obtain the second conclusion, use instead the competitors
\[
u_t(x)=\begin{cases}
 u\circ \phi_t^{-1}(x) & \{x_n \geq g_+(x')\}\\
 u(x) & \{x_n\leq g_-(x')\}\\
 0 & \{g_-(x')<x_n<g_+(x')\}.
\end{cases}
\]
The computation is then analogous.
 \end{proof}

\begin{theorem}\label{thm:exbyflat}Let $u\in \sQ(Q_{2r},\L,4r)$. Then
\[
 e(u,r/2)\leq C\1f(u,r)+E(u,r) + \sqrt{\L}r^{s/2}\2, 
\]
where the constant $C$ is universal.
\end{theorem}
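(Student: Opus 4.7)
The strategy is to apply the first-variation inequality Lemma \ref{lem:EL} with a carefully chosen Lipschitz test vector field whose tangential divergence on $J_u$ extracts the excess integrand $(1 - (\nu_x \cdot e_n)^2)$ as its leading-order contribution. This follows the spirit of the Allard tilt-excess lemma for stationary varifolds, as adapted by Ambrosio, Fusco, and Pallara to the Mumford--Shah setting.

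The canonical choice is $T(x) = c_0 \, x_n \, \eta(x') \, \zeta(x_n) \, e_n$, where $\eta \in C^\infty_c(D_{3r/4})$ satisfies $\eta \equiv 1$ on $D_{r/2}$ and $\|\n \eta\|_\infty \leq C/r$; $\zeta \in C^\infty_c((-r, r))$ satisfies $\zeta \equiv 1$ on $(-r/2, r/2)$ and $\|\zeta'\|_\infty \leq C/r$; and $c_0 > 0$ is a universal constant small enough to guarantee $\|T\|_{C^{0,1}} \leq 1$. Writing $\nu_{x,n} = \nu_x \cdot e_n$ and $\nu_x' = \nu_x - \nu_{x,n} e_n$, so that $|\nu_x'|^2 = 1 - \nu_{x,n}^2$, a direct computation yields $\dvg\nolimits^{\nu_x} T = c_0 (1 - \nu_{x,n}^2)$ on $Q_{r/2}$, while on $Q_r \sm Q_{r/2}$ two further error contributions appear, one supported where $\n' \eta \neq 0$ and the other where $\zeta' \neq 0$.

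Integrating $\dvg\nolimits^{\nu_x} T$ against $(\uu^2 + \ud^2)$ over $J_u$ and using the $\d$-regularity bound $\uu^2 + \ud^2 \geq \d^2$ at jump points together with $\eta = \zeta = 1$ on $Q_{r/2}$, the main term is bounded below by $c_0 \d^2 (r/2)^{n-1} e(u, r/2)$. The $\zeta'$-supported error lives in $\{r/2 < |x_n| < r\}$, so the identity $\int_{K_u \cap Q_r} x_n^2 d\cH^{n-1} = r^{n+1} f(u, r)$ together with Chebyshev gives $\cH^{n-1}(K_u \cap \{|x_n| > r/2\}) \leq 4 r^{n-1} f(u, r)$; this error is therefore already linear in $f$. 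The $\n'\eta$-supported error is handled by Cauchy--Schwarz:
\[
\Bigl| c_0 \int_{J_u \cap Q_r} (\uu^2 + \ud^2) x_n \nu_{x,n} \zeta (\nu_x' \cdot \n' \eta) d\cH^{n-1} \Bigr| \leq C r^{n-1} \sqrt{f(u, r)\, e(u, r)},
\]
using $\int_{K_u \cap Q_r} x_n^2 d\cH^{n-1} = r^{n+1} f$ and $\int_{J_u \cap Q_r} |\nu_x'|^2 d\cH^{n-1} = r^{n-1} e(u, r)$. The volume term $\int [|\n u|^2 \dvg T - \n u \cdot \n T \n u] d\cL^n$ is bounded pointwise by $C |\n u|^2$, contributing $C r^{n-1} E(u, r)$. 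Finally, optimizing the free parameter $t$ in Lemma \ref{lem:EL} by taking $t \sim \sqrt{\L r^s}$, and using the universal a priori bound $E(u, r) \leq C_0$ from \eqref{eq:remball}, produces the defect $C\sqrt{\L}\, r^{s/2} \cdot r^{n-1}$.

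Collecting these pieces and dividing by $r^{n-1}$ gives the provisional estimate
\[
e(u, r/2) \leq C \bigl[f(u, r) + \sqrt{f(u, r)\, e(u, r)} + E(u, r) + \sqrt{\L}\, r^{s/2} \bigr].
\]
To pass from this to the linear $f$ dependence stated in the theorem, one applies the weighted Young's inequality $\sqrt{f\, e(u, r)} \leq \tau f + e(u, r)/(4\tau)$ and invokes the universal a priori bound $e(u, r) \leq C_0$ from \eqref{eq:remball} to absorb the cross term into the $f$, $E$, and $\sqrt{\L}\, r^{s/2}$ contributions with a universal constant. The main obstacle is precisely this absorption step: the cross term $\sqrt{f\, e(u, r)}$ produced by Cauchy--Schwarz does not directly give linear $f$, and the reduction relies essentially on the universal boundedness of the excess, possibly combined with a completing-the-square manipulation that treats the resulting inequality as quadratic in $e$.
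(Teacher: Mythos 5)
Your setup is the same as the paper's (the vector field $x_n e_n$ times a cutoff, plugged into Lemma \ref{lem:EL} with $t\sim\sqrt{\L r^s}$, energy terms estimated by $E(u,r)$), but the final absorption step has a genuine gap. After your integrated Cauchy--Schwarz over the annulus where $\n'\eta\neq 0$, the cross term is $C\sqrt{f(u,r)\,e(u,r)}$, with the excess evaluated on the \emph{full} cylinder $Q_r$, while the left-hand side only controls $e(u,r/2)$. So neither of your two proposed remedies works: completing the square (or Young with the $e$-part absorbed into the left) is impossible because the two excesses live at different scales, and invoking the a priori bound $e(u,r)\leq C_0$ from \eqref{eq:remball} only yields $\sqrt{f\,e(u,r)}\leq C\sqrt{f(u,r)}$, i.e.\ a bound $e(u,r/2)\leq C(\sqrt{f}+E+\sqrt{\L}r^{s/2})$, which is strictly weaker than the stated linear dependence on $f$. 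The linear dependence is not cosmetic: in the regime where the theorem is used (e.g.\ in Lemma \ref{lem:harm1} and Theorem \ref{thm:flatimp}, where $E+\sqrt{\L r^s}+r\lesssim \Upsilon f$ and $f\sim\e_k^2\to 0$), an error of order $\sqrt{f}$ would dominate everything and break the subsequent arguments.

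The fix is to do the Cauchy/Young step \emph{pointwise inside the tangential divergence}, before integrating, which is why the paper takes the cutoff squared: with $T=C_*\phi^2 x_n e_n$, the cross term is $2C_*\phi\, y_n\,[e_n\cdot\n\phi-(e_n\cdot\nu_y)(\n\phi\cdot\nu_y)]$, and since $|e_n-(e_n\cdot\nu_y)\nu_y|^2=1-(e_n\cdot\nu_y)^2$ is exactly the excess density, pointwise Cauchy absorbs it into $\tfrac12 C_*\phi^2(1-(\nu_y\cdot e_n)^2)$ at the \emph{same point}, leaving only the error $2C_*|\n\phi|^2 y_n^2$, which integrates to $Cf(u,r)r^{n-1}$ --- linear in $f$ with no cross term at all. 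Note that with your plain product cutoff $\eta(x')\zeta(x_n)$ this pointwise absorption would produce a factor $|\n'\eta|^2/\eta$, which blows up near $\p\,\supp\eta$; you would need to replace $\eta$ by $\tilde\eta^2$ (equivalently, adopt the paper's $\phi^2$). Your treatment of the $\zeta'$-term via Chebyshev, the energy term, and the choice $t\sim\sqrt{\L r^s}$ are all fine.
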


\begin{proof}
 Define the vector field $T_0(x) = x_n e_n$; we have that $\n T_0 \cdot e = e\cdot e_n e_n$ for any vector $e$, so in particular $T_0$ has divergence 1. This implies that (for any $y\in J_u$ )
\[
 \dvg\nolimits^{\nu_y} T_0 =    1 - (\nu_y \cdot e_n)^2,  
\]
where $\nu_y$ is the approximate jump vector at $y$.

Now we select a smooth nonnegative cutoff function taking values in $[0,1]$, called $\phi$. Choose it so that it is supported on $Q_r$, is identically $1$ on $Q_{r/2}$, and has $|\n \phi_1|\leq C(n)/r$. Set $T = C_*\phi^2 T_0$, where $C_*$ is a universal constant chosen so that $T$ has derivative bounded by $1$.

Next, we estimate the tangential divergence of $T$. From the chain rule and the previous computation,
\begin{align*}
 \dvg\nolimits^{\nu_y} T &=  C_*\phi^2 ( 1 - (\nu_y \cdot e_n)^2) + 2C_*\phi y_n [e_n \cdot \n \phi - (e_n \cdot \nu_y)(\n \phi \cdot \nu_y)]\\
	&\geq C_*\phi^2 ( 1 - (\nu_y \cdot e_n)^2) - 2C_*\phi |y_n||\n \phi| |e_n  - (e_n \cdot \nu_y) \nu_y|\\
    &\geq C_*\phi^2 \frac{1}{2}( 1 - (\nu_y \cdot e_n)^2) -  2 C_*|\n \phi|^2 y_n^2. 
\end{align*}
The last step used Cauchy's inequality and the identity
\[
|e_n  - (e_n \cdot \nu_y) \nu_y|^2 = 1 - (e_n \cdot \nu_y)^2.
\] 
We may apply Lemma \ref{lem:EL} to $T$ with $\s = \sqrt{\L r^s}$ to obtain that
\[
 \left|\int_{Q_r \cap J_u} \dvg\nolimits^{\nu_y} T (\uu^2 +\ud^2)d\cH^{n-1}\right| \leq C[ E(u,r) + \sqrt{\L r^s}]r^{n-1};
\]
we moved all of the energy terms to the right. This gives
\[
 \int_{Q_r \cap J_u} \phi^2 ( 1 - (\nu_y \cdot e_n)^2) d\cH^{n-1} \leq C\1\int_{Q_r \cap J_u} \frac{x_n^2}{r^2} d\cH^{n-1} +[ E(u,r) + \sqrt{\L r^s}]r^{n-1}\2.
\]
We used the upper and lower bounds on $u$ from \eqref{eq:dregsize} to remove $\uu^2 + \ud^2$ from the integrands. The term on the left controls $e(u,r/2)$, while the first one on the right is bounded by $f(u,r)$. This gives
\[
 e(u,r/2)\leq C( E(u,r) + f(u,r) + \sqrt{\L r^s}).
\]
\end{proof}

We will later prove a version of this theorem, Theorem \ref{thm:exbyflat2}, which has $ff$ and $ee$ in place of $f$ and $e$.

\section{Measure-Theoretic Arguments}\label{sec:gmt}

We now use direct competitors and compactness arguments to prove several lemmas aimed at improving our density estimates \eqref{eq:dregden} in a flat regime. This requires more effort than is typically needed in, say, the theory of  stationary varifolds, largely because there is no monotonicity formula to give a ``perfect'' bound on the densities from below.

Some definitions will be helpful. Given $u\in \sQ(\W,\L,r_0)$, let $\mu_u  = (\uu^2 + \ud^2) d\cH^{n-1}\mres K_u$ be a Borel measure on $\W$ (we will drop the $u$ subscript where unambiguous). We would like to decompose $\mu_u$ into two pieces, one of which counts only the values of $u^2$ along the $\{x_n<0\}$-facing ``side'' of $K_u$, and the other along the $\{x_n>0\}$ ``side.'' Of course, $K_u$ does not have sides in any useful topological sense, so what we really want has to do with looking at the approximate tangent planes to $J_u$. It will be helpful to replace $u$ with its \emph{precise representative}, which is is defined to be $\frac{\uu+\ud}{2}$ outside of $\R^n \sm (S_u\sm J_u)$, and arbitrarily otherwise. Let $\nu_x$ be the approximate jump vector at $x\in J_u$. We make use of the following theorem of Vol'pert about one-dimensional restrictions of $BV$ functions, which may be found in \cite[Theorem 3.108]{AFP}.
\begin{proposition}\label{prop:1Drest} Let $u\in SBV(\R^n)$. Define $u_{x'}(t)=u(x',t):\R\rightarrow \R$; then $u_{x'}\in SBV(\R)$ for $\cL^{n-1}$-a.e. $x'\in \pi$, the measure-valued maps $x'\mapsto D^j u_{x'}, x'\mapsto \n u_{x'}d\cL^1$ are Borel measurable,
\[
 e_n \cdot \n u d\cL^n = \cL^{n-1}\otimes \n u_{x'}d\cL^1,
\]
and
\[
e_n \cdot D^j u = \cL^{n-1}\otimes D^j u_{x'}d\cL^1.
\]
Moreover, for $\cL^{n-1}$-a.e. $x'\in \pi$, we have that $J_{u_{x'}}=J_u \cap (\{x'\}\times \R)$, that $e_n \cdot \nu_{(x',t)}\neq 0$ at each $t\in J_{u_{x'}}$, and lastly that
\begin{equation}\label{eq:onesidelim}
 \begin{cases}
  \lim_{s\searrow t} u(x',s) = \uu(x',t) \qquad \lim_{s\nearrow t} u(x',s) = \ud(x',t) & e_n \cdot \nu_{(x',t)} >0 \\
  \lim_{s\searrow t} u(x',s) = \ud(x',t) \qquad \lim_{s\nearrow t} u(x',s) = \uu(x',t) & e_n \cdot \nu_{(x',t)} <0.
 \end{cases}
\end{equation}
\end{proposition}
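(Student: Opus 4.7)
My plan is to prove the slicing theorem through disintegration of measures: handle the three parts of the Lebesgue decomposition of $Du$ (absolutely continuous, jump, Cantor) separately, then extract the geometric conclusions from rectifiability of $J_u$.

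The starting point is the general $BV$ slicing identity obtained by duality. For $\phi\in C^\8_c(\R^n)$,
\[
\int_{\R^n}\phi\, d(e_n\cdot Du) = -\int_{\R^n} u\,\p_{x_n}\phi \,d\cL^n = -\int_{\pi}\Bigl(\int_\R u_{x'}(t)\,\p_t\phi(x',t)\,dt\Bigr)\,d\cL^{n-1}(x'),
\]
by Fubini. Testing against a countable dense family of $\phi$ supported in each bounded open set and invoking Riesz representation yields, for $\cL^{n-1}$-a.e. $x'$, that $u_{x'}\in BV_{\text{loc}}(\R)$ together with the disintegration $e_n\cdot Du = \cL^{n-1}\otimes Du_{x'}$; Borel measurability of the map $x'\mapsto Du_{x'}$ is extracted from the measurability of $x'\mapsto \int\phi\, dDu_{x'}$ for each fixed $\phi$.

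I would then split each side of this identity into its AC, jump, and Cantor components. The AC statement $e_n\cdot\n u \, d\cL^n = \cL^{n-1}\otimes \n u_{x'}\,d\cL^1$ is immediate from Fubini once one knows $t\mapsto u(x',t)$ is classically differentiable at $\cL^1$-a.e.\ $t$ for $\cL^{n-1}$-a.e.\ $x'$, which follows from $\n u \in L^1$ via the standard Sobolev slicing characterization. Uniqueness of the Lebesgue decomposition forces the singular parts to match as well, and since $D^c u = 0$ for $u\in SBV$, the Cantor part of $Du_{x'}$ vanishes for $\cL^{n-1}$-a.e.\ $x'$; hence $u_{x'}\in SBV(\R)$ and $e_n\cdot D^j u = \cL^{n-1}\otimes D^j u_{x'}\,d\cL^1$. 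Measurability of the AC and jump pieces separately follows by applying the duality argument to each of the (Borel) measures $\n u\, d\cL^n$ and $D^j u$ individually.

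The geometric identification $J_{u_{x'}} = J_u \cap (\{x'\}\times \R)$ is the main obstacle. For the inclusion $\supseteq$ I would use the rectifiability of $J_u$ from Proposition \ref{prop:FedVol}: cover $J_u$ up to an $\cH^{n-1}$-null set by countably many $C^1$ graphs, and apply a Sard-type argument via the coarea formula to conclude that, for $\cL^{n-1}$-a.e.\ $x'$, the slice $J_u \cap (\{x'\}\times \R)$ is at most countable and $e_n\cdot \nu_{(x',t)}\neq 0$ at each of its points (tangent planes parallel to the slicing direction contribute zero to almost every fiber). At such a slice point, the $L^1_{\text{loc}}$ blow-up $v_{(x',t),r}$ defining the approximate jump of $u$ restricts, for a subsequence $r\searrow 0$, to the blow-up of $u_{x'}$ at $t$, producing a one-dimensional jump with one-sided limits determined by the sign of $e_n\cdot\nu_{(x',t)}$ exactly as in \eqref{eq:onesidelim}. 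For the reverse inclusion $\subseteq$, I would match total jump masses: the disintegration established above gives $|e_n\cdot D^j u| = \cL^{n-1}\otimes |D^j u_{x'}|$, while the $\cH^{n-1}$-representation of $D^j u$ from Proposition \ref{prop:FedVol} gives $e_n \cdot D^j u = (\uu-\ud)(e_n\cdot\nu_x)\, d\cH^{n-1}\mres J_u$. Combining these two identities via the coarea formula forces the support of $D^j u_{x'}$—which is precisely $J_{u_{x'}}$—to lie inside the slice of $J_u$, leaving no room for stray jump points. The delicate coordination of slicing, coarea, and rectifiability in this last step is where essentially all of the difficulty resides; the rest of the argument is bookkeeping for the Lebesgue decomposition.
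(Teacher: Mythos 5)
First, note that the paper does not prove this proposition at all: it is quoted as Vol'pert's slicing theorem and cited directly from \cite[Theorem 3.108]{AFP}, so the only meaningful comparison is with the standard proof recorded there. Your outline follows that standard route (distributional slicing plus Fubini to get the disintegration $e_n\cdot Du=\cL^{n-1}\otimes Du_{x'}$, uniqueness of the Lebesgue decomposition together with the fact that a.e.\ slice of an $\cL^n$-null set is $\cL^1$-null to match the absolutely continuous and singular parts, vanishing of the Cantor part since $D^cu=0$, and the coarea formula for the rectifiable set $J_u$ to control the slices of the jump part), and those parts are essentially sound, modulo the minor misstatement that classical a.e.\ differentiability of slices comes from a ``Sobolev slicing characterization'' -- $u$ is only $SBV$, and what you actually use is that one-dimensional $BV$ functions are differentiable a.e.\ together with the Lebesgue-decomposition matching you already describe.

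The genuine gap is in the identification of the one-sided limits \eqref{eq:onesidelim}. Your mechanism for this is the claim that the $L^1_{\text{loc}}$ blow-up of $u$ at a jump point $(x',t)$ ``restricts, for a subsequence $r\searrow 0$, to the blow-up of $u_{x'}$ at $t$.'' This step fails as stated: $L^1_{\text{loc}}$ convergence of the blow-ups gives no information about their restriction to the single vertical line $\{x'\}\times\R$, which is a Lebesgue-null set, and passing to a subsequence only yields $\cL^n$-a.e.\ convergence, again saying nothing on that fixed line. There is also a quantifier problem: the conclusion must hold for $\cL^{n-1}$-a.e.\ $x'$ at \emph{every} $t$ in the slice of $J_u$, and an argument run pointwise at each jump point cannot be Fubini-ed into that form, since the exceptional set of lines may depend on the point. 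The measure-theoretic identities you invoke for the inclusion $J_{u_{x'}}\ss J_u\cap(\{x'\}\times\R)$ in fact give both inclusions (atoms of $Du_{x'}$ are exactly the slice points of $J_u$ with $e_n\cdot\nu\neq 0$, each carrying mass $(\uu-\ud)(e_n\cdot\nu)\neq 0$), but they only determine the \emph{difference} of the one-sided limits, not their individual values. To recover \eqref{eq:onesidelim} one needs an additional idea, e.g.\ applying the same disintegration to the truncations $(u-c)^+$ for all rational $c$ and reconstructing the one-sided limits from the resulting atom masses, or the maximal-function/one-sided approximate limit argument carried out in the proof of \cite[Theorem 3.108]{AFP}. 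As written, that final and most delicate part of the statement is not established by your proposal.
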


We now perform the following construction: let $A\ss \pi \cap Q_r$ be the set of full $\cL^{n-1}$ measure on which \eqref{eq:onesidelim} holds. Then set
\[
 \begin{cases}
  A^{+}_{u,r}=\{(x',t)\in K\cap Q_r: x'\in A, (x',s)\notin K\cap B_r \forall s>t\}\\
  A^{-}_{u,r}=\{(x',t)\in K\cap Q_r: x'\in A, (x',s)\notin K\cap B_r \forall s<t\}
 \end{cases}
\]
to be the sets of all nice points in $K$ visible from above and below. The obvious but useful fact about these is that the line segment $\{x\pm e_n t,t>0\}\cap B_r$ does not touch $K$ for each $x\in A^{\pm}_{u,r}$. Also define
\[
 \m_{u,r}^- = \lim_{s\nearrow t}u^2 (x',s)d\cH^{n-1}\mres (J_u \cap (A\times \R))
\]
and
\[
 \m_{u,r}^+ = \lim_{s\searrow t}u^2 (x',s)d\cH^{n-1}\mres (J_u \cap (A\times \R)).
\]
These are Borel measures. The $r$ dependence will be suppressed from now on (it may happen that on a larger cylinder $Q_s$, the associated set $A$ gets smaller by an $\cL^{n-1}$ negligible set, but from Proposition \ref{prop:1Drest} it may be checked that this will not change the measures, in the sense that $\m^+_{u,r}=\m^+_{u,s}\mres Q_r$. Likewise, under the assumption that $f_\8(u,s)<r/s$, which will always hold for us, we have that  $\cH^{n-1}(A^\pm_{u,s}\sm A^\pm_{u,r})=0$.) Note that there is an implicit dependence in these definitions on the choice of vector $e_n$. 

From Proposition \ref{prop:1Drest} it is easy to deduce that $\m^+ + \m^- \leq \m$. Notice that it would be unreasonable to expect equality here, as there could be substantial parts of $K$ whose approximate tangent planes are orthogonal to $\pi$. We will show soon, however, that in the flat situations we are concerned with, they are individually bounded from below, and in fact so are $\m^\pm \mres A^{\pm}$. It is worth noting here that if, say, $f_\8(u,r)\leq \frac{1}{4}$ and $u(x,r/2)>0$, we have that $u\geq \d$ (the lower bound) along every line segment $x+te_n$ with $x\in A^+$ and $t>0$. This may be used to show that in this situation,
\begin{equation}\label{eq:mpposden}
 \m^+\mres (Q_r\cap A^+) \geq \d^2 \cH^{n-1}\mres (Q_r\cap A^+),
\end{equation}
which will be used frequently without explicit mention.

The next few lemmas are proved directly.

\begin{lemma}\label{lem:upperdensity} Let $u\in \sQ(Q_{2r},\L,4r)$. Then for each $\g>1$ there is a constant $\e_1(\g)$ such that if
\[
 M:= f(u,2r) + \L r^s + r \leq \e_1,
\]
then
\begin{equation}\label{eq:upperdensity}
 \m(Q_r)\leq \g (u^2(0,r/2)+u^2(0,-r/2)) \w_{n-1}r^{n-1}.
\end{equation}
\end{lemma}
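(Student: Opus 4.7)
The plan is to construct an explicit SBV competitor which collapses the nearly-flat singular set $K_u$ onto the hyperplane $\{x_n=0\}$ and then apply the quasiminimality inequality with comparison ball $B_r \ss Q_{2r}$.  By Proposition~\ref{prop:flatsame}, $f_\8(u,r) \leq C f(u,2r)^{1/(n+1)} \leq C M^{1/(n+1)}$, so $K_u \cap Q_r \ss \{|x_n| \leq f_\8 r\}$, a strip of thickness $o(r)$ as $M\to 0$.  Set $h_0 = 2 f_\8(u,r) r$; by Fubini pick $h^* \in (h_0,2h_0)$ such that
\[
 \int_{D_r}\bigl(|\n_{x'} u(x',h^*)|^2 + |\n_{x'} u(x',-h^*)|^2\bigr)\,dx' \leq \frac{C}{h_0}\int_{\{h_0<|x_n|<2h_0\} \cap Q_r}|\n u|^2\,dx,
\]
and $h^*>f_\8 r$ implies that $\{x_n=\pm h^*\}$ miss $K_u$, with $u$ smooth across them.

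Define the competitor $v$ to equal $u$ outside $B_r$ and for $|x_n|\geq h^*$, and to equal $u(x',\sign(x_n)h^*)$ on $B_r \cap \{|x_n|<h^*\}$.  Then $v\in SBV(Q_{2r})$, $v=u$ off $B_r\ss Q_{2r}$, and $r<r_0=4r$, so $v$ is admissible in~\eqref{eq:min}; its jump set inside $B_r$ lies in $(D_r\times\{0\})\cup(\p B_r\cap\{|x_n|<h^*\})$.  Cancelling parts where $u=v$ and discarding $\int|\n u|^2\geq 0$, \eqref{eq:min} becomes
\[
 \m(B_r) \leq \int_{B_r\cap\{|x_n|<h^*\}}\!|\n v|^2 + \int_{D_r}\!\bigl(u^2(x',h^*)+u^2(x',-h^*)\bigr)\,dx' + \int_{\p B_r\cap\{|x_n|<h^*\}}\!(\vu^2+\vd^2)\,d\cH^{n-1} + \L r^{n-1+s}.
\]
I claim every term on the right except the second is $o(r^{n-1})$ as $M\to 0$.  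For the strip gradient energy, unwinding the proof of Lemma~\ref{lem:intreg} (combined with Caccioppoli on the harmonic region $\{|x_n|>f_\8 r\}\cap Q_{2r}$) yields a Morrey-type decay $\int_{B_\s(y)}|\n u|^2 \leq C\s^{n-1+s}$ on balls in that region; covering $\{h_0<|x_n|<2h_0\}\cap Q_r$ by $\sim (r/h_0)^{n-1}$ such balls of radius $\sim h_0$ gives $\int_{\{h_0<|x_n|<2h_0\}\cap Q_r}|\n u|^2 \leq Cr^{n-1}h_0^s$; combined with the Fubini choice of $h^*$, this bounds the strip gradient energy of $v$ by $Cr^{n-1}h_0^s = o(r^{n-1})$.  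The lateral piece has $\cH^{n-1}$-measure $\leq C h^* r^{n-2}=O(f_\8 r^{n-1})$ and bounded weight, so contributes $o(r^{n-1})$; and $\L r^{n-1+s}\leq M r^{n-1}$.  For the main term, chain the Hölder estimate from Lemma~\ref{lem:intreg} along a path from $(x',h^*)$ to $(0,r/2)$ through cylinders of size $\sim r$ inside the harmonic region (each with Hölder seminorm $\leq C\sqrt{\L+r^{-s}}$) to obtain $|u(x',h^*)-u(0,r/2)|\leq Cr^{1/2}$ uniformly in $x'\in D_r$, whence $\int_{D_r}u^2(x',h^*)\,dx' \leq u^2(0,r/2)\w_{n-1}r^{n-1}+Cr^{n-1/2}$, and similarly for $-h^*$.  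Finally, $\m(Q_r\sm B_r)\leq C f_\8^2 r^{n-1}=o(r^{n-1})$, because $K_u \cap (Q_r\sm B_r)$ projects onto an annulus in $\pi$ of area $\leq C f_\8^2 r^{n-1}$ and is nearly flat.

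Combining, $\m(Q_r) \leq \bigl(u^2(0,r/2)+u^2(0,-r/2)\bigr)\w_{n-1}r^{n-1}+o(r^{n-1})$ as $M\to 0$.  Picking $\e_1(\g)$ small enough absorbs the $o(r^{n-1})$ error into $(\g-1)\d^2\w_{n-1}r^{n-1}$ whenever at least one of $u(0,\pm r/2)\geq\d$, yielding the desired inequality; the degenerate case $u(0,r/2)=u(0,-r/2)=0$ forces $u\equiv 0$ on the components of $Q_r\sm K_u$ containing these points by continuity and~\eqref{eq:dregsize}, and in the flat regime this is incompatible with~\eqref{eq:dregden} unless $K_u\cap Q_r=\emptyset$, so $\m(Q_r)=0$ and the inequality holds trivially.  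The main obstacle is the Morrey-type gradient decay; without it the strip energy is only $O(r^{n-1})$ rather than $o(r^{n-1})$, which would not suffice to absorb into $(\g-1)$ times the main term.  A secondary nuisance is the chaining argument for the Hölder oscillation, which must be carried out on regions abutting $\{x_n=h^*\}$ where Lemma~\ref{lem:intreg} only applies in interior cylinders; one handles this by first establishing the bound on $\{x_n\geq h^*+\rho\}$ and then letting $\rho\searrow 0$, using that $u$ is harmonic in a neighborhood of $\{x_n=h^*\}$.
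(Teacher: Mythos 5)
There is a genuine gap, and it is exactly the point you flag as ``the main obstacle'': the Morrey-type decay $\int_{B_\s(y)}|\n u|^2\leq C\s^{n-1+s}$ does \emph{not} follow from Lemma \ref{lem:intreg} for the balls you need. The decay in that lemma's proof is relative to a starting ball contained in the region where $u$ is a $\sQ_0$-quasiminimizer: it gives $A_j\leq A_0+C\L$, so $\int_{B_\s(y)}|\n u|^2\leq C\s^{n-1+s}(A_0(y)+\L)$ where $A_0(y)$ is the normalized energy on the largest ball around $y$ avoiding $K_u$. For your balls of radius $\s\sim h_0$ centered in the strip $\{h_0<|x_n|<2h_0\}$, that largest ball also has radius $\sim h_0$ (since $K_u$ may pass at distance $\sim h_0$ below), and the only a priori bound available is \eqref{eq:remball}, which gives $A_0(y)\leq Ch_0^{-s}$, not $A_0(y)\leq C$. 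So at the scale $\s\sim h_0$ you gain nothing over the crude bound, the strip energy is only $O(r^{n-1})$ with a universal constant, and — as you yourself note — an error of size $Cr^{n-1}$ cannot be absorbed into $(\g-1)\d^2\w_{n-1}r^{n-1}$ when $\g$ is close to $1$. This is not a removable technicality: the lemma's hypotheses contain no smallness of $E(u,2r)$ (deliberately so, since the lemma is later applied in regimes where only the flatness is small), so no decay of the near-strip energy can be extracted at this stage.

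The fix is to not extend the slice values into the slab at all: take the competitor equal to $u1_{\R^n\sm(D_r\times\{|x_n|\leq h\})}$, i.e.\ set it to zero in the slab (this is what the paper does, with slab half-height $qr$ for a small fixed $q=q(\g)$; your thinner slab $h^*\sim f_\8 r$ would work too and even makes the lateral term $o(r^{n-1})$). Then the Dirichlet energy of $u$ in the slab sits on the favorable side of \eqref{eq:min} and is simply discarded, the competitor contributes no energy there, and the surface cost is the same face term $\int_{D_r}u^2(\cdot,h^*)+u^2(\cdot,-h^*)$ you already handle via the oscillation/chaining estimate (that part of your argument, and the $\m(Q_r\sm B_r)=o(r^{n-1})$ estimate, are fine). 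Separately, your treatment of the degenerate case $u(0,\pm r/2)=0$ is too quick: ``$u$ supported in a thin slab'' is not by itself incompatible with \eqref{eq:dregden}; the paper instead uses the quantitative upper bound on $\m(Q_r)$ just derived together with the \emph{lower} density bound in \eqref{eq:dregden} to force $K_u\cap Q_{7r/8}=\emptyset$, and then a covering argument with translated cylinders to empty all of $Q_r$. You should argue along those lines (which also requires your main estimate to have the $o(r^{n-1})$ error, i.e.\ the fix above).
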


To make sense of the conclusion, recall from Proposition \ref{prop:flatsame} that, up to assuming $\e_1$ is small enough, $f_\8(u, 3r/2)<<1/2$, and so $u$ is continuous in a neighborhood of the points $(0,\pm r/2)$. The reader may find it odd that we demand that $r$ is small, but this is because these estimates rely on $u$ being nearly constant on the two halves of $Q_r$, which is only the case if $r$ is small.

\begin{proof}
Fix $q$ to be small, and assume for the moment $f_\8(u,3r/2)\leq q/4$. Define two cylindrical regions $S^-=Q_r \cap \{x_n \leq - \frac{qr}{2}\}$ and $S^+=Q_r \cap \{x_n \geq  \frac{q r}{2}\}$. Applying Lemma \ref{lem:intreg} to cylinders covering the $q/4$ neighborhood of $S^\pm$, we obtain that
\[
 \osc_{S^+}u + \osc_{S^-}u \leq C\sqrt{r} (\L r^s + q^{-n-1}E(u,r))^{\frac{1}{2}}\leq C(q)\sqrt{r}.
\]
We used here that $E(u,r)$ is bounded from \eqref{eq:remball}.

Consider as a competitor the function $w = u 1_{\R^n \sm (D_r\times \{|x_n|\leq q r\})}$. We see from the minimization inequality that
\[
 \int_{D_r\times \{|x_n|\leq q r\}}|\n u|^2 d\cL^n + \int_{D_r\times \{|x_n|\leq q r\} \cap J_u}d\m \leq \int_{D_r \times \{\pm q\}}u^2 d\cH^{n-1} + \frac{2q}{\d^2}\w_{n-1}r^{n-1} +\L r^{n-1+s}.
\]
The second term on the right is the contribution from the lateral sides $\p D_r \times \{|x_n|\leq q r\}$ of the cylinder. Dropping the energy term on the left and using the assumption on $f_\8$, this gives
\[
 \frac{\m(Q_r)}{r^{n-1}}\leq \frac{1}{r^{n-1}}\int_{D_r \times \{\pm q\}}u^2 d\cH^{n-1}+ C(q+M).
\]
From the oscillation bound on $u$, we see that, furthermore,
\[
 \frac{\m(Q_r)}{r^{n-1}}\leq \w_{n-1}(u^2(0,r/2)+u^2(0,-r/2)) + C_0(q+M+C(q)\sqrt{r}).
\]
Now we choose $q = \frac{(\g-1) \d^2\w_{n-1}}{3C_0}$, and then $\e_1$ small enough to ensure that $f_\8(u,3 r/2)\leq \frac{q}{4}$ (by Proposition \ref{prop:flatsame}), $M\leq q$, and $r\leq \frac{q^2}{C^2(q)}$. This implies that
\[
 \frac{\m(Q_r)}{r^{n-1}}\leq \w_{n-1}(u^2(0,r/2)+u^2(0,-r/2)) + 3C_0 q \leq \g \w_{n-1}(u^2(0,r/2)+u^2(0,-r/2)).
\]
Note that we used here that at least one of $u(0,\pm r/2)$ is nonzero. If they were both zero, we have instead shown that
\[
 \frac{\m(Q_r)}{r^{n-1}}\leq (\g-1)\d^2.
\]
Selecting $\g$ close enough to $1$ and using the density lower bound from \ref{eq:dregden}, this implies that $K_u \cap Q_{7r/8}$ is empty. Applying this to cylinders $Q_{3r/2}(x)$ with $x\in D_{r/8}$ (on which we can ensure $f_\8(u,x,3r/2)<q/4$; observe that if $u(0,\pm r/2)=0$, then $u(x',\pm r/2)=0$ as well, as $u$ is either $0$ or at least $\d$ on each connected component of $Q_r\sm K_u$), we learn that $K_u \cap Q_{r}$ is empty, so \eqref{eq:upperdensity} holds trivially.
\end{proof}

\begin{lemma}\label{lem:lowerdensity} Let $u\in \sQ(Q_{2r},\L,4r)$. Assume $K_u\cap Q_{r/2} \neq \emptyset$. Then there is a constant $\e_2$ such that if
\[
 M:= E(u,2r)+ \L r^s + r \leq \e_2, \qquad f(u,2r)\leq \e_2,
\]
then there is a universal constant $c_0$ such that 
\begin{equation}\label{eq:ldc1}
|u(0,r/2)-u(0,-r/2)|^2 \geq c_0 r,
\end{equation}
and also
\begin{equation}\label{eq:ldc2}
 \frac{\cL^{n-1}(D_r \sm \pi(A^\pm\cap Q_r))}{r^{n-1}}\leq C M.
\end{equation}
Moreover, we have that
\begin{equation}\label{eq:ldc3}
 \m^+(Q_r\cap A^+)\geq (1-CM) u^2(0,r/2)\w_{n-1}r^{n-1}
\end{equation}
and
\begin{equation}\label{eq:ldc4}
 \m^-(Q_r\cap A^-)\geq (1-CM) u^2(0,-r/2)\w_{n-1}r^{n-1}.
\end{equation}
\end{lemma}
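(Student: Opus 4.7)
I plan to exploit the flatness hypothesis together with Proposition \ref{prop:flatsame} to confine $K_u \cap Q_r$ to a thin slab $\{|x_n| \leq qr\}$ with $q$ a universal constant (fixing $\e_2$ small enough that $f_\infty(u,r) \leq q = 1/10$), and then apply Lemma \ref{lem:intreg} on sub-cylinders inside $S^\pm := Q_r \cap \{\pm x_n > qr\}$ (which are disjoint from $K_u$) to obtain the oscillation estimate $\osc_{S^\pm} u \leq C\sqrt{rM}$ via a chain of fixed-size cylinders in the interior and a small covering argument near the lateral boundary of $Q_r$. In particular, $u(x',x_n) = u(0, r/2) + O(\sqrt{rM})$ uniformly on $S^+$ (and symmetrically on $S^-$), and the values $u(0,\pm r/2)$ are well-defined by continuity.

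For \eqref{eq:ldc1}, I argue by contradiction: suppose $|u(0,r/2) - u(0,-r/2)|^2 < c_0 r$ for a small universal $c_0$ to be chosen. Build a competitor $w \in SBV$ equal to $u$ outside the slab and, inside the slab, equal to the linear interpolation
\[
w(x',x_n) = u(x',qr)\tfrac{x_n+qr}{2qr} + u(x',-qr)\tfrac{qr-x_n}{2qr}.
\]
Since $u$ is continuous at the slices $x_n = \pm qr$ (which lie in $S^\pm$, away from $K_u$), $w$ matches $u$ continuously across the slab boundary and has no jumps inside the slab; since $K_u \cap Q_r$ is already contained in the slab, $K_w \cap Q_r$ is empty. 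A Fubini computation bounds
\[
\int_{\mathrm{slab}} |\partial_{x_n} w|^2 \, d\cL^n \leq \frac{1}{2qr}\int_{D_r} |u(x',qr) - u(x',-qr)|^2 \, dx' \leq C(c_0 + M)r^{n-1},
\]
using the oscillation bound and the contradiction hypothesis; the tangential contribution is $\leq CMr^{n-1}$ after averaging the slab height over a small range so that the slice energies $\int_{D_r}|\nabla' u(x',\pm qr)|^2 dx'$ are each controlled by $E(u,2r)/(qr)$. Quasiminimality gives $\mu(K_u \cap Q_r) \leq C(c_0+M)r^{n-1} + \L r^{n-1+s}$. Meanwhile, $K_u \cap Q_{r/2}\neq \emptyset$ together with \eqref{eq:dregden} yields $\cH^{n-1}(K_u \cap Q_r) \geq cr^{n-1}$, and the gap $u \in \{0\}\cup[\d,\d^{-1}]$ forces $\uu^2 + \ud^2 \geq \d^2$ at every jump, so $\mu(K_u \cap Q_r) \geq c\d^3 r^{n-1}$. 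Taking $c_0$ and $\e_2$ small enough (both universal) yields a contradiction.

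For \eqref{eq:ldc2}, $D_r \setminus \pi(A^+)$ agrees, up to $\cL^{n-1}$-null, with the set $G$ of $x' \in A \cap D_r$ whose vertical line avoids $K_u \cap B_r$. On such lines $u_{x'}$ is continuous, and the gap forces $u_{x'}$ either identically zero or entirely in $[\d,\d^{-1}]$. The oscillation bound on $S^\pm$ implies that if $u(0,r/2)$ and $u(0,-r/2)$ lie in different components of $\{0\}\cup[\d,\d^{-1}]$ then $G$ is $\cL^{n-1}$-null, while \eqref{eq:ldc1} forbids $u(0,\pm r/2) = 0$. So it suffices to treat the case $u(0,\pm r/2) \geq \d$, where \eqref{eq:ldc1} and oscillation give $|u(x',r/2) - u(x',-r/2)| \geq \tfrac{1}{2}\sqrt{c_0 r}$ on $G$; Cauchy--Schwarz on the continuous slice $u_{x'}$ yields $\int_{-r/2}^{r/2}|\partial_{x_n} u(x',t)|^2 dt \geq c_0/8$, and integrating in $x'$ against $\int_{Q_r}|\n u|^2 \leq Mr^{n-1}$ gives $\cL^{n-1}(G) \leq CMr^{n-1}$. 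The symmetric argument handles $A^-$.

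For \eqref{eq:ldc3} (and symmetrically \eqref{eq:ldc4}), at each $x \in A^+ \cap Q_r$ the one-sided limit of $u$ from above is continuous up into $S^+$ and so, by the oscillation bound, equals $u(0,r/2) + O(\sqrt{rM})$; hence the integrand of $\mu^+$ satisfies $\geq u^2(0,r/2) - Cu(0,r/2)\sqrt{rM}$. Since $A^+$ is countably $\cH^{n-1}$-rectifiable and $\pi$ is $1$-Lipschitz, $\cH^{n-1}(A^+ \cap Q_r) \geq \cL^{n-1}(\pi(A^+ \cap Q_r)) \geq (1-CM)\w_{n-1}r^{n-1}$ by \eqref{eq:ldc2}; multiplying and absorbing the $\sqrt{rM}$ error into the $(1-CM)$ factor (using $r \leq M$ and $\d \leq 1$) gives the claimed bound. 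The main obstacle is the competitor construction in step two: the slab height must be chosen so that both slice tangential energies are simultaneously controlled by the total energy via a Fubini averaging, and one must verify that the resulting piecewise-defined $w$ indeed lies in $SBV$ with no spurious jumps at the slab boundary.
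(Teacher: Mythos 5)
Your overall route is the same as the paper's (oscillation control on the two half-cylinders via Lemma \ref{lem:intreg}, a vertical linear-interpolation competitor for \eqref{eq:ldc1}, a fundamental-theorem-of-calculus/Chebyshev argument for the holes estimate \eqref{eq:ldc2}, and projection plus the area formula for \eqref{eq:ldc3}--\eqref{eq:ldc4}), and steps two through four are essentially fine modulo their dependence on \eqref{eq:ldc1}. The problem is in the competitor argument for \eqref{eq:ldc1}.

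Your claim that $w$ ``has no jumps'' and that $K_w\cap Q_r$ is empty is false: you only match $u$ continuously across the horizontal faces $\{x_n=\pm qr\}$, but across the \emph{lateral} interface $\p D_r\times(-qr,qr)$ the interpolation inside does not agree with $u$ outside (for instance, between the two sheets of $K_u$ the outside value may be $0$ while the interpolation is comparable to the average of $u(x',\pm qr)\geq \d$). This creates a new jump set of $\cH^{n-1}$-measure of order $q\,r^{n-1}$, on which $\vu^2+\vd^2$ can be as large as $2\d^{-2}$, so the quasiminimality inequality must carry an extra term of order $q\,\d^{-2}r^{n-1}$ on the right. With your choice $q=1/10$ fixed, this term dominates the lower bound $c\,\d^{3}r^{n-1}$ coming from \eqref{eq:dregden}, and no contradiction results, no matter how small $c_0$ and $\e_2$ are. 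This is precisely the delicate point in the paper's proof: the lateral contribution $\frac{2t}{\d^2}\w_{n-1}r^{n-1}$ is written out explicitly, and the slab half-height $tr$ is chosen with $t$ small \emph{in terms of $\d$} (legitimate, since ``universal'' constants may depend on $\d$), after which $\e_2$ is taken small so that the flatness confines $K_u$ inside the thinner slab and the remaining terms $C(t)M+\L r^s$ are absorbed. Your argument can be repaired by making exactly this modification (slab height a free small parameter, lateral jump term included, absorbed before choosing $\e_2$), but as written the step fails.
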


This lemma gives a very good estimate on the size of the ``holes''  in $K$, or rather the complement of its projection onto $\pi$. It also gives a lower bound on the density. This lower bound may be improved in certain cases, which will be an important idea in later sections.

\begin{proof}
We begin in the same way as in the proof of the previous lemma, by assuming that $f_\8(u, 3r/2)\leq t/4$, and showing that
\begin{align}
 \osc_{Q_{5r/4}\cap \{x_n> tr\}}u + \osc_{Q_{5r/4}\cap \{x_n<-tr\}}u & \leq Cr^{1/2}(\L r^s + t^{-n-1}E(u,2r))^{\frac{1}{2}} \nonumber \\
 &\leq C(t)r^{1/2}M^{1/2}\label{eq:ldi1} \\
 &\leq C(t)M. \label{eq:ldi2}
\end{align}

Using the bound on $E$, and taking $\s \in (q,1/8)$,
\[
 \frac{1}{r^{n-1}}\int_{\s r/2}^{\s r}\int_{D_r\times \{|x_n|=a\}}|\n u|^2 d\cL^n(x',a) \leq  E(u,2r)\leq M ,
\]
so applying Chebyshev's inequality lets us find a $t\in (\s,\s/2)$ such that
\begin{equation}\label{eq:ldi3}
 \frac{1}{r^{n-1}}\int_{D_r\times \{|x_n|=t r\}}|\n u|^2 d\cL^{n-1}\leq \frac{CM}{\s r} \leq \frac{CM}{tr}.
\end{equation}

We will now estimate quantitatively how close $u(0,-r/2)$ and $u(0,r/2)$ may be. Let $t$ be as above and consider as a competitor the following linear interpolation:
\[
 w(x)=\begin{cases}
       u(x) & x\notin D_r\times (-tr,tr)\\
       \frac{1}{2}\1u(x',-tr)(1-\frac{x_n}{t r}) + u(x',tr)(1+\frac{x_n}{t r})\2 & x\in D_r \times (-tr,tr).
      \end{cases}
\]
We know from the computation above that
\begin{align*}
 \int_{D_r \times (-tr,tr)}|\n w|^2 d\cL^n 
 & = 4tr \int_{D_r \times\{|x_n|=tr\} }|\n u|^2 d\cL^n + 2rt\int_{D_r} \frac{|u(x',tr)-u(x',-tr)|^2}{4t^2r^2}d\cL^{n-1} \\
 &\leq C M r^{n-1} + \int_{D_r} \frac{|u(x',tr)-u(x',-tr)|^2}{2tr}d\cL^{n-1} \\
& \leq CMr^{n-1} + C(t)Mr^{n-1} + Cr^{n-1} \frac{|u(0,r/2)-u(0,-r/2)|^2}{tr}.
\end{align*}
The second line used the bound \eqref{eq:ldi3}, and the third used the bound on the oscillation \eqref{eq:ldi1}. By plugging $w$ into the minimization inequality and using the density lower bound from \eqref{eq:dregden} at the point we know is in $K\cap Q_{r/2}$, we get
\[
 c(\d)  \leq C(t)M + \L r^s + \frac{|u(0,r/2)-u(0,-r/2)|^2}{2tr} + \frac{2t}{\d^2}\w_{n-1} .
\]
The last term comes from the lateral sides of the cylinder $D_r \times (-tr,tr)$. Now take $t$ small in terms of $\d$ only so that the last term may be reabsorbed, and then $\e_2$ small so that the first two terms are also reabsorbed. This gives
\[
 \frac{|u(0,r/2)-u(0,-r/2)|^2}{r}\geq c_0,
\]
where the constant is universal. This establishes \eqref{eq:ldc1}.

Next, we estimate how large the set of ``holes'' $H:=D_{5r/4} \sm \pi(K\cap Q_{5r/4})$ is. The point here is that whenever there is a hole, there is an enormous contribution to the gradient along the line segment going through it. The reason we care about the holes will become clear soon: the complement of the holes contributes directly to the lower bound.

Let $t$ be fixed, say $t=\frac{1}{8}$. For each point $x'\in H$, notice that
\begin{equation}\label{eq:ldi4}
 |u(x',tr)-u(x',-tr)|^2 \geq c_0 r - C M r\geq \frac{c_0 r}{2}
\end{equation}
from \eqref{eq:ldc1} and \eqref{eq:ldi2}, if $\e_2$ is chosen small enough. The line segment $\{x'\}\times (-tr,tr)$ does not intersect $K_u$, so $u$ is continuous along it. From the fundamental theorem of calculus,
\begin{equation}\label{eq:ldi5}
 \frac{c_0 r }{2}\leq \1\int_{-tr}^{tr} |\p_{e_n} u(x',a)|d\cL^1(a)\2^2 \leq tr \int_{-tr}^{tr} |\n u(x',a)|^2 d\cL^1(a),
\end{equation}
which holds for $\cL^{n-1}$-a.e. $x'\in H$. Integrating in $x'$ over $H$,
\[
 \cL^{n-1}(H) \leq C \int_{H\times (-tr,tr)}|\n u|^2 d\cL^n \leq C r^{n-1} M.
\]
The conclusion \eqref{eq:ldc2} now follows by observing that $H$ differs from $D_r \sm \pi(A^\pm \cap Q_r)$ only by the $\cL^{n-1}$-negligible set of Proposition \ref{prop:1Drest}.

Finally, we are in a position to estimate the densities of $\m^{\pm}$. First, let $A^0$ be $\pi(A^+\cap Q_r)=\pi(A^-\cap Q_r)$, and take a point $x'\in A^0$. Define $t^\pm(x')$ to be the unique numbers such that $(x',t^\pm(x'))\in A^{\pm}$, noting that $|t^\pm|\leq t r/4$. If we now let $t=\frac{1}{8}$ as before, we have
\[
 |u (x',t r) - \lim_{s\searrow t^{+}(x')} u(x',s)| \leq C\sqrt{2tr} \1\int_{t^{+}(x')}^{tr} |\p_{e_n} u(x',a)|^2 d\cL^1(a)\2^{1/2},
\]
so we get
\begin{equation}\label{eq:ldi6}
 |u (0,r/2) - \lim_{s\searrow t^{+}(x')}u(x',s)| \leq C \int_{-tr}^{tr} |\n u(x',a)|^2 d\cL^1(a) + C M + C r.
\end{equation}
We have that
\[
 \int_{Q_r \cap A^+}|e_n \cdot \nu_x| d\m^+ = \int_{A^0 \cap D_r}\lim_{s\searrow t^{+}(x')}u^2(x',s)d\cL^{n-1}
\]
from the area formula for rectifiable sets (see \cite[Theorem 2.91]{AFP}), where $\nu_x$ is the approximate jump vector at $x$. Noting that the Jacobian factor is less than one and applying \eqref{eq:ldi6}, we obtain that
\[
 \m^+(Q_r \cap A^+)\geq \cL^{n-1}(D_r \cap A^0)u^2(0,r/2) - CM r^{n-1}.
\]
Combining with \eqref{eq:ldc2}, we obtain the conclusion \eqref{eq:ldc3}. The final conclusion \eqref{eq:ldc4} is identical.
\end{proof}

We prove a slightly modified version of the lemma, which is less quantitative and will be useful much later.

\begin{lemma}\label{lem:lowerdensity2} Let $u\in \sQ(Q_{2r},\L,4r)$. Assume $K_u\cap Q_{r/2} \neq \emptyset$. Then for any $\g<1$, there is a constant $\e_{2'}$ such that if
\[
 M:=\frac{1}{r^{n-1}}\int_{Q_{2r}\cap \{|x_n|\geq 2\e_{2'}r\}}|\n u|^2 d\cL^n + f(u,2r) + r + \L r^s \leq \e_{2'},
\]
then 
\[
\frac{\m (Q_r)}{\w_{n-1} r^{n-1}} \geq \g (u^2 (0,r) + u^2(0,-r)).
\]
\end{lemma}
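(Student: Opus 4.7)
My plan is to proceed by contradiction and compactness, adapting the strategy of Lemma \ref{lem:lowerdensity} to the weaker energy hypothesis. Suppose the conclusion fails for some $\g<1$: take sequences $u_j\in\sQ(Q_{2r_j},\L_j,4r_j)$ satisfying all the listed hypotheses with $M_j\leq\e_j\searrow 0$, yet $\m_{u_j}(Q_{r_j})<\g(u_j^2(0,r_j)+u_j^2(0,-r_j))\w_{n-1}r_j^{n-1}$. I rescale $\tilde u_j(x):=u_j(r_j x)$ on $Q_2$. By the a priori bound \eqref{eq:remball}, the total gradient energy $\int_{Q_2}|\n\tilde u_j|^2\to 0$, while the flatness $f(\tilde u_j,2)\to 0$ and $\L_jr_j^s\to 0$.

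By Proposition \ref{prop:SBVcpt}, along a subsequence $\tilde u_j\to\tilde u$ in $L^1(Q_2)$ with $\tilde u\in SBV$ and $|\n\tilde u|=0$ a.e. Using Proposition \ref{prop:flatsame} to localize $K_{\tilde u_j}$ in a thin slab collapsing to $\pi$, one deduces that $\tilde u=c^+ 1_{\{x_n>0\}}+c^- 1_{\{x_n<0\}}$ for some $c^\pm\in\{0\}\cup[\d,\d^{-1}]$. Interior elliptic regularity for the harmonic restrictions of $\tilde u_j$ to $\{|x_n|>\e_j\}$, combined with the $L^2$-smallness of $\n\tilde u_j$, yields uniform convergence on compact subsets of $\{\pm x_n>0\}$; in particular, $\tilde u_j(0,\pm 1)\to c^\pm$.

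I then split into three cases. If $c^+\neq c^-$, then $J_{\tilde u}\cap Q_1=\pi\cap Q_1$ has $\cH^{n-1}$-measure $\w_{n-1}$, and Proposition \ref{prop:SBVcpt}(ii) applied on $Q_1$ gives
\[
((c^+)^2+(c^-)^2)\w_{n-1}\leq\liminf_j\m_{\tilde u_j}(Q_1)\leq\g((c^+)^2+(c^-)^2)\w_{n-1},
\]
impossible since $\g<1$ and $(c^+)^2+(c^-)^2>0$. If $c^+=c^-=0$, the constraint $u\in\{0\}\cup[\d,\d^{-1}]$ combined with the uniform convergence forces $u_j(0,\pm r_j)=0$ eventually, so the failure assumption would require $\m_{u_j}(Q_{r_j})<0$, which is impossible.

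The main obstacle is the remaining case $c^+=c^-=c>0$, where the limit has no jumps and the semicontinuity argument gives no information. Here the uniform convergence of $\tilde u_j\to c$ away from $\pi$ forces $\overline{\tilde u}_j,\underline{\tilde u}_j\to c$ uniformly on $K_{\tilde u_j}\cap Q_1$, so $\m_{\tilde u_j}(Q_1)$ is essentially $2c^2\cH^{n-1}(K_{\tilde u_j}\cap Q_1)$. To derive a contradiction I construct a competitor $v_j$ that replaces $\tilde u_j$ on a slab $D_1\times(-\e,\e)$ by the linear interpolation $\tfrac{1}{2}[\tilde u_j(x',\e)(1+x_n/\e)+\tilde u_j(x',-\e)(1-x_n/\e)]$, with $\e\in(2\e_j,3\e_j)$ chosen via Chebyshev's inequality on $\int_{\{2\e_j<|x_n|<3\e_j\}}|\n\tilde u_j|^2$. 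This $v_j$ is continuous across the slab, so has no jumps inside $Q_1$, and its gradient is controlled by a combination of the trace bound (from Chebyshev) and the quantity $|\tilde u_j(x',\e)-\tilde u_j(x',-\e)|^2$, which tends to zero since both sides approach $c$. The quasi-minimality of $\tilde u_j$ tested against $v_j$ then gives an upper bound on $\m_{\tilde u_j}(Q_1)$ that vanishes in the limit, while the density lower bound \eqref{eq:dregden} together with $K_{\tilde u_j}\cap Q_{1/2}\neq\emptyset$ forces $\cH^{n-1}(K_{\tilde u_j}\cap Q_1)$ to stay above a fixed positive constant, yielding the contradiction.
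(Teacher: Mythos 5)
Your scaling/compactness setup and your first two cases are fine: when $c^+\neq c^-$ the limit really does have $J_{\tilde u}\cap Q_1=\pi\cap Q_1$ and lower semicontinuity gives the contradiction, and when $c^+=c^-=0$ the values $u_j(0,\pm r_j)$ must eventually vanish so the failure hypothesis is vacuous. The genuine gap is the remaining case $c^+=c^-=c>0$, which is not a pathological case to be ruled out but the \emph{generic} one: by \eqref{eq:ldc1} the jump $|u(0,r)-u(0,-r)|$ is only of order $\sqrt r$ (bounded below by $c_0\sqrt r$, and typically exactly of that size), so the blow-ups converge to the same constant on both sides of $\pi$ while $\m_{\tilde u_j}(Q_1)$ stays close to $2c^2\w_{n-1}$. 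There is no contradiction to be found there; what must be proved in that case is exactly the lower bound on $\liminf_j\m_{\tilde u_j}(Q_1)$, and SBV semicontinuity is useless because the limit function has no jump. Your interpolation competitor does not rescue this, for a scaling reason: its vertical Dirichlet energy is $\frac{1}{2\e r}\int_{D_r}|u(x',\e r)-u(x',-\e r)|^2\,d\cL^{n-1}$, and since the squared jump is of order $r$ (it cannot be smaller, again by \eqref{eq:ldc1}) this is of order $r^{n-1}/\e$ --- the same order as the surface measure you hope to delete, not $o(r^{n-1})$. Equivalently, in rescaled variables the Dirichlet term enters the quasiminimality inequality with a factor $r_j^{-1}$ relative to the surface term, so the fact that $|\tilde u_j(x',\e)-\tilde u_j(x',-\e)|^2\to0$ does not make the competitor cheap. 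A telling symptom is that your case-3 argument never uses the contradiction hypothesis $\m_{\tilde u_j}(Q_1)<\g(\cdot)$: if it were valid it would show $\m(Q_r)=o(r^{n-1})$ for every flat quasiminimizer with equal limiting side values, contradicting \eqref{eq:dregden} and Lemma \ref{lem:lowerdensity}. (The auxiliary claim that $\overline{\tilde u}_j,\underline{\tilde u}_j\to c$ uniformly on $K_{\tilde u_j}\cap Q_1$ is also unjustified: uniform convergence away from the slab controls nothing on $K_{\tilde u_j}$, where for instance $\ud$ may be $0$.)

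The paper's proof is instead a direct quantitative one, repeating the scheme of Lemma \ref{lem:lowerdensity} while keeping the half-width parameter $t$ of the vertical segments explicit: the uncontrolled energy inside the thin slab $\{|x_n|<2\e_{2'}r\}$ always appears multiplied by $t$, the energy outside the slab is small by hypothesis, and the ``holes'' estimate together with the fundamental-theorem-of-calculus estimates along vertical segments shows that $\m^++\m^-$ already has density at least $\g(u^2(0,r)+u^2(0,-r))$ once $t$ and then $\e_{2'}$ are chosen small. Any compactness route would have to recover exactly this per-line information for the measures $\m^\pm$ (not just for the limit function), which is the content you are missing.
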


\begin{proof}
 The proof is the same as the previous lemma, just with worse estimates. First, we may still conclude that
 \[
  |u (0,r) - u(0,-r)|\geq c_0 \sqrt{r},
 \]
 proceeding in exactly the same way.
 
 For the estimate on the holes, we also use that for a.e. $x'\in H$,
 \[
   \frac{c_0 }{2}-C(t)M\leq  t \int_{-tr}^{tr} |\n u(x',a)|^2 d\cL^1(a),
 \]
from \eqref{eq:ldi4} and \eqref{eq:ldi5}, but now carefully preserve the $t$, which may be chosen as small as $f_\8(u,r)$. This gives
\[
 \cL^{n-1}(H) (1-C(t)M) \leq C t r^{n-1}.
\]
We may first choose $t$ small, and then $\e_{2'}$ even smaller so that $C(t)M\leq \frac{1}{2}$, and
\[
 \cL^{n-1}(H) \leq C t r^{n-1}\leq (1-\g)\w_{n-1}\frac{r^{n-1}}{2}.
\]

Similarly, in the final estimate \eqref{eq:ldi6}, we get
\begin{align*}
 |u (0,r/2) - \lim_{s\searrow t^{+}(x')}u(x',s)| &\leq C \sqrt{tr}  \sqrt{\int_{-tr}^{tr} |\n u(x',a)|^2 d\cL^1(a)} + C(t)M\\
 & \leq C(t)M + t\int_{-tr}^{tr} |\n u(x',a)|^2 d\cL^1(a).
\end{align*}
Integrating along the complement of $H$ and using the crude energy bound from \eqref{eq:remball} gives
\[
 |\cL^{n-1}(D_r\sm H)u^2(0,r/2) - \int_{D_r\sm H} \lim_{s\searrow t^{+}(x')}u^2(x',s)|d\cL^{n-1}\leq [C(t)M+Ct]r^{n-1}.
\]
By first choosing $t$ small and then $M$ small, the right-hand side may be controlled by $(1-\g)\w_{n-1}\frac{\d^2 r^{n-1}}{2}$. We may now conclude as before.
\end{proof}

Now we prove a lemma about what happens when the excess and the energy are small. For the Lipschitz approximation theorem of Section \ref{sec:lip}, we require some way of saying that small excess implies flatness. This is because flatness is not additive in the same way excess and energy are, and so it is poorly suited to some covering arguments. On the other hand, excess and energy do not by themselves control the flatness, as you might have several parallel planes as $K$. This example has overly large density, however, and hence is not minimal. Up to technical points involving how to tell what the correct value of $u$ is when computing the density, that is the purpose of the next set of lemmas.

Let us emphasize here that while the conclusion in the following lemma is a lower bound on the densities of $\mu^\pm$, not unlike in Lemma \ref{lem:lowerdensity}, the hypotheses are weaker: the whole point is that it features $e$ rather than $f$. In Theorem \ref{thm:exbyflat}, we showed that $f$ controls $e$, but the converse is more difficult to show. This explains the additional steps in the proof and the softer blow-up argument used here.

\begin{lemma}\label{lem:exlb}Let $u\in \sQ(Q_{2r},\L,4r)$. Assume that $0\in K$. Then for every $\g<1$ there is a constant $\e_3(\g)$ such that if
	\begin{equation}\label{eq:exlbh}
	E(u,2r) + e(u,2r) + \L r^s + r \leq \e_3,
	\end{equation}
	then the following three properties hold:
\begin{enumerate}[(A)]
	\item If
	\begin{equation}\label{eq:exlbc1}
	\frac{\m^+(Q_{ r})}{\w_{n-1}r^{n-1}}< \g \d^2,
	\end{equation}
	where $\d$ is the constant from the $\d$-regularity condition, then
	\begin{equation}\label{eq:exlbc2}
	 f(u, \g r)<1-\g \text{ and } u(0,r/2)=0.
	\end{equation}
	\item At least one of the following holds:
	\begin{equation}\label{eq:exlbc3}
	\frac{\m^+(Q_{ r})}{\w_{n-1}r^{n-1}}\geq \g \d^2\qquad \text{ or }\qquad \frac{\m^-(Q_{r})}{\w_{n-1}r^{n-1}}\geq \g \d^2.
	\end{equation}
	\item If we also assume that $0\in A^+$ and
	\[
	\sup_{t\in (0,r/2)}|a - u(0,t)|\leq \e_3
	\]
	for some $a$, then
	\begin{equation}\label{eq:exlbc4}
	\frac{\m^+(Q_{ r}\cap \{|x_n|\leq (1-\g)r\})}{\w_{n-1}r^{n-1}}\geq \g a^2.
	\end{equation}
\end{enumerate}	
 \end{lemma}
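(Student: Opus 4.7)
The plan is to prove all three parts by a single blow-up and contradiction argument. For each assertion, suppose it fails along a sequence $u_k \in \sQ(Q_{2r_k}, \L_k, 4r_k)$ with $0 \in K_{u_k}$ and
\[
M_k := E(u_k, 2r_k) + e(u_k, 2r_k) + \L_k r_k^s + r_k \to 0.
\]
After rescaling $\tilde u_k(x) := u_k(r_k x)$ to $Q_2$, these are $\d$-regular quasiminimizers of a rescaled functional whose bulk term is weighted by $r_k^{-1}$ (forcing any limit to have vanishing gradient) and whose quasiminimality defect $\L_k r_k^s$ tends to zero. Proposition~\ref{prop:SBVcpt} yields a subsequential $L^1(Q_2)$ limit $u_\infty \in SBV(Q_2)$ with $\n u_\infty \equiv 0$ a.e.~and $\|u_\infty\|_\infty \leq \d^{-1}$.

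I next extract the structure of $u_\infty$. The total variation of the horizontal components of $D\tilde u_k$ satisfies
\[
\sum_{i=1}^{n-1} |e_i \cdot D \tilde u_k|(Q_2) \leq C\d^{-1} \sqrt{\cH^{n-1}(J_{\tilde u_k}) \cdot e(u_k, 2r_k)} \to 0
\]
by Cauchy-Schwarz, so $e_i \cdot Du_\infty = 0$ for $i < n$ and $u_\infty(x', x_n) = v(x_n)$. The resulting $v \in SBV(-2,2)$ takes values in $\{0\} \cup [\d,\d^{-1}]$, has $\n v = 0$, and has only finitely many jumps (using the uniform upper bound on $\cH^{n-1}(J_{\tilde u_k})$ from \eqref{eq:remball}). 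Since $0 \in K_{\tilde u_k}$ for all $k$ and density lower bounds pass to the limit, $0 \in J_v$.

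Each conclusion then follows by establishing a direction-sensitive lower semicontinuity of $\m^\pm$ in this setting. Although $\m^\pm$ is not standardly LSC under $SBV$ convergence, the vanishing excess forces the $\cH^{n-1}$-measure of the non-horizontal part $\{y \in J_{\tilde u_k} : 1 - (\nu_y \cdot e_n)^2 > \eta\}$ to vanish as $k \to \infty$ (by Chebyshev) for any $\eta > 0$, so $\m^\pm_k$ restricted to the ``good part'' matches a portion of the standard surface functional to which Proposition~\ref{prop:SBVcpt}(ii) applies. For part~(B), both $\m_k^\pm(Q_{r_k}) < \g \d^2 \w_{n-1} r_k^{n-1}$ combined with this LSC would give $\m_\infty^\pm(Q_1) < \g \d^2 \w_{n-1}$; but the jump of $v$ at $0$ contributes $v(0^\pm)^2 \w_{n-1}$ to $\m_\infty^\pm$, and since $v(0^\pm)$ are distinct elements of $\{0\} \cup [\d,\d^{-1}]$, at least one is $\geq \d$, contradicting $\g < 1$. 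For part~(A), the one-sided hypothesis forces $v(a_j^+) = 0$ for every jump $a_j \in (-1,1)$; combined with $0 \in J_v$ and the $\d$-regularity of $v$ this leaves exactly $v = c \cdot 1_{\{x_n < 0\}}$ with $c \geq \d$, so $K_{u_\infty} \cap Q_\g = \{x_n = 0\} \cap Q_\g$. A Hausdorff-convergence argument using the density lower bound on $K_{\tilde u_k}$ gives $K_{\tilde u_k} \cap Q_\g \ss \{|x_n| < \eta_k\}$ with $\eta_k \to 0$, so $f(\tilde u_k, \g) \to 0 < 1 - \g$; and Lemma~\ref{lem:intreg} produces uniform convergence $\tilde u_k \to 0$ on $\{x_n > \eta\} \cap Q_\g$, forcing $\tilde u_k(0, 1/2) = 0$ by \eqref{eq:dregsize}. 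For part~(C), the hypothesis $0 \in A^+_{u_k}$ and density arguments forbid $v$ from jumping on $(0,1)$, and the pointwise closeness to $a$ forces $v \equiv c$ there with $|c - a| \leq \e_3$ and $c \in \{0\} \cup [\d,\d^{-1}]$; the case $a = 0$ is trivial, while otherwise $c \geq \d$, and the jump of $v$ at $0$ contributes $c^2 \w_{n-1} \geq \g a^2 \w_{n-1}$ to $\m_\infty^+(Q_1 \cap \{|x_n| < 1 - \g\})$ provided $\e_3 = \e_3(\g)$ is chosen small enough.

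The main obstacle is justifying the direction-sensitive semicontinuity of $\m^\pm$ in the third paragraph, which is nonstandard and relies essentially on the vanishing excess to control the non-horizontal portion of $J_{\tilde u_k}$; a secondary technical point is carefully converting the $L^\infty$-closeness in part~(C) into a quantitative lower bound on $v(0^+)^2$ exceeding $\g a^2$.
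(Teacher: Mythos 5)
Your blow-up setup (rescale, SBV compactness, $\n u_\8=0$) matches the paper's, and your Cauchy--Schwarz observation that the small excess kills the horizontal components of $D\tilde u_k$, so that $u_\8=v(x_n)$, is correct and is a nice alternative to part of the paper's structure argument. But the proposal has a genuine gap at its core: the hypotheses and conclusions of the lemma concern $\m^\pm$ and $K_u$, and the limits of these objects are \emph{not} determined by the limit function $v$. Sheets of $K_{u_k}$ can collapse onto a single plane across which $u_\8$ has no jump at all (e.g.\ a slab $\{0<x_n<\e_k\}$ where $u_k=0$, with $u_k=c\geq\d$ above and below: here excess and energy vanish, $0\in K_{u_k}$, yet $v\equiv c$ and $J_v=\emptyset$). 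Hence your claim that ``$0\in K_{\tilde u_k}$ and density lower bounds pass to the limit, so $0\in J_v$'' is false as stated: the density bound \eqref{eq:dregden} passes to the support of the weak-* limit of the measures $\m_k$, not to the jump set of the $L^1$-limit — the paper explicitly refrains from claiming $K\ss K_{u_\8}$. Since your contradictions in (A) and (B) hinge on $0\in J_v$, and your flatness conclusion in (A) infers Hausdorff convergence of $K_{\tilde u_k}$ to $K_{u_\8}$ (again false in the collapsing-sheet scenario, where the Hausdorff limit is $\supp\m\supsetneq K_{u_\8}$), these steps do not go through.

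The second, related gap is the ``direction-sensitive lower semicontinuity of $\m^\pm$'' that you flag as the main obstacle: Proposition \ref{prop:SBVcpt}(ii) only gives lower semicontinuity of the symmetric quantity $\uu^2+\ud^2$, and discarding the non-horizontal part of $J_{\tilde u_k}$ via Chebyshev does not separate upper- from lower-trace contributions, precisely because of collapsing sheets (the upper trace of a surviving limit plane may be fed by lower traces of disappearing sheets). This is exactly the part the paper spends most of its proof on, by a different mechanism: it takes weak-* limits $\m,\m^\pm$ of the rescaled measures, uses the first variation formula (Lemma \ref{lem:EL}) together with the vanishing excess to show $\m\mres Q_{3/2}$ is translation invariant along $\pi$ and hence supported on finitely many horizontal planes, and then identifies the densities of $\m^\pm$ on each plane by applying Lemmas \ref{lem:upperdensity} and \ref{lem:lowerdensity} on small cylinders straddling a single plane; this yields $\m=\m^++\m^-$ and $\m^\pm\mres\pi_i=u_{\8,i}^2\,(\text{resp. }u_{\8,i-1}^2)\,\cH^{n-1}\mres\pi_i$, from which (A), (B), (C) follow by evaluating these identities (for (B), $\m(Q_1)=0$ contradicts $0\in\supp\m$, not a jump of $v$). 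To repair your argument you would need to replace the analysis of $J_v$ by an analysis of the limit measures along these lines; the structure of $v$ alone is insufficient.
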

%
%

\begin{proof}
 The argument will be by contradiction. If one of (A),(B), or (C) fail to hold, then we may find sequences $u_k,r_k,\L_k$, with
\begin{equation}\label{eq:exlbi1}
   E(u_k,2r_k) + e(u_k,2r_k) + \L_k r_k^s + r_k \rightarrow 0.
\end{equation}
We will first analyze this sequence, extract a suitable limit, and examine its structure. Afterwards, we will see how failing one of the lemma's conclusions then results in a contradiction.

Letting $\tilde{u}_k(x)=u(r_k x)$, we see from the energy bound in \eqref{eq:exlbi1} that
\begin{equation}\label{eq:exlbi2}
 \frac{1}{r_k}\int_{Q_2}|\n \tilde{u}_k|^2 d\cL^n = E(u_k,2r_k) \rightarrow 0.
\end{equation}
It follows from the $SBV$ compactness theorem of Proposition \ref{prop:SBVcpt} that $\tilde{u}_k \rightarrow u_\8 \in SBV$ in $L^1(Q_2)$ along a subsequence, and $u_\8$ is locally constant on $Q_2\sm K_{u_\8}$ (having $\n u_\8=0$ from \eqref{eq:exlbi2}). We may also take a further subsequence for which $r_k^{1-n}\m_{u_k}(r_k \cdot) \rightarrow \m$ and $r_k^{1-n}\m^+_{u_k}(r_k \cdot)\rightarrow \m^+$ weakly-* as measures to some Borel measures $\mu,\mu^+$. Recall that for any $r_k x_k\in K_{u_k} = \supp \mu_{u_k} $ and every ball $B_s(x_k)\ss Q_2$, we have that the density estimate \eqref{eq:dregden} gives
\[
C^{-1}\leq \frac{\mu_{u_k}(B_{sr_k}(x_k))}{s^{n-1}r_k^{n-1}} \leq C,
\] 
where we used \eqref{eq:dregsize} to pass from $\cH^{n-1}\mres K_{u_k}$ to $\mu_{u_k}$. For almost every $s$, this passes to the limit to give that if $x = \lim x_k$, then
\begin{equation}\label{eq:exlbi5}
C^{-1}\leq \frac{\mu(B_s(x))}{s^{n-1}}\leq C. 
\end{equation}
 As a consequence, it follows that if we set $K :=\supp \mu$, then $K$ is the limit of $K_{u_k}/r_k$ in Hausdorff topology (locally in $Q_2$). Indeed, \eqref{eq:exlbi5} implies that if $x_k\in K_{u_k}/r_k$ converge to $x\in Q_2$, then $x\in K$. If, on the other hand, $x \in K$, then
\[
0< \mu(B_s(x))\leq \liminf r^{1-n}_k\mu_{u_k} (B_{s r_k}(x))
\]
for all $s$ sufficiently small, so we may find $x_k \in K_{u_k}/r_k $ converging to $x$. 
Another consequence of \eqref{eq:exlbi5} is that we may express $\mu = \theta(x)d\cH^{n-1}\mres K$, with $C^{-1}\leq \theta \leq C$ at $\cH^{n-1}$-a.e. point (from \cite[Theorem 2.56]{AFP} and the Radon-Nikodym theorem). 

We also have that $K_{u_\8} \ss K$: one way to see this is that for any open connected $V \cc Q_2 \sm K$, we have that $V \ss Q_2 \sm K_{\tilde{u}_k}$ for $k$ large enough. Applying Lemma \ref{lem:intreg}, we have that $\tilde{u}_k$ are equicontinuous on $V$, and so converge uniformly to $u_\8$. This implies that $u_\8$ is continuous, and so constant, on $V$, and so $V\cap K_{u_\8} = \emptyset$. Note that we do not claim the opposite inclusion $K\ss K_{u_\8}$.

We now make use of the excess assumption. Consider the product space $Q_2 \times S^{n-1}$, and the measures $\s_k =r_k^{1-n}(\overline{\tilde{u}_k}^2(x)+\tilde{\ud}_k^2(x))d\cH^{n-1}\mres J_{\tilde{u}_k} \otimes \d_{\nu_x}$ on it, where $\d_{\nu_x}$ is a delta at the approximate jump vector $\nu_x$, which is normal to to $J_{\tilde{u}_k}$ at $x$. These measures have bounded total variation, so we may extract a weak-* convergent subsequence $\s_k \rightarrow \s$. Then we have that
\begin{align*}
 \int_{Q_2\times S^{n-1}} 1- (e_n,\nu')^2 d\s(x,\nu') &= \lim_k \int_{Q_2\times S^{n-1}}1- (e_n,\nu')^2 d\s_k(x,\nu')\\
 & = \lim_k r_k^{1-n}\int_{Q_2\cap J_{\tilde{u}_k}} (1- (e_n,\nu_x)^2 ) (\overline{\tilde{u}}_k^2(x)+\tilde{\ud}_k^2(x))\cH^{n-1}\\
 & \leq C \lim_k e(u_k,2r_k)\\
 & = 0,
\end{align*}
which implies that $\s$ is supported on $Q_2\times \{e_n,-e_n\}$. We also have that (for any $\phi:Q_2\rightarrow \R$ continuous and compactly supported on $Q_2$)
\begin{align*}
\int_{Q_2\times S^{n-1}} \phi(x) d\s(x,\nu') &= \lim_k \int_{Q_2\times S^{n-1}} \phi(x) d\s_k(x,\nu')\\
& = \lim_k r^{1-n}_k \int_{Q_{2 r_k}}\phi(x/r_k) d\mu_{u_k}(x)\\
& = \int \phi d\mu,
\end{align*}
which means that 
\begin{equation}\label{eq:exlbi3}
\s = \mu \otimes (s \d_{e_n} + (1-s)\d_{-e_n})
\end{equation}
for some $s\in [0,1]$ (using here that $\s,\s_k\geq 0$).

We apply this observation to the first variation formula from Lemma \ref{lem:EL}. Fix any vector field $T\in C_c^{1}(Q_{3/2};\R^n)$, with $\|T\|_{C^1}\leq 1$. Plugging $u_k$ into the formula with the scaled vector field $r_k T(x/r_k)$, we get
\[
  |\int_{Q_2\times S^{n-1}}\dvg\nolimits^{\nu'}T(x) d\s_k(x,\nu') | \leq C(n) t \1E(u_k,2r_k) + 1\2 + \frac{\L_k r_k^{s}}{t} + CE(u_k,2r_k).
\]
for $|t|<c_0$. The energy terms were lumped together and moved to the right, the entire expression was divided by $r_k^{n-1}$, and the left integral was suggestively rewritten. The right-hand side goes to $0$ upon first taking $k\rightarrow \8$ and then $t\rightarrow 0$. This gives
\begin{align}
 0 &= \lim_k \int_{Q_2\times S^{n-1}}\dvg\nolimits^{\nu'}T(x) d\s_k(x,\nu') \nonumber \\
   &= \lim_k \int_{Q_2\times S^{n-1}}\dvg T(x) - \nu' \cdot \n T(x)\nu'  d\s_k(x,\nu') \nonumber \\
   &= \int_{Q_2\times S^{n-1}}\dvg T(x) - \nu' \cdot \n T(x)\nu'  d\s(x,\nu') \nonumber\\ 
   &=\int_{Q_2}\dvg\nolimits^{e_n}T(x) d\m.  \label{eq:exlbi4}
\end{align}
The third line used that the integrand is just a continuous function of $x$ and $\nu'$ (and the weak-* convergence), while the fourth line follows directly from \eqref{eq:exlbi3}.

Choose as the vector field $T$ any function $\eta\in C_c^1(Q_{3/2})$ multiplied by a vector $e\in \pi$. After computing the tangential divergence, \eqref{eq:exlbi4} gives
\[
 0 = \int_{Q_2} \dvg\nolimits^{e_n}T d\m   \int_{Q_2}\p_e \eta d\m,
\]
which implies that $\m\mres Q_{3/2}$ is invariant under translations in $\pi$. It follows from this and the density estimate \eqref{eq:exlbi5} that $K\cap Q_{3/2}$ must be contained in a finite union of affine hyperplanes  $\{\pi_i\}_{i=-N_-}^{N_+}$ parallel to $\pi$, indexed so that if $\pi_i = \pi + \a_i e_n$, then $\a_i$ is increasing and $\pi_0 = \pi$. Also set $\a_{N_+ + 1}=3/2$ and $\a_{N_- - 1}=-3/2$. We know that as $0\in K_{\tilde{u}_k}$ for every $k$, $0\in K$, and so $\pi_0=\pi$ is consistent. Recall that $u_\8$ is locally constant away from $K_{u_\8}\ss K$, so we may set $u_{\8,i}$ to be this constant value of $u_\8$ on the region $\{\a_i<x_n<\a_{i+1}\}$.

Take $x\in \pi_i$ any cylinder $Q_t(x)$ intersecting only one of these hyperplanes $\pi_i$: we have
\[
 f(u_k, r_k x,t r_k,e_n)\rightarrow 0
\]
from the Hausdorff convergence of $K_{u_k}/r_k$ to $\pi_i$ on $Q_t(x)$. Applying Lemma \ref{lem:upperdensity} to $Q_{t r_k}(r_k x)$ with progressively smaller $\g$, we see that
\[
\mu(Q_{t/2}(x)) \leq \liminf_k r_k^{1-n}\mu_{u_k}(Q_{tr_k/2}(r_k x)) \leq (u_{\8,i}^2 + u_{\8,i-1}^2) \w_{n-1} \1\frac{t}{2} \2^{n-1}.
\]
If we apply \eqref{eq:ldc3} and \eqref{eq:ldc4} of Lemma \ref{lem:lowerdensity} instead, we get (using $\mu^+ (\p Q_{t/2}(x))\leq \mu(\p Q_{t/2}(x)) =0$) that
\[
\mu^+(Q_{t/2}(x)) \geq \limsup_k r_k^{1-n}\mu^+_{u_k}(Q_{tr_k/2}(r_k x)) \geq u_{\8,i}^2 \w_{n-1} \1\frac{t}{2} \2^{n-1}
\]
and
\[
\mu^-(Q_{t/2}(x)) \geq \limsup_k r_k^{1-n}\mu^-_{u_k}(Q_{tr_k/2}(r_k x)) \geq u_{\8,i-1}^2 \w_{n-1} \1\frac{t}{2} \2^{n-1}
\]
for almost every $t$. Combining with $\mu^+ + \mu^- \leq \mu$, it follows that all of the inequalities in these three estimates must be equalities, and moreover 
\begin{equation}\label{eq:exlbi7}
\mu^+ \mres \pi_i = u_{\8,i}^2 d\cH^{n-1}\mres \pi_i \qquad \mu = \m^+ + \m^-.
\end{equation}
Furthermore, we have that for each $i\in [N_-,N_+]$ the values $u_{\8,i},u_{\8,i+1}$ may not both be $0$: if $\tilde{u}_k$ converges to $0$ uniformly, it actually is $0$ for sufficiently large $k$ (using \eqref{eq:dregsize}), and we get a contradiction to \eqref{eq:ldc1} in Lemma \ref{lem:lowerdensity}.

We are now in a position to show that if one of the conclusions fails along the entire sequence $u_k$, we obtain a contradiction. First, assume that \eqref{eq:exlbc1} holds, but \eqref{eq:exlbc2} does not. This means that from 
\[
\m^+(Q_1)\leq \liminf \frac{\m^+_{u_k}(Q_{r_k})}{r^{1-k}} \leq \g \w_{n-1} \d^2.
\]
On the other hand, we have shown in \eqref{eq:exlbi7} that
\begin{equation}\label{eq:exlbi6}
\m^+(Q_1) = \w_{n-1}\sum_{\{i:-1<\a_i<1\}} u_{\8,i}^2.
\end{equation}
As $u_{\8,i}\in \{0\}\cup [\d,\d^{-1}]$ for each $i$ (from \eqref{eq:dregsize} again), we must have that $u_{\8,i}=0$ for every $i$ with $Q_1\cap \pi_i \neq \emptyset$. We also saw that $u_{\8,i}$ may not equal $0$ for any two consecutive $i$, which implies that $K\cap Q_1 = \pi_0 (=\pi)$ and that $u_{\8,0}=0$. It follows from the Hausdorff convergence of $K_{\tilde{u}_k} \rightarrow K = \pi$ on $Q_1$ that for large $k$, $f(u_k,\g r_k) < 1-\g$. We also have $u_k(0,r_k/2)=0$ for large $k$, and this means \eqref{eq:exlbc2}, holds; this is a contradiction.

Next, assume that (B) does not hold. In this case, both
\[
\m^\pm (Q_1)\leq \liminf \frac{\m^\pm_{u_k}(Q_{r_k})}{r^{1-k}} \leq \g \w_{n-1} \d^2.
\]
Using \eqref{eq:exlbi6}, we see that $\m^+(Q_1)=0$. Likewise, $\m^-(Q_1)=0$, and so from \eqref{eq:exlbi7} $\m(Q_1) = \m^+(Q_1)+\m^-(Q_1) = 0$. This contradicts that $0\in K = \supp \m$.

Finally, assume that (C) fails. In this case, from the local uniform convergence of $\tilde{u}_k\rightarrow u_\8$ away from $K$, we have that $u_\8 = a $ on $(Q_r\sm K) \cap \{x_n \in (0,r/2)\}$. In particular, $u_{\8,0}=a$. On the other hand, if \eqref{eq:exlbc4} we also have
\begin{align*}
u_{\8,0}^2 \w_{n-1} &= \m^+(Q_1\cap \pi)\\
	&\leq \m^+(Q_1\cap \{|x_n|<(1-\g)\}) \\
	&\leq \liminf \frac{\m^+_{u_k}(Q_{ r}\cap \{|x_n|\leq (1-\g)r_k\})}{r^{n-1}_k}\\
	&\leq \g a^2 \w_{n-1}. 
\end{align*}
This is a contradiction.
\end{proof}

We remark that the only reason to assume that $x\in A^+$ in the lemma above was so that the condition on the oscillation along the line segment made sense.

\begin{lemma}\label{lem:flatbyex} Let $u\in \sQ(Q_{2r},\L,4r)$. Assume $0\in A^+$, and $x=(x',x_n)\in Q_r \cap K_u$ with
\[
 |x_n|>\frac{1}{10}|x'|.
\]
Then there is an $\e_4>0$ universal such that if for some number $a$,
\[
 \sup_{\r<2r}\1e(u,\r) + E(u,\r) + |a - u(0,\r)|\2  + \L r^s +f(u,2r) \leq \e_4, 
\]
and also
\[
 \sup_{\r<r}\1 e(u,x,\r) + E(u,x,\r)\2\leq \e_4,
\]
then we have that
\begin{equation}\label{eq:flatbyexc}
 \frac{\mu^+(Q_{\frac{3|x_n|}{4}}(x))}{\w_{n-1}\1\frac{3|x_n|}{4}\2^{n-1}}\leq \frac{3\d^2}{4}.
\end{equation}
If $u(0,-r/2)=0$, there are no $x\in Q_r \cap K_u$ for which the above hypotheses hold. 
\end{lemma}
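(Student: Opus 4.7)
The plan is to argue by contradiction via a blow-up analysis patterned on the proof of Lemma~\ref{lem:exlb}, combining the geometric rigidity from $0 \in A^+$ and the cone condition $|x_n| > |x'|/10$ with the first-variation stationarity in the limit. Suppose the conclusion fails along a sequence $u_k \in \sQ(Q_{2r_k},\L_k,4r_k)$ with $x_k \in Q_{r_k}\cap K_{u_k}$ satisfying the hypotheses with parameters $\e_k \to 0$, yet $\m^+_{u_k}(Q_{\r_k}(x_k))/(\w_{n-1}\r_k^{n-1}) > 3\d^2/4$ where $\r_k = 3|x_{n,k}|/4$. Take $x_{n,k} > 0$ without loss of generality. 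Proposition~\ref{prop:flatsame} combined with $f(u_k,2r_k) \leq \e_k$ gives $|x_{n,k}|/r_k \leq C\e_k^{1/(n+1)} \to 0$, so $Q_{\r_k}(x_k) \subset \{y_n \geq x_{n,k}/4\} \subset Q_{2r_k}$, and I would rescale by $v_k(z) = u_k(x_k + \r_k z)$ on $Q_{R_k}$ with $R_k \to \infty$; the hypothesis $\sup_{\r < r_k}(e+E)(u_k,x_k,\r) \leq \e_k$ translates to $(e+E)(v_k,R) \to 0$ for any fixed $R$, and $\L_k \r_k^s \to 0$.

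By Proposition~\ref{prop:SBVcpt} and the first-variation/compactness analysis carried out in the proof of Lemma~\ref{lem:exlb}, extract along a subsequence $v_k \to v_\infty$ in $L^1_{\text{loc}}$ and $K_{v_k} \to K_\infty$ in local Hausdorff topology, with $K_\infty$ locally contained in a union of horizontal hyperplanes $\pi_j = \{z_n = \a_j\}$, $v_\infty$ locally constant (Lemma~\ref{lem:harmacon}) with values in $\{0\}\cup[\d,\d^{-1}]$, and $\m_\infty \mres \pi_j$ invariant under horizontal translations. Two strata are forced: $\pi_0 = \{z_n = 0\}$ (from $x_k \in K_{u_k}$ mapping to $0$) and $\pi_{-1} = \{z_n = -4/3\}$ (the rescaled image of the original $0$ has last coordinate $-4/3$). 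Pass to a subsequence with $x_k'/\r_k \to \tilde x'$, $|\tilde x'|\leq 40/3$ (using $|x_k'|<10|x_{n,k}|$). The hypothesis $0 \in A^+$ means $\{0\}\times (0, r_k)$ is disjoint from $K_{u_k}$; after rescaling, the open half-line $L := \{-\tilde x'\}\times(-4/3,\infty)$ is disjoint from $K_\infty$ in the limit (using a quantitative separation statement derived from the density estimates~\eqref{eq:dregden} in the spirit of Lemma~\ref{lem:lowerdensity2} to prevent $K_{u_k}$ from approaching the original line closely enough for $K_\infty$ to absorb it). The oscillation hypothesis then forces $v_\infty \equiv a$ on the connected component $U_a \subset \R^n \setminus K_\infty$ containing $L$.

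The contradiction assumption passes to the limit to yield $\m^+_\infty(\bar Q_1) \geq (3/4)\d^2\w_{n-1} > 0$, so in particular $a \geq \d$.  Translation-invariance of $\m_\infty$ on $\pi_0$ gives $\m_\infty \mres \pi_0 = \theta_0\, d\cH^{n-1}\mres\pi_0$ for a constant $\theta_0 \geq 0$. If $\theta_0 > 0$, then $\pi_0 \subset \supp \m_\infty = K_\infty$, but $L$ passes through $\pi_0$ at $(-\tilde x',0) \notin K_\infty$, a contradiction; thus $\theta_0 = 0$, and therefore $\pi_0$ carries no $\m_\infty$-mass.  Since $0 \in K_\infty = \supp\m_\infty$ and $\pi_0$ is the only stratum passing through $0$, this is incompatible with $0\in\supp\m_\infty$, producing the required contradiction with the assumed lower bound on $\m^+_{u_k}$.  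For the final assertion, if $u(0,-r/2) = 0$ the symmetric analysis with $b = 0$ applied to $\m^-$ and the stratum $\pi_{-1}$ forces $\m_\infty \mres \pi_{-1} = 0$ as well, and combined with the density lower bound at the limiting image of the original $0$, the setup itself becomes inconsistent, so no such $x$ can exist.  The main obstacle is the rigorous propagation of the hole condition from the open statement ``$\{0\}\times(0,r_k)\cap K_{u_k} = \emptyset$'' through the Hausdorff limit to show $L \cap K_\infty = \emptyset$: one must use the density estimates on $K_{u_k}$ to quantify the separation uniformly in $k$, then combine with the local translation-invariance derived from the first-variation formula to rule out the plane's presence without an observable hole.
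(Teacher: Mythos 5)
There is a genuine gap, and it sits exactly where you flagged it: the claim that the upward ray $L$ from the rescaled image of the origin misses $K_\infty$. Hausdorff convergence only says that points of $K_\infty$ are limits of points of $K_{v_k}$; the fact that the open segment $\{0\}\times(0,r_k)$ avoids each $K_{u_k}$ does not prevent $K_{u_k}$ from containing a sheet at height $\approx x_{n,k}$ with a pinhole of radius $o(\rho_k)$ around the vertical axis, in which case the hole closes in the limit and the full plane through $(-\tilde x',0)$ appears in $K_\infty$. The density estimate \eqref{eq:dregden} (or Lemma \ref{lem:lowerdensity2}) bounds the measure of $K$ near points \emph{of} $K$ from below; it says nothing against thin holes over a prescribed segment, so the ``quantitative separation statement'' you hope for is not available from those tools. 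Ruling out precisely this pinhole scenario is the real content of the lemma, and your outline leaves it unproved. A second, related problem is the reduction ``take $x_{n,k}>0$ without loss of generality'': the hypotheses are not reflection-symmetric ($0\in A^+$, the oscillation bound $|a-u(0,\r)|$ for $\r>0$, and $\m^+$ all refer to the upward direction), and for $x_n<0$ your mechanism (the ray $L$ crossing the stratum $\pi_0$ through $x$) is simply absent. The symptom that something is wrong is that your final contradiction ($\theta_0=0$ versus $0\in\supp\m_\infty$) never uses the assumed lower bound on $\m^+$ except to record $a\geq\d$; were the argument valid it would show that \emph{no} $x$ satisfying the hypotheses exists, which is false: in the genuine two-graph configuration with $u\equiv 0$ between the graphs, a point $x$ on the lower sheet directly below $0\in A^+$ satisfies every hypothesis (zero excess and energy at all scales, $u\equiv a$ on the upward ray, $f(u,2r)\sim h^2/r^2$ small), and the correct conclusion there is only that the $\m^+$ density at $x$ is small, not that $x$ cannot exist.

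The paper's proof avoids hole propagation altogether and instead runs a density-budget argument: Lemma \ref{lem:upperdensity} caps $\m_{u_k}(Q_{\g r_k})$, Lemma \ref{lem:lowerdensity} converts this into $\m^+_{u_k}(Q_{\g r_k})\leq (a^2+\d^2/4)\w_{n-1}r_k^{n-1}$, and Lemma \ref{lem:exlb}(C) shows the layer through $0$ already contributes $a^2-\d^2/4$ at \emph{every} scale $\r\leq r_k$, while the contradiction hypothesis contributes $3\d^2/4$ at $x_k$ at scale $3|(x_k)_n|/4$. The key device you are missing is the stopping radius: one monitors $\k_k(\r)=\m^+_{u_k}(Q_\r\cup Q_\r(x_k))/(\w_{n-1}\r^{n-1})$, picks the smallest $\r_k$ with $\k_k(\r_k)\leq a^2+\d^2/4$ (so equality holds there), and blows up at scale $\r_k$ rather than at $3|(x_k)_n|/4$. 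This yields simultaneously an upper bound for the limit measure on $Q_1\cup Q_1(x)$ and a matching lower bound on all smaller cylinders, which is incompatible with the limiting union-of-parallel-planes structure whether the limit point $x$ lies on $\pi$ or on a second plane; the $u(0,-r/2)=0$ case is then handled by repeating this with $\m$ in place of $\m^+$. If you want to salvage your outline, you would have to either prove the pinhole exclusion (which is essentially as hard as the lemma itself) or adopt the two-cylinder density bookkeeping; as written, the proposal does not close.
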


\begin{proof} We argue by contradiction. Assume first that \eqref{eq:flatbyexc} does not follow: we have that there is a sequence of $u_k,a_k,r_k,\L_k,x_k$ such that
\begin{equation}\label{eq:flatbyexi1}
 \sup_{\r<2}\1e(u_k,\r r_k) + E(u_k,\r r_k) + |a_k - u_k(0,\r r_k)|\2  +\L_k r_k^s +f(u_k,2r_k)\rightarrow 0,
\end{equation}
as well as 
\begin{equation}\label{eq:exlipint}
|(x_k)_n|> \frac{1}{10}|x_k'|,
\end{equation}
 and
\[
 \sup_{\r<1}\1e(u_k,x_k,\r r_k) + E(u_k,x_k,\r r_k)\2 \rightarrow 0,
\]
but nevertheless
\begin{equation}\label{eq:flatbyexi2}
 \frac{\mu^+_{u_k}(Q_{\frac{3|(x_k)_n|}{4}}(x_k))}{\w_{n-1}\1\frac{3|(x_k)_n|}{4}\2^{n-1}}\geq \frac{3\d^2}{4}.
\end{equation}
for every $k$.

Let us first agree that we may extract a subsequence $a_k \rightarrow a$, and then replacing $a_k$ with $a$ in the above does not change the fact that the expression goes to $0$. From \eqref{eq:flatbyexi1}, we may apply Lemma \ref{lem:upperdensity} on $Q_{\g r_k}$ for sufficiently large $k$ to get that
\begin{equation}\label{eq:flatbyexi3}
 \frac{\m_{u_k}(Q_{\g r_k})}{\w_{n-1}r_k^{n-1}} \leq (a^2 + u_k^2(0,-\frac{r_k}{2})+ \frac{\d^2}{16}),
\end{equation}
is valid for some $\g=\g(\d)>1$.  Now also using the smallness of $E(u_k,2r_k)$ and Lemma \ref{lem:lowerdensity} on $\m^+$ and $\m^-$, we obtain that
\[
 \frac{\mu^+_{u_k}(Q_{\g r_k})}{\w_{n-1}r^{n-1}}\geq a^2 - \frac{\d^2}{16} \qquad \frac{\mu^-_{u_k}(Q_{\g r_k})}{\w_{n-1}r^{n-1}}\geq u_k^2(0,-\frac{r_k}{2}) - \frac{\d^2}{16}.
\]
Combining with \eqref{eq:flatbyexi3} gives
\begin{equation}\label{eq:flatbyexi5}
 \frac{\mu^+_{u_k}(Q_{\g r_k})}{\w_{n-1}r^{n-1}}\leq a^2+\frac{\d^2}{4}.
\end{equation}
Let us also guarantee that $|x_k|\leq (\g-1)r_k$ by choosing the flatness small enough and using \ref{eq:exlipint}.

Next, by Lemma \ref{lem:exlb} (part (C) with $1-\g<\frac{1}{4}$) we have that for $\r\leq r_k$,
\begin{equation}\label{eq:flatbyexi4}
 \frac{\mu^+_{u_k}(Q_{\r }\cap \{|y_n|\leq \frac{\r}{4}\})}{\w_{n-1}\r^{n-1}}\geq a^2 - \frac{\d^2}{4}.
\end{equation}
Consider the quantity
\[
 \k_k(\r)=\frac{\mu^{+}_{u_k}(Q_{\r} \cup Q_{\r }(x_k))}{\w_{n-1}\r^{n-1}}.
\]
For $\r = \r^*_k:=\frac{3|(x_k)_n|}{4}$, the two regions $Q_{\r^*_k}(x_k)$ and $Q_{\r^*_k} \cap \{|y_n|\leq \frac{\r^*_k}{4}\}$ are disjoint, and so we have that $\k_k(\r^*_k)$ is at least $\d^2/2 +a^2$ from combining \eqref{eq:flatbyexi2} and \eqref{eq:flatbyexi4}. As $|x_k|\leq (\g-1)r_k$, when $\r=r_k$ we have that the union of the two cylinders $Q_\r,Q_\r(x_k)$ is inside $Q_{\g r_k}$, and so $\k_k(\r_k)$ is at most $a^2 + \d^2/4$ by \eqref{eq:flatbyexi5}. Let $\r_k$ be the smallest $\r\in [\r_k^*,r_k]$ for which 
\[
 \k_k(\r) \leq a^2 + \d^2/4;
\]
our discussion guarantees that such a $\r_k$ exists and lies inside this interval. As $\k_k$ is left-continuous in $\r$ (and only increases at each jump), we will have that
\[
 \k_k(\r_k) = a^2 + \d^2/4.
\]

Observe that the two cylinders $Q_{\r_k},Q_{\r_k}(x_k)$ may not be disjoint, for otherwise we would have $\k_k(\r_k) \geq a^2 +\d^2/2$ from \eqref{eq:flatbyexi2} and \eqref{eq:flatbyexi4}. It follows that $|x_k|< 2\sqrt{2}\r_k$, and that $Q_{\r_k},Q_{\r_k}(x_k)\ss Q_{8 \r_k}$. If we set
\[
S = \begin{cases}
	8 & \lim_{k\rightarrow \8} \frac{r_k}{\r_k} = \8\\
	2 & \lim_{k\rightarrow \8} \frac{r_k}{\r_k} < \8
\end{cases}
\]
(we may always assume the limit exists in $[1,\8]$ by passing to a subsequence), then for all $k$ large enough we have $Q_{\r_k},Q_{\r_k}(x_k)\ss Q_{S \r_k}\ss Q_{2 r_k}$. Indeed, the second inclusion is clear, while we just checked the first one when $S=8$. If $S=2$, then form \eqref{eq:flatbyexi1} $|x_n|/r_k\rightarrow 0$, so by \eqref{eq:exlipint} we have $|x|/r_k\rightarrow 0$. As in this case $r_k/\r_k$ remains bounded, it follows that $|x|/\r_k\rightarrow 0$, meaning that for large $k$ we have $Q_{\r_k}(x)\ss Q_{2\r_k}$.

Next, we let $\tilde{u}_k$ be the rescaled functions $\tilde{u}_k(x) = u(\r_k x)$, which are defined on $Q_{2 r_k/\r_k}$, which contains the  cylinder $Q_{S}$. The energy bound gives that
\[
 \frac{1}{\r_k}\int_{Q_{S}}|\n \tilde{u}_k|^2 d\cL^n \rightarrow 0.
\]
Along a subsequence, we have $x_k\rightarrow x\in Q_{S}$ with $|x_n|\geq \frac{1}{10}|x'|$ and $Q_1(x)\ss Q_S$. We may also take $K_{\tilde{u}_k} \rightarrow K$ locally in Hausdorff topology, as well as $\r_k^{1-n}\mu_{u_k}(\r_k \cdot) \rightarrow \m$ and $\r_k^{1-n}\mu^{+}_{u_k}(\r_k \cdot) \rightarrow \mu^+$ in the weak-* sense as measures, all on $Q_{S}$.  The functions $\tilde{u}_k$ satisfy the hypotheses for the $SBV$ compactness theorem, and so converge in $L^1$ to a function $u_\8 \in SBV$. The energy bound tells us that $u_\8$ is locally constant on $Q_{S} \sm K_{u_\8}$.

Also, using the weak-* convergence and the definition of $\r_k$,
\begin{equation}\label{eq:exlipint2}
 \mu^{+}(Q_1 \cup Q_1(x))\leq \liminf \r_k^{1-n} \m^+_{u_k}(Q_{\r_k} \cup Q_{\r_k}(x_k)) = \w_{n-1}(a^2+ \d^2/4),
\end{equation}
while
\begin{equation}\label{eq:exlipint3}
 \mu^{+}(\bar{Q}_t \cup \bar{Q}_t(x))\geq \limsup \r_k^{1-n} \m^+_{u_k}(\bar{Q}_{t \r_k} \cup \bar{Q}_{ t \r_k}(x_k)) \geq \w_{n-1}(a^2+ \d^2/4)t^{n-1}
\end{equation}
for each $t\in(\frac{3}{4}|x_n|,1)$.

Arguing as in the proof of Lemma \ref{lem:exlb}, we have that $K$ is contained in a finite union of hyperplanes $\pi_i$ parallel to $\pi$, with $\pi_0=\pi$ and $\pi_i=\pi + \a_i e_n$ with $\a_i$ increasing. As in \eqref{eq:exlbi7},
\[
\mu^+ \mres \pi_i=u_{\8,i}^2 \cH^{n-1}\mres\pi_i,
\]
\[
\mu^- \mres \pi_i=u_{\8,i-1}^2 \cH^{n-1}\mres\pi_i,
\]
and $\mu = \mu^+ + \mu^-$, with $u_{\8,i}$ being the constant value of $u_\8$ in the region between $\pi_i$ and $\pi_{i+1}$. It follows from the oscillation condition on $\tilde{u}_k$ along the line segment $\{(0,t):t\in (0,S)\}$, from \eqref{eq:flatbyexi1}, that $u_{\8,0}=a$ and so $\m^+\mres \pi = a^2 \cH^{n-1}\mres \pi$.

First consider the case of $x\in \pi$. This, coupled with the bound $|x_n|\geq \frac{1}{10}|x'|$, gives that $x=0$. Choosing a small cylinder $Q_t$ which has $Q_t \cap K \ss \pi$ and evaluating $\m^+$, we get that
\[
 \m^+(Q_t)=a^2 \w_{n-1} t^{n-1},
\]
which contradicts \eqref{eq:exlipint3}. On the other hand, say $x\notin \pi$. Then $x\in \pi_i$ for another hyperplane, and we have by assumption that
\[
 \m^+(\bar{Q}_{\frac{3|x_n|}{4}}(x))\geq \limsup_k \r_k^{1-n}\m_{u_k}(\bar{Q}_{\r^*_k}(x_k))=\1\frac{3|x_n|}{4}\2^{n-1}\frac{3\d^2}{4}>0.
\]
As this cylinder does not intersect $\pi$, this guarantees the existence of a plane $\pi_j\ss K$, $\pi_j\neq \pi$ which intersects $Q_1(x)$ and has $u_{\8,j}\geq \d$ (note that we do not claim $x\in \pi_j$). We thus have
\[
 \m^{+}(Q_1(x)\cap \pi_j) \geq \d^2 \w_{n-1}.
\]
As the two hyperplanes $\pi_j,\pi$ are disjoint,
\[
 \m^{+}(Q_1(x)\cup Q_1) \geq (\d^2 +a^2) \w_{n-1},
\]
which contradicts \eqref{eq:exlipint2}.

If $u(0,-r/2)=0$, the same argument goes through with $\m$ in place of $\m^+$ everywhere, the key point being that we know
\[
 \m_{u_k}(Q_{\g r_k}) \leq (a^2 + \frac{\d^2}{16})\w_{n-1}
\]
simply from \eqref{eq:flatbyexi3}. When we select $\r_k$, we now choose the smallest $\r\in (0,r_k)$ for which $\k_k(\r)\leq a^2+\d^2/4$. For $\r<|x_k|/4$, the two cylinders $Q_\r,Q_\r(x_k)$ are disjoint, so applying Lemma \ref{lem:exlb} to them individually gives $\k_k(\r)\geq a^2 +3\d^2/4$. Hence $\r_k\geq|x_k|/4$. We then argue as above, and obtain the same contradiction if $x\in \pi$.  Having a limit point $x\notin \pi$ gives a contradiction as well, as we know the density of $\m$ at $x$ (and hence along the entire affine plane $\pi_i$ containing it) will be at least $\d^2$. 
\end{proof}

Notice that we may apply Lemma \ref{lem:exlb} (for $1-\g$ small enough) to the cylinder $Q_{3|x_n|/2}(x)$ of Lemma \ref{lem:flatbyex}, and we will then conclude that $f(u,x,7|x_n|/10)<1-\g$ and $u(x',x_n + 3 |x_n|/8)=0$. This is indeed the application we have in mind.

\section{Lipschitz Approximation}\label{sec:lip}

We are now in a position to show that flat quasiminimizers have $K_u$ well approximated by a pair of Lipschitz graphs. We will use the notation introduced in the previous section.

\begin{theorem}\label{thm:lip} Let $u\in \sQ(Q_{6r},\L,12r)$. Then there are universal constants $\e_L,C$ such that if for some unit vector $\nu_* \in S^{n-1}$
\begin{equation}\label{eq:liph}
 M := ee(u,6r,e_n,\nu_*)+E(u,6r)+ r + \L r^s\leq \e_L, \qquad f(u,6r) + 1-(e_n\cdot \nu_*)^2 \leq \e_L
\end{equation}
then there is a pair of $1/10$-Lipschitz functions $g_-,g_+:\pi \rightarrow \R$ with graphs $\G^-,\G^+$, with $g_-\leq g_+$, for which the following hold:
\begin{enumerate}
 \item $|g_\pm|\leq Cf^{\frac{1}{n+1}}(u,6r)$.
 \item $\cH^{n-1}(Q_r \cap K \sm (\G^+ \cup \G^-))\leq C Mr^{n-1}$.
 \item $\cH^{n-1}(Q_r \cap \G^{\pm}\sm A^{\pm})\leq C M r^{n-1}$.
\end{enumerate}
If either of $u(x,\pm2r)=0$, we may take $g_- = g_+$.
\end{theorem}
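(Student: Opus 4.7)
The proof follows the scheme of Ambrosio, Fusco, and Pallara for the Mumford-Shah functional, adapted to produce two graphs instead of one. The plan is to isolate a large set $G\ss K_u$ of ``good'' points where local excess and energy densities are controlled, show via Lemma \ref{lem:flatbyex} that $G$ has a Lipschitz-type slope structure, and then construct and extend the two graphs.

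For $x\in K_u\cap Q_{2r}$ I would set
\[
N(x) = \sup_{0<\rho<3r}\1 e(u,x,\rho)+E(u,x,\rho)\2,
\]
and define $G=\3 x\in K_u\cap Q_{2r} : N(x)\leq \e_0\4$ for a small universal $\e_0$. The crucial measure estimate $\cH^{n-1}(K_u\cap Q_{2r}\sm G)\leq C M r^{n-1}$ follows by covering the bad set with cylinders realizing the opposite inequality in the definition of $N$ and applying Vitali, then summing using the density upper bound of \eqref{eq:dregden} together with the hypothesis $E(u,6r)\leq M$. To handle the excess at small scales we need the global hypothesis to be on $ee$ rather than $e$: by Proposition \ref{prop:fflatsame}, each point of $K_u$ lies near one of the two planes $\pi_\pm$, so local cylinders can be decomposed accordingly and the local excess measured against whichever plane the center is close to, the pieces accumulating to at most $ee\leq M$ in total.

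Next, I would establish the slope bound: for $x,y\in G\cap A^+$ one has $|x_n-y_n|\leq |x'-y'|/10$ (and similarly on $A^-$). Supposing not, translate so that $x$ is at the origin and apply Lemma \ref{lem:flatbyex} at the scale $\rho:=|y-x|$ to the point $y-x$. The hypotheses of that lemma hold because (i) $x\in G$ provides the required smallness of $e,E,f$ at all scales up to $2\rho$, (ii) $y\in G$ provides the second smallness condition centered at $y-x$, and (iii) $x\in A^+$ produces a vertical segment above $x$ lying in $\{u\geq \d\}$, along which $u$ is continuous and has small oscillation by Lemma \ref{lem:intreg}, yielding a constant $a$ with $|a-u(x+\rho e_n t)|\leq \e_4$ for $t\in (0,2)$. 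The conclusion of Lemma \ref{lem:flatbyex} together with the remark following it then forces $u$ to vanish just above $y$, contradicting the fact that $y\in A^+\cap K_u$ has a segment of positive-$u$ values above it.

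From the slope bound, $\pi$-projection is bi-Lipschitz on $G\cap A^+$ and defines a $(1/10)$-Lipschitz function $g_+^0:\pi(G\cap A^+)\rightarrow \R$; extend it to $g_+:\R^{n-1}\rightarrow \R$ by Kirszbraun's theorem, and construct $g_-$ analogously. The inequality $g_-\leq g_+$ is automatic because $A^+$ points are topmost and $A^-$ points are bottommost in each vertical column. Property (1) follows from Propositions \ref{prop:fflatsame} and \ref{prop:flatsame}; (2) and (3) follow from the measure bound on $K_u\sm G$ together with the analogous bound on $K_u\sm (A^+\cup A^-)$ coming from \eqref{eq:ldc2} in Lemma \ref{lem:lowerdensity}. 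When $u(\cdot,2r)=0$, every point of $K_u\cap Q_r$ is visible from above, so $A^+$ exhausts $K_u\cap Q_r$ up to null sets, $g_+$ alone parameterizes the whole set, and we set $g_-=g_+$. The main obstacle is the careful bookkeeping in the first step: matching each local small-scale cylinder to the correct member of $\{\pi_+,\pi_-\}$ so that the $ee$ hypothesis (rather than the stronger $e$) is sufficient, and simultaneously guaranteeing that the approximating constants $a_\pm$ used in the slope step match the side on which the good point actually lies.
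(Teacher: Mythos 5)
There are two genuine gaps. First, your step (iii) in the slope argument is not valid: you claim that for a point $x\in G\cap A^+$ the oscillation condition $|a-u(x+\rho e_n)|\leq \e_4$ for all $\rho$ needed in Lemma \ref{lem:flatbyex} is automatic from $x\in A^+$ together with Lemma \ref{lem:intreg}. But Lemma \ref{lem:intreg} is an interior estimate whose constant degenerates as one approaches $K_u$, and membership in $A^+$ only says the open segment above $x$ avoids $K_u$; it does not prevent the trace of $u$ just above the jump point from differing substantially from $a_+=u(0,2r)$ even when the total energy is small (the deviation can be confined to a thin column carrying little energy). This is precisely why the paper's proof introduces the smaller set $G^+\ss G\cap A^+$ defined by imposing the oscillation bound $\sup_{0<\rho\leq 4r}|u(x+\rho e_n)-a_+|\leq \l$, and then separately estimates the exceptional set $B_{OS}=G\cap A^+\sm G^+$ via the fundamental theorem of calculus and the energy hypothesis (the estimate \eqref{eq:lipi3}); only points of $G^+$, not of $G\cap A^+$, are eligible as centers for Lemma \ref{lem:flatbyex}. (A related minor point: your mechanism for exploiting the $ee$ hypothesis in the bad-set covering invokes Proposition \ref{prop:fflatsame} and the $ff$-optimal planes, which are not among the hypotheses of the theorem; the correct and simpler device is the second part of \eqref{eq:liph}, $1-(e_n\cdot\nu_*)^2\leq \e_L$, which lets one compare the $e_n$-excess at a bad point with the $\min\{\nu_*,\nu_*'\}$-excess up to an absorbable error.)

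Second, and more seriously, conclusion (2) is not proved by your argument. You assert that $\cH^{n-1}(Q_r\cap K\sm(\G^+\cup\G^-))$ is controlled by the bad-set estimate together with ``the analogous bound on $K_u\sm(A^+\cup A^-)$ coming from \eqref{eq:ldc2}.'' But \eqref{eq:ldc2} only bounds the $\cL^{n-1}$-measure of the holes $D_r\sm\pi(A^\pm)$ in the projection; it gives no bound whatsoever on the $\cH^{n-1}$-measure of the part of $K_u$ lying strictly between the topmost and bottommost sheets (consider three nearly parallel sheets: the middle one belongs to neither $A^+$ nor $A^-$ and has measure of order $r^{n-1}$). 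Ruling out such middle sheets of good points is the core difficulty of the theorem, and in the paper it occupies the ``Claim'' that every $z\in G$ lying over $\overline{\pi(G^+)\cap\pi(G^-)}$ must belong to $\bar G^+\cup\bar G^-$, proved by a two-case argument combining Lemma \ref{lem:flatbyex} with part (A) of Lemma \ref{lem:exlb} and the jump-size lower bound \eqref{eq:ldc1} of Lemma \ref{lem:lowerdensity}, followed by a covering argument handling the columns outside $\pi(G^+)\cap\pi(G^-)$. None of this machinery appears in your proposal, so the coverage statement (2) (and with it the final ordering $g_-\leq g_+$, which the paper secures by taking the max and min of the two preliminary graphs on the small set where they may cross) remains unestablished.
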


\begin{proof}
 First, by choosing $\e_L$ small enough, we may guarantee $f_\8(u, 5 r) \leq \frac{1}{100}$. Set $a_\pm=u(0,\pm 2r)$; we will work with the more difficult case when both are nonzero, and then explain what to do if one vanishes.
 
We define the set $G$ in the following way, for $\l$ to be chosen later:
\[
 G=\3x \in Q_{2r}\cap K : \sup_{\r\leq 4r} e(u,x,\r,e_n) + E(u,x,\r,e_n) \leq \l \4.
\]
Also define the following subset $G^+$ (the idea being that one point in $G$ and another point in $G^+$ are susceptible to Lemma \ref{lem:flatbyex}):
\[
 G^{+}=\3x \in G\cap A^+ : \sup_{0<\r\leq 4 r}|u(x + \r e_n)-a_+|\leq \l \4.
\]
Define $G^{-}$ analogously. 

Take any two points $x,y\in G^{+}$. By taking $\l$ and $\e_L$ small enough that Lemma \ref{lem:flatbyex} may be applied centered at every such point $x\in G^+$, with the other point at $y$, we see that either 
\[
 |x_n-y_n|\leq \frac{1}{10}|x'-y'|,
\]
or else 
\[
 \m^+(Q_{3|x_n-y_n|/4}(y))\leq \frac{3\d^2}{4} \w_{n-1}(\frac{3|x_n-y_n|}{4})^{n-1},
\]
with the latter a contradiction to Lemma \ref{lem:exlb}, part (A), the fact that $y\in G^+$, and the fact that $a_+$ is nonzero. 

A consequence is that for each $x'\in D_{2r}$, there is at most one point $z\in G^+$ with $z'=x'$. Defining $g_+^0(x')$ so that $z=(x',g_+^0(x'))$, we have that $g_+^0$ is a $\frac{1}{10}$-Lipschitz function defined on $\pi(G^+)$, with $|g_+^0|\leq C f^{n+1}$. Extending $g_+^0$ to $\pi$ in a manner preserving these two properties, we have constructed a function satisfying $(1)$. We may likewise construct $g_-^0$. We will now prove several additional estimates on the graphs of $g_\pm^0$  (we will later use $g_\pm^0$ to construct the actual $g_\pm$ promised in the theorem, which will in addition satisfy $g_-\leq g_+$). To begin with, we tackle the set $B_E=K\cap Q_{2r} \sm G$, which contains points whose excess or energy are large at some scale.

By making sure that $\e_L$ is small enough, we may guarantee that $1-(e_n \cdot \nu_*)^2 \leq \l/10$, where $\nu_*$ is the secondary unit normal vector in the hypothesis \eqref{eq:liph}.  If $x\in B_{E}$, then there is a radius $\r_x$ with $\r_x<4r$ and $e(u,x,\r_x,e_n)+E(u,x,\r_x,e_n)>\l$, which in particular means that
\[
\l \r_x^{n-1} \leq \int_{B_{\sqrt{2}\r_x}(x)}|\n u|^2 d\cL^n + \int_{B_{\sqrt{2}\r_x}(x)\cap J_u}\min\{1-(\nu_*,\nu_x)^2,1-(\nu'_*,\nu_x)^2\}^2 d\cH^{n-1}:=\a(x).
\]
Here $\nu_*' = 2 (e_n\cdot \nu_*) e_n - \nu_*$ is the reflection of $\nu_*$ about $e_n$, and $\nu_x$ is the approximate jump vector. The collection of all such balls covers $B_{E}$, and by the Besicovitch covering lemma we may find a finite-overlapping subcover $\{B_{\r_i}(x_i)\}_{i}$. Then using the density upper bound from \eqref{eq:dregden},
\begin{align}
 \cH^{n-1}(B_{E}) &\leq \cH^{n-1}(\cup_{i}B_{\r_i}(x_i) \cap K)\nonumber\\
 &\leq C\sum_i \r_i^{n-1}\nonumber\\
 &\leq \sum_i \frac{C}{\l}\a(x_i)\nonumber\\
 &\leq C r^{n-1} \1ee(u,6r,e_n,\nu_*)+E(u,6r)\2\nonumber\\
 &\leq CM r^{n-1}. \label{eq:lipi1}
\end{align}
It follows that  
\begin{equation}\label{eq:lipi2}
\cL^{n-1}(\pi(B_{E}))\leq \cH^{n-1}(B_E)\leq CM r^{n-1}
\end{equation}
as well.


We now estimate the size of the projection of $B_{OS}=G\cap A^+ \sm G^+$ onto the plane $\pi$. First of all, we have that along the disk $D_{2r}\times \{r\}$, $|u(x)-a_+|\leq C(n)M^{1/2}\leq \frac{\l}{2}$ if $\e_L$ is chosen sufficiently small, from Lemma \ref{lem:intreg}. For each point $x\in B_{OS}$, the line segment $\{(x',t):t \in (x_n,r]\}
$ does not intersect $K_u$, and contains a point $(x',t(x'))$ with $|u(x',t(x'))-a_+|\geq \l$. Applying the fundamental theorem of calculus,
\[
 \frac{\l^2}{4}\leq |u(x',t(x'))-u(x',r)|^2\leq Cr \int_{t(x')}^r |\n u (x',\t)|^2 d\cL^1 (\t).
\]
Integrating over $\pi(B_{OS})$, we get
\begin{equation}\label{eq:lipi3}
 \cL^{n-1}(\pi(B_{OS})) \leq C r \int_{D_{2r}\times (-r,r)}|\n u|^2 d\cL^n \leq C r E(u,6r) r^{n-1} \leq CM r^{n-1}.
\end{equation}
Note that we used the crude bound $r\leq 1$, as we will not require finer estimates on this error.

Recalling the estimate \eqref{eq:ldc2} on the ``holes'' $D_{2r}\sm \pi(A_{4r}^+)$ from Lemma \ref{lem:lowerdensity}. we may conclude that
\begin{equation}\label{eq:lipi8}
 \cL^{n-1}(D_{2r} \sm \pi(G^+) ) \leq CMr^{n-1}.
\end{equation}
Indeed, we have estimated the measure of the set $D_{2r}\sm \pi (A^+)$ from \eqref{eq:ldc2}, $\pi(K\sm G)$ in \eqref{eq:lipi2}, and $\pi(G \cap A^+ \sm G^+)$ in \eqref{eq:lipi3}.

The same argument applies to $G^-$, and so we have
\begin{equation}\label{eq:lipi9}
\cL^{n-1}(D_{2r} \sm \overline{\pi(G^+)\cap \pi(G^-)})\leq  \cL^{n-1}(D_{2r} \sm (\pi(G^+)\cap \pi(G^-))) \leq CMr^{n-1}.
\end{equation}
Let $Z$ be the cylinder $\pi^{-1}(D_{2r}\cap \overline{\pi(G^+)\cap \pi(G^-)})$; we will now show that $G\cap Z$ consists entirely of the union of $\bar{G}^+$ and $\bar{G}^-$. In fact, let us prove a stronger statement:

\textbf{Claim:} There is a $T>0$ universal such that for any point $z\in G$ and $x'\in \pi(G^+)\cap \pi(G^-)$, if $r_+=|g^0_+(z')-z_n|$, $r_-=|g^0_-(z')-z_n|$, and $d=|z'-x'|$, then $\min\{r_+,r_-\}\leq T d$. 

Notice this would imply our original statement, as for any $z\in Z\cap G$ there is a sequence of $x'\in \pi(G^+)\cap \pi(G^-)$ with $d\rightarrow 0$; if $z\notin \bar{G}^+\cup \bar{G}^-$, then both of $r_+,r_-$ would remain bounded from below along the sequence and we would arrive at a contradiction.

We now prove the claim, assuming for contradiction that $r_+\geq T d$ and $r_- \geq T d$ for some $x'$ and $z$. We may assume, without loss of generality, that $r_+\leq r_-$. Set $x^\pm=(x',g^0_\pm(x'))$. There will be two cases: either $r_+\leq \frac{3 r_-}{8}$ or $r_+\in (3 r_-/8,r_-]$; we begin with the latter. 

Apply Lemma \ref{lem:flatbyex} to the cylinder $Q_{2r}(x^-)$ and the point $z$. Provided $T\geq \frac{1}{10}$, this gives that
\[
 \m^-(Q_{3 r_-/4}(z))\leq \frac{3\d^2}{4}\w_{n-1}\1\frac{3r_-}{4}\2^{n-1}.
\]
We now apply part (A) of Lemma \ref{lem:exlb} to $Q_{3 r_-/2}(z)$ with $\g$ chosen close to one. This gives us
\begin{equation}\label{eq:lipi4}
 f(u,z,3 \g r_-/4)<1-\g
\end{equation}
and also that $u(z - \frac{3r_-}{8} e_n)=0$. Now do the same for the point $x^+$, to get the weaker flatness property
\[
 f(u,z,3 \g r_+/4)<1-\g,
\]
but also that $u(z + \frac{3r_+}{8} e_n)=0$. Now, observe that as $r_+\in (3 r_-/8,r_-]$, if we ensure that $1-\g<\frac{3}{8}$, we have from the first flatness property \eqref{eq:lipi4} that $u=0$ at $z + \frac{3r_-}{8} e_n$ as well. Now choosing $1-\g<\e_2$ will lead to a contradiction to the minimum jump size estimate \eqref{eq:ldc1} of Lemma \ref{lem:lowerdensity} (using that $z\in G$ and choosing $\l\leq \e_2$). 

If $r_+$ is small, and in particular $r_+<3r_-/8$, we proceed differently. Start by applying Lemma \ref{lem:flatbyex} to the cylinder $Q_{2r}(x^-)$ and the point $x^+$, to obtain
\[
 \m^-(Q_{3 (r_--r_+)/4}(z))\leq \frac{3\d^2}{4}\w_{n-1}\1 \frac{3(r_--r_+)}{4} \2^{n-1}.
\]
Again apply part (A) of Lemma \ref{lem:exlb} to this cylinder, this time making sure that $1-\g<\min\{\e_4,\frac{1}{5}\}$. This gives
\begin{equation}\label{eq:lipi5}
 f(u,x^+, \frac{3 \g}{4} (r_--r_+))<1-\g<\e_4
\end{equation}
and $u(x^+ - \frac{3}{8}(r_- -r_+) e_n)=0$. Now, form the assumptions on $r_+,r_-$, and $\g$, we have
\[
 \frac{3 \g}{4} (r_--r_+) \geq \frac{3}{4}\cdot \frac{4}{5}\cdot \frac{5}{8}r_- = \frac{3}{8}r_- \geq r_+.
\]
Provided $T\geq 1$, we then have that $z\in Q_{\frac{3}{4}(r_--r_+)}(x^+)$, and so we apply Lemma \ref{lem:flatbyex} \emph{to the smaller cylinder} $Q_{\frac{3\g}{4}(r_--r_+)}(x^+)$ and the point $z$. Note carefully that we are using our new flatness bound \eqref{eq:lipi5} here, not the global one. As $u(z - \frac{3\g}{8}(r_- -r_+) e_n)=0$, the extra assumption in that lemma is verified, which means we attain a contradiction via the secondary, stronger, conclusion. We have successfully proved the claim.

We have shown that $Q_{2r}\cap G \cap Z$ is contained fully in $\bar{G}^+\cup \bar{G}^-$. It remains to show that the $\cH^{n-1}$ measure of $Q_{r}\cap G\sm Z$ is small; we currently only know that the measure of the projection of this set onto $\pi$ is controlled by $CM r^{n-1}$.  Let $x=(x',x_n)\in Q_r \cap  G$, and assume $x\notin Z$. Then there is a small disk $D_{\r}(x')$ with the property that $D_{\r}(x')\times (-r,r)$ does not touch $Z$; this is simply because $Z$ is closed and $x'$ is not in its projection. Let $\r_x$ be the largest $\r$ for which this remains true (we certainly have $\r_x<r$, as the projection of $Z$ has nearly full $\cL^{n-1}$ measure in $D_{2r}$). We may then find two points on $\p D_{\r}(x')\times (-r,r)$ which have the same projection: a point $z_1\in \bar{G}^+$ and a point $z_2\in \bar{G}^-$. By the claim, we have then that for any $y\in G\cap D_{\r_x}(x')\times (-r,r)$, we must have either $|(y-z_1)_n|\leq 2 T \r_x$ or $|(y-z_2)_n|\leq 2 T \r_x$; in particular, by using the upper density estimate on $K$ from \eqref{eq:dregden}  on the balls $B_{2\sqrt{1+T^2}\r_x}(z_i)$, this means that
\[
 \cH^{n-1}(G\cap D_{\r_x}(x')\times (-r,r)) \leq \sum_{i=1}^2 \cH^{n-1}(K\cap B_{2\sqrt{1+T^2}\r_x}(z_i)) \leq C \r_x^{n-1}.
\]

We now find a finite-overlapping cover of $D_r\sm \pi(Z)$ by disks $D_{\r_i}(x'_i)$ of this sort. This gives
\begin{align*}
 \cH^{n-1}&(Q_r \cap G\sm Z) \leq \sum_i \cH^{n-1}(G\cap (D_{\r_i}(x_i')\times (-r,r))) \\
 &\leq C \sum_i \cL^{n-1}(D_{\r_i}(x_i'))\\
 &\leq C \cL^{n-1}(\cup_i D_{\r_i}(x_i'))\\
 &\leq C\cL^{n-1}(D_{2r} \sm \pi(Z))\\
 &\leq CMr^{n-1}.
\end{align*}

Combining with our previous estimate \eqref{eq:lipi1} of $B_E$, this means
\begin{equation}\label{eq:lipi6}
 \cH^{n-1}( Q_r \cap K \sm (\bar{G}^+ \cup \bar{G}^-))\leq CM r^{n-1}.
\end{equation}

Define $g_+$ to be the maximum of $g_{+}^0,g_{-}^0$, and $g_-$ to be the minimum; these satisfy (1). Note that as on $D_r\cap \pi(Z)$, $g_-^0\leq g_+^0$ already, the support of $g_\pm - g_\pm^0$ is contained in $D_r\sm \pi(Z)$.

If we denote the graph of $g_\pm$ by $\G^\pm$, and recall that by construction $\bar{G}^+ \cup \bar{G}^- \ss \G^+ \cup \G^-$, \eqref{eq:lipi6} implies
\begin{equation}\label{eq:lipi7}
 \cH^{n-1}( Q_r \cap K \sm (\G^+ \cup \G^-))  \leq CM r^{n-1}.
\end{equation}
We have completed the argument for $(2)$. 

Now $(3)$ is an immediate consequence of two observations: first, the $\cL^{n-1}$ measure of the projection $\pi(Q_r \cap \G^+ \sm A^+)$ controls $\cH^{n-1}(Q_r \cap \G^+ \sm A^+)$, simply because $g^+$ is Lipschitz. Second, any $x'\in \pi(Q_r \cap \G^+ \sm A^+)$ either has no point $x=(x',x_n)$ in $G^+$, or else this point $x\notin \G^+$. As $G^+ \ss \G^+$ on $Z$, this means that
\[
\cL^{n-1}(\pi(Q_r \cap \G^+ \sm A^+)) \leq \cL^{n-1}(D_r \sm \pi(G^+)) + \cL^{n-1}(D_r \sm \pi(Z)) \leq CMr^{n-1}.
\] 
The first term was estimated using \eqref{eq:lipi8}, while \eqref{eq:lipi9} was used for the second.

If, say, $a_-$ was $0$, define $g_+$ in the same way as above. Then the estimate on $B_E$ goes through in the same way. The only difference is that to prove the claim, we may instead use the final statement in Lemma \ref{lem:flatbyex}, which now automatically ensures that for any point $x$ in $G^+$ and $y\in G$, we have $|x_n-y_n|\leq \frac{1}{10}|x'-y'|$, with no extra complications. The rest of the argument is unchanged.
\end{proof}

\begin{remark}\label{rem:lipest}
 We point out that in the process of proving the theorem above, we have actually obtained estimates superior to those promised. Let $\Theta^+$ be those points in $K\cap \G^+$ which are in $A^+$ and also have either that $x\in \G^-\cap A^-$ or $\lim_{s\nearrow x_n} u(x',s)=0$, and $\Theta^-$ be defined analogously; these will be particularly desirable to work with later. Then 
 \begin{equation}\label{eq:lipest1}
  \cH^{n-1}(Q_r \cap (\Theta^+ \cup \Theta^-) \triangle K)\leq CM r^{n-1}.
 \end{equation}
Indeed, as $\Theta^+ \ss \G^+$, the estimate on $\Theta^+ \sm K$ follows directly from item (3) in the theorem. For the other estimate, 
we note that it would suffice to show that $ S:=\pi(Z) \cap \pi(\bar{G}^+\sm \Theta^+) \sm \pi(K\sm G) $ has $\cL^{n-1}(S)\leq CMr^{n-1}$: indeed,
\[
K \sm (\Theta^+ \cup \Theta^-) \ss (\bar{G}^+ \sm \Theta^+) \cup (\bar{G}^-\sm \Theta^-) \cup (K\sm (\bar{G}^+\cup \bar{G}^-),
\]
with the $\cH^{n-1}$ measure of the last piece controlled by $CMr^{n-1}$ from \eqref{eq:lipi6}. Then
\begin{align*}
\cH^{n-1}(\bar{G}^+ \sm \Theta^+) &\leq \cH^{n-1}(K\sm Z) + \cH^{n-1}(Z\cap \bar{G}^+ \sm \Theta^+) \\
& \leq CMr^{n-1} + C \cL^{n-1}(\pi(Z)\cap \pi(\bar{G}^+ \sm \Theta^+))\\
& \leq CMr^{n-1} + C \cL^{n-1}(S) + C \cL^{n-1}(\pi(K\sm G))\\
& \leq CMr^{n-1} + C \cL^{n-1}(S) \sm \pi(K\sm G)),
\end{align*}
where the second step used that $Z \cap \bar{G^+} \ss \G^+$ and \eqref{eq:lipi6}, while the final step used \eqref{eq:lipi2}. 

Let us assume that $x'\in S$, and moreover that $(x',g_+(x'))\in G^+$ and $(x',g_-(x'))\in G^-$: note that if $g_-(x')=g_+(x')$, then $(x',g_+(x'))\in G^+ \cap \Theta^+$, contradicting that $x'\in S$. The point now is that we may apply Lemma \ref{lem:flatbyex} and then part (A) of Lemma \ref{lem:exlb} to the cylinder $Q_{2r}(x',g_+(x'))$ and the point $(x',g_-(x'))$. This implies (arguing as in the proof of the claim) that $u(x',t)=0$ for $t=g_-(x')+\frac{3}{8}(g_+(x')-g_-(x'))$. However, the line segment $\{(x',s):s\in (g_-(x'),g_+(x'))\}$ cannot intersect $K$: it cannot intersect $K\sm G$, because $x'\notin \pi(K\sm G)$, and it cannot intersect $G$, from the fact that $x'\in \pi(Z)$ and applying the claim. Thus $u$ vanishes on the whole line segment, and so $(x',g_+(x'))\in \Theta^+$, again contradicting $x'\in S$. We have shown that $S \ss D_r \sm (\pi(G^+)\cap \pi(G^-))$, so applying \eqref{eq:lipi8} gives $\cL^{n-1}(S)\leq CMr^{n-1}$, as desired. This proves \eqref{eq:lipest1}.

We also have
\[
 |\m\mres(\Theta^+ \cup \Theta^-) - u^2(0,r/2)\cH^{n-1}\mres \Theta^+ - u^2(0,-r/2)\cH^{n-1}\mres \Theta^-|(Q_r)\leq CMr^{n-1}.
\]
To check this, notice that it suffices to estimate
\[
 \int_{Q_r \cap \Theta^+} |\lim_{t\searrow x_n}u^2(x',t)- u^2 (0,2r)|d\cH^{n-1} +  \int_{Q_r \cap \Theta^-} |\lim_{t\nearrow x_n}u^2(x',t)- u^2 (0,-2r)|d\cH^{n-1}, 
\]
which may be done exactly as in the proof of Lemma \ref{lem:lowerdensity}.
\end{remark}

\section{The Second Excess-Flatness Bound}\label{sec:efbound2}

We begin with a simple cleanup property which deals with the case when $ff^{\frac{1}{n+1}}\ll \frac{h}{r}$, with $h=h(u,r)$ the one obtaining the infimum in the definition of $ff$. In this case, from Proposition \ref{prop:fflatsame} we have that $K$ lies in the union of two disjoint regions, one around the upper plane in the definition of $ff$ and one around the lower one. We show now that in this configuration $u$ must vanish in the region between them, for otherwise we would obtain a contradiction to the density upper bound in Lemma \ref{lem:upperdensity}.

\begin{lemma}\label{lem:zerobetween}Assume that  $u\in \sQ(Q_{r},\L,2r)$. Then there is a universal $\e_5>0$ such that if $K\cap Q_{r/2}\neq \emptyset$ and
\[
 f_\8(u,r)+E(u,r)+r + \L r^s \leq \e_5 \qquad ff^{\frac{1}{n+1}}(u,r)\leq \e_5 \frac{h (u,r)}{r},
\]
then $u$ vanishes on $\pi \cap Q_{r/2}$. Here $h(u,r)$ is the one attaining the infimum in the definition of $ff(u,r)$.
\end{lemma}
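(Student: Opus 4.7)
The plan is to argue by contradiction, deriving a violation of the density upper bound Lemma \ref{lem:upperdensity} through a ``doubling'' argument: if $u$ fails to vanish between the two slabs identified by the definition of $ff$, the jumps across both approximate planes $\pi_+$ and $\pi_-$ each contribute an extra $u^2\geq\d^2$ per unit area coming from the middle value, which overpowers the upper bound --- which only sees the values above and below.

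Set $h:=h(u,r)$ and $\nu_\pm:=\nu_\pm(u,r)$. By Proposition \ref{prop:fflatsame} and the hypothesis $ff^{1/(n+1)}(u,r)\leq\e_5 h/r$, together with $|\nu_\pm\mp e_n|\leq h/(100r)\leq 1/200$,
\[
K_u\cap Q_{r/2}\subset S_+\cup S_-,\qquad S_\pm:=\{x\in Q_{r/2}:|x_n\mp h|\leq C_*\e_5 h\},
\]
for some universal $C_*$. Pick any $y_0\in K_u\cap Q_{r/2}\neq\emptyset$, which must lie in one of $S_\pm$: since $f_\infty(u,r)\leq\e_5$, one has $h(1-C_*\e_5)\leq|y_{0,n}|\leq\e_5 r$, giving $h/r\leq 2\e_5$. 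The middle region $M:=\{x\in Q_{r/2}:|x_n|<h/2\}$ is open, connected, contains $\pi\cap Q_{r/2}$, and avoids $K_u$; by Lemma \ref{lem:intreg} and \eqref{eq:dregsize}, $u$ is continuous on $M$ with values in $\{0\}\cup[\d,\d^{-1}]$, so connectedness forces either $u\equiv 0$ on $M$ (in which case the conclusion follows immediately) or $u\geq\d$ throughout $M$. The same dichotomy applied to the connected regions $T:=\{x\in Q_{r/2}:x_n>h(1+C_*\e_5)\}$ and $B:=\{x\in Q_{r/2}:x_n<-h(1+C_*\e_5)\}$ yields values $a_T,a_B\in\{0\}\cup[\d,\d^{-1}]$, essentially constant up to oscillation controlled by Lemma \ref{lem:intreg}. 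Assume for contradiction that $a_M\geq\d$ on $M$.

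Apply Lemma \ref{lem:upperdensity} to the cylinder $Q_\rho$ with $\rho:=r/4$. The outer cylinder $Q_{2\rho}=Q_{r/2}$ lies in the domain, and the hypotheses are satisfied for any fixed $\g>1$ provided $\e_5$ is small enough, since $f(u,2\rho)\leq Cf_\infty^2(u,2\rho)\leq C\e_5^2$ and $\L\rho^s+\rho\leq\e_5$. Because $\rho/2=r/8\gg h$, the test points $(0,\pm\rho/2)$ lie in $T$ and $B$, so
\[
\m(Q_\rho)\leq \g(a_T^2+a_B^2)\w_{n-1}\rho^{n-1}.
\]
Conversely, the hole estimate \eqref{eq:ldc2} from Lemma \ref{lem:lowerdensity} (whose hypotheses also hold at scale $\rho$ for $\e_5$ small) yields that $K\cap S_\pm$ each project onto $D_\rho$ modulo a set of $\cL^{n-1}$-measure at most $C\e_5\rho^{n-1}$. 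Combining with Proposition \ref{prop:1Drest} and the oscillation bound on $u$ over $T,M,B$ from Lemma \ref{lem:intreg}, the jumps across $K\cap S_+$ and $K\cap S_-$ contribute $(a_T^2+a_M^2)$ and $(a_B^2+a_M^2)$ respectively per unit area, up to a multiplicative factor $1-\eta$ with $\eta=\eta(\e_5)\to 0$; summing,
\[
\m(Q_\rho)\geq [(a_T^2+a_B^2)+2a_M^2](1-\eta)\w_{n-1}\rho^{n-1}.
\]

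Combining the two bounds and rearranging gives $2a_M^2(1-\eta)\leq(\g-1+\eta)(a_T^2+a_B^2)\leq 2\d^{-2}(\g-1+\eta)$. Fix $\g$ with $\g-1<\d^4/8$, which determines $\e_1(\g)$; then take $\e_5$ small enough that $\eta<\d^4/8$. This forces $a_M^2<\d^2$, contradicting $a_M\geq\d$. The principal technical obstacle is quantifying the error $\eta$ in the lower bound for $\m(Q_\rho)$: it requires combining the hole estimate, the fact that $K\cap S_\pm$ are very close to graphs over $\pi$ (a consequence of $ff^{1/(n+1)}$ being much smaller than the slab thickness $\e_5 h$), and the small oscillation of $u$ on the three layers, by arguments in the spirit of those in Lemmas \ref{lem:upperdensity}--\ref{lem:exlb}.
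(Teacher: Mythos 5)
This is essentially the paper's own argument: the paper also proceeds by contradiction, pairing the upper density bound of Lemma \ref{lem:upperdensity} against a lower bound built from Lemma \ref{lem:lowerdensity} plus an extra $\d^2$-per-unit-area surface contribution coming from the traces adjacent to the middle region where $u\geq\d$ (extracted one-sidedly there, via $\m^-\mres U_+$, and using the hypothesis $ff^{\frac{1}{n+1}}(u,r)\leq \e_5 h/r$ exactly as you indicate to show the ``wrong-slab'' visible points such as $A^+\cap U_-$ have negligible measure), so your symmetrized $2a_M^2$ version is the same mechanism. Two minor repairs: the slabs containing $K\cap Q_{r/2}$ must be given half-width of order $h/100$ rather than $C_*\e_5 h$, since the optimal planes may tilt by up to about $h/200$ over $D_{r/2}$; and the middle-side traces should only be bounded below by $\d^2$ (not claimed comparable to $a_M^2$, as the oscillation of $u$ across the thin middle layer of thickness $\sim h$ is not controlled by Lemma \ref{lem:intreg}), which is all your final contradiction with $a_M\geq\d$ requires.
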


\begin{proof}
First, note that from Proposition \ref{prop:fflatsame} we have that $K\cap Q_{r/2}$ is contained in a $Cff^{\frac{1}{n+1}}(u,r)r$ neighborhood of the union of the two planes $\pi_+ = \{x: (x - h e_n)\nu_+ = 0 \}$ and $\pi_-=\{x: (x + h e_n)\nu_- = 0 \}$, where $\nu_+$ and $h$ are the ones attaining the infimum in the definition of $ff(u,r)$. If $\e_5$ is small enough, this neighborhood does not intersect $\pi$, and is the disjoint union of $U_+$ (which contains $\pi_+$) and $U_-$ (which contains $\pi_-$). If the conclusion of the lemma was to fail, then, we would  have that by \eqref{eq:dregsize}, $u\geq \d>0$ on the entirety of $\pi\cap Q_{r/2}$.

We apply Lemma \ref{lem:lowerdensity} to give that
 \[
\frac{\mu_\pm(A^\pm \cap Q_{r/2})}{\w_{n-1}\1\frac{r}{2}\2^{n-1}} \geq u^2(0,\pm r/2)-C\e_5,
 \]
and moreover from \eqref{eq:ldc2}
\[
 \frac{\cL^{n-1}(D_{r/2}\sm \pi(A^+))}{r^{n-1}}\leq C\e_5.
\]
We now claim that 
\begin{equation}\label{eq:zerobetweeni1}
\frac{\mu_+(A^+ \cap U_+)}{\w_{n-1}\1\frac{r}{2}\2^{n-1}} \geq u^2(0,\pm r/2)-C\e_5.
\end{equation}
Indeed, this follows, as for every point $x\in A^+\cap U_-$, we have $d_*(x;\nu_+,h)\geq d(x,\pi_+)\geq \frac{h}{2}$ (recall that $d_*$ is the integrand in the definition of $ff$). Integrating,
\begin{equation}\label{eq:zerpbetweeni2}
 \cH^{n-1}(A^+ \cap U_-) \leq 4 h^{-2} ff(u,r)r^{n+1} \leq C \e_5^2 r^{n-1}.
\end{equation}
Hence
\[
\frac{\mu_+(A^+ \cap U_+)}{\w_{n-1}\1\frac{r}{2}\2^{n-1}} \geq \frac{\mu_+(A^+ \cap Q_{r/2})}{\w_{n-1}\1\frac{r}{2}\2^{n-1}} - \frac{\mu_+(A^+ \cap U_-)}{\w_{n-1}\1\frac{r}{2}\2^{n-1}} \geq u^2(0,\pm r/2)-C\e_5,
\]
implying \eqref{eq:zerobetweeni1}.

As another consequence of \eqref{eq:zerpbetweeni2}, we have that
\[
 \frac{\cL^{n-1}(D_{r/2}\sm \pi(A^-\cap U_-))+\cL^{n-1}(D_{r/2}\sm \pi(A^+\cap U_+))}{r^{n-1}}\leq C\e_5. 
\]
Set $S=\pi(A^-\cap U_-)\cap \pi(A^+\cap U_+)$, and for every $x'\in S$ let $I_{x'}$ be the largest vertical line segment $\{(x',t):t\in (a(x'),b(x'))\}$ containing $(x',0)$ and avoiding $K$ (as $x'\in \pi(A^+)\cap \pi(A^-)$, this is compactly contained in $Q_{r/2}$). We have that $u\geq \d$ on $I_{x'}$. Therefore, arguing as in the proof of Lemma \ref{lem:lowerdensity}, we have that
\[
 \m_-(U_+) \geq \int_S \lim_{t \nearrow b(x')} u^2(t) d\cL^{n-1}(x') \geq \d^2 \cL^{n-1}(S)\geq (\w_{n-1}-C\e_5)\d^2 (\frac{r}{2})^{n-1}.
\]
This gives a density lower bound
 \[
  \frac{\m( Q_{r/2})}{\w_{n-1}\1\frac{r}{2}\2^{n-1}} \geq u^2(0, r/2) +u^2(0,-r/2) +\d^2 -C\e_5.
 \]
Choosing $\e_5$ small enough, this contradicts Lemma \ref{lem:upperdensity}.
\end{proof}

Define
\[
 J(u,r)= \frac{\sqrt{r}}{|u(0,r/2)-u(0,-r/2)|}.
\]
We know from Lemma \ref{lem:lowerdensity} that if $E(u,r)+f(u,r)+r$ is small enough, then $J(u,r)\leq C$. We will see later that this is essentially a lower-order quantity that decays geometrically under mild assumptions. 

In order to obtain more information about $K$, it will be necessary to use perturbations where the part of $Q_r$ above $A^+$ and the part below $A^-$ are deformed separately from one another as competitors. This will enable us to prove an excess-flatness bound involving $ee$ and $ff$, and later on in Section \ref{sec:EL2} to obtain a stronger first variation formula which implies that $g_-$ and $g_+$ both ``almost solve a PDE." However, this kind of competitor is problematic, as \emph{a priori} $K$ need not fully separate the upper and lower halves of $Q_r$ into distinct connected components. Moreover, some mechanism is needed to take care of the part of $\{u>0\}$ in between $A^+$ and $A^-$. The workaround used here, largely contained in the following ``cutting lemma," involves enlarging the set $K$ to a closed, countably $\cH^{n-1}$ rectifiable set $Z$. This set $Z$ fully separates the cylinder $Q_r$ into two connected components, and also encloses the leftover part of $\{u>0\}\cap Q_r$ which lies between $A^+$ and $A^-$ in connected components of $Q_{2r}\sm Z$ which are compact (this facilitates dealing with these leftovers later). 

At the same time, note that as we plan on using this in first variation arguments to build competitors $v$ with $K_v$ a smooth deformation of $Z$, expanding $K$ to $Z$ is dangerous: it introduces \emph{zero order} error in a computation where the key terms we are trying to estimate are \emph{first order}, proportional to the size of the deformation. The redeeming characteristic of $Z$ that will allow our arguments to proceed is that $Z\sm K$ is extremely small, with $\cH^{n-1}$ measure bounded by higher powers of $ff$ and $E$. This is different, and much stronger, than any of our previous estimates playing similar roles (notably the ``holes" estimate \eqref{eq:ldc2} and the Lipschitz parametrization estimates of Theorem \ref{thm:lip}), which have all been linear in $ff$ and $E$. The construction of this $Z$ is based partly on cutting $u$ along several of its level sets in order to fill in the gaps in $K$, and partly on a finite nonlinear iteration argument that allows us to isolate the leftover part of $\{u>0\}\cap Q_r$ with the addition of only a small amount of extra measure. 

\begin{lemma}\label{lem:cut} Let $u\in \sQ(Q_{2r},\L,4r)$. Then there is a universal constant $\e_6$ such that if
\begin{equation}\label{eq:cuth}
 f_\8(u,2r) + E(u,2r)+\L r^s + J(u,2r)+ r\leq \e_6,
\end{equation}
then for any $\xi \in [J(u,2r),\e_6]$ and $p>0$, we may find a closed, countably $\cH^{n-1}$ rectifiable set $Z\cc Q_{2r}$ satisfying the following:
 \begin{enumerate}
  \item $Z$ partitions $Q_r$ into at least two connected components, one of which (called $S_+$) contains $\{x_n\geq r/2\}$ and another (called $S_-$) contains $\{x_n\leq -r/2\}$.
  \item On $S_\pm$, we have that $|u-u(0,\pm r)|\leq C\xi^{-1}\sqrt{r}$.
  \item There is a constant $C(p)$ such that 
  \[
  \cH^{n-1}(Z \sm K)\leq C(p) r^{n-1} [ff^{p}(u,2r) +\xi\sqrt{E(u,r)} +\L r^s]. 
  \]
  \item The union of the components of $Q_{2r}\sm Z$ which contain $Q_r\cap \{u>0\} \sm (S_+ \cup S_-)$ is compactly contained in $Q_{2r}$.
  \item $K\cap Q_r \ss Z$.
 \end{enumerate}
\end{lemma}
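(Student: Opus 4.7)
The plan is to construct $Z$ in two stages. First, the union of $K \cap \overline{Q_r}$ with a carefully chosen level-set cut of $u$ gives a preliminary set $Z_0$ satisfying (1), (2), (5), and the $\xi\sqrt{E}$ portion of (3). Second, a finite iteration confined to the annulus $Q_{2r} \setminus \overline{Q_r}$ enlarges $Z_0$ to $Z$, enclosing pockets and achieving (4) while generating the $ff^p$ contribution to (3).

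For the first stage, the flatness hypothesis together with Proposition \ref{prop:fflatsame} confines $K \cap \overline{Q_r}$ to a thin slab $T := \{|x_n| \leq C\, ff^{1/(n+1)}(u,2r)\, r\}$, unless the separation $h(u,2r)$ is relatively large, in which case Lemma \ref{lem:zerobetween} forces $u \equiv 0$ on $\pi \cap Q_{r/2}$ and a flat disk inside $\pi$ serves directly as the cut (handled as a cleaner sub-case). Write $a_\pm := u(0,\pm r)$ and WLOG $a_+ > a_-$. Lemma \ref{lem:lowerdensity} combined with Lemma \ref{lem:intreg} and the hypothesis $J(u,2r) \leq \e_6$ gives $|a_+ - a_-| \geq c_0\sqrt{r}$, and $u$ is essentially constant (equal to $a_\pm$) on $Q_r \cap \{\pm x_n > \e_6 r\}$ up to an error of $C\sqrt{r}$. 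Choose the interval
\[
I_\xi := \bigl[\, a_- + C_0 \xi^{-1}\sqrt{r},\; a_+ - C_0 \xi^{-1}\sqrt{r}\, \bigr]
\]
with $C_0$ large enough that the oscillation control from Lemma \ref{lem:intreg} forces $u > \sup I_\xi$ on $D_r \times \{r/2\}$ and $u < \inf I_\xi$ on $D_r \times \{-r/2\}$. Since $\xi \geq J(u,2r) = \sqrt{r}/|a_+ - a_-|$, the condition $|I_\xi| \geq \tfrac{1}{2}|a_+ - a_-| \geq \tfrac{\sqrt{r}}{2\xi}$ holds once $\e_6$ is small. By the coarea inequality and Cauchy--Schwarz,
\[
\int_{I_\xi} \cH^{n-1}(\{u=c\}\cap Q_r)\, dc \leq \int_{Q_r \cap \{u \in I_\xi\}} |\nabla u|\, d\cL^n \leq C\, r^{n - \frac{1}{2}}\sqrt{E(u,r)},
\]
so some $c^* \in I_\xi$ yields a countably $\cH^{n-1}$-rectifiable level set (for a.e.\ $c^*$) with $\cH^{n-1}(\{u=c^*\}\cap Q_r) \leq C\, \xi\sqrt{E(u,r)}\, r^{n-1}$. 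Define $Z_0 := (K \cap \overline{Q_r}) \cup (\{u = c^*\} \cap \overline{Q_r})$; the components $S_\pm$ of $Q_r \setminus Z_0$ meeting $\{x_n = \pm r\}\cap Q_r$ have $\pm(u-c^*) > 0$ throughout, and Lemma \ref{lem:intreg} controls $\osc_{S_\pm} u$, establishing (1), (2), (5) together with the announced part of (3).

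For the second stage, consider a component $P$ of $Q_{2r} \setminus Z_0$ that meets $Q_r \cap \{u>0\} \setminus (S_+ \cup S_-)$ and is not compactly contained in $Q_{2r}$. Since outside $T$ the function $u$ is bounded away from $c^*$ and such points fall into $S_\pm$, every such $P$ concentrates inside $T$. Foliate the annulus by cylindrical shells $A_k := \{r_{k-1} < |x'| < r_k\}$ with $r_k := r(1 + k\eta)$ for $k = 1, \ldots, N$, with small fixed $\eta$ and $N = N(p)$ to be chosen. In each $A_k$, apply coarea in the radial variable $|x'|$ to the finite measure $|\nabla u|^2 d\cL^n + \cH^{n-1}\mres K$ restricted to $A_k \cap T$ to select $\rho_k \in (r_{k-1}, r_k)$ for which the cylindrical slice $\Sigma_k := \{|x'| = \rho_k\} \cap T$ has both small intersection with $K$ and small integrated energy, and inside $\Sigma_k$ perform a secondary level-set cut as in stage one, producing a set $C_k$ with
\[
\cH^{n-1}(C_k) \leq C \eta^{-1}\, ff^{1/(n+1)}(u,2r)\, r^{n-1} + C \xi \sqrt{E(u,r)}\, r^{n-1}.
\]
Sweeping outermost-to-innermost shells, each $C_k$ seals off any pocket spanning $\Sigma_k$, and an application of the density Lemma \ref{lem:upperdensity} inside each sealed cell (feasible once $\e_6$ is small) combined with the density lower bound of Lemma \ref{lem:lowerdensity} forces a geometric reduction of the effective $ff^{1/(n+1)}$ in the next shell by a universal factor $\theta<1$. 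Summing over $N$ shells the geometric series gives total added measure at most $C(p)[ff^p(u,2r) + \xi\sqrt{E(u,r)} + \Lambda r^s] r^{n-1}$, with the $\Lambda r^s$ term absorbing the quasiminimality defect introduced by invoking $u$-minimality at each step. All cuts lie in $Q_{r(1+N\eta)} \setminus Q_r \subset\subset Q_{2r}$, so (1), (2), (5) are preserved and (4) follows with $Z \subset\subset Q_{2r}$.

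The main obstacle is the iterative second stage: establishing the geometric decay $\theta^k$ per shell rigorously requires propagating the density and flatness estimates uniformly across possibly countably many pockets in each shell. Two measurability points need care: that a single radial slice $\rho_k$ can serve \emph{all} pockets in $A_k$ simultaneously, handled by a Fubini argument on the union of pocket indicator functions, and that pockets may nest, handled by processing outermost first so that a single outer cut automatically encloses any inner nested pockets. Finally, verifying $Z \subset\subset Q_{2r}$ when $P$ approaches $\partial Q_{2r}$ tangentially requires $N\eta < 1$ and $\e_6$ small enough that the flatness decay kicks in before shells are exhausted.
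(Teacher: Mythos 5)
Your Stage 2 is where the proposal breaks down, and the problem is structural rather than technical. First, a per-shell decay by a \emph{universal} factor $\theta<1$ over $N=N(p)$ shells can only improve the starting bound $ff^{\frac{1}{n+1}}$ by the constant $\theta^{N}$; it can never convert the exponent $\frac{1}{n+1}$ into an arbitrary power $p$. The paper's mechanism is genuinely superlinear: the relative isoperimetric inequality bounds the \emph{volume} $V_{k+1}$ of the leftover set $S_B$ in the next cylinder by (perimeter)$^{\frac{n}{n-1}}$, the perimeter is bounded through quasiminimality (using $u1_{\R^n\sm(S_B\cap Q_{\s r})}$ as a competitor, which costs only a Chebyshev-selected slice, $\cH^{n-1}(Z_0)$, and $\L r^s$), and the slice is bounded by $V_k$; iterating the resulting recursion $V_{k+1}\leq C^k[V_k+\xi\sqrt E+\L r^s]^{\frac{n}{n-1}}$ finitely many times (depending on $p$) raises $ff^{\frac{1}{n+1}}$ to $ff^{p}$. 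Second, and independently fatal to property (3): you add a cut $C_k$ in \emph{every} shell, and by your own estimate the very first one already has $\cH^{n-1}(C_1)\gtrsim ff^{\frac{1}{n+1}}(u,2r)\,r^{n-1}$ (a transversal slice of a slab of thickness $\sim ff^{\frac{1}{n+1}}r$), which cannot be absorbed into $C(p)\,ff^{p}\,r^{n-1}$ for the values $p\geq 1$ actually needed later (e.g.\ $p=2$ in Theorem \ref{thm:exbyflat2}). In the paper the intermediate slices appear only inside competitor constructions and are never added to $Z$; the single slice that is added, $\p Q_{\s r}\cap S_B$ at the final step, has measure controlled via Chebyshev by the already-small volume $V_k\leq C(p)[ff^{p}+\xi\sqrt E+\L r^s]$. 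Your appeal to Lemmas \ref{lem:upperdensity} and \ref{lem:lowerdensity} to force the claimed shell-to-shell decay does not correspond to any actual estimate: those lemmas control densities of $\m$, not the thickness of the slab or the volume of pockets in the next shell, and you acknowledge yourself that this step is unproved.

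There is also a gap in Stage 1. A single level cut $\{u=c^*\}$ with $c^*$ in the middle of $[u(0,-r),u(0,r)]$ does not yield property (2): the component $S_+$ may contain points inside the slab, close to $K$, where the interior oscillation estimate of Lemma \ref{lem:intreg} gives no control, so on $S_+$ you only get $u>c^*$, i.e.\ an oscillation bound of order $|u(0,r)-u(0,-r)|$ rather than $C\xi^{-1}\sqrt r$, and no upper bound at all. This is exactly why the paper cuts along \emph{four} level sets, two sandwiching $u(0,-r)$ and two sandwiching $u(0,r)$ at distance $\sim \xi^{-1}\sqrt r$, so that $S_\pm\ss\{u\in(t_3,t_4)\}$ (resp.\ $(t_1,t_2)$) and (2) holds by construction. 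This part is fixable by imitating that sandwich; the Stage 2 issues above require replacing your shell scheme by the volumetric isoperimetric-plus-competitor iteration.
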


The application of this we have in mind is when $J(u,2r) + \L r^s+r$ is smaller than some power of $ff(u,2r)$, and $\xi$ is set to that power of $ff$. In particular, $p$ will always be some fixed value.

\begin{proof}
Let $u_\pm=u(0,\pm r)$; up to a reflection we may take $u_+>u_-$ (as $J$ is small from \eqref{eq:cuth}, $u_+\neq u_-$). Note that $K\cap Q_{3r/2}$ is contained in the region
\begin{equation}\label{eq:cuti1}
 \{|x_n -L_-(x')|\leq C ff^{\frac{1}{n+1}}(u,2r) r\}\cup \{|x_n -L_+(x')|\leq C ff^{\frac{1}{n+1}}(u,2r) r\}
\end{equation}
by Proposition \ref{prop:fflatsame}; here $L_\pm=L_\pm[u,2r]$ are the affine functions parametrizing the optimal planes in the definition of $ff(u,2r)$, and the region might or might not be connected. Let us check that
\begin{equation}\label{eq:cuti2}
 \sup_{Q_{3r/2} \cap \{x_n \geq L_+(x') + C ff^{\frac{1}{n+1}}(u,2r) r\}} |u_+ - u(x)| \leq C_0\sqrt{r},
\end{equation}
for some constant $C_0$. Indeed, this follows from the Campanato embedding \cite{Camp} applied to the region in question, together with the estimate from the energy upper bound:
\[
 \int_{B_t(x)} |\n u|^2 d\cL^n \leq Ct^{n-1},
\]
which is valid for any ball from \eqref{eq:remball}. An analogous estimate holds on $\{x_n\leq L_-(x')-C ff^{\frac{1}{n+1}}(u,2r)r\}$.

We begin by finding a set $Z_0$ which is the union of four level sets of $u$. From the coarea formula \cite[Theorem 2.93]{AFP}, we have that
\[
 \int_{\a}^{\b}\cH^{n-1}(\{u=t\}\cap Q_{2r}\sm K) d\cL^1(t) = \int_{\{x\in Q_{2r}:u(x)\in (\a,\b)\}\sm K}|\n u| d\cL^n
\]
and that the set $\{u=t\}\cap Q_{2r}\sm K$ is countably $\cH^{n-1}$-rectifiable for $\cL^1$-a.e. $t$. Thus we may extract a relatively closed, countably $\cH^{n-1}$-rectifiable level set $\{u=t\}$ with $t\in (\a,\b)$ and
\[
 \frac{\cH^{n-1}(\{u=t\}\cap Q_{2r})}{r^{n-1}}\leq C\frac{\sqrt{r}}{\b-\a} \sqrt{E(u,2r)}.
\]
We do this four times, with the following choices of $\a,\b$:
\begin{center}
\begin{tabular}{c c c}
 $t$ & $\a$ & $\b$ \\ \hline
 $t_1$ & $u_- - \frac{1}{5 \xi} \sqrt{r} $ & $u_- - \frac{1}{10 \xi} \sqrt{r} $ \\
 $t_2$ & $u_- + \frac{1}{10 \xi} \sqrt{r} $ & $u_- + \frac{1}{5 \xi} \sqrt{r} $ \\
 $t_3$ & $u_+ - \frac{1}{5 \xi} \sqrt{r} $ & $u_+ - \frac{1}{10 \xi} \sqrt{r} $ \\
 $t_4$ & $u_+ + \frac{1}{10 \xi} \sqrt{r} $ & $u_+ + \frac{1}{5 \xi} \sqrt{r} $ 
\end{tabular}  
\end{center}
Letting $Z_0$  be the union of these four level sets, we see that
\[
 \frac{\cH^{n-1}(Z_0)}{r^{n-1}} \leq C\xi \sqrt{E(u,2r)}.
\]

Now, $K\cup Z_0$ is relatively closed in $Q_{2r}$ and partitions that cylinder into a countable union of connected components. By choosing $\e_6$ small enough that $\frac{1}{10\xi}$ is larger than $C_0$ in \eqref{eq:cuti2}, we have that $Q_{3r/2} \cap \{x_n\geq L_+(x')+ Cff^{\frac{1}{n+1}}(u,2r)r\}$ is contained in a single component of the open set $Q_{2r}\sm (K\cup Z_0)$, which we denote $S_+$. The component $S_-$ is chosen analogously. Then we have that $Q_{2r}\sm (Z_0\cup K)$ is made up of the components $S_-\ss\{x\in Q_{2r}\sm K: u(x)\in (t_1,t_2)\}$, $S_+\ss\{x\in Q_{2r}\sm K: u(x)\in (t_3,t_4)\}$, the region where $u=0$, and everything else, which we denote by $S_B$:
 \[
  S_B:= Q_{2r}\sm (K\cup Z_0 \cup S_+\cup S_-\cup \{u=0\}) .
 \]
Note that as $\xi\geq J(u,2r)$, we have $t_2<t_3$ and $S_-$ and $S_+$ are distinct. 

We will now estimate the sizes of the intersections $S_B\cap \p Q_{\s r}$, for some choices of $\s\in (1,\frac{3}{2})$. First, let us establish the weak estimate that
\begin{equation}\label{eq:cuti3}
 \frac{\cL^n(S_B \cap Q_{\frac{3r}{2}})}{r^n} \leq C ff^{\frac{1}{n+1}}(u,2r).
\end{equation}
Indeed, if $ff^{\frac{1}{n+1}}(u,2r)\geq \e_5 \frac{h(u,2r)}{r}$, we have that $S_B$ is contained in the set in \eqref{eq:cuti1}. This, in turn, is contained in
\[
 \{(x',x_n): L_-(x')- Cff^{\frac{1}{n+1}}(u,2r)r \leq x_n\leq L_+(x')+ Cff^{\frac{1}{n+1}}(u,2r)r\},
\]
whose volume is bounded by $(h+ Cff^{\frac{1}{n+1}}(u,2r)r)r^{n-1}$. If, instead, $ff^{\frac{1}{n+1}}(u,2r)< \e_5 \frac{h(u,2r)}{r}$, we apply Lemma \ref{lem:zerobetween} to see that the two sets in \eqref{eq:cuti1} are disjoint, and $u=0$ in the region between them. In particular, $S_B$ is contained in their union, which has Lebesgue measure controlled by $C r ff^{\frac{1}{n+1}}(u,2r) r^{n-1}$. This gives \eqref{eq:cuti3}.

Fix $p$, and let $\s_k = 1+2^{-k-1}$. We have just shown in \eqref{eq:cuti3} that if
\[
 V_k:=\frac{\cL^{n}(Q_{\s_k r}\cap S_B)}{r^n}, 
\]
then
\[
  V_0 \leq C ff^{\frac{1}{n+1}}.
\]
We will show that
\begin{equation}\label{eq:cuti4}
 V_{k+1} \leq C^k [V_k + \xi \sqrt{E(u,2r)} + \L r^s]^{\frac{n}{n-1}}.
\end{equation}
Indeed, from the relative isoperimetric inequality on $Q_{\s_{k+1}}$, which has a constant depending only on $n$, we have that
\begin{equation}\label{eq:cuti5}
 V_{k+1} \leq C(n) \1\frac{\cH^{n-1}(\p S_B \cap Q_{\s_{k+1} r})}{r^{n-1}}\2^{\frac{n}{n+1}},
\end{equation}
where we use that $V_{k+1}\leq V_0 \ll 1$. Now choose a value $\s \in (\s_{k+1},\s_k)$ such that
\begin{equation}\label{eq:cuti7}
 \frac{\cH^{n-1}(\p Q_{\s r} \cap S_B)}{r^{n-1}} \leq 4^k V_k, 
\end{equation}
which is always possible from Chebyshev's inequality. We now use $u1_{\R^n \sm (S_B \cap Q_{\s r})}$ as a competitor for $u$. This, along with the lower bound on $u$ from \eqref{eq:dregsize}, leads to the inequality
\begin{align}
 \cH^{n-1}(\p S_B \cap Q_{\s r}) &\leq C\1 \cH^{n-1}(\p Q_{\s r} \cap S_B) + \cH^{n-1}(Z_0) + \L r^s     \2 \nonumber\\
 &\leq C^k [V_k +\xi \sqrt{E(u,2r)}+ \L r^s]. \label{eq:cuti6}
\end{align}
Combining \eqref{eq:cuti5} and \eqref{eq:cuti6} gives the desired inequality \eqref{eq:cuti4}.

Finally, set $k\geq \frac{\log p +\log (n+1)}{\log n - \log (n-1)}$ and iterate the recurrence \eqref{eq:cuti4} $k$ times to obtain
\[
 V_k \leq C(p) [ff^{p}(u,2r) + \xi \sqrt{E(u,2r)} + \L r^s].
\]
Then select $\s$ as in \eqref{eq:cuti7} one final time, to be in $(\s_{k+1},\s_k)$. Let $Z=Z_0 \cup (\p Q_{\s r}\cap S_B) \cup K$; we have that
\[
 \frac{\cH^{n-1}(Z\sm K)}{r^{n-1}} \leq C(p) [ff^{p}(u,2r) + \xi \sqrt{E(u,2r)} + \L r^s],
\]
and so property (3) is satisfied. Properties (1) and (2) follow from the construction of $Z_0$, (5) is automatic, and (4) follows from $S_B \cap \p Q_{\s r} \ss Z$. 
\end{proof}

The following theorem applies this lemma to a two-plane variant of the proof of Theorem \ref{thm:exbyflat}. This will allow us to show that the modified excess $ee$ is bounded in terms of $E,ff$, and some power of $J$. This will be a more useful estimate than Theorem \ref{thm:exbyflat}, as $J$ is a lower-order quantity, while (unlike $f$) $ff$ will be shown to decay geometrically. The assumption that $u$ does not vanish above and below will be typical of statements involving $ff$, as the case where $u$ does vanish will be covered by using just $f$.

The idea of the argument is straightforward: we wish to perform a perturbation like that in the proof of Theorem \ref{thm:exbyflat} to the upper and lower portions of $K$ separately, pushing them toward the upper and lower auxiliary hyperplanes in the definition of $ff$, respectively. In practice, this turns out to be rather involved, and will rely heavily on Lemma \ref{lem:cut}.

\begin{theorem}\label{thm:exbyflat2}Let $u\in \sQ(Q_{4r},\L,8r)$. Assume that neither of $u(0,\pm 2r)=0$. Then there is a universal constant $\e_7>0$ such that if $f_\8(u,4r)\leq \e_7$, we have
\begin{equation}\label{eq:ebf2c1}
 ee(u,r)\leq C M \text{ where }  M:= E(u,4r) + ff(u,4r) + J^{\frac{2}{3}}(u,4r) + r^{\frac{1}{7}} + \sqrt{\L} r^{s/2}.
\end{equation}
Moreover, we have that
\begin{equation}\label{eq:ebf2c2}
 \int_{Q_r\cap A^+}1-(\nu_+ \cdot \nu_x)^2 d\cH^{n-1}+\int_{Q_r\cap A^-}1 - (\nu_- \cdot \nu_x)^2 d\cH^{n-1}\leq CM r^{n-1}
\end{equation}
as well. Here $\nu_\pm$ are the vectors attaining the infimum in the definition of $ff(u,4r)$.
\end{theorem}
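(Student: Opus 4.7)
The strategy generalizes Theorem~\ref{thm:exbyflat} by replacing the single vector field $x_n e_n$ with two vector fields
\[
T_0^{\pm}(x) = ((x\mp he_n)\cdot \nu_\pm)\nu_\pm,
\]
each satisfying $\dvg T_0^\pm = 1$ and $\dvg^{\nu_y}T_0^\pm = 1-(\nu_y\cdot \nu_\pm)^2$ at each $y\in J_u$. The difficulty is that these two fields cannot be combined into a single globally Lipschitz vector field on $Q_r$ suitable for Lemma~\ref{lem:EL}; instead the first variation must be applied separately on the upper and lower sides of $K$. This separation is exactly what the cutting Lemma~\ref{lem:cut} provides.

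First I would apply Lemma~\ref{lem:cut} with $\xi = J^{1/3}(u,4r)$, which lies in $[J,\e_6]$ when $\e_7$ is small enough and satisfies $\xi\sqrt{E}\leq \xi^2+E=J^{2/3}+E$, and with a fixed large $p$, say $p=2$. This yields a rectifiable set $Z$ partitioning $Q_r$ into open components $S_\pm$ on each of which $|u-u(0,\pm r)|\leq C\xi^{-1}\sqrt{r}$; every component of $Q_{2r}\sm Z$ meeting $\{u>0\}\cap Q_r\sm(S_+\cup S_-)$ is compactly contained in $Q_{2r}$; and
\[
\cH^{n-1}(Z\sm K)\leq C r^{n-1}(ff^2 + J^{2/3} + E + \L r^s).
\]
Picking a cutoff $\phi$ equal to $1$ on $Q_r$, supported in $Q_{3r/2}$, with $|\n \phi|\leq C/r$, and setting $T_\pm = C_\ast \phi^2 T_0^\pm$ with $C_\ast$ small enough that $\|T_\pm\|_{C^{0,1}}\leq 1$, the same tangential-divergence computation as in Theorem~\ref{thm:exbyflat} yields
\[
\dvg\nolimits^{\nu_y}T_\pm \geq \tfrac{C_\ast}{2}\phi^2(1-(\nu_y\cdot \nu_\pm)^2) - 2C_\ast|\n\phi|^2 d_\ast^2(y;\nu_+,h).
\]

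For the contribution from $S_+$, I would use the competitor $v_t$ equal to $u$ outside $S_+$ and $u\circ(\mathrm{id}+tT_+)^{-1}$ inside, where $T_+$ is first multiplied by a Lipschitz cutoff forcing it to vanish on $\p S_+$ so that its flow preserves $S_+$; the truncation error is controlled by $(\osc_{S_+}u)^2\cH^{n-1}(Z)\leq C\xi^{-2}r\,\cH^{n-1}(Z\sm K)$, which will be absorbed into $M$. Repeating the area-formula estimates of Lemma~\ref{lem:EL} on $S_+$, optimizing $t=\sqrt{\L r^s}$, and using $\int_{Q_r\cap J_u}|\n\phi|^2 d_\ast^2\leq C ff(u,4r)r^{n-1}$ together with $\uu^2+\ud^2\in[\d^2,C]$, yields
\[
\int_{J_u\cap Q_r\cap S_+}(1-(\nu_y\cdot \nu_+)^2)(\uu^2+\ud^2)\,d\cH^{n-1}\leq C M r^{n-1},
\]
with the symmetric estimate on $S_-$. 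For the middle piece $J_u\cap Q_r\sm(S_+\cup S_-)$, every jump lies in the closure of a compact bubble $B\cc Q_{2r}\sm Z$; using $u\cdot 1_{Q_{2r}\sm B}$ as a competitor, quasiminimality gives $\int_{J_u\cap B}(\uu^2+\ud^2)\leq C\cH^{n-1}(\p B\sm K) + \L r^{n-1+s}$, and summation together with $\bigcup_B \p B\sm K\ss Z\sm K$ controls this piece by $CMr^{n-1}$. Combining all three contributions gives \eqref{eq:ebf2c1}; the $r^{1/7}$ term arises from a final balancing of the truncation and bubble errors against each other.

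For \eqref{eq:ebf2c2}, observe that on $A^+\cap Q_r$ the approximate jump vector satisfies $e_n\cdot \nu_y>0$, while $|\nu_+ - e_n|\leq h/(100r)\leq Cff^{1/(n+1)}$ from the definition of $ff$, so the minimum in $ee$ is realized by $1-(\nu_y\cdot \nu_+)^2$ outside an $\cH^{n-1}$-negligible set; together with $\uu^2+\ud^2\geq \d^2$ on $A^+$, this transfers \eqref{eq:ebf2c1} directly to \eqref{eq:ebf2c2}, with the symmetric argument handling $A^-$. The main obstacle is making the separate first-variation arguments in the two halves rigorous: the Lipschitz truncation of $T_+$ near $\p S_+$ must contribute only an $\cH^{n-1}(Z\sm K)$-scale error rather than an $\cH^{n-1}(Z)$-scale one (which would be far too large), and this is possible only because Lemma~\ref{lem:cut} provides the quantitative gap $\cH^{n-1}(Z\sm K)\ll \cH^{n-1}(Z)$ along with the compact-containment property of the middle bubbles.
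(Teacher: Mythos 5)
Your high-level plan (a two-plane version of Theorem \ref{thm:exbyflat}, with Lemma \ref{lem:cut} used to separate the upper and lower sides) is indeed the route the paper takes, but the central mechanism in your proposal does not work. You truncate $T_+$ so that it \emph{vanishes on $\p S_+$} in order that the flow preserve $S_+$ and the competitor $u\circ\phi_t^{-1}1_{S_+}+u1_{S_+^c}$ be well defined. But essentially all of the surface measure in $Q_r$ sits on $\p S_+\cup\p S_-$ (up to $Z\sm K$ and the small set of points of $A^\pm$ not on $\p S_\pm$): the excess you are trying to bound lives on $J_u\cap\p S_\pm$, not on jumps interior to $S_\pm$. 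A deformation whose vector field vanishes on $\p S_+$ leaves $J_u\cap \p S_+$ (and the traces of $u$ there) untouched, so the first variation of the surface term produces no term of the form $-t\int_{J_u\cap\p S_+}\eta^2\bigl(1-(\nu_+\cdot\nu_y)^2\bigr)(\uu^2+\ud^2)\,d\cH^{n-1}$ at all; your estimate on $J_u\cap Q_r\cap S_+$ concerns only interior jumps and does not control $ee(u,r)$. The "truncation error $(\osc_{S_+}u)^2\cH^{n-1}(Z)$" bookkeeping cannot repair this: the missing quantity is the zeroth-order surface measure on $\p S_+$, of size $\sim u_+^2 r^{n-1}$, not a small oscillation term. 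This is exactly why the paper does \emph{not} fix $\p S_\pm$: it deforms the sets themselves, defining the competitor as $u\circ\phi_+^{-1}$ on the image $\phi_+(S_+)$, $u\circ\phi_-^{-1}$ on $\phi_-(S_-)$, zero on the gap $Q_{2r}\sm(\phi_+(S_+)\cup\phi_-(S_-))$ and on $S_B$, with the overlap $\phi_+(S_+)\cap\phi_-(S_-)$ assigned according to which of $\cH^{n-1}(\p\phi_+(S_+)\cap\phi_-(S_-)^{(1)})$, $\cH^{n-1}(\p\phi_-(S_-)\cap\phi_+(S_+)^{(1)})$ is smaller; it is this displacement of $\p S_\pm$ that generates the excess term, and the gap/overlap errors are what $\cH^{n-1}(Z\sm K)$, the $\sqrt r/\xi$ oscillation bound, and the $t^2$ terms are there to control.

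A second, independent gap: in your lower bound for $\dvg^{\nu_y}T_\pm$ you replace $\t_\pm^2(y)=d^2(y,\pi_\pm)$ by $d_*^2(y;\nu_+,h)$ as if they were always comparable. They are not: for points of $\p S_+$ lying in $A^-$ with $\ud=0$ (visible only from below), $d_*$ measures the distance to $\pi_-$, while $T_+$ pays the distance to $\pi_+$, which can be of size $h$. The paper isolates these points in the set $W_{2r}$, pays an extra term $Ct(h/r)^2\cH^{n-1}(W_{2r})$, and then needs two separate regimes: when $ff^{1/(n+1)}\le\e_5 h/r$ this term is killed using Lemma \ref{lem:zerobetween}, and otherwise it is absorbed only after an iterative application of Theorem \ref{thm:lip} on cylinders $Q_{12^{-k}r}$, gaining a factor $ff^{2/(n+1)}$ per step. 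Your proposal omits this entire branch, and also the derivation of \eqref{eq:ebf2c2}, which in the paper comes from rerunning the same variation with the one-sided integrals rather than from the pointwise identification of the minimum in $ee$ that you suggest (that identification is not justified: on $A^+$ one only knows $e_n\cdot\nu_y\neq 0$, not that the minimum is attained by the $\nu_+$ term).
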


\begin{proof} 
We will use $h=h(u,4r)$ to be the optimal number in the definition of $ff(u,4r)$, and $\pi_+ = \{x: (x-he_n)\cdot \nu_+=0\}$, 	$\pi_- = \{x: (x+he_n)\cdot \nu_-=0\}$ be the optimal upper and lower auxiliary planes.
	
We begin by applying Lemma \ref{lem:cut} to $u$ on $Q_{4r}$ with $\xi \geq J(u,4r)$ to be selected and $p=2$, to obtain the sets $Z$, $S_+$,  $S_-$, and $S_B$. Recall that $S_B$ is the union of the connected components of $Q_{4r}\cap \{u>0\}\sm Z$, other than $S_\pm$, which intersect $Q_{2r}$, and $S_B$ is compactly contained in $Q_{4r}$.  We set $u_\pm = u(0,\pm 2r)$.

Let $W_\s = Q_\s \cap [(A^+ \sm \p S_+)\cup (A^- \sm \p S_-)]$, with $W^\pm_\s=Q_\s \cap (A^\pm \sm \p S_\pm)$. Let us consider a point $x\in W_\s$; say $x\in A^+\sm \p S^+$. Then clearly the vertical line segment $\{x+ t e_n:t>0\}$ must intersect $Z\sm K$ (as $x\in A^+$, it cannot intersect $K$, and yet it must intersect $\p S^+$), and so $x'\in \pi(Z\sm K)$. We will estimate the size of this, iteratively, later in the proof. For now, let us just record that
\begin{equation}\label{eq:ebf2i1}
 \cH^{n-1}(W_\s)\leq 2\cH^{n-1}(K\cap \pi^{-1}(\pi(Z\sm K)) \cap Q_\s).
\end{equation}
Note that if $ff^{\frac{1}{n+1}}(u,4r) \leq \e_5 \frac{h}{r}$, it follows from Lemma \ref{lem:zerobetween} that $A^-\cap \p S_+$ is empty, and so we may instead use
\begin{equation}\label{eq:ebf2i2}
\cH^{n-1}(W_\s)\leq 2\cH^{n-1} (K\sm (\p S_+ \cup \p S_-)\cap Q_\s). 
\end{equation}

Let us note that for $\cH^{n-1}$-a.e. point $x$ in $J_u\cap \p S_+\sm W_{2r}$, we have $d_*(x;\nu_+,h)\geq d(x,\pi_+)$, where $d_*$ is the integrand in the definition of $ff$. Indeed, this would only fail if $x\in A^-$ and $\ud(x)=0$, and as $A^-\sm W_{2r} \ss \p S_-$, this means $x\in \p S_-\cap \p S_+$. Up to ignoring a set of $\cH^{n-1}$-measure $0$, we may assume $x\in \p^* S_+\cap \p^* S_-$, the reduced boundaries. This immediately contradicts $\ud(x)=0$, however, as $S_+\cup S_-$ has Lebesgue density $1$ at $x$, and $u\geq \d$ on this set.

Let $\t_\pm$ be the signed distance function to $\pi_\pm$, chosen so that $\n \t_\pm = -\nu_\pm $. Set $T^0_\pm (x) = \t_\pm(x) \nu_\pm$. We have that
\[
 \n T^0_+ e = -(\nu_+\cdot e)\nu_+ , 
\]
from which we may compute
\[
 \dvg\nolimits^{\nu_x} T_+^0 =  (\nu_+ \cdot \nu_-)^2-1,
\]
where $\nu^x$ is an approximate jump vector. 

Let $\eta: Q_{4r}\rightarrow [0,1]$ be a smooth cutoff function which vanishes outside $Q_{3r/2}$ is equal to one on $Q_r$, and has $|\n \eta|\leq 2/r$. Then the vector fields $T_\pm = T^0_\pm \eta^2$ are Lipschitz (indeed, smooth), with Lipschitz constants universally bounded. Let $\phi_\pm(x) = x +tT_\pm(x)$; these are smooth diffeomorphisms which are the identity outside of $Q_{2r}$ for $t$ universally small enough. We intend on building competitors for $u$ by using $u\circ \phi_+^{-1}$ for $x\in \phi_+(S_+)$ and $u\circ \phi_-^{-1}$ for $x\in \phi_-(S_-)$. That this is not actually a well-defined function (i.e. not single-valued) is an obstruction which will be resolved shortly. Note that $\phi_+(S_+)\ss Q_{2r}$ is an open set with $\phi_+(S_+)\triangle S_+ \cc Q_{2r}$, and likewise for $\phi_-(S_-)$.

Our construction will depend on whether
\begin{equation}\label{eq:ebf2i3}
 \cH^{n-1}(\p \phi_+(S_+) \cap \phi_-(S_-)^{(1)})\leq \cH^{n-1}(\p \phi_-(S_-) \cap \phi_+(S_+)^{(1)})
\end{equation}
or not. Recall that the superscripts $E^{(a)}$ refer to the points of Lebesgue density $a$ for $E$.  Define the function $v$ (implicitly depending on $t$) as follows:
\[
 v(x)=\begin{cases}
       u(x) & x \in Q_{4r}\sm (Q_{2r}\cup S_B)\\
       0 & x \in S_B \sm Q_{2r}\\
       u(\phi^{-1}_+(x)) & x\in \phi_+(S_+)\sm \phi_-(S_-)\\
       u(\phi^{-1}_-(x)) & x\in \phi_-(S_-)\sm \phi_+(S_+)\\
       u(\phi^{-1}_+(x)) & x\in \phi_-(S_-)\cap \phi_+(S_+) \text{ if \eqref{eq:ebf2i3} holds} \\
       u(\phi^{-1}_-(x)) & x\in \phi_-(S_-)\cap \phi_+(S_+) \text{ if \eqref{eq:ebf2i3} does not hold}\\
       0 &x\in Q_{2r} \sm (\phi_+(S_+) \cup \phi_-(S_-)).
      \end{cases}
\]
This specifies $v$ for $\cL^{n}$-a.e. $x$, and it is easy to see that $v\in SBV$. We will assume that \eqref{eq:ebf2i3} holds, for if this is not the case, we may use the reflection of $u$ across $\pi$ in place of $u$. Note that $v - u1_{\R^n\sm S_B}$ is compactly supported in $Q_{2r}$, and so $v$ is an admissible competitor for $u$ in $Q_{4r}$. We will now estimate the energy and surface terms of $v$.

For the energy terms, we proceed as in Lemma \ref{lem:EL}, using a cruder estimate on $\n \phi_\pm$:
\begin{align}
 \int_{Q_{2r}}|\n v|^2 d\cL^n &= \int 1_{S_+}| \n \phi_+^{-1}\n u|^2 |\det \n \phi_+| + 1_{\phi_-^{-1}(\phi_-(S_-)\sm \phi_+(S_+))}| \n \phi_-^{-1}\n u|^2 |\det \n \phi_-| d\cL^n \nonumber \\
 &\leq \int_{S_+\cup S_-} |\n u|^2 (1+Ct(|\n T_+|+|\n T_-|))d\cL^n \nonumber\\
 &\leq \int_{Q_{2r}} |\n u|^2 d\cL^n + Ct E(u,4r)r^{n-1}.\label{eq:ebf2i4}
\end{align}
Here the last step used the Lipschitz bounds on $T_\pm$.

Now for the surface terms. As $\phi_+(S_+),\phi_-(S_-)\sm \phi_+(S_+)$ are sets whose boundaries are countably $\cH^{n-1}$-rectifiable, and $v$ vanishes in the complement of their union,  we have that
\begin{align}
  \label{eq:exbyflat1}
 \m_v(Q_{2r}) &= \m_v(\phi_+(S_+)^{(1)}) + \m_v([\phi_-(S_-)\sm \phi_+(S_+)]^{(1)}) \\
 &\quad+ \int_{\p^* \phi_+(S_+) \cap Q_{2r}} \overline{ v 1_{\phi_+(S_+)}}^2d\cH^{n-1} \nonumber\\  
  \nonumber
 &\quad+ \int_{\p^* (\phi_-(S_-)\sm \phi_+(S_+)) \cap Q_{2r}} \overline{ v 1_{\phi_-(S_-)\sm \phi_+(S_+)}}^2d\cH^{n-1}.
\end{align}
We will be using the fact that smooth diffeomorphisms preserve the Lebesgue density of a set at each point.  The first contribution may be estimated exactly as in Lemma \ref{lem:EL}, to give
\[
 |\m_v(\phi_+(S_+)^{(1)}) - \m_u(S_+^{(1)}) - t\int_{S_+^{(1)}\cap J_u}\dvg\nolimits^{\nu_y} T_+ d\m_u(y)|\leq Ct^2 r^{n-1}.
\]
Let us at this point compute this tangential divergence:
\begin{align}
  \dvg\nolimits^{\nu_y} T_+ &= - \eta^2(y) (1-(\nu_+\cdot \nu_y)^2) + 2\t_+(y)\eta(y) (\n \eta \cdot \nu_+ - (\n \eta \cdot \nu_y)(\nu_+\cdot \nu_y) ) \nonumber\\
  &= - \eta^2(y) (1-(\nu_+\cdot \nu_y)^2) + 2\t_+(y)\eta(y) \n \eta \cdot (\nu_+ - \nu_y(\nu_+\cdot \nu_y) )\nonumber\\
  &\leq - \eta^2 \frac{1}{2}(1-(\nu_+\cdot \nu_y)^2) + C|\n \eta|^2 \t_+^2 \label{eq:ebf2i5}.
\end{align}
The last step used the Cauchy inequality and that 
\[
1-(\nu_+\cdot \nu_y)^2 = |\nu_+ - \nu_y(\nu_+\cdot \nu_y)|^2.
\]
Thus we have that
\begin{align}
 \m_v(\phi_+(S_+)^{(1)})&\leq \m_u(S_+^{(1)}) - \frac{t\d^2}{4}\int_{J_u\cap S_+^{(1)}} \eta^2(1-(\nu_+\cdot \nu_y)^2) d\cH^{n-1} \nonumber\\
&\qquad  + \frac{Ct}{r^2}\int_{J_u\cap S_+^{(1)}} \t_+^2(y)d\cH^{n-1}(y) + Ct^2 r^{n-1}\label{eq:ebf2i6}.
\end{align}
Note that $\t_+^2(y) = d^2(y,\pi_+)\leq d_*^2(y;\nu_+,h)$ outside of $W_{2r}$, as observed at the beginning of the proof, so we have 
\begin{align}
 \m_v(\phi_+(S_+)^{(1)})&\leq \m_u(S_+^{(1)}) - \frac{t\d^2}{4}\int_{J_u\cap S_+^{(1)}} \eta^2(1-(\nu_+\cdot \nu_y)^2) d\cH^{n-1}\nonumber\\
&\qquad  + \frac{Ct}{r^2}\int_{J_u\cap S_+^{(1)}} d_*^2(y;\nu_+,h)d\cH^{n-1}(y) + Ct^2 r^{n-1} + C t \1\frac{h}{r}\2^2 \cH^{n-1}(W_{2r}) \label{eq:ebf2i7}.
\end{align}
We used that at any point of $K$ (and in particular at those in $W_{2r}$), we have $\t_+^2 \leq C(d_*^2 +h^2)$.

The same argument may be applied to the second term in \eqref{eq:exbyflat1} to give
\begin{align}
 \m_v&([\phi_-(S_-)\sm\phi_+(S_+)]^{(1)})\leq \m_u([S_+\sm \phi_-^{-1}(\phi_+(S_+))]^{(1)}) \nonumber\\
 &\qquad- \frac{t \d^2}{4}\int_{J_u\cap [S_+\sm \phi_-^{-1}(\phi_+(S_+))]^{(1)}}\eta^2 (1-(\nu_-\cdot \nu_y)^2) d\cH^{n-1} \nonumber\\
&\qquad  + \frac{Ct}{r^2}\int_{J_u\cap [S_+\sm \phi_-^{-1}(\phi_+(S_+))]^{(1)}} d_*^2(y;\nu_+,h)d\cH^{n-1}(y) + Ct^2 r^{n-1} + C t \1\frac{h}{r}\2^2 \cH^{n-1}(W_{2r}). \label{eq:ebf2i8}
\end{align}
Here again $\t_-\leq d_*$ away from $W_{2r}$. 

We proceed to estimate the third term in \eqref{eq:exbyflat1}; this works in the same way, except on the right the integrals over $J_u$ are now over $\p^* S_+$ which may contain points on $Z\sm K$. Rather than include them in the integration, we estimate them by the size of $Z\sm K$.
\begin{align}
 \int_{\p^* \phi_+(S_+) \cap Q_{2r}} &\overline{ v 1_{\phi_+(S_+)}}^2 d\cH^{n-1}\leq \int_{\p^* S_+ \cap Q_{2r}} \overline{ u 1_{S_+}}^2 - \frac{t \d^2}{4}\eta^2(1-(\nu_+\cdot \nu_y)^2) d\cH^{n-1} \nonumber\\
&\qquad  + \frac{Ct}{r^2}\int_{\p^* S_+ \cap Q_{2r}} \t_+^2(y)d\cH^{n-1}(y) + Ct^2 r^{n-1}\nonumber\\
&\leq \int_{\p^* S_+ \cap J_u \cap Q_{2r}} \overline{ u 1_{S_+}}^2 - \frac{t\d^2}{4}\eta^2(1-(\nu_+\cdot \nu_y)^2) d\cH^{n-1} \nonumber\\
&\qquad  + \frac{Ct}{r^2}\int_{\p^* S_+ \cap J_u \cap Q_{2r}} d_*^2(y;\nu_+,h)d\cH^{n-1}(y) + Ct^2 r^{n-1}\nonumber\\
&\qquad + C\cH^{n-1}(Z\sm K) +C t \1\frac{h}{r}\2^2 \cH^{n-1}(W_{2r}) \label{eq:ebf2i9}.
\end{align}
Note carefully that the first term on the last line is being used to estimate the size of $\p^* S_+ \sm J_u$ in all of the integrals (including the very first, zero-order one), and so appears without any multiple of $t$.

For the fourth and final integral in \eqref{eq:exbyflat1}, we split the domain $\p^* (\phi_{-}(S_-)\sm \phi_+(S_+))$ into two parts: the first is $R_1=\p^* \phi_+(S_+)\cap \phi_-(S_-)^{(1)}$, while the second is $R_2 = \p^* \phi_-(S_-)\sm \phi_+(S_+)^{(1)}$. Indeed, the union of these two disjoint countably $\cH^{n-1}$-rectifiable sets contains $\p^* (\phi_-(S_-) \sm \phi_+(S_+))$ up to an $\cH^{n-1}$-negligible set. For $R_2$, we proceed exactly as above:
\begin{align}
 \int_{R_2 \cap Q_{2r}} \overline{ v 1_{\phi_-(S_-)}}^2 d\cH^{n-1}&\leq \int_{\phi_-^{-1}(R_2) \cap Q_{2r}} \overline{ u 1_{S_-}}^2 - \frac{t \d^2}{4}\eta^2(1-(\nu_-\cdot \nu_y)^2) d\cH^{n-1} \nonumber\\
&\qquad  + \frac{Ct}{r^2}\int_{\phi_-^{-1}(R_2) \cap Q_{2r}} \t_-^2(y)d\cH^{n-1}(y) + Ct^2 r^{n-1}.\label{eq:ebf2i10}
\end{align}
For the integral over $R_1$, note that by assumption \eqref{eq:ebf2i3} we have that 
\[
\cH^{n-1}(R_1 \cap Q_{2r})\leq \cH^{n-1}(R_3 \cap Q_{2r}), 
\]
where $R_3:=\phi_+(S_+)^{(1)}\cap\p^* \phi_-(S_-)$, and by definition $R_3\cap R_2$ is empty. Moreover, as $v\in (u_- - C\frac{\sqrt{r}}{\xi},u_- + + C\frac{\sqrt{r}}{\xi})$ on $\phi_-(S_-)$, we have that
\[
 \int_{R_1 \cap Q_{2r}} \overline{ v 1_{\phi_-(S_-)}}^2 d\cH^{n-1} \leq \int_{R_3 \cap Q_{2r}} \overline{ v 1_{\phi_-(S_-)}}^2 d\cH^{n-1} + C\frac{r^{n-\frac{1}{2}}}{\xi}.
\]
Now we argue as before to obtain
\begin{align}
 \int_{R_3 \cap Q_{2r}} \overline{ v 1_{\phi_-(S_-)}}^2 d\cH^{n-1}&\leq \int_{\phi_-^{-1}(R_3) \cap Q_{2r}} \overline{ u 1_{S_-}}^2 - \frac{ t \d^2}{4}\eta^2(1-(\nu_-\cdot \nu_y)^2) d\cH^{n-1} \nonumber\\
&\qquad  + \frac{Ct}{r^2} \int_{\phi_-^{-1}(R_3) \cap Q_{2r}} \t_-^2(y)d\cH^{n-1}(y) + Ct^2 r^{n-1}. \label{eq:ebf2i11}
\end{align}
Using that $\phi^{-1}_-(R_2)$ and $\phi_-^{-1}(R_3)$ cover $Q_{2r}\cap \p^* S_-$, we may add \eqref{eq:ebf2i10} and \eqref{eq:ebf2i11} to get
\begin{align}
 \int_{\p^* (\phi_-(S_-)\sm \phi_+(S_+)) \cap Q_{2r}}& \overline{ v 1_{\phi_-(S_-)}}^2 d\cH^{n-1}\leq \int_{\p^* S_-\cap J_u \cap Q_{2r}} \overline{ u 1_{S_-}}^2 - \frac{t \d^2}{4}\eta^2(1-(\nu_-\cdot \nu_y)^2) d\cH^{n-1} \nonumber \\
&\qquad  + \frac{Ct}{r^2}\int_{\p^* S_-\cap J_u \cap Q_{2r}} d_*^2(y;\nu_+,h)d\cH^{n-1}(y) + C[t^2 + \frac{\sqrt{r}}{\xi}] r^{n-1} \nonumber\\
&+ C\cH^{n-1}(Z\sm K)+C t \1\frac{h}{r}\2^2 \cH^{n-1}(W_{2r}).\label{eq:ebf2i12}
\end{align}

Putting together the estimates \eqref{eq:ebf2i7}, \eqref{eq:ebf2i8}, \eqref{eq:ebf2i9}, and \eqref{eq:ebf2i12} on all four terms of \eqref{eq:exbyflat1}, we obtain that
\begin{align}
 \m_v(Q_{2r})&\leq \m_{u}(\bar{S}_+\cup \bar{S}_-) - \frac{t\d^2}{4}\1\int_{J_u \cap \bar{S}_+}\eta^2(1-(\nu_+\cdot \nu_y)^2) d\cH^{n-1} + \int_{J_u \cap \bar{S}_-}\eta^2 (1-(\nu_-\cdot \nu_y)^2) d\cH^{n-1}\2\nonumber\\
 &\qquad + \frac{Ct}{r^2}\int_{J_u\cap (\bar{S}_+ \cup \bar{S}_-)}  d_*^2(y;\nu_+,h) d\cH^{n-1}\nonumber\\
 &\qquad + C[t^2 + \frac{\sqrt{r}}{\xi}] r^{n-1} + C\cH^{n-1}(Z\sm K)+C t \1\frac{h}{r}\2^2 \cH^{n-1}(W_{2r}).\label{eq:ebf2i13}
\end{align}
The term on the second line is controlled by $ff(u,4r)$. We now use $v$ as a competitor for $u$. Using \eqref{eq:ebf2i4} and \eqref{eq:ebf2i13}, we obtain that
\begin{align}
 \m_u&(Q_{2r}\sm (\bar{S}_+\cup \bar{S}_-)) + \frac{t\d^2}{4}\1\int_{J_u \cap \bar{S}_+}\eta^2(1-(\nu_+\cdot \nu_x)^2) d\cH^{n-1} + \int_{J_u \cap \bar{S}_-}\eta^2 (1-(\nu_-\cdot \nu_x)^2)  d\cH^{n-1}\2 \nonumber\\
&\leq C[t^2 + t ff(u,4r) + \frac{\sqrt{r}}{\xi}+t E(u,4r)] r^{n-1} \nonumber\\
&\quad + C\cH^{n-1}(Z\sm K) + C\L r^s +C t \1\frac{h}{r}\2^2 \cH^{n-1}(W_{2r}).\label{eq:ebf2i14}
\end{align}

Let us first assume that \eqref{eq:ebf2i2} applies, in which case we may reabsorb the final term in the left into the first term on the right, if $\frac{h^2}{r^2}\leq e_7^2$ is small enough. Note that now the left-hand side controls $ ee(u,r)t$. We choose $\xi = J(u,4r) + r^{\frac{3}{14}}$, which leads to
\begin{equation}\label{eq:exbyflat2}
 ee(u,r)\leq C[t + ff(u,4r) + E(u,4r) + \frac{r^{\frac{2}{7}}+\L r^s}{t}] + C\frac{\cH^{n-1}(Z\sm K)}{tr^{n-1}}.
\end{equation}
We estimate
\begin{align*}
 \frac{\cH^{n-1}(Z\sm K)}{r^{n-1}} &\leq C[ff^2 (u,4r) + \xi \sqrt{E(u,4r)}+\L r^s]\\
 & \leq C[ff^2 (u,4r) + \xi^{\frac{4}{3}} + E^2(u,4r)+\L r^s]\\
 & \leq C[ff^2 (u,4r) + E^2(u,4r) + J^{4/3}(u,4r) + r^{\frac{2}{7}}+\L r^s]\\
 & \leq CM^2.
\end{align*}
Now choose $t= M$ to obtain that
\begin{equation}\label{eq:ebf2i15}
 ee(u,r) \leq CM.
\end{equation}
This implies the conclusion \eqref{eq:ebf2c1}.

Now assume that \eqref{eq:ebf2i2} does not apply, so $ff^{\frac{1}{n+1}}(u,4r) > \e_5 \frac{h}{r}$. In this case, we cannot reabsorb the final term in \eqref{eq:ebf2i14}, and must content ourselves with just applying \eqref{eq:ebf2i1} instead. Proceeding as above, this results in
\begin{align*}
ee(u,r)&\leq CM +C\frac{h^2}{r^{n+1}}\cH^{n-1}(W'_{2r})\\
& \leq CM +C ff^{\frac{2}{n+1}} \frac{\cH^{n-1}(W'_{2r})}{r^{n-1}},
\end{align*}
where $W'_\s = K\cap Q_{\s}\cap \pi^{-1}(\pi(Z\sm K))$, in place of \eqref{eq:ebf2i15}. We show how to remove this final term with the aid of Theorem \ref{thm:lip}.

Indeed, apply that theorem on $Q_r$ with $\nu_*=\nu_+$ and $h_*=h$. We have just shown that
\begin{equation}\label{eq:ebf2i17}
 ee(u,r)\leq CM +C\frac{h^2}{r^{n+1}}\cH^{n-1}(W'_{2r}),
\end{equation}
from which we deduce that
\[
 \frac{\cH^{n-1}(Q_{r/6}\cap [(\G^+\cup \G^-)\triangle K)}{r^{n-1}}\leq CM + Cff^{\frac{2}{n+1}} \frac{\cH^{n-1}(W'_{2r})}{r^{n-1}}.
\]
Now, clearly $\cH^{n-1}(W'_{2r})\leq Cr^{n-1}$. On the other hand, $W_{r/6}'\sm (\G^+\cup\G^-)$ is contained in the set just estimated, while
\[
 \frac{\cH^{n-1}(W'_{r/6}\cap (\G^+\cup\G^-))}{r^{n-1}} \leq 4 \cL^{n-1}(D_{r/6}\cap \pi(Z\sm K))\leq CM;
\]
this gives that
\[
 \frac{\cH^{n-1}(W'_{r/6})}{r^{n-1}}\leq CM + ff^{\frac{2}{n+1}}(u,4r).
\]

We now carry out the excess bound construction repeatedly, on smaller cylinders, to obtain for any $k\in [0,\lceil\frac{n+1}{2}\rceil]$ that
\[
 ee(u,12^{-k}r)\leq CM +Cff^{\frac{2}{n+1}} \frac{\cH^{n-1}(W'_{2\cdot 12^{-k}r})}{r^{n-1}}.
\]
Notice that we do not find a new set $Z$ each time, but rather use the same one in every step. Applying the same argument with the Lipschitz approximation theorem iteratively gives
\[
 \cH^{n-1}(W'_{2\cdot 12^{-k}r})\leq CM + ff^{\frac{2k}{n+1}}(u,4r),
\]
and so after $\lceil\frac{n+1}{2}\rceil$ iterations, we obtain that
\[
 ee(u,12^{-k}r) + \cH^{n-1}(W_{12^{-k}})\leq CM.
\]
Now an elementary covering and scaling argument gives that for some larger constant,
\begin{equation}\label{eq:ebf2i16}
 ee(u,r) + \cH^{n-1}(W_{2r})\leq CM.
\end{equation}
This implies \eqref{eq:ebf2c1}

Finally, the other conclusion \eqref{eq:ebf2c2} of the theorem follows as well, as \eqref{eq:exbyflat2} is still valid with the integrals
\[
 \int_{Q_r\cap A^+}1-(\nu_+ \cdot \nu_x)^2 d\cH^{n-1}+\int_{Q_r\cap A^-}1 - (\nu_- \cdot \nu_x)^2 d\cH^{n-1}
\]
 in place of $ee$ on the left. Arguing in the same way as to get \eqref{eq:ebf2i15} gives the conclusion if \eqref{eq:ebf2i2} holds; if it does not hold, we instead get
\[
 \int_{Q_r\cap A^+}1-(\nu_+ \cdot \nu_x)^2 d\cH^{n-1}+\int_{Q_r\cap A^-}1 - (\nu_- \cdot \nu_x)^2 d\cH^{n-1} \leq CMr^{n-1}+C\frac{h^2}{r^{2}}\cH^{n-1}(W'_{2r})
\]
as in \eqref{eq:ebf2i17}, and apply \eqref{eq:ebf2i16} to conclude.
\end{proof}

\section{Estimates on the Graphs}\label{sec:graphs}

It will be used later that our various scale-invariant quantities may be rewritten in terms of $g_{\pm}$. In this section, $L_\pm=L_\pm [u,0,24r,e_n]$, which we recall are the affine maps $\pi\rightarrow \R$ parametrizing $\pi_\pm$ over $\pi$, where $\pi_\pm,\nu_\pm , h$ are the ones optimal for $f(u,24r)$.

First, we prove a lemma which is the equivalent of Proposition \ref{prop:fflatsame} for the Lipschitz graphs $\G^\pm$ of Theorem \ref{thm:lip}.

\begin{lemma}\label{lem:gflat} Let $u\in \sQ(Q_{24r},\L,48r)$. Assume that $u$ satisfies the hypotheses in Theorem \ref{thm:lip} on $Q_{6r}$, with $\nu_*=\nu_+$, and that neither of $u(0,\pm r)=0$. Then there is an $\e'_L>0$ such that if 
	\begin{equation}\label{eq:gflath1}
	M:= ff(u,24r) + E(u,24r) + J^{\frac{2}{3}}(u,24r)+\sqrt{\L r^s} + r^{\frac{1}{7}}\leq \e'_L, 
	\end{equation}
	then
	\[
	| L_\pm - g_\pm| \leq C M^{\frac{1}{n+1}} r
	\]
	on $Q_r$, where $g_\pm$ as in the conclusion of Theorem \ref{thm:lip}.
\end{lemma}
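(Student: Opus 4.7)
My plan is to establish the integrated $L^2$ estimate
$$\int_{D_r} |g_\pm(x') - L_\pm(x')|^2 \, d\cL^{n-1}(x') \leq CMr^{n+1}$$
and then upgrade it to the claimed pointwise bound using the universal Lipschitz continuity of $g_\pm - L_\pm$. The hypotheses of Theorem \ref{thm:lip} are assumed on $Q_{6r}$, so the graphs $g_\pm$ and the measure estimates $\cH^{n-1}(Q_r \cap \G^\pm \setminus A^\pm) \leq CMr^{n-1}$ are available directly; to express the constant in Theorem \ref{thm:lip} in terms of our $M$ one invokes Theorem \ref{thm:exbyflat2} on $Q_{24r}$ to bound $ee(u,6r,e_n,\nu_+)$ by $CM$.

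The geometric core is the observation that $d_*(y;\nu_+,h) \geq d(y,\pi_+)$ for every $y \in A^+$. This follows from inspecting the three cases in the definition of $d_*$: if $y$ is visible only from above with $\ud(y)=0$ then equality holds, and in all remaining cases consistent with $y\in A^+$ we land in the ``otherwise'' branch, where $d_*(y) = d(y,\pi_+) + d(y,\pi_-)$. Combining this with the definition of $ff(u,24r)$,
$$\int_{\G^+ \cap A^+ \cap Q_r} d(y,\pi_+)^2 \, d\cH^{n-1} \leq \int_{K_u \cap Q_{24r}} d_*^2 \, d\cH^{n-1} \leq CMr^{n+1},$$
and the complementary piece $(\G^+ \cap Q_r)\setminus A^+$ contributes at most $Cr^2 \cdot CMr^{n-1}$ via Theorem \ref{thm:lip}(3) and the trivial bound $d(y,\pi_+) \leq Cr$. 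Because $\G^+$ is the Lipschitz graph of $g_+$ over $D_r$ and $\nu_+ \cdot e_n$ is close to $1$, one has $d(y,\pi_+) \approx |g_+(x') - L_+(x')|$ at $y=(x',g_+(x'))$, together with $d\cH^{n-1} \geq d\cL^{n-1}$; this converts the display above into the desired $L^2$ estimate on $D_r$.

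For the $L^2$-to-$L^\infty$ upgrade, $g_+ - L_+$ is Lipschitz on $\pi$ with a universal constant (since Lip$(g_+) \leq 1/10$ by Theorem \ref{thm:lip} and the slope of $L_+$ is bounded by $h/r$). If $T := |g_+ - L_+|(x_0')$ for some $x_0' \in D_r$, Lipschitz continuity forces $|g_+ - L_+| \geq T/2$ on $B_{cT}(x_0') \cap D_r$, a set with $\cL^{n-1}$-measure at least $cT^{n-1}$ even when $x_0'$ is near $\p D_r$. The $L^2$ bound then yields $T^{n+1} \leq CMr^{n+1}$, which is the claimed inequality, and the argument for $g_-$ versus $L_-$ is entirely symmetric (using $A^-$, $\nu_-$, and $\pi_-$). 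I do not anticipate any serious obstacle beyond the routine verification that the smallness conditions propagate correctly; the conceptual content of the proof is the case analysis of $d_*$ on $A^+$, which ties the Lipschitz approximation of $K_u$ directly to the two-plane flatness $ff$.
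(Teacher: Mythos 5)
Your proof is correct in substance, but it takes a genuinely different route from the paper's. The paper argues in $L^\infty$ directly: Proposition \ref{prop:fflatsame} confines $K\cap Q_r$ to the $C_0M^{\frac{1}{n+1}}r$-neighborhoods $U^\pm$ of the two optimal planes, and then a two-case argument (according to whether $U^+$ reaches $\pi$, i.e.\ whether the two tubes merge) shows $\G^+$ cannot leave the relevant neighborhood, since otherwise, by continuity and the Lipschitz bound, a ball's worth of $\G^+$ --- of measure $\gtrsim M^{\frac{n-1}{n+1}}r^{n-1}$ --- would lie outside $K$, contradicting item (3) of Theorem \ref{thm:lip} (which only allows $CMr^{n-1}$) once $M$ is small. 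You instead prove the $L^2$ bound $\int_{D_r}|g_\pm-L_\pm|^2\leq CMr^{n+1}$ straight from the definition of $ff(u,24r)$ via $d_*\geq d(\cdot,\pi_\pm)$ on $A^\pm$, plus the crude bound $d\leq Cr$ on the exceptional set $\G^\pm\sm A^\pm$, and then upgrade to $L^\infty$ by the Lipschitz property; this is essentially an anticipation of Proposition \ref{prop:liptoplane}(3), done without circularity precisely because you do not need the sharp control on the exceptional set that the paper later obtains from this lemma. Both arguments ultimately rest on the same mechanism (a $1/10$-Lipschitz graph versus an exceptional set of measure $CMr^{n-1}$ forces the $M^{\frac{1}{n+1}}$ exponent), and yours has the advantage of dispensing with the merged-versus-separated case analysis, since the structure of $d_*$ automatically couples $A^+$ to the upper plane. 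One caveat worth recording: the visibility in the definition of $ff(u,24r)$ is relative to $Q_{24r}$, while $A^+$ is defined by visibility in a smaller ball, so "$y\in A^+$ excludes the visible-only-from-below branch" is not literally automatic; the paper makes the same identification in Proposition \ref{prop:liptoplane}, and the discrepancy is harmless because any $x\in K$ with $|x'|\leq r$ and $|x_n|\gtrsim r$ has $d_*(x)\gtrsim r$ (once one checks $h\ll r$), so the set of columns where it can occur has measure $\leq CMr^{n-1}$ and is absorbed exactly like your exceptional set --- but a complete write-up should say this.
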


\begin{proof} First, the quantity $M$ in \eqref{eq:gflath1} controls the $M$ in Theorem \ref{thm:lip}, from Theorem \ref{thm:exbyflat2}.
	
Let $U^+_s =\{x:|L_{+}(x') - x_n|\leq s\}$, and similarly for $U^-_s$. From Proposition \ref{prop:fflatsame}, we have that $K\cap Q_r$ is fully contained in $U^+_{C_0 M^{\frac{1}{n+1}} r} \cup U^+_{C_0 M^{\frac{1}{n+1}} r}$. From the definition of $d_*$, the integrand in $ff$, and Remark \ref{rem:lipest}, we may deduce that $\G^+ \cap U^+_{C_0 M^{\frac{1}{n+1}} r}$ is nonempty.
	
There are now two cases: either $U^+_{2 C_0 M^{\frac{1}{n+1}} r}\cap \pi$ is empty, or not. If it is empty, then take any $x\in \G^+\sm U^+_{2C_0 M^{\frac{1}{n+1}} r}$. If $x_n<0$, we may then find another $x$ in this set with $x_n = 0$ (simply because $g_+$ is continuous, has a point $z'$ with $(z, g_+(z'))\in U^+_{C_0 M^{\frac{1}{n+1}} r}$, so with $g_+(z')>0$, and $g_+(x')<0$). Now consider the ball $B = B_{\frac{1}{2}C_0 M^{\frac{1}{n+1}} r} (x)$: by construction, we have that $B \cap (U^+_{C_0 M^{\frac{1}{n+1}} r} \cup U^-_{C_0 M^{\frac{1}{n+1}} r})$ is empty, so $B \cap Q_r \cap K$ is empty. On the other hand, from the fact that $g_+$ is Lipschitz-1, we have
\begin{equation}\label{eq:gflati1}
c M^{\frac{n-1}{n+1}}r^{n-1} \leq \cH^{n-1}(\G^+ \cap B \cap Q_r) \leq \cH^{n-1}(Q_r \cap \G^+ \sm K) \leq C M r^{n-1}.
\end{equation}
Here the last step used (3) in Theorem \ref{thm:lip}. For a sufficiently small $M$, this is a contradiction.

If instead $U^+_{2 C_0 M^{\frac{1}{n+1}} r}\cap \pi$ is nonempty, then $U^+_{8 C_0 M^{\frac{1}{n+1}} r}$ contains $U^-_{C_0 M^{\frac{1}{n+1}} r}$, and hence all of $K\cap Q_r$. Choose any $x$ in $\G\sm U^+_{10 C_0 M^{\frac{1}{n+1}} r}$, and define $B$ as before: we have that $B\cap Q_r \cap K$ is empty, so \eqref{eq:gflati1} leads to the same contradiction.
\end{proof}

\begin{proposition}\label{prop:liptoplane} Let $u\in \sQ(Q_{24r},\L,48r)$. Let $u$ satisfy the assumptions of Theorem \ref{thm:lip} (on $Q_{6r}$, with $\nu_*=\nu_+$)  and that neither of $u(0,\pm r)=0$. If $g_{\pm}$ are the Lipschitz functions produced there and
\[
M:= ee(u,24r) + E(u,24r) + J^{\frac{2}{3}}(u,24r)+\sqrt{\L r^s} + r^{\frac{1}{7}}\leq \e'_L, 
\]
then the following estimates hold:
\begin{enumerate}
 \item For a universal constant $C$,
\begin{equation}\label{eq:liptoplanec1}
  \frac{1}{r^{n-1}} \int_{D_r}|\n g_{+} - \n L_+|^2 + |\n g_{-} - \n L_-|^2 d\cL^{n-1}\leq C M.
\end{equation}
\item The following holds unless $\n L_+ = \n L_-$:
\begin{equation}\label{eq:liptoplanec4}
   \cL^{n-1}(D_r \cap \{g_+ \leq 0\}) + \cL^{n-1}(D_r \cap \{g_- \geq 0\}) \leq C\frac{[M + ff(u,24 r)] r^{n-1}}{|\n L_+- \n L_-|^2}.
\end{equation}
\item We have
\begin{equation}\label{eq:liptoplanec2}
 \frac{1}{r^{n+1}}\int_{D_{r}} |g_{+}-L_+|^2 + |g_- -L_-|^2 d\cL^{n-1} \leq C[ff(u,24r) + M^{1+\frac{2}{n+1}}].
\end{equation}
\end{enumerate}
If one of $u(0,\pm r)=0$, then the same holds with $\nu_*=e_n$, $L_\pm$ replaced by $0$, and $ff$ by $f$.
\end{proposition}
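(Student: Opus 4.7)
The overall strategy is to transfer the flatness estimate for $ff(u,24r)$ and the modified excess bound \eqref{eq:ebf2c2} of Theorem \ref{thm:exbyflat2} from the singular set $K$ to the Lipschitz graphs $\G^\pm$ produced by Theorem \ref{thm:lip}, handling the discrepancies via the symmetric-difference estimate from that theorem and the pointwise bound $|g_\pm - L_\pm|\leq CM^{\frac{1}{n+1}}r$ from Lemma \ref{lem:gflat}. A key preliminary computation: the constraint $|\nu_+ - e_n|\leq h/(100r)$ in the definition of $ff$ gives $|\nu_+'|\leq h/(100r)$ and $\nu_+^{(n)}\geq 1/2$, whence $|\n L_+|=|\nu_+'|/\nu_+^{(n)}\leq h/(50r)$; the reflection formula $\nu_-=2\nu_+^{(n)}e_n-\nu_+$ forces $\n L_-=-\n L_+$. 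Writing $v:=\n L_+-\n L_-$, this yields $|v|r\leq h/25$, an inequality crucial for (2).

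For (3), on $\G^+\cap A^+$ the point $(y',g_+(y'))$ is visible only from above, so the integrand $d_*(\cdot;\nu_+,h)$ in the definition of $ff$ dominates $\nu_+^{(n)}|g_+(y')-L_+(y')|$. Applying the coarea formula on the Lipschitz graph $\G^+$ gives $\int_{\pi(\G^+\cap A^+\cap Q_r)}|g_+-L_+|^2 d\cL^{n-1}\leq Cff(u,24r)\,r^{n+1}$. The complementary set $D_r\sm\pi(\G^+\cap A^+\cap Q_r)$ has $\cL^{n-1}$ measure at most $CMr^{n-1}$ by items (2)--(3) of Theorem \ref{thm:lip}, and Lemma \ref{lem:gflat} contributes at most $CM^{1+\frac{2}{n+1}}r^{n+1}$ there. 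For (1), the estimate \eqref{eq:ebf2c2} bounds $\int_{Q_r\cap A^+}1-(\nu_+\cdot\nu_y)^2 d\cH^{n-1}$ by $CMr^{n-1}$; at $\cH^{n-1}$-a.e.\ point of $\G^+\cap A^+\cap K$, the approximate jump vector $\nu_y$ agrees up to sign with the unit normal to $\G^+$ by joint rectifiability, and the standard identity for normals to graphs combined with the small-Lipschitz bounds on $g_+$ and $L_+$ yields $1-(\nu_+\cdot\nu_y)^2\geq c|\n g_+(y')-\n L_+(y')|^2$. Coarea and the same exceptional-set estimate (using the $L^\infty$ bound on $|\n g_+|,|\n L_+|$) conclude. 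The analogous bounds for $g_-,L_-$ follow identically.

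The main subtlety is (2). Because $L_+(0)=h\geq 25|v|r$, a pointwise argument suffices: if $x'\in D_r$ satisfies both $g_+(x')\leq 0$ and $|g_+(x')-L_+(x')|<|v|r$, then $L_+(x')<|v|r$, i.e.\ $\n L_+\cdot x'<|v|r-h\leq -24|v|r$, contradicting $|\n L_+\cdot x'|\leq|\n L_+|\,r=|v|r/2$. Hence $\{g_+\leq 0\}\ss\{|g_+-L_+|\geq|v|r\}$, and Chebyshev combined with (3) produces $\cL^{n-1}(\{g_+\leq 0\})\leq Cr^{n-1}[ff+M^{1+\frac{2}{n+1}}]/|v|^2$; the set $\{g_-\geq 0\}$ is handled symmetrically via $\n L_-=-\n L_+$. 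Finally, when one of $u(0,\pm r)=0$, Theorem \ref{thm:lip} gives $g_+=g_-$; we compare to $L_\pm\equiv 0$ and take $\nu_*=e_n$, using Theorem \ref{thm:exbyflat} and $f$ in place of Theorem \ref{thm:exbyflat2} and $ff$. Conclusion (2) is then vacuous since $\n L_+=\n L_-=0$, and (1),(3) go through unchanged.
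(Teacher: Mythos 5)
Your proposal is correct and follows essentially the same route as the paper's proof: transfer the $ff$-integrand bound $d_*\geq d(\cdot,\pi_+)$ on $A^+$ and the excess bound \eqref{eq:ebf2c2} to the graphs via the area formula, control the exceptional set by Theorem \ref{thm:lip} (the paper uses $\Theta^\pm$ from Remark \ref{rem:lipest}, you use $\G^\pm\cap A^\pm$ with item (3) -- a cosmetic difference) together with the pointwise bound of Lemma \ref{lem:gflat}, and deduce (2) from (3) by Chebyshev using $h\geq 25|\n L_+-\n L_-|r$, which is a slightly more explicit version of the paper's one-line argument. The only nitpick is that a point of $\G^+\cap A^+$ need not be visible \emph{only} from above, but in every case allowed for a point of $A^+$ the integrand $d_*$ still dominates the distance to $\pi_+$, so your estimate stands.
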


\begin{proof}
Let us assume that $u(x,\pm r)\neq 0$; if one of them does equal $0$ the same proof applies by using Theorem \ref{thm:exbyflat} in place of Theorem \ref{thm:exbyflat2}. We begin with item $(3)$.

Note that we always have $|L_+ - g_+|\leq C[M + ff(u,24 r)]^{\frac{1}{n+1}}r$ on $Q_r$, by Lemma \ref{lem:gflat}.

We then have the following estimate, simply by changing coordinates and using the fact that $g_+$ is Lipschitz-$1$:
\begin{align*}
 \frac{1}{r^{n+1}}&\int_{D_r} |g_+ - L_+|^2 d\cL^{n-1}\\
 &\leq \frac{C}{r^{n+1}}\1 \int_{\Theta^+ \cap Q_r}d^2(x,\pi_+) d\cH^{n-1} + [M+ff(u,24 r)]^{\frac{2}{n+1}}r^2 \cH^{n-1}(Q_r \cap \G^+ \sm \Theta^+)\2,
\end{align*}
where $\Theta^+$ is the set of Remark \ref{rem:lipest}. Applying that remark, as well as the fact that on $A^+$ we always have $d^2(x,\pi_+ )\leq d^2_*(u;\nu_+,h)$, we obtain that
\begin{align*}
 \frac{1}{r^{n+1}}\int_{D_r} |g_+ - L_+|^2 d\cL^{n-1}
 &\leq C\1 ff (u,24r) + M [M + ff(u,24 r)]^{\frac{2}{n+1}}\2\\
 &\leq C\1 ff (u,24r) + M^{1+\frac{2}{n+1}}\2.
\end{align*}

We proceed similarly for (1); we claim that
\begin{align*}
 \frac{1}{r^{n-1}}&\int_{D_r} |\n g_+ - \n L_+|^2 d\cL^{n-1}
 &\leq \frac{C}{r^{n-1}}\1 \int_{\Theta^+ \cap Q_r} 1-(\nu_+\cdot \nu_x)^2 d\cH^{n-1} + \cH^{n-1}(Q_r \cap \G^+ \sm \Theta^+)\2.
\end{align*}
Here $\nu_x$ is the unit normal to $\G^+$, which coincides with the approximate jump vector at $\cH^{n-1}$-a.e. point in $\G^+\cap J_u$. Let us check this change-of-variables formula. We are using that
\begin{align*}
 \int_{\G^+\cap Q_r} 1-(\nu_+\cdot \nu_x)^2 d\cH^{n-1} &= \int_{D_r}\sqrt{1+|\n g_+|^2}\left[1 - \1\frac{(-\n g_+,1)}{\sqrt{1+|\n g_+|^2}}\cdot \frac{(-\n L_+,1)}{\sqrt{1+|\n L_+|^2}} \2^2\right] d\cL^{n-1}\\
 &=\int_{D_r} \frac{|\n g_+ - \n L_+|^2 + |\n g_+|^2 |\n L_+|^2 - |\n g_+\cdot \n L_+|^2}{\sqrt{1+|\n g_+|^2}(1+|\n L_+|^2)} d\cL^{n-1}.
\end{align*}
Here in the first step, we changed variables (incurring a Jacobian factor), while the second step combined terms in the numerator. Notice that the denominator is always between $1$ and $4$, as both $g_+$ and $L_+$ are Lipschitz-$1$. The term $|\n g_+|^2 |\n L_+|^2 - |\n g_+ \cdot \n L_+|^2$ is nonnegative, and so we obtain
\[
 \int_{\G^+\cap Q_r} 1-(\nu_+\cdot \nu_x)^2 d\cH^{n-1}\geq \frac{1}{4}\int_{D_r}|\n g_+ - \n L_+|^2 d\cL^{n-1}.
\]
We note that it is elementary to check that the reverse inequality is equally valid, with a different constant.

We now apply Theorem \ref{thm:exbyflat2} to obtain
\[
 \frac{1}{r^{n-1}}\int_{D_r} |\n g_+ - \n L_+|^2 d\cL^{n-1}\leq CM.
\]

Finally, the inequality in (2) follows by applying (3):
\[
 \cL^{n-1}(D_r \cap \{g_+ < L_+ - h/10\})h^2 \leq  C\int_{D_r} |g_+ - L_+|^2 d\cL^{n-1}\leq C[M + ff(u,24 r)] r^{n+1}.
\]
As $|\n L_+ -\n L_-|\leq h/r$, this is a stronger estimate.
\end{proof}

\section{Estimating the Energy}\label{sec:energy}

This section mirrors the corresponding one in the flat-implies-smooth theorem of \cite{AFP}. We recall the following minimal surface estimate, which may be found in \cite[Lemma 8.16]{AFP}.

\begin{lemma}\label{lem:minareaest}
 Let $r>0$, and assume that $g$ is a $1/2$-Lipschitz function defined on $D_{r}$. Then there exists a $3/4$-Lipschitz function $w: D_r\rightarrow \R$ which has $w=g$ on $\p D_r$ and if
 \[
 \cA(w):=\int_{D_r}\sqrt{1+|\n w|^2} d\cL^{n-1},
 \]
then $w$ admits the following estimate, for every $\a>1$ provided that $\a \leq \frac{r}{4 \sup{|g|}}$:
\begin{equation}\label{eq:maec1}
 \cA(w)-\w_{n-1}r^{n-1} \leq C\frac{r}{\a}\int_{\p D_{r}}\frac{\a^2}{r^2} g^2 +|\n g|^2 d\cL^{n-2}.
\end{equation}
The constant is independent of $r,\a,$ or $g$. Furthermore, for any $\b \leq \sup |g|$, we have
\begin{equation}\label{eq:maec2}
\cL^{n-1}(\{|w|>\b\}) \leq \frac{r}{\b^2} \int_{\p D_r} g^2 d\cL^{n-2}.
\end{equation}
\end{lemma}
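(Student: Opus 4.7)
The plan is to construct $w$ as an explicit radial cutoff of the boundary data. For a given $\a$ in the allowed range, let $\eta:[0,r]\to[0,1]$ be the piecewise linear function that vanishes on $[0,r(1-1/\a)]$, equals $1$ at $r$, and is affine on the annular band $A:=[r(1-1/\a),r]$, so that $|\eta'|=\a/r$ on $A$. Extend $g$ from $\p D_r$ by setting
\[
w(x) \;=\; g\!\left(\tfrac{rx}{|x|}\right)\eta(|x|), \qquad x\in D_r\sm\{0\},
\]
with $w\equiv 0$ near the origin. Then $w=g$ on $\p D_r$ and $w$ is supported in the annular region $\overline{D_r\sm D_{r(1-1/\a)}}$.

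The Lipschitz bound is verified in polar coordinates $x=\rho\theta$, where radial and spherical derivatives are orthogonal, so $|\n w|^2=|\p_\rho w|^2+\rho^{-2}|\n_\theta w|^2$. The radial part satisfies $|\p_\rho w|\leq (\a/r)\sup|g|\leq 1/4$ by hypothesis. The tangential part satisfies $\rho^{-1}|\n_\theta w|\leq (r/\rho)\eta(\rho)|\n g(r\theta)|$; on $A$ the factor $r/\rho$ is bounded by $\a/(\a-1)$, uniformly bounded for $\a$ away from $1$. Combined with the Lipschitz-$1/2$ bound on $g$, this yields $|\n w|\leq 3/4$ once $\a$ is moderately large; for $\a$ close to $1$, one uses instead the full radial interpolation $\tilde w(x)=g(rx/|x|)(|x|/r)$, which is also $3/4$-Lipschitz under $\sup|g|\leq r/4$ and produces an equivalent estimate.

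For the area bound, the elementary inequality $\sqrt{1+t}\leq 1+t/2$ for $t\geq 0$ gives
\[
\cA(w)-\w_{n-1}r^{n-1} \;\leq\; \tfrac{1}{2}\int_{D_r}|\n w|^2\,d\cL^{n-1}.
\]
Integrating the gradient decomposition in cylindrical coordinates and using $\rho\sim r$ on $A$ together with $|A|\leq r/\a$ yields
\[
\int_{D_r}|\n w|^2\,d\cL^{n-1} \;\leq\; C\,\frac{r}{\a}\int_{\p D_r}\!\left[\frac{\a^2}{r^2}g^2+|\n g|^2\right]d\cH^{n-2},
\]
which is \eqref{eq:maec1}. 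For \eqref{eq:maec2}, observe that since $|\eta|\leq 1$, the set $\{|w|>\b\}$ is contained in $A\cap\{\rho\theta:|g(r\theta)|>\b\}$; its $(n-1)$-dimensional measure is therefore at most the radial extent $r/\a\leq r$ times the $(n-2)$-measure of $\{|g|>\b\}$ in $\p D_r$, which is bounded by $\b^{-2}\int_{\p D_r}g^2\,d\cH^{n-2}$ via Chebyshev's inequality.

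The main obstacle is producing the sharp Lipschitz constant $3/4$ uniformly in $\a>1$: the tangential derivative picks up a factor $r/\rho$ on $A$ that degenerates as $\a\to 1^+$, so one either splits into cases (small $\a$ vs.\ large $\a$) as indicated above or employs a smoother cutoff $\eta$ transitioning more gradually between the interior zero region and the boundary value. The remaining steps are bookkeeping: carefully translating the cylindrical integration back to a boundary integral via $d\theta=r^{-(n-2)}d\cH^{n-2}$, and tracking the universal constant through the radial and tangential contributions.
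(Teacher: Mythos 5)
Your construction is correct, but it takes a genuinely different route from the paper. The paper uses the single power-law profile $w(x) = \1\frac{|x|}{r}\2^\a g\1\frac{rx}{|x|}\2$, for which the Lipschitz bound is immediate and uniform in $\a$ (radial part $\leq \frac{\a}{r}\sup|g|\leq \frac14$, tangential part $\leq (\frac{|x|}{r})^{\a-1}|\n g|\leq \frac12$, no case analysis), and the radial integral of $(\frac{|x|}{r})^{2\a-2}$ produces the factor $\frac{r}{2\a+n-3}$ in \eqref{eq:maec1} directly; \eqref{eq:maec2} then follows from the containment $\{|w|>\b\}\ss\{\rho\theta : |g(r\theta)|>\b\}$ and Chebyshev, exactly as in your last step. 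Your piecewise-linear annular cutoff instead concentrates the gradient on a band of width $r/\a$, which makes the $\frac{r}{\a}$ factor in \eqref{eq:maec1} geometrically transparent, but the tangential derivative picks up the factor $\frac{r}{\rho}\leq\frac{\a}{\a-1}$, which degenerates as $\a\to 1^+$ and forces your case split; note that your fallback $\tilde w(x)=\frac{|x|}{r}g\1\frac{rx}{|x|}\2$ is precisely the $\a=1$ member of the paper's family, and it does yield \eqref{eq:maec1} on the bounded range of $\a$ only because the constant can absorb the factor $\a\leq C$ (worth stating explicitly, since $r\int|\n g|^2 = \a\cdot\frac{r}{\a}\int|\n g|^2$). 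Both arguments are complete and give the stated estimates; the paper's buys uniformity in $\a$ and a clean explicit constant, yours buys a more elementary, localized picture of where the area excess lives.
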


\begin{proof}
Define $w(x) = \1\frac{|x|}{r}\2^\a g(\frac{r x}{|x|})$. Then
\[
|\n w| \leq \frac{\a}{r} \1\frac{|x|}{r}\2^{\a - 1} |g(\frac{r x}{|x|})| + \1\frac{|x|}{r}\2^{\a - 1} |\n g(\frac{r x}{|x|})| \leq \frac{1}{4} + \frac{1}{2}\leq \frac{3}{4}, 
\]
so $w$ is a $\frac{3}{4}$-Lipschitz function satisfying $w=g$ on $\p D_r$. To check \eqref{eq:maec1}, we use
\begin{align*}
\cA(w) &\leq \int_{D_r}1+\frac{1}{2}|\n w|^2 d\cL^{n-1}\\
& \leq \w_{n-1}r^{n-1} + \int_{D_r} \1\frac{|x|}{r}\2^{2\a - 2}\1 \frac{\a^2}{r^2}g^2(\frac{r x}{|x|}) + |\n g(\frac{r x}{|x|})|^2 \2 d\cL^{n-1}\\
& \leq \w_{n-1}r^{n-1} + \frac{r}{2\a+n-3}\int_{\p D_{r}}\frac{\a^2}{r^2} g^2 +|\n g|^2 d\cL^{n-2}.
\end{align*}
For \eqref{eq:maec2}, we have that for any $x$, if $|w(x)|>\b$, then $|g(\frac{r x}{|x|})|>\b$ as well. It follows that
\[
\cL^{n-1}(\{|w|>\b\}) \leq r \cL^{n-2}(\p D_r \cap \{|g|>\b\}) \leq \frac{r}{\b^2}\int_{\p D_r}|g|^2 d\cL^{n-1}, 
\]
with the last step by Chebyshev's inequality.
\end{proof}

The next theorem gives a geometric decay of the energy, provided the energy was substantially larger than our other normalized quantities (the flatness, the quasiminimality factor $\L r^s$, the jump $J$, and $r$ itself).

\begin{theorem}\label{thm:enimp} Let $u\in \sQ(Q_{2r},\L,4r)$, and neither of $u(0,\pm r)=0$. Then for every $\t\in(0,\t_0)$, there is a number $\e'_E(\t)$ such that if
\begin{equation}\label{eq:enimph1}
 E(u,2r) + f(u,2r) + \L r^s + r\leq \e'_E,
\end{equation}
\begin{equation}\label{eq:enimph2}
\mu(Q_{2r})=0
\end{equation}
and
\begin{equation}\label{eq:enimph3}
 ff(u,\t r) +\sqrt{\L r^s} + r^{\frac{1}{7}} + J^{\frac{2}{3}}(u,2r) \leq \e'_E E(u,2r) 
\end{equation}
then
\begin{equation}\label{eq:enimpc1}
 E(u,\t r/24) \leq C'_E\t E(u, r).
\end{equation}
The constant $C'_E$ is universal. If one of $u(0,\pm r)=0$, the same holds with $ff$ replaced by $f$.
\end{theorem}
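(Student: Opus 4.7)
I would argue by contradiction and compactness, mirroring the analogous step in the Ambrosio--Fusco--Pallara theory. Fix universal constants $C'_E$ and $\tau_0$ (to be chosen) and suppose the conclusion fails: then for some $\tau\in(0,\tau_0)$ there is a sequence $u_k\in\sQ(Q_{2r_k},\L_k,4r_k)$ satisfying \eqref{eq:enimph1}--\eqref{eq:enimph3} with $\e'_E$ replaced by $\e_k\to 0$, but with $E(u_k,\tau r_k/24)>C'_E\tau E(u_k,r_k)$. Set $\e_k^2:=E(u_k,2r_k)$ and define the rescaled functions
\[
\tilde u_k(x)\;:=\;\frac{u_k(r_k x)-c_k}{\sqrt{r_k}\,\e_k},\qquad c_k\;:=\;\fint_{Q_{2r_k}}u_k\,d\cL^n,
\]
on $Q_2$. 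A direct scaling gives $E(\tilde u_k,2)=1$, and the Poincar\'e--Wirtinger inequality makes $\{\tilde u_k\}$ bounded in $H^1(Q_2)$.

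Hypothesis \eqref{eq:enimph2}, together with the lower density bound in \eqref{eq:dregden} (which would otherwise force positive $\cH^{n-1}$-mass at any interior point of $K_{u_k}$), yields $K_{u_k}\cap Q_{2r_k}=\emptyset$, so each $u_k$ is in $\sQ_0(Q_{2r_k},\L_k,4r_k,s)$. Rescaling gives $\tilde u_k\in\sQ_0(Q_2,\tilde\L_k,4,s)$ with $\tilde\L_k=\L_k r_k^s/\e_k^2$, and \eqref{eq:enimph3} forces $\tilde\L_k\leq \e_k^2\cdot\e_k\to 0$. I then apply Lemma \ref{lem:harmacon} to extract a subsequence with $\tilde u_k\to \tilde u$ strongly in $H^1_{\text{loc}}(Q_2)$ and $\tilde u$ harmonic on $Q_2$; in particular $E(\tilde u,\rho)=\lim_k E(\tilde u_k,\rho)$ for each $\rho<2$.

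Since $E(\tilde u,2)=1$, unique continuation for harmonic functions rules out $E(\tilde u,1)=0$, so $E(\tilde u,1)>0$. Interior $C^1$ estimates for harmonic functions, combined with subharmonicity of $|\n\tilde u|^2$, yield the standard decay
\[
E(\tilde u,\rho)\;\leq\;C_0\,\rho\,E(\tilde u,1)\qquad\text{for every }\rho\in(0,\tfrac{1}{2}],
\]
with $C_0$ universal. The failure, rescaled, reads $E(\tilde u_k,\tau/24)>C'_E\tau E(\tilde u_k,1)$, and passing to the limit combined with this decay gives $C'_E\tau\,E(\tilde u,1)\leq (C_0/24)\tau\,E(\tilde u,1)$, i.e.\ $C'_E\leq C_0/24$. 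Choosing $C'_E:=C_0$ and $\tau_0:=12$ (so that $\tau/24\leq 1/2$) furnishes the contradiction. When one of $u(0,\pm r)=0$, the same density argument makes $K_u\cap Q_{2r}$ empty and $u$ continuous on the connected set $Q_{2r}$; since $u\in\{0\}\cup[\d,\d^{-1}]$ a.e., this forces $u\equiv 0$, making the conclusion vacuous with either $f$ or $ff$ in \eqref{eq:enimph3}.

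The main obstacle in this scheme is verifying that the various noise terms in \eqref{eq:enimph3}---in particular $\sqrt{\L r^s}$ combined with the $\e_k$ normalization---are small enough after the rescaling to produce $\tilde\L_k\to 0$, which is what triggers Lemma \ref{lem:harmacon} and delivers the harmonic limit. The assumption \eqref{eq:enimph2} is then precisely what eliminates the $SBV$ jump contribution and reduces the argument to the well-known energy decay for harmonic functions, circumventing the more delicate two-phase blow-up that would be required if $K_u$ were genuinely present in $Q_{2r_k}$.
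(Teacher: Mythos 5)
Your proposal rests on a misreading of hypothesis \eqref{eq:enimph2}. That condition is a technical null-mass assumption, not the statement that the jump set is absent from the cylinder: in the paper's own proof it is invoked as ``$\m_k(\pi)=0$'' (no $\m$-mass on the central hyperplane, needed so that the density lower bound \eqref{eq:enimpi5} does not double-count), and in Corollary \ref{cor:enimp} it is arranged by choosing a generic vertical translate of the cylinder, which is possible for $\cL^1$-a.e.\ shift precisely because only a null family of parallel slices can carry positive $\m$-mass. It cannot mean $K_u\cap Q_{2r}=\emptyset$: the theorem is applied (through the corollary and Lemma \ref{lem:iterate}) at points of $K_u$, where \eqref{eq:dregden} forces $\m(Q_{2r})>0$. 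Moreover, under your literal reading the hypotheses are actually incompatible for small $\e'_E$: if $K_u\cap Q_{2r}=\emptyset$, Lemma \ref{lem:intreg} gives $|u(0,r)-u(0,-r)|\leq C\sqrt{r}\,\sqrt{E(u,2r)+\L r^s}\leq C\sqrt{\e'_E\, r}$, so $J(u,2r)\geq c/\sqrt{\e'_E}$ is large and \eqref{eq:enimph3} fails; the statement would be vacuous, whereas the paper's proof is a genuinely two-phase compactness argument in which $v_k=\tilde u_k/(\sqrt{r_k}\,\xi_k)$ converges to a function harmonic on $Q_2\sm\pi$ with Neumann conditions on both sides of $\pi$, and the real work is excluding concentration of $|\n v_k|^2$ on $\pi$ via the Lipschitz approximation, the comparison surfaces of Lemma \ref{lem:minareaest}, and the sharp density estimates. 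Your argument never engages with any of this.

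Even granting your reduction to quasiminimizers of the Dirichlet energy, there is a gap in the contradiction step. Lemma \ref{lem:harmacon} gives strong convergence only in $H^1_{\text{loc}}(Q_2)$, so you only know $E(\tilde u,2)\leq 1$, not $E(\tilde u,2)=1$: the normalized energy of $\tilde u_k$ may concentrate near $\p Q_2$. Consequently unique continuation does not rule out $E(\tilde u,1)=0$; in that degenerate case both sides of the rescaled failing inequality $E(\tilde u_k,\t/24)>C'_E\t\,E(\tilde u_k,1)$ tend to zero and no contradiction results. (The paper's conclusion compares $E(u,\t r/24)$ with $E(u,r)$, not with the quantity $E(u,2r)$ used for normalization, so a lower bound on the inner energy is not automatic.) In short, the key idea of the theorem --- energy decay across a nonempty, nearly flat jump set when the energy dominates the two-plane flatness --- is missing from the proposal.
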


Before proving the theorem, we present an easy corollary that removes the extra assumption \eqref{eq:enimph2} and re-expresses the hypotheses and conclusion in a way which will be more useful later.

\begin{corollary}\label{cor:enimp} Let $u\in \sQ(Q_{3r},\L,4r)$, and neither of $u(0,\pm r)=0$. Then for every $\t\in(0,\t_0)$, there is a number $\e_E(\t)$ such that if
	\begin{equation}\label{eq:enimp2h1}
	E(u,3r) + f(u,3r) + \L r^s + r\leq \e_E
	\end{equation}
	and
	\begin{equation}\label{eq:enimp2h2}
	\inf_{|e_n-\nu|\leq \e_E,|x|\leq \e_E}ff(u,x,\t r,\nu) +\sqrt{\L r^s} + r^{\frac{1}{7}} + J^{\frac{2}{3}}(u,2r) \leq \e_E E(u,2r) 
	\end{equation}
	then
	\begin{equation}\label{eq:enimp2c1}
	E(u,\t r/50) \leq C_E\t E(u, 2r).
	\end{equation}
	The constant $C_E$ is universal. If one of $u(0,\pm r)=0$, the same holds with $ff$ replaced by $f$.
\end{corollary}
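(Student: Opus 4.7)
The strategy is to deduce the corollary from Theorem \ref{thm:enimp} by a change of coordinates followed by a Fubini-type choice of radius. Let $(x_0,\nu_0)$ be a near-minimizing pair for the infimum in \eqref{eq:enimp2h2}, with $|x_0|\leq \e_E r$ and $|e_n-\nu_0|\leq \e_E$. Choose a rotation $R$ of $\R^n$ with $Re_n=\nu_0$ and define $\tilde u(y)=u(x_0+Ry)$. Since $F$ is invariant under isometries, $\tilde u$ is still a $\d$-regular $\L,r_0,s$-quasiminimizer on the rotated-translated domain, and all of $E,f,ff,J,\mu$ transform naturally. Because $\mu_u$ is a finite Borel measure, by Fubini we may pick $r'\in[r,5r/4]$ with $\mu_u(\p Q_{2r',\nu_0}(x_0))=0$; this realizes the technical hypothesis \eqref{eq:enimph2} for $\tilde u$ on $Q_{2r'}$ (read as a boundary-nullity condition, as is typical for compactness arguments used in the proof of the theorem).

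Next, I will verify that the hypotheses of Theorem \ref{thm:enimp} hold for $\tilde u$ at scale $r'$, after shrinking $\e_E$ further in terms of $\tau$ and $\e'_E(\tau)$. For $\e_E$ small, $Q_{2r',\nu_0}(x_0)\subset Q_{3r}$ and $Q_{r',\nu_0}(x_0)\supset Q_{r/2}$, so $E(\tilde u,2r')\leq C(n)E(u,3r)$ and $E(\tilde u,r')\geq c(n)E(u,2r)$ up to the bounded factor $(r'/r)^{n-1}$. For the flatness, any $y\in K_u\cap Q_{2r',\nu_0}(x_0)$ obeys
\[
 |y\cdot e_n-(y-x_0)\cdot\nu_0|\leq |x_0|+|e_n-\nu_0||y|\leq C\e_E r,
\]
so $f(\tilde u,2r')\leq Cf(u,3r)+C\e_E^2$. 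The hypothesis \eqref{eq:enimp2h2} translates directly to $ff(\tilde u,\tau r')\leq C\e_E E(u,2r)\leq C'\e_E E(\tilde u,r')$, which is $\leq \e'_E E(\tilde u,r')$ once $\e_E$ is small enough; the lower-order terms $J^{2/3}, \sqrt{\L r^s}, r^{1/7}$ are isometry-invariant.

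Applying Theorem \ref{thm:enimp} to $\tilde u$ then yields
\[
 E(\tilde u,\tau r'/24)\leq C'_E\tau E(\tilde u,r').
\]
For $\e_E\ll\tau$, the containment $Q_{\tau r/50}\subset Q_{\tau r'/24,\nu_0}(x_0)$ holds (there is room for the $O(\e_E r)$ shift since $\tau r'/24\geq\tau r/24>\tau r/50$), and therefore
\[
 E(u,\tau r/50)\leq C(n)E(\tilde u,\tau r'/24)\leq C(n)C'_E\tau E(\tilde u,r')\leq C(n)^2 C'_E\tau E(u,2r),
\]
establishing \eqref{eq:enimp2c1} with $C_E=C(n)^2 C'_E$. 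The case where one of $u(0,\pm r)=0$ is handled identically, invoking the final sentence of Theorem \ref{thm:enimp} with $f$ in place of $ff$ throughout.

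I do not expect any serious obstacle: the entire argument is bookkeeping around an isometry and a Fubini radius selection. The only care needed is to keep the $\tau$-dependence of $\e_E$ consistent across the two cylinder containments, and to interpret \eqref{eq:enimph2} as the Fubini-friendly boundary-nullity condition rather than as vanishing of $\mu$ on the whole cylinder; under the natural reading, no new analytic ideas are required.
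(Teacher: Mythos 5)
Your overall reduction is the same as the paper's: pick a near-optimal pair $(x,\nu)$ in \eqref{eq:enimp2h2}, apply Theorem \ref{thm:enimp} on a slightly enlarged tilted cylinder contained in $Q_{3r}$, and transfer hypotheses and conclusion through cylinder containments, with $\e_E$ taken small relative to $\t$. The genuine gap is your treatment of the technical hypothesis \eqref{eq:enimph2}. As printed that hypothesis is garbled, but the way it is actually used in the proof of Theorem \ref{thm:enimp} — it is invoked as ``$\m_k(\pi)=0$'' in deriving the density lower bound \eqref{eq:enimpi5} — shows that the intended condition is that $\mu_u$ gives no $\cH^{n-1}$-mass to the \emph{central cross-sectional hyperplane} of the cylinder to which the theorem is applied, not to the topological boundary of that cylinder. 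Your Fubini selection of the radius $r'$ enforces $\mu_u(\p Q_{2r',\nu_0}(x_0))=0$, which is a different statement: varying $r'$ never moves the mid-hyperplane $\{y:(y-x_0)\cdot\nu_0=0\}$, and in the flat regime nothing prevents $K_u$ from carrying positive $\cH^{n-1}$-measure exactly on that hyperplane (a flat piece of $K_u$ lying on the optimal plane is precisely the configuration one expects). Since your center is pinned at the near-minimizer $x_0$, you have no remaining degree of freedom to rule this out, so the hypothesis of Theorem \ref{thm:enimp}, in the form its proof requires, has not been verified.

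The repair is exactly the paper's: keep the radius essentially fixed but translate the center along the axis, applying the theorem on $Q_{sr,\nu_0}(x_0+rt\nu_0)$ for $\cL^1$-a.e. small $t$; the mid-hyperplanes corresponding to distinct $t$ are pairwise disjoint, so at most countably many $t$ are excluded, and the $O(tr)$ shift is absorbed into the same containment estimates you already carry out (at the cost of replacing $\t$ by $\t/s-t$ and a harmless constant). While making this change, note also that the jump term is not simply ``isometry-invariant'': \eqref{eq:enimp2h2} controls $J(u,2r)$, whose denominator is $|u(0,r)-u(0,-r)|$, whereas the theorem needs the jump quantity of the recentered, tilted cylinder, whose denominator is evaluated at points on the new axis; their comparability requires the oscillation estimate of Lemma \ref{lem:intreg} away from $K_u$ (as the paper does), not just a change of variables. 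With those two corrections your argument matches the paper's proof.
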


\begin{proof} Fix $\nu, x$ which attain the infimum in \eqref{eq:enimp2h2}. We wish to apply Theorem \ref{thm:enimp} to the cylinder $Q_{s r,\nu}(x+rt\nu)$ with the value used as $\t$ in there set to $\t' = \frac{1}{s} \t - t>0$, where $s>1$ is close to $1$ and $t$ is very small. Indeed, we select $t$ and $s$
\begin{align}
&Q_{2r}\ss Q_{2s r,\nu}(x+rt\nu) \ss Q_{3 r} \label{eq:enimp2i1}\\
&Q_{s r,\nu}(x+rt\nu) \ss Q_{2r} \label{eq:enimp2i2}\\
&Q_{\t' s r /25} \ss Q_{\t' s r/24,\nu}(x+rt\nu) \label{eq:enimp2i3}\\
&\frac{1}{s}-\frac{t}{\t}\geq \frac{1}{2} \label{eq:enimp2i4}\\
&\mu(Q_{2s r,\nu}(x+rt\nu))=0. \label{eq:enimp2i5}
\end{align}
Note that the first four may be ensured by selecting $\e_E$ small enough (depending on $\t$ for \eqref{eq:enimp2i3}), while the final property is true for $\cL^1$-a.e. choice of $t$, and ensures that \eqref{eq:enimph2} holds. Then \eqref{eq:enimp2h1} and \eqref{eq:enimp2i1} directly imply \eqref{eq:enimph1}, if $\e_E$ is chosen small enough: this is nontrivial only for the $f$ term, in which case
\begin{align*}
f(u,x+t\nu,2sr,\nu) &= \frac{1}{(s r)^{n+1}}\int_{K \cap Q_{2sr,\nu}(x+rt\nu)} | x \cdot \nu|^2 d\cH^{n-1}\\
	& \leq \frac{1}{(s r)^{n+1}}\int_{K \cap Q_{3r}} | x \cdot \nu|^2 d\cH^{n-1} \\
	& \leq \frac{1}{(s r)^{n+1}}\int_{K \cap Q_{3r}} | x_n|^2 + |x|^2 |\nu - e_n|^2 d\cH^{n-1} \\
	& \leq C [f(u,3r) + \e_E^2]\\
	& \leq C \e_E^2.
\end{align*}
The penultimate step used the condition on $\nu$ in the infimum in \eqref{eq:enimp2h2}.

Now we check \eqref{eq:enimph3}, using \eqref{eq:enimp2h2}. Indeed, for the first three terms,
\begin{align*}
ff(u,x+rt\nu,\t' s r, \nu) + \L (2sr)^s + (2sr)^{\frac{1}{7}} &\leq C[ff(u,x,\t r,\nu) + \L r^s + r^{\frac{1}{7}}] \\
&\leq C \e_E E(u,2r)\\
&\leq C \e_E E(u,x+rt\nu,2sr,\nu).
\end{align*}
The first line used that $\t'sr + |rt\nu| \leq \t r - rst +rt <\t r$, while the last line used \eqref{eq:enimp2i1}. For the last term, we may use Lemma \ref{lem:intreg} to say
\[
 |u(r e_n) - u(x+r(t+s)\nu)| \leq C \sqrt{r}(\L r^s + E(u,3r))\leq C\e_E \sqrt{r}.
\] 
It follows that
\[
J(u,x+rt\nu, 2sr,\nu) \leq \1\frac{1}{1-C\e_E}\2 J(u,2r) \leq C \e_E E(u,x+rt\nu,2sr,\nu)
\]
as for the other terms.

We then learn from \eqref{eq:enimpc1} that
\[
E(u,x+rt\nu,\t' sr/24,\nu) \leq C'_E\t' E(u,x+rt\nu, sr,\nu) \leq C C'_E \t E(u,2r),
\]
with the last step from \eqref{eq:enimp2i2}. Using \eqref{eq:enimp2i3} and \eqref{eq:enimp2i4},
\[
E(u,x+rt\nu,\t' sr/24,\nu) \geq c E(u,\t' sr/25) \geq c E(u,\t r /50).
\]
This gives the conclusion \eqref{eq:enimp2c1}.
\end{proof}

We turn to the proof of Theorem \ref{thm:enimp}. The proof is rather involved, partially because we will need to use very sharp lower perimeter density estimates in the argument.

\begin{proof}[Proof of Theorem \ref{thm:enimp}.]
We argue by contradiction. Assume this estimate fails for some value of $\t$; then there are sequences $u_k,r_k, \e_k,\L_k$ with $\e_k\searrow 0$ and
\[
 E(u_k,2 r_k) + f(u_k,2 r_k) + \L_k r^s_k + r_k \rightarrow 0,
\]
\[
 ff(u_k, \t r_k) +\sqrt{\L_k}r_k^{s/2} +r_k^{\frac{1}{7}}+J^{\frac{2}{3}}(u_k,2 r_k) \leq \e_k E(u_k,2 r_k):= \e_k \xi_k^2,
\]
and yet
\[
  E(u_k,\frac{\t}{24} r_k) > C_E \t E(u_k, r_k)
\]
for a $C_E$ to be chosen later independently of $\t$. For convenience, choose $u_k(0,r_k)>u_k(0,-r_k)$; we will give the proof in the case when both are nonzero, but the argument proceeds analogously if one vanishes (up to replacing $ff$ by $f$). Any modifications needed in this case will be described in comments inside [square brackets].

Define the rescaled sets $\tilde{K}_k = K_{u_k}/r_k$, rescaled functions $\tilde{u}_k(x)=u_k(r_k x)$, and rescaled measures $\tilde{\m}_k(E) = r_k^{1-n}\m_k (r_k E)$. Along a subsequence, Lemmas \ref{lem:upperdensity} and \ref{lem:lowerdensity} imply that $\tilde{K}_k \rightarrow \pi \cap Q_2$ locally in Hausdorff topology, $\tilde{u}_k \rightarrow u_\8$ in $L^1 (Q_2)$, with $u_\8$ a locally constant function on $Q_2\sm \pi$, and $\tilde{\m}_k \rightarrow \m_\8 = [u_\8^2(e_n) + u_\8^2 (-e_n)] d\cH^{n-1}\mres \pi$ in weak-$*$ sense.

Consider the functions $v_k =\frac{\tilde{u}_k}{\sqrt{r_k} \xi_k}$. We have that
\[
 \int_{Q_2} |\n v_k|^2 d\cL^n = 1,
\]
while
\[
 \int_{Q_{\frac{\t}{24}}} |\n v_k|^2 d\cL^n > C_E \t^n \int_{Q_1}|\n v_k|^2 d\cL^n.
\]
Up to extracting a subsequence, we have that there are numbers $c^\pm_k$ such that $v_k - c^\pm_k \rightarrow v$ weakly in $H^1(Q_2 \cap \{x_n>\s (<-\s)\})$, where $v\in H^1(Q_2 \sm \pi)$ has
\[
 \int_{Q_2\sm \pi} |\n v|^2 d\cL^n \leq 1.
\]
For future reference, we note that $c_k^\pm$ may be taken as the mean value of $v_k$ on $Q_2 \cap \{x_n>\frac{1}{2}(<-\frac{1}{2})\}$. However, we have that on those regions the oscillation of $\tilde{u}_k$ is controlled by
\begin{equation}\label{eq:enimpi4}
 \osc_{Q_2 \cap \{x_n\geq 1/2\}} \tilde{u}_k \leq C r_k^{\frac{1}{2}} [\xi_k + \sqrt{\L_k r_k^{s}}]\leq C\xi_k r_k^{\frac{1}{2}},
\end{equation}
so the oscillation of $v_k$ over this region is bounded by a universal constant. We may therefore choose $c_k^\pm=v_k(\pm e_n)$ and obtain the same conclusion. With this choice, the estimate \eqref{eq:ldc1} of Lemma \ref{lem:lowerdensity} rescales to give 
\begin{equation}\label{eq:bigjump}
\xi_k (c_k^+ - c_k^-)\geq c_0>0, 
\end{equation}
where $c_0$ is universal. The same argument gives that $v_k -c_k^\pm \rightarrow v$ uniformly on compact subsets of $Q_2\sm \pi$.

We will now show that $v$ is harmonic and satisfies a Neumann condition on each half-cylinder. Take any test function $\phi \in C^1_c (Q_2)$, and set $\phi_k(x)= \xi_k \sqrt{r_k} \phi (x/r_k)$. Let $\psi: \R \rightarrow (0,1)$ be a bijective increasing function with $|\psi'|\leq t$. Then use $w_k = u_k + \s \phi_k \psi (\frac{u_k - c_k}{\sqrt{r_k}\xi_k})$ as a competitor for $u_k$, where $c_k = \frac{c_k^+ + c_k^-}{2}$. Note that $|\phi_k|\leq \|\phi\|_{C^{1}} \sqrt{r_k}\xi_k$, so $|\bar{w}_k -\uu_k| + |\underline{w}_k - \ud|\leq C\s \xi_k \sqrt{r_k} \|\phi\|_{C^1}$ at $\cH^{n-1}-$a.e. point. This gives
\begin{align*}
 \int_{Q_{2 r_k}} &|\n u_k|^2 d \cL^n \leq \int_{Q_{2 r_k}} |\n u_k|^2 + \s \n u_k \cdot [\n \phi_k \psi(\frac{u_k - c_k}{\sqrt{r_k}\xi_k}) + \frac{\phi_k \psi'}{\sqrt{r_k} \xi_k} \n u_k]d\cL^n\\
 &+ \int_{Q_{2 r_k}}2\s^2 \frac{\phi_k^2}{r_k \xi_k^2}|\psi'|^2 |\n u_k|^2 + 2\s^2 \psi^2 |\n \phi_k|^2 d\cL^n\\
  &+  C \s r_k^{n-1+1/2}\xi_k \|\phi\|_{C^1} + \L_k r_k^{n-1+s}.
\end{align*}
Dividing by $r_k^{n-1}\xi_k^2 \s$ and rescaling the integrals gives
\begin{align*}
 \int_{Q_2}& [\n v_k \cdot \n \phi] \psi(v_k-c_k) + |\n v_k|^2 \phi \psi'(v_k-c_k) d\cL^n\\
 &\geq - C\s \int_{Q_2} \phi^2 |\psi'|^2 |\n v_k|^2 + \psi^2|\n \phi|^2 d\cL^n - C\frac{\sqrt{r_k}}{\xi_k}\|\phi\|_{C^1} - \frac{\L_k r_k^s}{\s \xi_k^2}.
\end{align*}
The choice of $\s=\xi_k^2$ makes the entire right-hand side converge to $0$ as $k\rightarrow \8$, so we have that
\[
 \liminf \int_{Q_2} \n v_k \cdot \n \phi \psi(v_k-c_k) d\cL^n \geq - Ct \|\phi\|_{C^1},
\]
On the cylinder $Q_2$, $\n v_k \rightharpoonup \n v$ weakly in $L^2$, while from \eqref{eq:bigjump} and dominated convergence theorem $\n \phi \psi(v_k-c_k) \rightarrow \n \phi 1_{\{x_n>0\}}$ strongly. We conclude that
\[
 \int_{Q_2\cap \{x_n>0\}} \n v \cdot \n \phi \geq - Ct \|\phi\|_{C^1},
\]
and then send $t$ to $0$. After repeating on the other half-cylinder (using $1-\psi$ in place of $\psi$), this implies that $v$ is harmonic on $Q_2 \sm \pi$ and satisfies the Neumann condition on either side of $\pi$. In particular, for some universal constant $C(n)$,
\[
 \int_{Q_{\t/24}}|\n v|^2 d\cL^n \leq C(n)\t^n\int_{Q_1}|\n v|^2 \cL^n \leq C(n)\r^n \liminf \int_{Q_1}|\n v_k|^2.
\]
Notice that if we knew that $\n v_k$ converged to $\n v$ strongly in $L^2$ on $Q_{\t/24}$, this would be a contradiction. We now set out to prove this. We do know that
\[
 \int_{E} |\n v_k|^2 d\cL^n \rightarrow \int_{E} |\n v|^2 d\cL^n,
\]
for  $E\cc Q_2 \sm \pi$ (from Lemma \ref{lem:harmacon}).

Up to extracting a further subsequence, we have that $|\n v_k|^2 d\cL^n \rightharpoonup \nu$ locally on $Q_2$ to some measure $\nu$ in the weak-$*$ sense, and may find the Lebesgue decomposition $\nu = |\n v|^2 d\cL^n + \nu^s$. As we have just concluded that $\n v_k \rightarrow \n v$ strongly in $L^2$ on compact subsets of $Q_2 \sm \pi$, we have that $\nu^s$ is concentrated on $\pi$. We will now show that for any $x\in Q_{\t/24}\cap \pi$ and $\s$ small enough,
\[
 \nu^s(Q_\s(x)) \leq C\s^n,
\]
which implies that $\nu^s=0$ and that $\n v_k \rightarrow \n v$ strongly. We note that from now on, constants will depend on $\t$; as $\e_H$ is allowed to depend on $\t$, this is not a problem.

Let $L_{\pm,k}=L_{\pm}[u_k,\t r_k]$ be the affine functions parametrizing the minimal planes in the definition of $ff(u_k,\t r_k)$, and $h_k = L_{+,k}(0)$. We have
\begin{equation}\label{eq:enimpi12}
h_k/r_k \rightarrow 0 \text{ and } \max_{D_{\t r}}\frac{|L_{\pm,k}|}{r_k}\rightarrow 0
\end{equation}
as $k\rightarrow \8$. Extract $g_{\pm,k}$, the Lipschitz functions from Theorem \ref{thm:lip} applied to $Q_{\frac{\t}{4} r_k}$, and let $\G^\pm_k$ be their graphs. Using also Theorem \ref{thm:exbyflat2}, these graphs obey
\[
 \cH^{n-1}(Q_{\frac{\t}{24} r_k} \cap [K_{u_k} \triangle (\G^+_k \cup \G^-_k)]) \leq C \xi_k^2.
\]
We know from Proposition \ref{prop:liptoplane} that
\begin{equation}\label{eq:enimpi6}
 \frac{1}{ (\t r_k)^{n-1}}\int_{Q_{\t r_k/24}} |\n g_{+,k} - \n L_{+,k}|^2 + |\n g_{-,k} - \n L_{-,k}|^2 d\cL^{n-1} \leq C \xi_k^2
\end{equation}
and
\begin{equation}\label{eq:enimpi7}
 \frac{1}{(\t r_k)^{n+1}}\int_{Q_{\t r_k/24}} |g_{+,k} - L_{+,k}|^2 + |g_{-,k} - L_{-,k}|^2 d\cL^{n-1} \leq o_k(1)\xi_k^2.
\end{equation}
We will also use $\nu_{\pm,k}$ for the optimal vector and its reflection in the definition of $ff(u,\t r)$.

Given a point $x\in \pi$ and a radius $\s$, define the following sets:
\[
 Z_{+,k}(x,\s) :=  Q_{\s,\nu_{+,k}}(x',L_{+,k}(x')) \cap \{x_n \geq 0\},
\]
\[
 Z_{-,k}(x,\s) :=  Q_{\s,\nu_{-,k}}(x',L_{-,k}(x')) \cap \{x_n\leq 0\},
\]
and
\[
 Z_k(x,\s) := Z_{+,k}(x,\s)\cup Z_{-,k}(x,\s).
\]
We have glued two cut-off cylinders together; this construction will only be used when $|L_{+,k}|\ll \s$, and so $\pi$ only intersects the lateral sides of each cylinder, not their faces. Let $Y_{+,k}(x,\s)$ be the upper face of the cylinder $Q_{\s,\nu_{+,k}}(x',L_{+,k}(x'))$, i.e. the set $\{x:(x-h_k e_n)\cdot \nu_{+,k} = \s \}\cap \bar{Q}_{\s,\nu_{+,k}}(x',L_{+,k}(x'))$.  We will use the following coordinates to identify points in $y\in Z_{+,k}(x.\s)$: $y=(q_y,t_y)_*$, where $q_y\in D_\s$ is found by fixing an isometry $D_\s \rightarrow Y_{+,k}(x,\s)$ and pulling back the unique point in $ Y_{+,k}(x,\s) \cap \{y+\R \nu_+\}$ via this isometry, while $t_y \in [0,\8)$ is the unique number which allows the expression $y = t_\s \nu_+ + z'$ for some $z'\in \pi$. We likewise identify $Z_{-,k}(x,\s)$ with a subset of $D_\s \times (-\8,0]$. Up to changing one of the isometries of $D_\s \rightarrow Y_{\pm,k}$, these sets of coordinates agree on $Z_{+,k}\cap Z_{-,k}$, and so we may extend them to $Z_k$ in a well-defined way. For each $z\in D_\s$, choose $t^\pm(z)$ so that $(z,t^+(z))_*\in Y_{+,k}$ and $(z,t^-(z))_*\in Y_{-,k}$. Finally, we record for future reference that (up to choosing the parametrizations of $Y_{+,k}$ consistently), these coordinates are independent of $\s$, in the sense that if $y\in Z_k(x,\s)$ has coordinates $(q,t)_*$ relative to $Z_k(x,\s)$, it will have the same coordinates relative to any $Z_k(x,\s')$ with $\s'>\s$. See Figure \ref{fig:2} for an illustration. [If $c_k^-=0$, we take use $L_k=0$ for both planes, so $Z_k(x,\s)$ is the usual cylinder $Q_\s(x)$ and no special coordinates are needed.]

\begin{figure}
	\centering
	\def\svgwidth{15cm}
	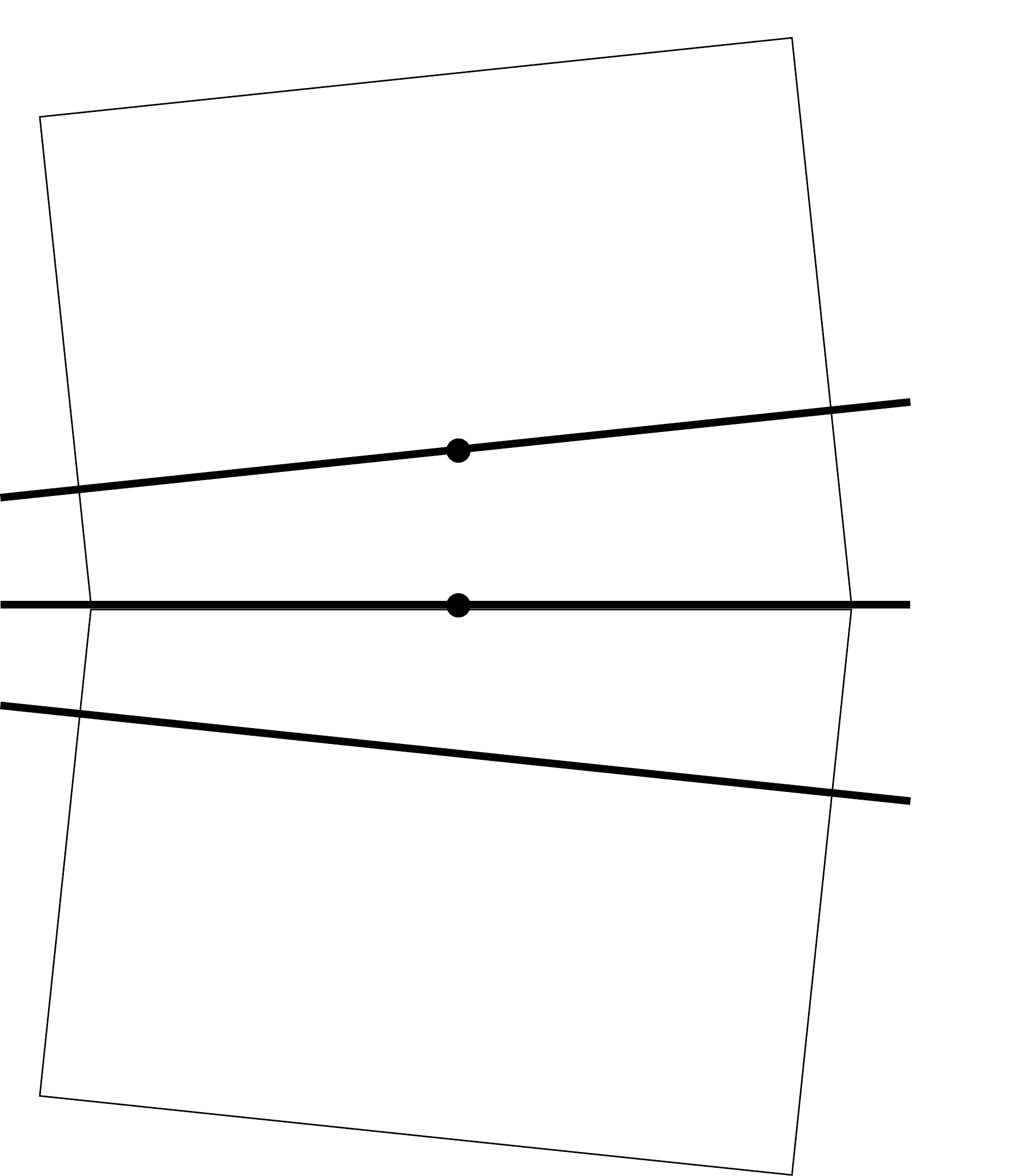
	\caption{The horizontal bold line is $\pi$, while the two bold lines at an angle are the graphs of $L_{\pm,k}$. The shape $Z_k(x,\s)$ is shown, as well as an arbitrary point $z$ with coordinates $(q_z,t_z)_*$; the line departing from $z$ represents the set of all points $w\in Z_k(x,\s)$ with $q_w=q_z$. The upper and lower faces $Y_{\pm,k}$ are labeled, as well as one of the sets $Y^a_{-,k}$ used in our estimates.} \label{fig:2}
\end{figure}

We will now spend some time obtaining a careful lower bound on the density of $\m_k$. Fix $x,\s$ so that $Q_\s (x)\in Q_{\t r_k/24}$; we will show that
\begin{equation}\label{eq:enimpi1}
 \m_k (Z_{k}(x,\s r_k)) \geq [u_k^2(0,r_k) + u_k^2(0,-r_k)] \w_{n-1}(\s r_k)^{n-1} - \xi_k^2 o_k(1)r_k^{n-1}
\end{equation}
for $k$ large. We will abbreviate $Z_k = Z_{k}(x,\s r_k)$, and similarly for the associated objects defined above. First, as $f(u_k,2 r_k)\leq \e_k$, we have that $K_{u_k}\cap Q_{2 r_k} \ss \{ |x_n|\leq C \e_k^{\frac{1}{n+1}} r_k \}$ from Proposition \ref{prop:flatsame}, and we may take $C \e_k^{\frac{1}{n+1}} < \s/10$. This implies that the faces $Y_{\pm,k}$ do not intersect $K$, nor do the disks $Y^{a}_{+,k}:=Y_{+,k} - (\s - a) r_k\nu_+$,  $Y^{a}_{-,k}:=Y_{-,k} + (\s - a) r_k\nu_-$ which are parallel to them, provided $a>2C\e_k^{\frac{1}{n+1}}$. We are using here that $|L_{\pm,k}|\leq C \e_k^{\frac{1}{n+1}} r_k$ as well. As for $\cL^1$-almost every such choice of $a$, we have that
\[
 \int_{Y^{a}_{\pm,k}/r_k}|v-v_k -c^+_k|^2 d\cL^n \rightarrow 0, 
\]
we may use a diagonal argument to find a sequence $a_k > 2C\e_k^{\frac{1}{n+1}}$, with $a_k \rightarrow 0$ and
\begin{equation}\label{eq:conv1}
\int_{Y^{a_k}_{\pm,k}/r_k}|v-v_k -c^+_k|^2 d\cL^n \rightarrow 0.
\end{equation}
Notice that for $\cL^{n-1}$-a.e. point in $z\in D_{\s r_k}$, one of the following two things happens: either no point in the set $\{(z,t)_* : t\in (t^-(z),t^+(z))\}$ lies in $K_{u_k}$, and along both line segments constituting this set $u_k$ is absolutely continuous (the set of such $z$ is called $H_k$), or else this set does intersect $K_{u_k}$, the restriction of $u$ to each line segment is $BV$, and if we set
\[
 t^+_*(z)=\max\{t\in (t^-(z),t^+(z)):(z,t)_* \in K_{u_k}\} 
\]
and
\[
 t^-_*(z)=\min\{t\in (t^-(z),t^+(z)):(z,t)_* \in K_{u_k}\} ,
\]
then $(z,t^\pm_*)_*\in J_{u_k} $ and  $\lim_{t\nearrow t^-_*} u_k((z,t)_*)$,$\lim_{t\searrow t^+_*} u_k((z,t)_*)$ exist and coincide appropriately with one of $\uu_k,\ud_k$ (the set of these $z$ is called $P_k$). That the complement of $H_k \cup P_k$ is $\cL^{n-1}$ negligible follows from applying Proposition \ref{prop:1Drest} on the tilted cylinders.

We estimate the size of $H_k$. For each $z\in H_k$, we may integrate
\begin{equation}\label{eq:enimpi2}
 |u_k((z,t^+(z) -(\s-a_k)r_k)_*) - u_k((z,t^-(z) + (\s-a_k)r_k)_*)| \leq \int_{t^-(z) + (\s-a_k)r_k}^{t^+(z) - (\s-a_k)r_k} |\n u_k((z,t)_*)| d\cL^1(t).
\end{equation}
Squaring and using H\"older's inequality gives
\[
 \int_{Y^{a_k}_{+,k}\cap \{q_x\in H_k\}} |u_k(x) - u_k(-x)|^2 d\cH^{n-1}(x) \leq Ca_k r_k^n \xi_k^2. 
\]
We used here that the length of the interval of integration in \eqref{eq:enimpi2} is controlled by $2 a_k + C\e_k^{\frac{1}{n+1}}\leq Ca_k$.  Rewriting for $v_k$, we have
\[
 \int_{(Y^{a_k}_{+,k}\cap \{q_x\in H_k\})/r_k} |v_k(x) - v_k(-x)|^2 d\cH^{n-1}(x) \leq C a_k.
\]
From \eqref{eq:conv1}, we have
\[
 \int_{(Y^{a_k}_{\pm,k}\cap \{q_x\in H_k\})/r_k} |v_k(x)-c^\pm_k|^2 d\cH^{n-1}(x) \leq 2 \int_{(Y^{a_k}_{\pm,k}\cap \{q_x\in H_k\})/r_k}v^2 d\cH^{n-1}\leq C \frac{\cL^{n-1}(H_k)}{r_k^{n-1}}.
\]
This implies
\[
 \cL^{n-1}(H_k) (c_k^+-c_k^-)^2 \leq C \1\cL^{n-1}(H_k) + a_k r_k^{n-1}\2.
\]
Reabsorbing the first term on the right and using \eqref{eq:bigjump}, we have that
\begin{equation}\label{eq:enimpi3}
 \frac{\cL^{n-1}(H_k)}{r_k^{n-1}}\leq C\xi_k^2 a_k = \xi_k^2 o_k(1)
\end{equation}
for $k$ large.

Now we look at what happens over $P_k$. From \eqref{eq:enimpi4} and $r_k^{\frac{1}{7}} <\e_k \xi_k^2$, we know that
\[
 \sup_{Q_{2 r_k}\{x_n\geq r_k/4\}}|u_k-u_k(0,r_k)| \leq C\e_k \xi_k^2,
\]
and similarly for the other side. We consider then
\[
 \int_{\{(z,t^+_*(z))_*:z\in P_k\}} \lim_{t\searrow t_*^+(z)}u_k^2((z,t)_*) d\cH^{n-1} +  \int_{\{(z,t^-_*(z))_*:z\in P_k\}} \lim_{t\nearrow t_*^-(z)}u_k^2((z,t)_*) d\cH^{n-1} \leq \m_k(Z_k),
\]
(note that we used the assumption that $\m_k(\pi)=0$ here) and estimate each piece from below. The main observation is that
\[
 \int_{\{(z,t^+_*(z))_*:z\in P_k\}} \lim_{t\searrow t_*^+(z)}u_k^2((z,t)_*) d\cH^{n-1}\geq \int_{P_k} \lim_{t\searrow t_*^+(z)}u_k^2((z,t)_*) d\cL^{n-1}.
\]
This follows from the area formula and the fact that the projection $(q,t)_*\rightarrow q$ is a contraction. Integrating over each line segment (or line segment pair, as the case may be),
\[
 |u_k((z,r_k/2)_*) - \lim_{t\searrow t_*^+(z)}u_k((z,t)_*)| \leq  \int_{t^+(z)}^{r_k/2}|\n u_k((z,t)_*)| d\cL^1(t), 
\]
so
\begin{equation}\label{eq:enimpi16}
 |u_k(0,r_k) - \lim_{t\searrow t_*^+(z)}u_k((z,t)_*)| \leq  \int_{t^+(z)}^{r_k/2}|\n u_k((z,t)_*)| d\cL^1(t) + \e_k \xi_k^2.
\end{equation}
Integrating over $P_k$,
\begin{align*}
 \left|u_k^2(0,r_k) \cL^{n-1}(P_k) - \int_{P_k} \lim_{t\searrow t_*^+(z)}u_k^2((z,t)_*) d\cL^{n-1}\right| &\leq \e_k \xi_k^2 r_k^{n-1} + \int_{Q_{2 r_k}}|\n u_k| d\cL^n\\
 &\leq Cr_k^{n-1}(\e_k \xi_k^2 + r_k^{1/2} \xi_k)\\
 &\leq Cr_k^{n-1}\e_k \xi_k^2. 
\end{align*}
Combining with \eqref{eq:enimpi3}, we have just shown that
\begin{align}
 \frac{\m_k(Z_k)}{r_k^{n-1}} & \geq [u_k^2 (0,r_k) + u_k^2(0,-r_k)]\frac{\cL^{n-1}(P_k)}{r^{n-1}_k} - C\xi^2_k o_k(1) \nonumber\\
 & \geq [u_k^2 (0,r_k) + u_k^2(0,-r_k)]\w_{n-1}\s^{n-1} -C\frac{\cL^{n-1}(H_k)}{r^{n-1}_k} - C\xi^2_k o_k(1) \nonumber\\
 &\geq [u_k^2 (0,r_k) + u_k^2(0,-r_k)]\w_{n-1}\s^{n-1} - C\xi^2_k o_k(1).\label{eq:enimpi5}
\end{align}

Define the measure $\a_k = \xi_k^{-2} \cH^{n-1}\mres ([K_{u_k}\sm (\G^+_k \cup \G^-_k)]/r_k)$; using Theorems \ref{thm:lip} and \ref{thm:exbyflat2} we have that $\a_k(Q_{\t/24})\leq C$ uniformly in $k$. Up to passing to a further subsequence we may assume that $\a_k \rightharpoonup \a$ in the weak-$*$ sense, with $\a$ a Borel measure concentrated on $\pi$. 

We will now build a competitor for $u_k$. Observe that over each set $Z_k(x,\s r_k)$, we may view $g_{\pm,k}$ as graphs over the affine planes $\{x: (x\mp he_n)\nu_\pm = 0 \}$, using the notation $g^*_{\pm,k} : D_{\s r_k} \rightarrow \R$ for the mapping of $z$ to the unique value so that $(z,t^+(z)-\s r_k+g^*(z))_*\in \G^+_k$. These functions are $\frac{1}{2}-$Lipschitz, and satisfy
\begin{equation}\label{eq:enimpi8}
 \frac{1}{r_k^{n+1}}\int_{D_{\s r_k}}|g^*_{\pm,k}|^2 d\cL^{n-1} \leq \xi_k^2 o_k(1)
\end{equation}
(this follows from changing variables in \eqref{eq:enimpi7}) and also
\begin{equation}\label{eq:enimpi9}
 \frac{1}{r_k^{n-1}}\int_{D_{\s r_k}}|\n g^*_{\pm,k}|^2 d\cL^{n-1} \leq C\xi^2.
\end{equation}
This second fact comes again from changing variables in \eqref{eq:enimpi6} and using \eqref{eq:liptoplanec4}. It is easy to see that despite the definitions, $g^*_{\pm,k}$ is independent of the radius $\s r_k$ of the ambient cylinder $Z_k$. [If $c_k^-=0$, simply use $g^*=g$.]

We fix $x$ and $\s_0$ so that $Q_{3\s_0}(x)\ss Q_{\t/24}$, and then select a value $\s \in (\s_0/2,\s_0)$ (passing to a further subsequence) so that the following properties hold, with constant possibly depending on $\s$:
\begin{enumerate}
 \item 
 \[
\a(\p Q_\s(x)) + \nu^s(\p Q_{\s} (x)) = 0.  
 \]
 \item
 \[
 \sup \frac{1}{\xi^2_k r_k^{n-2}}\int_{\p D_{\s r_k}} |\n g_{+,k}^*|^2 + |\n g_{-,k}^*|^2 d\cL^{n-2} \leq C.
\]
 \item
\[
 \lim \frac{1}{\xi^2_k r_k^{n}}\int_{\p D_{\s r_k}}| g_{+,k}^*|^2 + | g_{-,k}^*|^2 d\cL^{n-1} = 0.
\]
\item
\[
 \lim \frac{1}{\xi_k^2 r_k^{n-1}}\cH^{n-1}(\{y\in \p Z_k(x,\s r_k): \uu_k(y)>0, g_{-,k}(y')<y_n<g_{+,k}(y')\}) =0.
\]
\end{enumerate}
The first of these is true for $\cH^{n-1}$-a.e. $\s$ simply because $\a +\nu^s$ is a finite Borel measure supported on $\pi$. That (2) and (3) are possible follows from Chebyshev's inequality and our previous estimates \eqref{eq:enimpi9} and \eqref{eq:enimpi8}, respectively. We now carefully check (4). [Note that there is nothing to show if $c_k^-=0$, for then $g_-=g_+$.] First, it suffices to show that
\begin{equation}\label{eq:enimpi11}
 \cL^n(\{u_k>0\} \cap G \cap Q_{2\s_0}(x))\leq r_k^n \xi^2_k o_k(1),
\end{equation}
where $G=\{x\in Q_{\t r_k/24} : g_-(x')<x_n<g_+(x')\}$ is the region between the graphs. We have the easy bound
\begin{equation}\label{eq:enimpi10}
 \frac{\cL^n(\{u_k>0\} \cap G)}{r_k^n} \leq C \xi_k^{\frac{2}{n+1}}
\end{equation}
from estimating the size of the two regions from Proposition \ref{prop:fflatsame} and using Lemma \ref{lem:zerobetween} in the case that $\xi_k^{\frac{2}{n+1}}r_k \ll h_k$. We will now give a mechanism for obtaining an iterative improvement of the size of this set. Fix $\g<1$ so that $Q_{2\s_0}(x)\ss Q_{\g \t r_k/24}$, and $k_0$ so that $(1+2^{-k_0})\g < 1$. Set $A_t = \{u_k>0\} \cap G \cap Q_{\frac{\t \g r_k}{24}(1+t)} \sm K_{u_k}$. Then applying Friedrich's inequality (this is equivalent to using the $BV$ Sobolev inequality on $u_k^2 1_{A_t}$), we have
\[
 \cL^n(A_t)^{\frac{n-1}{n}}\leq C \1\int_{A_t}u_k^{\frac{2n}{n-1}} d\cL^n\2^{\frac{n-1}{n}} \leq C\1 \int_{A_t}|\n u_k|^2 d\cL^n + \int_{\p A_t} u_k^2 d\cH^{n-1}\2.
\]
Here the first inequality used that $u 1_{A_t}\geq \d 1_{A_t}$, from \eqref{eq:dregsize}. Averaging over $t\in (2^{-i},2^{1-i})$, this gives
\[
 \cL^n(A_{2^{-i}})^{\frac{n-1}{n}} \leq C^i \1 \int_{A_{2^{1-i}}} |\n u_k|^2 d\cL^n + \cH^{n-1}(\p A_{2^{1-i}} \cap Q_{\frac{\t \g r_k}{24}(1+2^{1-i})}) + r_k^{-1}\cL^n(A_{2^{1-i}})\2.
\]
The third term captures the contribution of $\p Q_{\frac{\t \g r_k}{24}(1+t)}$ to $\p A_t$, while the second term contains the contribution from $\p \{u_k>0\}$. 

Now, $\p A_{2^{-i}} \cap Q_{\frac{r_k \t \g}{24}(1+2^{-i})}\ss K_{u_k}$, so it is countably $\cH^{n-1}$-rectifiable, and $(\p A_{2^{-i}}\sm \p^* A_{2^{-i}}) \cap Q_{\frac{r_k \g \t}{24}(1+2^{-i})}\ss K_{u_k}$ is $\cH^{n-1}$-negligible. Note that $\p^* A_{2^{-i}} \cap Q_{\frac{r_k \t \g}{24}(1+2^{-i})} \ss K_{u_k} \sm (\Theta^+_k \cup \Theta^-_k)$, with $\Theta^\pm_k$ as in Remark \ref{rem:lipest}. Indeed, at any point $x\in \Theta^+_k\ss \G_k^+$, we have that $\ud(x)=0$ and $\nu_x \cdot e_n >0$; if $x\in \p^* A_{2^{-i}}$ as well, then $\nu_x$ would have to be the measure-theoretic unit normal to $A_{2^{-i}}$ at $x$ as well, and this contradicts that $u\geq \d$ on $A_{2^{-i}}$. This gives
\[
 \1\frac{\cL^n(A_{2^{-i}})}{r_k^n}\2^\frac{n-1}{n} \leq C^i \1 \xi_k^2 + \frac{\cL^n(A_{2^{1-i}})}{r_k^n}\2.
\]
Applying this recursion repeatedly, starting with $l=k_0$ and using \eqref{eq:enimpi10}, we deduce that
\[
 \frac{\cL^n(A_{2^{-l}})}{r_k^n} \leq C^l (\xi_k^{\frac{2n}{n-1}} + \xi_k^{\frac{2}{n+1} (\frac{n}{n-1})^{l-k_0}} ).
\]
Setting $l = k_0 + \lceil \frac{\log (n+1) + \log n - \log(n-1)}{\log n - \log(n-1)}\rceil$ gives
\[
\frac{\cL^n(A_{2^{-l}})}{r_k^n} \leq C^l \xi_k^{\frac{2n}{n-1}} = \xi_k^2 o_k(1),
\]
which gives \eqref{eq:enimpi11}, as $G \cap Q_{2\s_0}(x) \ss A_{2^{-l}}$ for any $l$.

We proceed with the construction of competitors. Let $h_{+,k}$ be the $3/4$-Lipschitz function obtained from applying Lemma \ref{lem:minareaest} to $g_{+,k}^*$ on $D_{\s r_k}$: it satisfies $g_{+,k}^* =h_{+,k}$ on $\p D_{\s r_k}$ and the estimate
\begin{equation}\label{eq:enimpi13}
\int_{D_{\s r_k}} \sqrt{1 +|\n h_{+,k}|^2} d\cL^{n-1}\leq \w_{n-1}(\s r_k)^{n-1}+ C\xi_k^2 r_k^{n-1} \1\frac{1}{\a_k} + \a_k o_k(1)\2
\end{equation}
for an $\a_k \in (1,\frac{r_k}{4\|g_{+,k}^*\|_{L^\8}})$, using also properties (2) and (3) of $\s$. As $\frac{r_k}{4\|g_{+,k}^*\|_{L^\8}}\rightarrow \8$ from \eqref{eq:enimpi12}, we may choose $\a_k$ so that the term in the parentheses is $o_k(1)$. Define $h_{-,k}$ similarly; it is straightforward to check from the definitions that $t^+(q)-\s r_k + h_{+,k}(q) \geq t^-(q) - \s r_k + h_{-,k}(q)$ for $q\in D_{\s r_k}$.

From the definition of $ff(u,\t r)$, we have that $|\n L_{+,k}| \leq \frac{h_k}{50 \t r_k}$, which implies that $L_{+,k}> \frac{49h_k}{50}$ on that set. This means
\[
\{q\in D_{\s r_k}: t^+(q) -\t r_k + h_{+,k}(q) < 0 \}\ss \{q\in D_{\s r_k}: h_{+,k}(q) < \frac{-h_k}{2}\}.
\]
Combining with \eqref{eq:maec2} gives
\begin{equation}\label{eq:enimpi14}
\cL^{n-1}(\{q\in D_{\s r_k}: t^+(q) -\t r_k + h_{+,k}(q) < 0 \}) \leq \frac{r_k^{n-1}}{h_k^2} \xi_k^2 o_k(1).
\end{equation}

Let $S^+_k = \{(q,t^+(q)-\s r_k + h_{+,k}(q))_*: q\in D_{\s r_k}\}$ be the graph generated by $h_{+,k}$, and similarly $S^-_k = \{(q,t^-(q)+\s r_k + h_{-,k}(q))_*: q\in D_{\s r_k}\}$. We now estimate the size of $S^+_k$:
\begin{align}
\cH^{n-1}(S^+_k) &= \int_{q_x \in D_{\s r_k}} \frac{1}{\nu_x \cdot \nu_+} d\cL^{n-1} \nonumber\\
& = \int_{\{t^+(q) -\t r_k + h_{+,k}(q) \geq 0 \}} \sqrt{1 + |\n h_{+,k}|^2} d\cL^{n-1} \nonumber\\
&\quad+ \int_{\{ t^+(q) -\t r_k + h_{+,k}(q) < 0 \}} \sqrt{1 + |\n h_{+,k} - 2 \n L_{+,k}|^2} d\cL^{n-1}\nonumber\\
& \leq \w_{n-1}(\s r_k)^{n-1}+ \xi_k^2 r_k^{n-1} o_k(1) +\int_{\{ t^+(q) -\t r_k + h_{+,k}(q) < 0 \}} |\n L_{+,k}|^2 d\cL^{n-1}\nonumber\\
& \leq \w_{n-1}(\s r_k)^{n-1}+ \xi_k^2 r_k^{n-1} o_k(1) +\frac{r_k^{n-1}}{h_k^2} \xi_k^2 o_k(1) \cdot h_k^2\nonumber\\
& \leq \w_{n-1}(\s r_k)^{n-1}+ \xi_k^2 r_k^{n-1} o_k(1). \label{eq:enimpi15}
\end{align}
Here the first line uses the area formula (note that $S^+_k$ is a Lipschitz graph over the plane $\{(x',L_{+,k}(x'))\}$); here $\nu_x$ represents the unit normal $S_{+,k}$, oriented so that $\nu_x\cdot e_n>0$. The second line just computes these normal vectors in terms of $h_{+,k}$. The third line used \eqref{eq:enimpi13}, while the fourth line is from \eqref{eq:enimpi14}.

Set $\zeta_{+,k}$ to be the largest $3/4$-Lipschitz function agreeing with $g_{+,k}^*$ on $\p D_{\s r_k}$, and likewise $\zeta_{-,k}$ to be the smallest $3/4$-Lipschitz function agreeing with $g_{-,k}^*$ on $\p D_{\s r_k}$. We construct a Lipschitz map $\Phi_{+,k}:Z_k\rightarrow Z_k$ which has bounded Lipschitz constant, leaves the region below (in $*$ coordinates) $t^+-\s r_k + h_{+,k}$ and the sides $\p Z_+$ unchanged, and sends the region between $t^+ -\s r_k + \zeta_{+,k}$ and $t^+ -s r_k + h_{+,k}$ to $S^+_k$ (this is elementary to construct, and is done explicitly in \cite{AFP}). Likewise define $\Phi_{-,k}$ and $\Phi_k$, their composition. Then the competitor $w_k$ is given by
\[
 w_k(y) = \begin{cases}
           u_k(y) & y\notin Z_k \\ 
           0 & t_y\in (t^-(q_y)+\s r_k+h_{-,k}(q_y),t^+(q_y) -\s r_k + h_{+,k}(q_y)) \\
           u\circ \Phi^{-1} & \text{otherwise}.
          \end{cases}
\]
See Figure \ref{fig:3} for a drawing. Set $W_k$ to be the region between the graphs of $t^- + \s r_k +\zeta_{-,k}$ and $t^+ - \s r_k + \zeta_{+,k}$.  Observe that $Z_k\cap K_{w_k}\sm (S^+_k \cup S^-_k)$ is contained in the image under $\Phi$ of $K_{u_k}\cap W_k$, for $K_{u_k}\cap W_k$ is mapped either to $S^\pm_k$ or to the region where $w_k=0$ under $\Phi$. As $Z_k\sm W_k$ is devoid of $\G^\pm_k$ (using that $g^*_{\pm , k}$ are $1/2$-Lipschitz) this gives
\begin{equation}\label{eq:enimpi17}
\cH^{n-1}(Z_k\cap K_{w_k}\sm (S^+_k \cup S^-_k)) \leq C \a_k([Z_k\sm W_k]/r_k) \xi_k^2 r_k^{n-1}.
\end{equation}
Likewise
\begin{equation}\label{eq:enimpi18}
 \int_{Z_k} |\n w_k|^2 d\cL^n \leq C \int_{Z_k\sm W_k} |\n u_k|^2 d\cL^n.
\end{equation}

\begin{figure}
	\centering
	\def\svgwidth{15cm}
	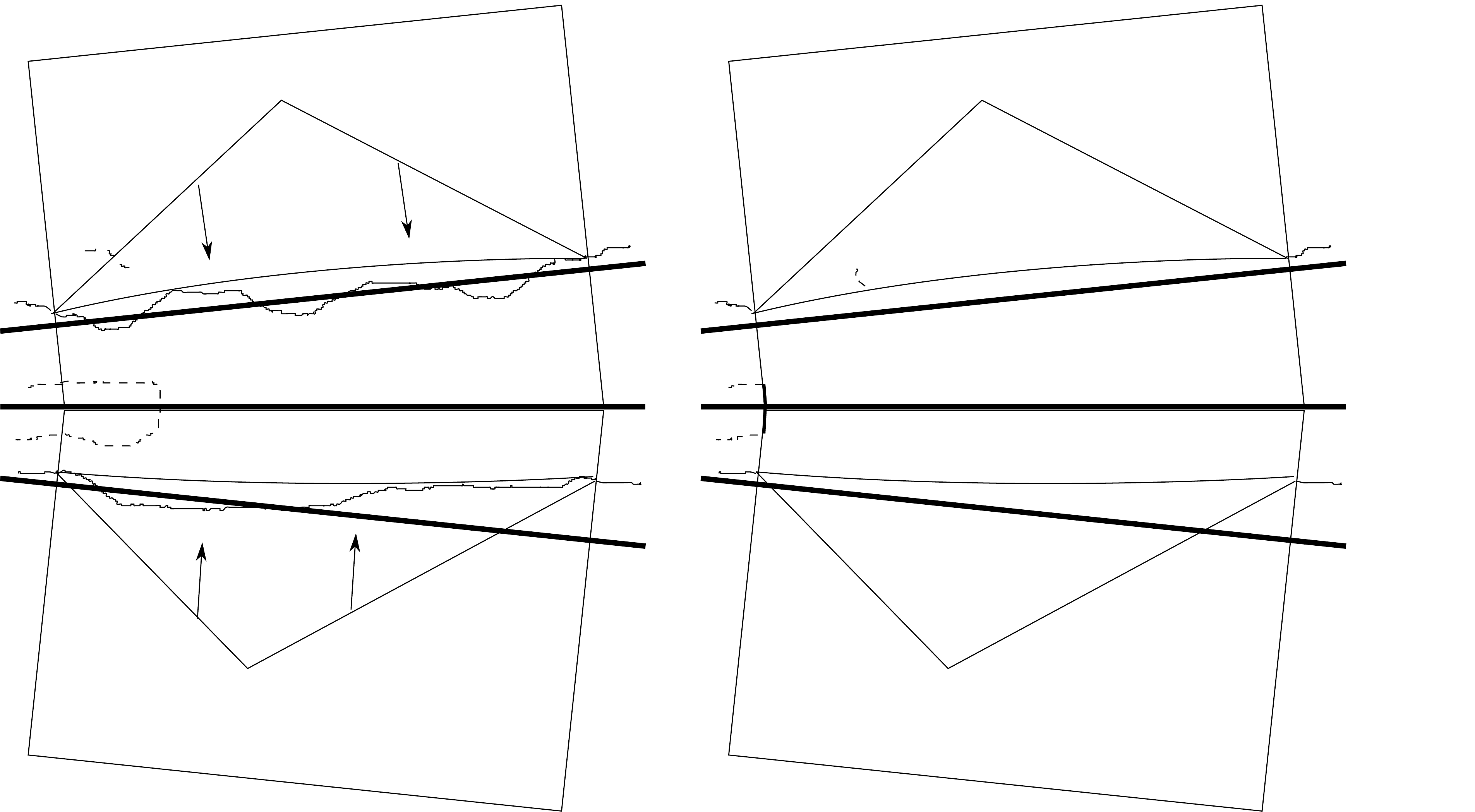
	\caption{On the left, the set $Z_k$ is drawn together with the graphs of $t^\pm \mp \s r_k+\zeta_{\pm,k}$ (these are the triangular shapes defining the boundary of $\W_k$), the sets $S^\pm_k$ (these are the smooth curves), and the graphs $\G^\pm_k$ (these are the solid, jagged curves). Note that all three coincide on $\p Z_k$. The portions of $K_{u_k}$ not contained in $\G^\pm_k$ are shown as dashed curves. The arrows show the effect of the mapping $\Phi_k$,  which collapses the portion of $\W_k$ above $S_k^+$ onto $S_k^+$, and likewise with the region below $S^-_k$. On the right is the resulting function $w_k$, which vanishes between $S_k^\pm$. Note that $K_{w_k}$ consists of $S^\pm_k$, the image under $\Phi$ of $K_{u_k}\sm W_k$ (a piece of which appears as the dashed line above $S_k^+$), and a new contribution in $\p Z_k$ due to setting $w_k=0$ between $S^\pm_k$ (this is the bold line on the left edge).} \label{fig:3}
\end{figure}

Arguing as in \eqref{eq:enimpi16}, we have
\[
|u_k^2(0,r_k) - u_k^2((z,t^+(z)-\s r_k + \zeta_{k,+}(z))_*)| \leq  C\int_{-r_k/2}^{r_k/2}|\n u_k((z,t)_*)| d\cL^1(t) + C\e_k \xi_k^2
\]
for each $z\in P_k$. This gives
\begin{align*}
 \int_{ S^+_k} u_k^2((z,t^+(z)-\s r_k + \zeta_{k,+}(z))_*) d\cH^{n-1} & \leq \cH^{n-1}(S_k^+) u_k^2(0,r_k) + C\cL^{n-1}(H_k) +C\e_k \xi_k^2 r_k^{n-1}\\
 & \leq \w_{n-1} (\s r_k)^{n-1} u^2_k(0,r_k) + \xi_k^2 r_k^{n-1}o_k(1).
\end{align*}
The second line used \eqref{eq:enimpi15} and \eqref{eq:enimpi3}. Combining with  \eqref{eq:enimpi17} gives
\begin{align}
\int_{K_{w_k}\cap Z_k} \overline{w}_k^2 + \underline{w}_k^2 d\cH^{n-1} &\leq \int_{ S^\pm_k} u_k^2((z,t^\pm (z)\mp\s r_k \nonumber\\
&\quad + \zeta_{k,\pm}(z))_*) d\cH^{n-1} + C \cH^{n-1}(Z_k\cap K_{w_k}\sm (S^+_k \cup S^-_k)) \nonumber\\
& \leq  \w_{n-1} (\s r_k)^{n-1} (u^2_k(0,r_k)+u^2(0,-r_k)) \nonumber\\
&\quad + \xi_k^2 r_k^{n-1}(o_k(1) + C \a_k([Z_k\sm W_k]/r_k))\nonumber \\
& \leq  \m_k(Z_k) + \xi_k^2 r_k^{n-1}(o_k(1) + C \a_k([Z_k\sm W_k]/r_k)),\label{eq:enimpi19}
\end{align}
with the last line from \eqref{eq:enimpi5}. Using property (4) of $\s$ gives
\begin{equation}\label{eq:enimpi20}
\cH^{n-1}(\p Z_k\cap K_{w_k}) \leq \xi_k r_k^{n-1}o_k(1).
\end{equation}

Using the minimality relation, \eqref{eq:enimpi18},\eqref{eq:enimpi19}, and \eqref{eq:enimpi20},
\[
 \int_{W_k}|\n u_k|^2 d\cL^n \leq  C \int_{Z_k\sm W_k} |\n u_k|^2 d\cL^n + \xi_k^2 r_k^{n-1} (o_k(1) + \a_k([Z_k\sm W_k]/r_k)).
\]
As $Z_k/r_k \rightarrow Q_\s (x)$ and $W_k /r_k \rightarrow W_\8 = \{|y_n|< \s- \frac{3}{4}|y'-x'|\}$, after dividing by $\xi_k^2 r_k^{n-1}$ and taking $k\rightarrow \8$ (using property (1) of $\s$ to pass to the limit in all of the terms involving measures),
\[
 \int_{W_\8}|\n v|^2 d\cL^n + \nu^s (W_\8)\leq C\left[\int_{Q_\s(x)\sm W_\8}|\n v|^2 d\cL^n + \a(Q_\s(x)\sm W_\8)+\nu^s (Q_\s(x)\sm W_\8)\right]. 
\]
The last terms vanish, as $\a,\nu^s$ are concentrated on $\pi$, so
\[
 \nu^s (W_\8)\leq C\int_{Q_\s(x)}|\n v|^2 d\cL^n \leq C\s^n.
\]
We have shown that
\[
 \nu^s (Q_{\s_0/2} (x))\leq C\s_0^n
\]
for each $\s_0$, which implies that $\nu^s$ is absolutely continuous with respect to Lebesgue measure. As $\nu^s$ was the singular part in a Lebesgue decomposition, this means it must vanish on $Q_{\t/24}$, which completes the argument.
\end{proof}

\section{Competitors}\label{sec:EL2}

Here we prove estimates on various competitors. The first lemma is much easier to prove than the others, and will be used in situations where $f$ and $ff$ are comparable. The second lemma will give information when $ff\ll f$, and will require more delicate constructions.

\begin{lemma}\label{lem:harm1} Let $u\in \sQ(Q_{12 r},\L,24 r)$. For each $\Upsilon>0$, there is a constant $\e_8=\e_8(\Upsilon)$ such that if
\[
 f(u,12 r) + E(u,12 r) + \L r^s + r \leq \e_8, \qquad E(u,12 r) + \sqrt{\L r^s}+r \leq  \Upsilon f(u,12 r),
\]
then for each $\phi$ in $C^\8_c (D_r)$,
\[
 \left|\int_{D_r}[u^2(0,r)\n g_+ + u^2(0,-r)\n g_-] \cdot \n \phi d\cL^{n-1}\right| \leq C(\Upsilon) \|\phi \|_{C^{1}} f(u,6r)r^{n-1}.
\]
Here $g_\pm$ are the functions from Theorem \ref{thm:lip} applied to $Q_r$ with $\nu_* = e_n$.
\end{lemma}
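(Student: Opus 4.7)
The plan is to apply the first variation formula (Lemma~\ref{lem:EL}) to the vector field $T(x) = \phi(x')\eta(x_n) e_n$, where $\eta \in C^\infty_c((-r,r))$ is a fixed cutoff with $\eta \equiv 1$ on $[-r/2, r/2]$ and $\|\eta'\|_\infty \leq 4/r$. Since $\phi \in C^\infty_c(D_r)$, the Poincar\'{e}-type bound $\|\phi\|_\infty \leq 2r\|\n \phi\|_\infty$ absorbs the $1/r$ factor in $\|\eta'\|_\infty$ and yields $\|T\|_{C^{0,1}} \leq C\|\phi\|_{C^1}$. Combining Theorem~\ref{thm:exbyflat} with the hypothesis $E+\sqrt{\L r^s}+r\leq \Upsilon f$, the excess satisfies $e(u,6r) \leq C(1+\Upsilon) f$ with $f := f(u,6r)$, and the quantity $M$ from Theorem~\ref{thm:lip} (applied at scale $r$) satisfies $M \leq C(1+\Upsilon)f$.

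Lemma~\ref{lem:EL} (applied to $T/\|T\|_{C^{0,1}}$ and optimized in $t$) gives
\[
\left|\int_{J_u}(\uu^2+\ud^2)\dvg\nolimits^{\nu_y}T\, d\cH^{n-1} + \int \bigl(|\n u|^2 \dvg T - \n u \cdot \n T \n u\bigr)\, d\cL^n\right| \leq C\|\phi\|_{C^1}\sqrt{\L r^s}\, r^{n-1} \leq C\Upsilon \|\phi\|_{C^1} f r^{n-1}.
\]
A direct expansion of the volume integrand gives $\phi\eta'|\n' u|^2 - (\p_n u)\eta \n \phi\cdot \n' u$, and both pieces are bounded in $L^1$ by $C\|\phi\|_{C^1}E(u,r)r^{n-1} \leq C\Upsilon\|\phi\|_{C^1}f r^{n-1}$ (using $\|\phi\|_\infty/r \leq 2\|\n \phi\|_\infty$ for the first).

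For the surface integral, I split $J_u \cap Q_r$ into $\Theta^+$, $\Theta^-$, and the complement of their union. By Remark~\ref{rem:lipest}, the complement has $\cH^{n-1}$-measure $\leq CMr^{n-1}$, contributing $\leq C(\Upsilon)\|\phi\|_{C^1}fr^{n-1}$. On the main pieces, Remark~\ref{rem:lipest} identifies $\mu \mres (\Theta^+ \cup \Theta^-)$ with $u^2(0,\pm r/2)\cH^{n-1}\mres \Theta^\pm$ up to total variation $O(Mr^{n-1})$; Lemma~\ref{lem:intreg}, applied on $\{x_n > r/4\} \cap Q_r$ (which is disjoint from $K$ for small $f$ by Proposition~\ref{prop:flatsame}), yields $|u^2(0,\pm r) - u^2(0,\pm r/2)| \leq C\sqrt{r}(\sqrt{E}+\sqrt{\L r^s}) \leq C\Upsilon f$, since $r \leq \Upsilon f$. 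On $\Theta^+ \ss \G^+$, $\eta(g_+) = 1$ and $\eta'(g_+) = 0$ (as $|g_+| < r/2$ for small $f$ by Theorem~\ref{thm:lip}), so the tangential divergence at $y = (y', g_+(y'))$ simplifies to $\dvg\nolimits^{\nu_y}T = (\n \phi \cdot \n g_+)/(1+|\n g_+|^2)$, and the area formula gives
\[
\int_{\Theta^+}\dvg\nolimits^{\nu_y} T\, d\cH^{n-1} = \int_{\pi(\Theta^+) \cap D_r}\frac{\n \phi \cdot \n g_+}{\sqrt{1+|\n g_+|^2}}\, d\cL^{n-1}.
\]
This differs from $\int_{D_r}\n \phi \cdot \n g_+\, d\cL^{n-1}$ by (i) the error on $D_r \setminus \pi(\Theta^+)$, with $\cL^{n-1}$-measure $\leq CMr^{n-1}$, and (ii) a Jacobian error bounded by $C\|\phi\|_{C^1}\int_{D_r}|\n g_+|^2 d\cL^{n-1}$. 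Using the area formula in reverse, $\int_{D_r}|\n g_+|^2 d\cL^{n-1} \leq C\int_{\G^+ \cap Q_r}(1 - (\nu_y \cdot e_n)^2) d\cH^{n-1} \leq C(e(u,r) + M)r^{n-1} \leq C(1+\Upsilon)fr^{n-1}$, where passing from $\G^+$ to $J_u$ uses item (2) of Theorem~\ref{thm:lip}. The same analysis applies to $\Theta^-$, and combining all contributions and rearranging yields the claim.

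The main technical obstacle is ensuring $\|T\|_{C^{0,1}} \leq C\|\phi\|_{C^1}$ rather than the na\"{\i}ve $\sim \|\phi\|_{C^1}/r$, which depends crucially on $\phi$ being compactly supported in $D_r$ so that Poincar\'{e} gives $\|\phi\|_\infty \leq 2r\|\n\phi\|_\infty$. A secondary subtlety is the $L^2$-bound on $\n g_\pm$ without invoking Proposition~\ref{prop:liptoplane}, whose hypothesis on $J(u,\cdot)$ is not assumed here; this is bypassed by reading the bound directly off the excess through Theorem~\ref{thm:exbyflat}.
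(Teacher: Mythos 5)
Your proposal follows essentially the same route as the paper's proof: the same vector field $\phi(x')\eta(x_n)e_n$ in the first variation formula of Lemma \ref{lem:EL}, the excess bound of Theorem \ref{thm:exbyflat}, the Lipschitz approximation of Theorem \ref{thm:lip} together with Remark \ref{rem:lipest} to replace $\mu$ on $J_u$ by $u^2(0,\pm r)\cH^{n-1}$ on $\Theta^\pm$ and then $\G^\pm$, and the same tangential-divergence/area-formula computation with the Jacobian error absorbed by the $L^2$ bound on $\n g_\pm$. Your only deviations are cosmetic --- making the Poincar\'e normalization of $T$ explicit, optimizing over $t$ rather than choosing $t$ proportional to the flatness, and deriving the gradient bound directly from the excess instead of citing Proposition \ref{prop:liptoplane} (whose $J$-hypothesis is indeed not assumed here) --- and the mild looseness in the bookkeeping between $f(u,6r)$ and $f(u,12r)$ is equally present in the paper's own proof.
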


\begin{proof}
We may as well assume that $\|\phi\|_{C^1}$ is sufficiently small. Set $ T = \xi(x_n) \phi(x')e_n$, where $\xi$ is a smooth cutoff function which is $1$ on $(-\frac{r}{2},\frac{r}{2})$ and supported on $(-r,r)$. We now plug $T$ into the equation of Lemma \ref{lem:EL}. This results in
 \begin{align*}
  &\left|\int_{J_{u}}(\uu^2(x)+\ud^2(x))\dvg\nolimits^{\nu_x}T d\cH^{n-1}(x) + \int_{Q_{r}}|\n u|^2 \dvg T - \n u \cdot \n T \n u d\cL^n\right|\\
  &\qquad\leq C(n) \s  r^{n-1}\1 E(u,r) + 1\2 + \frac{\L}{\s}r^{n-1+s}
 \end{align*}
for each $\s$ small enough. Applying Theorem \ref{thm:exbyflat} on $Q_{6 r}$, Theorem \ref{thm:lip} with $\nu_*=e_n$ on $Q_r$, and Remark \ref{rem:lipest} lets us alter the domain of integration in the first term to $\Theta^{+}\cup \Theta^-$ and replace $\m$ with a multiple of Hausdorff measure:
\begin{align*}
 &\left|\int_{\Theta^+}u^2(0,r)\dvg\nolimits^{\nu_x}T d\cH^{n-1} + \int_{\Theta^-}u^2(0,-r)\dvg\nolimits^{\nu_x}T d\cH^{n-1} \right|\\
 &\qquad\leq C(n) \s  r^{n-1} + C\1 E(u,12r)+f(u,12r) + r \2r^{n-1} + \frac{\L}{\s}r^{n-1+s}.
\end{align*}
We also moved all of the energy terms to the right-hand side and estimated them crudely. Another application of Remark \ref{rem:lipest} lets us change the domain of integration to $\G^{\pm}$, up to increasing the constants on the right:
\begin{align*}
 &\left|\int_{\G^+}u^2(0,r)\dvg\nolimits^{\nu_x}T d\cH^{n-1} + \int_{\G^-}u^2(0,-r)\dvg\nolimits^{\nu_x}T d\cH^{n-1} \right|\\
 &\qquad\leq C(n) \s  r^{n-1} +C(\Upsilon) f(u,12r)r^{n-1} + \frac{\L}{\s}r^{n-1+s}.
\end{align*}
Using the hypotheses and setting $\s=f(u,12r)$, we see that the entire right-hand side is controlled by $C(\Upsilon)f(u,12r)r^{n-1}$.

On the other hand, the terms on the left may be rewritten as integrals over the disk, using the area formula:
\[
 \int_{\G^+}u^2(0,r)\dvg\nolimits^{\nu_x}T d\cH^{n-1}=\int_{D_{r}}u^2(0,r)\dvg\nolimits^{\nu_x}T \sqrt{1+|\n g_+|^2}d\cL^{n-1}.
\]
The tangential divergence here may be computed directly in terms of the gradient of $g_{\pm}$, using that $\xi$ is constant on $\G^{\pm}$, and hence $\dvg T =0$:
\begin{align*}
 \dvg\nolimits^{\nu_x}T &= - \frac{(\n g_+,-1)}{\sqrt{1+|\n g_+|^2}} \cdot \n T \frac{(\n g_+,-1)}{\sqrt{1+|\n g_+|^2}}\\
 &=- \frac{(\n g_+,-1)}{\sqrt{1+|\n g_+|^2}} \frac{(0,\n g_+ \cdot \n \phi)}{\sqrt{1+|\n g_+|^2}}\\
 &=\frac{\n g_+ \cdot \n \phi}{1+|\n g_+|^2}.
\end{align*}
Each of the integrals may then be expressed as
\[
\int_{\G^+}u^2(0,r)\dvg\nolimits^{\nu_x}T d\cH^{n-1}=\int_{D_{r}}u^2(0,r) \n g_{+} \cdot \n \phi  d\cL^{n-1} + O(\int_{D_r}|\n g_{+}|^2 d\cL^{n-1}),
\]
with the error term controlled by $C(\Upsilon)f(u,12r)r^{n-1}$ by the hypothesis and Lemma \ref{prop:liptoplane}.
\end{proof}

We will now assume that $u(0,\pm r)\neq 0$, and show that a similar statement is available with $f$ replaced by $ff$. The argument is based on perturbing the graph $\G^+$ while leaving $\G^-$ fixed; Lemma \ref{lem:cut} is used to turn this into a viable competitor, and some additional care is needed to deal with situations where the perturbed $\G^+$ now intersects with $\G^-$.

\begin{lemma}\label{lem:harm2} Let $u\in \sQ(Q_{24r},\L,48 r)$. Assume $u(0,\pm r)\neq 0$. For each $\Upsilon>0$, there is a constant $\e_9=\e_9(\Upsilon)$ such that if
\[
 f(u,24r) + E(u,24r) + \L r^s + r \leq \e_9, \qquad E(u,24r) + \sqrt{\L r^s} + J^{\frac{2}{3}}(u,24r) + r^{\frac{1}{7}} \leq  \Upsilon ff(u,24r),
\]
then for each $\phi \geq 0$ in $C^\8_c (D_r)$,
\[
 \left|\int_{D_r}[\n g_+ - \n L_+] \cdot \n \phi d\cL^{n-1}\right| \leq C(\Upsilon) \|\phi \|_{C^{1}} [\frac{h^2}{r^2}ff^{\frac{1}{2}}(u,6r)+ff(u,6r)]r^{n-1}.
\]
Here $L_+=L_+[u,24 r]$, $h = h(u,24 r)$, $\nu_+ = \nu_+(x,24 r)$ are the minimal parameters for $ff(u,24r)$ and $g_+$ is function from Theorem \ref{thm:lip} applied on $Q_r$ with $\nu_* = \nu_+$.
\end{lemma}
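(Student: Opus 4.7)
The plan is to mirror the construction in the proof of Theorem~\ref{thm:exbyflat2}, but to extract the first-order term in the expansion of $F(v_t) - F(u)$ (rather than the quadratic term) by perturbing only the upper graph $\G^+$ and testing against the function $\phi$. Specifically, I would apply Lemma~\ref{lem:cut} on $Q_{24r}$ with $\xi = ff^{1/2}(u,24r)$ and some power $p\geq 2$ so that $\cH^{n-1}(Z\setminus K) \leq C r^{n-1}[ff^{p}+\xi\sqrt{E}+\L r^s]$, producing components $S_+\supset\{x_n>r/2\}$ and $S_-\supset\{x_n<-r/2\}$. Then choose a vector field $T(x) = \phi(x') \eta(x) \nu_+$, where $\eta$ is a smooth cutoff equal to $1$ in a neighborhood of $\pi_+\cap Q_r$ and vanishing near $\pi_-$ (this uses that $h/r \leq C ff^{1/(n+1)}$ is small, so that the two auxiliary planes are well-separated). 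For $t$ small, $\phi_t(x)=x+tT(x)$ is a diffeomorphism which leaves $S_-$ fixed, and the competitor
\[
v_t(x) = \begin{cases} u\circ\phi_t^{-1}(x) & x\in \phi_t(S_+) \\ u(x) & x\in S_- \\ 0 & \text{otherwise} \end{cases}
\]
is admissible for both $t>0$ (upward perturbation, extending by $0$) and $t<0$ (downward perturbation, where $u\circ \phi_t^{-1}$ naturally extends $u$ into the newly created strip between old and new $\G^+$).

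Next I would expand $F(v_t) - F(u)$ to first order in $t$. The volume term contributes $t\int[|\n u|^2\dvg T - 2\n u\cdot\n T\n u]\,d\cL^n + O(t^2 E(u,24r)r^{n-1})$, which by hypothesis is $\le \Upsilon^{-1} t\,ff(u,24r)r^{n-1}$. The surface term splits into (a) the main piece along $\G^+$; (b) a piece along $Z\setminus K$, bounded by $\cH^{n-1}(Z\setminus K)\leq C[ff^2 + \xi\sqrt E + \L r^s]r^{n-1}$; and (c) a piece along $K\cap S_+\setminus\G^+$, bounded by $CMr^{n-1}$ via Theorem~\ref{thm:lip} and Remark~\ref{rem:lipest}. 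For (a), I compute $\dvg^{\nu_x} T$ in graph coordinates where $\nu_x=(-\n g_+,1)/\sqrt{1+|\n g_+|^2}$ and $\nu_+=(-\n L_+,1)/\sqrt{1+|\n L_+|^2}$, which gives
\[
\dvg\nolimits^{\nu_x} T = [\n g_+ - \n L_+]\cdot\n\phi + O(|\n g_+-\n L_+|^2 + |\n g_+|^2|\n L_+|^2)\|\phi\|_{C^1}
\]
on $\G^+\cap\{\eta=1\}$, plus a boundary-layer contribution where $|\n\eta|\ne 0$ of size $O(\frac{h^2}{r^2}\cH^{n-1}(\cdot))$ since $T\cdot\nu_+ = 0$ would vanish on $\pi_+$ itself and the cutoff is taken on a scale $\sim h$. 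Using the area formula to convert to $D_r$, Remark~\ref{rem:lipest} to replace $u^2$ by $u^2(0,r)$ up to error $O(Mr^{n-1})$, and Proposition~\ref{prop:liptoplane} to control $\int|\n g_+-\n L_+|^2\leq Cff\,r^{n-1}$, the surface term becomes
\[
t\,u^2(0,r)\int_{D_r}[\n g_+-\n L_+]\cdot\n\phi\,d\cL^{n-1} \;+\; O\!\left(t\,\|\phi\|_{C^1}\Bigl[ff(u,24r)+\tfrac{h^2}{r^2}\cH^{n-1}(W)/r^{n-1}\Bigr]r^{n-1}\right),
\]
where $W$ is the same "leftover" set as in Theorem~\ref{thm:exbyflat2}.

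Finally, minimality gives $F(u)\le F(v_t)$ for both signs of $t$, and combining these two inequalities (which yield matching upper and lower bounds on the first-order coefficient) and dividing by $|t|$ removes the sign ambiguity and produces the estimate. The $\frac{h^2}{r^2}ff^{1/2}$ error in the statement appears precisely from the boundary-layer contribution of the cutoff $\eta$ together with the $W$-estimate $\cH^{n-1}(W)\leq C(M+ff^{2/(n+1)})r^{n-1}$ from Theorem~\ref{thm:exbyflat2}, after choosing an optimal cutoff scale $\approx h\cdot ff^{-1/4}$; the secondary $ff$ error absorbs all other contributions. The main obstacle I anticipate is the careful bookkeeping of which pieces of $K\cap\p S_+$ contribute jump energy to $v_t$ versus $u$ in the downward variation ($t<0$), where the newly exposed portion of $\G^+_{\text{new}}$ must match $u^2(0,r)$ in the limit: this is why the hypothesis $u(0,\pm r)\ne 0$ is imposed and why $\phi\geq 0$ is needed (so that the perturbation's orientation is consistent with the graph side on which $u$ is nonzero), and why the cutting lemma, rather than Theorem~\ref{thm:lip} alone, is required to avoid accidental interaction with $\G^-$.
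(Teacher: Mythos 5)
Your overall strategy is the same as the paper's (apply the cutting Lemma \ref{lem:cut}, move only the upper component $S_+$ by a compactly supported deformation, expand the energy and surface terms to first order, and pass to graph coordinates over $D_r$), but as written the argument has genuine gaps. The central one is your decoupling device. You assert $h/r\leq C\, ff^{\frac{1}{n+1}}$ and that this makes $\pi_+$ and $\pi_-$ ``well-separated,'' so that a cutoff $\eta$ can equal $1$ near $\pi_+$ and vanish near $\pi_-$. Both halves of this are wrong: it is $f$, not $ff$, that controls $h$ (see \eqref{eq:ffvf}; the regime $ff\ll h^2/r^2$ is precisely the one this lemma must cover, and $h\approx 0$ is also allowed), and separation of the two planes requires $h$ to be \emph{large} relative to the flatness scale $ff^{\frac{1}{n+1}}r$, not small. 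When $h$ is comparable to or smaller than that scale, no such $\eta$ exists, $\G^-$ and $S_-$ enter the region where your deformation acts, and your competitor is ambiguous on $\phi_t(S_+)\cap S_-$ (in particular for the downward variation). This overlap is exactly the hard part: the paper resolves it not by a cutoff but by the dichotomy on whether $\cH^{n-1}(\p \Psi_\s(S_+)\cap S_-^{(1)})$ or its counterpart is smaller (reflecting $u$ across $\pi$ if necessary), together with a transfer of surface measure between the two boundaries that costs an extra $C\frac{\sqrt r}{\xi}r^{n-1}$ via the oscillation bound on $S_\pm$, and a careful density-point analysis ($S_-^{(0)},S_-^{(1/2)},S_-^{(1)}$, etc.). You name this as an ``anticipated obstacle,'' but the fix you propose does not survive the small-$h$ case, and this is also why the final estimate is obtained as two one-sided inequalities (for $\phi$ and then $-\phi$) rather than as a clean two-sided first variation.

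There are also two quantitative problems. First, your choice $\xi=ff^{\frac12}$ is too large: the cutting lemma's zero-order error $\cH^{n-1}(Z\sm K)\gtrsim \xi\sqrt{E}\gtrsim \Upsilon^{\frac12} ff\, r^{n-1}$ enters the minimality inequality without a factor of $t$, so after dividing by the perturbation size $t\approx ff$ it contributes an $O(1)$ error, not $O(ff)$. One needs $\xi\approx(\Upsilon^{-1}ff)^{\frac32}$, and this is exactly where the hypotheses $J^{\frac23}(u,24r)\leq\Upsilon ff$ (to make $\xi\geq J$ admissible in Lemma \ref{lem:cut}) and $r^{\frac17}\leq\Upsilon ff$ (to control $\sqrt r/\xi$ after dividing by $t$) are consumed. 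Second, your attribution of the $\frac{h^2}{r^2}ff^{\frac12}$ term to a boundary layer of the cutoff at scale $h\,ff^{-\frac14}$ is not the actual mechanism (and is incoherent when $h\approx0$): in the correct argument this term appears only at the very end, when the tangential divergence on $\G^+$ is converted into $\int_{D_r}(\n g_+-\n L_+)\cdot\n\phi$; comparing $1/\sqrt{1+|\n g_+|^2}$ with $1/\sqrt{1+|\n L_+|^2}$ produces a cross term bounded by $\frac{h}{r}\int_{D_r}|\n L_+|\,|\n g_+-\n L_+|\,d\cL^{n-1}\leq C\frac{h^2}{r^2}\sqrt{ff}\,r^{n-1}$ by Cauchy--Schwarz and Proposition \ref{prop:liptoplane}, while the harmonicity of the affine function $L_+$ kills the leading term $\int\n L_+\cdot\n\phi$.
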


\begin{proof} We may restrict our attention to $\phi$ with $\|\phi\|_{C^1} \leq 1$. Set $u_+=u(0,r)$ and $u_-=u(0,-r)$, and abbreviate $ff=ff(u,24r)$. Up to choosing $\e_9$ small, we may guarantee that
\[
 \sup_{Q_{12 r} \cap \{x_n \geq \frac{r}{10}\}} |u_+ -u(x)| \leq C \sqrt{r\e_9},
\]
and likewise on the lower half of the cylinder, from Lemma \ref{lem:intreg}.

First, we apply Lemma \ref{lem:cut} with $p=2$ and $\xi$ to be chosen to find a closed, countably $\cH^{n-1}$-rectifiable set $Z\ss Q_{2r}\cap \{|x_n|\leq r/10\}$ with the following properties:
\begin{enumerate}
 \item $Z$ separates $Q_r$ into (at least) two disjoint connected open sets, one of which (we call it $S_+$) contains $Q_r \cap \{x_n \geq r/2\}$, and another, $S_-$, contains $Q_r \cap \{x_n \leq -r/2\}$. 
 \item On $S_+$, $u \in (u_+ - C\frac{r^{1/2}}{\xi},u_+ + C\frac{r^{1/2}}{\xi})$, and similarly on $S_-$, $u \in (u_- - C\frac{r^{1/2}}{\xi},u_- + C\frac{r^{1/2}}{\xi})$.
 \item $\cH^{n-1}(Z \sm K)\leq C(\Upsilon)r^{n-1} (ff^{2} + \xi \sqrt{f})$.
 \item The union of the connected components of $Q_{2r} \sm Z$ which contain $Q_r \cap \{u>0\}\sm (S_+ \cup S_-)$ is compactly contained in $Q_{2r}$.
\end{enumerate}

We now construct a one-parameter family of competitors $v_\s$ for $u$, based on the diffeomorphisms $\Psi_\s(x)=x + \s T(x)$, where $T(x) = \phi(x') \zeta(x_n)\e_n$ and $\zeta$ is a compactly supported cutoff which vanishes outside $(-r,r)$, is $1$ on $(-r/2,r/2)$, and has derivative bounded by $4/r$. Notice that $\|T\|_{C^1}\leq C$. Consider the set $\Psi_\s (S_+)$, together with the $SBV$ function $w_\s = u\circ \Psi_\s^{-1} 1_{\Psi_\s(S_+)}$. Proceeding similarly to the proof of Lemma \ref{lem:EL}, we have that
\begin{equation}\label{eq:harm2i7}
 \left|\int_{\Psi_\s(S_+)}|\n w_\s|^2 d\cL^n -\int_{S_+}|\n u|^2 d\cL^n\right| \leq C \s \int_{S_+}|\n u|^2 d\cL^n \|T\|_{C^1},
\end{equation}
as well as the estimate 
\begin{align}
 &\left|\int_{Q_r\cap K_{w_\s}} \overline{w}_\s^2 + \underline{w}_\s^2 d\cH^{n-1} -\int_{Q_r \cap K_{w_0}} \overline{w}_0^2 + \underline{w}_0^2 d\cH^{n-1}  - \s \int_{Q_r \cap K_{w_0}}\dvg\nolimits^{\nu_x} T \1\overline{w}_0^2 + \underline{w}_0^2\2 d\cH^{n-1} \right|\nonumber\\
 &\qquad\leq C\s^2 r^{n-1} \|T\|_{C^1}. \label{eq:harm2i8}
\end{align}

To actually construct $v_\s$, we examine the open set $\Psi_\s (S_+) \cap S_-$. There will be two cases, depending on whether (case 1) $\cH^{n-1}(\Psi_\s(S_+) \cap \p S_-)\leq \cH^{n-1}(\p \Psi_\s(S_+) \cap S_-)$ or (case 2) $\cH^{n-1}(\Psi_\s(S_+) \cap \p S_-)> \cH^{n-1}(\p \Psi_\s(S_+) \cap S_-)$. Define the $SBV$ function $v_\s$ on $\cL^n$-a.e. point on $Q_r$ by
\[
 v_\s (x) =\begin{cases}
            u(x) & x\in S_- \sm \Psi_\s (S_+)\\
            w_\s (x) & x\in \Psi_\s (S_+) \sm S_-\\
            0 & x\in Q_r\sm (S_- \cup \Psi_\s (S_+))\\
            u(x) & x\in \Psi_\s (S_+) \cap S_- \text{ and in case 1}\\
            w_\s (x) & x\in \Psi_\s (S_+) \cap S_- \text{ and in case 2}.
           \end{cases}
\]
This function coincides with $u 1_{\R^n \sm S_B}$, where $S_B$ is the set given in item $(4)$ above, on the boundary of $Q_r$, and so we extend $v_\s$ in this way. By $(4)$, this is a compactly supported perturbation of $u$ on $Q_{2r}$, and so we may use it as a competitor. Let us assume for now that we are in case 1; case 2 works similarly. We will now somewhat laboriously compute how the integral of $\vu_\s^2 +\vd_\s^2$ on $K_{v_\s}$ compares to the measure $\m$ on $Q_r$, with the goal of showing that the difference is essentially the integral of $\s \dvg^{\nu_x} T$ against $\m^+$ on $\G^+$, the upper Lipschitz graph, up to small errors. We will omit the $\s$ subscript on $v$.

Let us think of $v = u 1_{S_-} + w_\s 1_{V}$ (where $V=\Psi_\s (S_+) \sm S_-$), and estimate its jump set accordingly (making use of the $BV$ product rule). First of all, on $J_v \cap S_-^{(1)}$ we have $J_v=J_u$ and $\vu=\uu$, $\vd=\ud$, so
\begin{equation}\label{eq:harm2i1}
 \int_{J_v \cap S_-^{(1)}} \vu^2 +\vd^2 d\cH^{n-1} =  \m (S_-^{(1)}).
\end{equation}
For $J_v \cap S_-^{(0)}$, there are a few cases. On $J_v \cap S_-^{(0)} \cap V^{(0)}$, we must have $\vu=\vd=0$ for $\cH^{n-1}-$a.e. point (contradicting being in $J_v$), which
means this set is $\cH^{n-1}$ negligible. On $J_v \cap S_-^{(0)} \cap V^{(1)}$, $J_v=J_{w_\s}$ and $\vu=\bar{w}_\s$, $\vd = \underline{w}_\s$ so we have
\begin{equation}\label{eq:harm2i2}
 \int_{J_v \cap S_-^{(0)} \cap V^{(1)}} \vu^2 +\vd^2 d\cH^{n-1} =  \int_{J_{w_\s} \cap S_-^{(0)} \cap V^{(1)}} \bar{w}_\s^2 +\underline{w}_\s^2 d\cH^{n-1}.
\end{equation}
Finally, on $J_v \cap S_-^{(0)} \cap V^{(1/2)}$, we have that $\vu = \bar{w}_\s$, while $\vd=0=\underline{w}_\s$, so we still obtain
\begin{equation}\label{eq:harm2i3}
 \int_{J_v \cap S_-^{(0)} \cap V^{(1/2)}} \vu^2 +\vd^2 d\cH^{n-1} =  \int_{J_{w_\s} \cap S_-^{(0)} \cap V^{(1/2)}} \bar{w}_\s^2 +\underline{w}_\s^2 d\cH^{n-1}.
\end{equation}

This leaves only the points in $J_v \cap S_-^{(1/2)}$. For points in the set $J_v \cap S_-^{(1/2)} \cap V^{(0)}$, we have that either they are in $Z\sm K$, or else they are in $J_u$ and $\vu=\uu,\vd=0$. This implies
\begin{equation}\label{eq:harm2i4}
 \int_{J_v \cap S_-^{(1/2)} \cap V^{(0)}} \vu^2 +\vd^2 d\cH^{n-1} - \m(S_-^{(1/2)} \cap V^{(0)}) \leq C\cH^{n-1}(Z\sm K).
\end{equation}
Finally, this leaves $J_v \cap S_-^{(1/2)} \cap V^{(1/2)}$. Here we notice that one of $\vu,\vd$ coincides with $\bar{w}_\s$, while the other with $\overline{u1_{S_-}}$, and so our integral can be separated into these two terms. The second we leave essentially alone, just noting that the boundary of $S_-$ lies outside of $K$ only on a small set:
\begin{equation}\label{eq:harm2i5}
 \left|\int_{J_v \cap S_-^{(1/2)} \cap V^{(1/2)}} \overline{u1_{S_-}}^2 d\cH^{n-1} -\int_{J_u \cap S_-^{(1/2)} \cap V^{(1/2)}} \overline{u1_{S_-}}^2 d\cH^{n-1}\right| \leq C\cH^{n-1}(Z\sm K).
\end{equation}
For the first, we use that we are in case 1 to say that $\cH^{n-1}(S_-^{(1/2)} \cap V^{(1/2)})\leq \cH^{n-1}(\Psi_\s(S_+)^{(1/2)}\cap S_-^{(0)})$. We also observe that the oscillation of $w_\s$ over $\Psi_\s(S_+)$ is the same as the oscillation of $u$ over $S_+$, which is controlled by $Cr^{1/2}/\xi$. We therefore obtain that
\begin{equation}\label{eq:harm2i6}
 \int_{J_v \cap S_-^{(1/2)} \cap V^{(1/2)}} \bar{w}_\s^2 d\cH^{n-1}\leq \int_{J_{w_\s}\cap S_-^{(1)} \cap \Psi_\s(S_+)^{(1/2)}} \bar{w}_\s^2 d\cH^{n-1} + C\frac{\sqrt{r}}{\xi} r^{n-1}.
\end{equation}

Combining all of these inequalities \eqref{eq:harm2i1},\eqref{eq:harm2i2},\eqref{eq:harm2i3},\eqref{eq:harm2i4},\eqref{eq:harm2i5}, and \eqref{eq:harm2i6} together gives
\begin{align*}
 \int_{J_v \cap Q_r} \vu^2 +\vd^2 d\cH^{n-1} &\leq \int_{J_u \cap Q_r} \overline{u1_{S_-}}^2 + \underline{u1_{S_-}}^2 d\cH^{n-1} + \int_{J_{w_\s} \cap Q_r} \overline{w}_\s^2 + \underline{w}_\s^2 d\cH^{n-1}\\
 &\qquad+ C(\Upsilon)[ff^{2} +\xi \sqrt{ff} + \frac{\sqrt{r}}{\xi}] r^{n-1}.
\end{align*}
Applying our prior estimate \eqref{eq:harm2i8} on the second term on the right and combining the integrals gives
\begin{align}
 \int_{J_v \cap Q_r} \vu^2 +\vd^2 d\cH^{n-1} &\leq \m(Q_r) + \s \int_{J_{w_0} \cap Q_r}\dvg\nolimits^{\nu_x}T ( \overline{w}_0^2 + \underline{w}_0^2 )d\cH^{n-1} \nonumber\\
 &\qquad+ [C\s^2+C(\Upsilon)(ff^{2} +\xi \sqrt{ff} + \frac{\sqrt{r}}{\xi}) ] r^{n-1} \label{eq:harm2i11}.
\end{align}
Now, $J_{w_0}$ coincides with $\Theta^+$ except on the union of $Z\sm K$ and $(J_u\cap \p S_+) \triangle \Theta^+$, up to $\cH^{n-1}$ negligible sets, where $\Theta^+$ is as in Remark \ref{rem:lipest}. Let us estimate this second set. First of all, the set $\Theta^+ \sm \p S_+$ has the property that
\[
 \pi(\Theta^+ \sm \p S_+) \ss \pi(Z\sm K).
\]
Indeed, the upward line segment $\{x+te_n: t>0\}$ for each $x\in \Theta^+ \sm \p S_+$ avoids $K$, but must intersect $\p S_+$ before exiting the cylinder $Q_r$. As $\Theta^+$ lies on a $1-$Lipschitz graph, this implies that
\begin{equation}\label{eq:harm2i9}
 \cH^{n-1}(\Theta^+ \sm \p S_+)\leq 2 \cL^{n-1}(\pi(\Theta^+ \sm \p S_+))\leq 2 \cH^{n-1}(Z\sm K).
\end{equation}
On the other hand
\[
\p^* S_+ \cap J_u\sm \Theta^+ \ss (J_u \sm (\Theta^+\cup \Theta^-)) \cup (\p^* S_+ \cap \Theta^-\sm \Theta^+) , 
\]
with the first of these having measure bounded by $C(\Upsilon)ff$ from Remark \ref{rem:lipest} (and Theorem \ref{thm:exbyflat2}). For a point $x$ in the second set, $\p^* S_+ \cap \Theta^-\sm \Theta^+$, we must (by definition of $\Theta^-$) have that the Lebesgue density of $\{u>0\}$ at $x$ is $1/2$; this implies that the density of $S_-$ at $x$ must be $0$ (the density of $S_+$ being $1/2$). Thus $x\in \Theta^- \sm \p^* S_-)$; from \eqref{eq:harm2i9}, we infer that
\[
 \cH^{n-1}(\p^* S_+ \cap \Theta^-\sm \Theta^+)\leq 2\cH^{n-1}(Z\sm K).
\]
Putting everything together, we have shown that
\begin{equation}\label{eq:harm2i10}
 \cH^{n-1}((J_u\cap \p S_+) \triangle \Theta^+)\leq C(\Upsilon)[ff+ff^2 + \xi \sqrt{ff}]r^{n-1}.
\end{equation}
Note that several times we used the fact that for a set $E$ whose boundary is countably $\cH^{n-1}$-rectifiable, we have $\cH^{n-1}(\p E\sm \p^* E)=0$.

Thus if in \eqref{eq:harm2i11} we integrate the term with the tangential divergence of $T$ on $\Theta^+$ instead of on $J_{w_0}$, we incur an error controlled by $C(\Upsilon) \s [ff + \xi\sqrt{ff}] r^{n-1}$ (using the fact that $\|T\|_{C^{1}} \leq C$):
\begin{align*}
 \int_{J_v \cap Q_r} \vu^2 +\vd^2 d\cH^{n-1} &\leq \m(Q_1) + \s \int_{\Theta^+ \cap Q_r}\dvg\nolimits^{\nu_x}T u_+^2 d\cH^{n-1} \\
 &\quad + C(\Upsilon)r^{n-1}[ff^{2} +\xi \sqrt{ff} + \frac{\sqrt{r}}{\xi} +\s^2 + \s ff].
\end{align*}
We once again used the oscillation bound on $u$ over $S_+$. Now apply $v$ as a competitor in the quasiminimizer inequality, to obtain (using also \eqref{eq:harm2i7}) that
\[
 0\leq \s \int_{\Theta^+ \cap Q_r}\dvg\nolimits^{\nu_x}T u_+^2 d\cH^{n-1} + C(\Upsilon)r^{n-1}[ff^{2} +\xi \sqrt{ff} + \frac{\sqrt{r}}{\xi} +\s^2 + \s ff] + \L r^{n-1+s}.
\]
Set $\xi = [\Upsilon^{-1} ff]^{\frac{3}{2}}\geq J(u,24r)$  and then  $\s = ff $, which gives
 \[
  \frac{1}{r^{n-1}}\int_{\Theta^+ \cap Q_r}\dvg\nolimits^{K}T u_+^2 d\cH^{n-1} \geq - C(\Upsilon)[ff +  \frac{\sqrt{r}}{ff^{5/2}}]\geq -C(\Upsilon)ff.
\]

To obtain the final conclusion, we recall from the proof of Lemma \ref{lem:harm1} that
\[
 \int_{\G^+\cap Q_r} \dvg\nolimits^{\nu_x}T d\cH^{n-1} = \int_{D_r} \frac{\n g_+ \cdot \n \phi}{\sqrt{1+|\n g_+|^2}} d\cL^{n-1}.
\]
We estimate
\begin{align*}
 \left| \frac{1}{\sqrt{1+|\n L_+|^2}} -\frac{1}{\sqrt{1+|\n g_+|^2}}\right| &\leq \frac{|\n L_+| |\n g_+ - \n L_+| }{(1+|\n L_+|^2 )^{\frac{3}{2}}} + C |\n g_+ - \n L_+|^2 \\
 &\leq \frac{h}{r} |\n g_+ -\n L_+| + C |\n g_+ - \n L_+|^2
\end{align*}
using a Taylor expansion for $\frac{1}{\sqrt{1 + a^2}}$ around $a = |\n L_+|$. This gives (using that $\|\phi\|_{C^1}\leq 1$)
\begin{align*}
 \big|\int_{D_r}&\frac{\n g_+ \cdot \n \phi}{\sqrt{1+|\n L_+|^2}} - \frac{\n g_+ \cdot \n \phi}{\sqrt{1+|\n g_+|^2}} d\cL^{n-1}\big|\\
 &\leq \frac{h}{r}\int_{D_r}|\n g_+||\n g_+-\n L_+|d\cL^{n-1} + C\int_{D_r}|\n g_+ -\n L_+|^2 d\cL^n\\
 &\leq \frac{h}{r}\int_{D_r}|\n L_+| |\n g_+ -\n L_+|d\cL^{n-1} + C\int_{D_r}|\n g_+ -\n L_+|^2 d\cL^n\\
 &\leq C(\Upsilon)[\frac{h^2}{r^2} \sqrt{ff} + ff]r^{n-1},
\end{align*}
where we used Proposition \ref{prop:liptoplane} and the estimate $|\n L_+|\leq \frac{h}{r}$ in the last step. Note that as planes are harmonic, we have
\[
 \int_{D_r} \n L_+ \cdot \n \phi d\cL^{n-1} =0.
\]
Applying these estimates, we finally arrive at
\[
  \frac{1}{\sqrt{ff}r^{n-1}}\int_{D_r}[\n g_+ -\n L_+]\cdot \n \phi u_+^2 d\cH^{n-1} \geq - C(\Upsilon)(\sqrt{ff} + \frac{h^2}{r^2}).
\]
The other inequality follows by using $-\phi$ instead.
\end{proof}

\section{Flatness Improvement}\label{sec:flatness}

\begin{theorem}\label{thm:flatimp} Let $u\in \sQ(Q_{r},\L,r_0)$ and $2r<r_0$. For each $\Upsilon>0$ and $\t \in (0,\t_0)$, there is a constant $\e_H=\e_H(\Upsilon,\t)$ such that if
\[
 f(u,r) + E(u,r) + \L r^s + r \leq \e_H, \qquad E(u,r) + \sqrt{\L r^s} + J^{\frac{2}{3}}(u,r) + r^{1/7} \leq  \Upsilon ff(u,r),
\]
then for some $\nu$ with $1 - (\nu\cdot e_n)^2 \leq C_H ff(u,r)$, and  $x$ with $x=|x|\nu$ and $|x|^2 \leq C_H ff(u,r)r^2$,
\begin{equation}\label{eq:flatimpc}
  ff(u,x,\t r,\nu)\leq C_H \t^2 ff(u, r).
\end{equation}
The constant $C_H$ is universal. If one of $u(0,\pm r/2)=0$, the same holds with $f$ in place of $ff$.
\end{theorem}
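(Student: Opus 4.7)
The plan is to approximate the Lipschitz graphs $g_{\pm}$ produced by Theorem \ref{thm:lip} (applied with $\nu_* = \nu_+$) by harmonic functions on a slightly smaller disk, and then exploit the fact that a harmonic function is approximated by its affine tangent at an interior point to order $\tau^2$ (not merely $\tau$) in the appropriate scaled norm. The key input is Lemma \ref{lem:harm2}, which says that $g_{\pm} - L_{\pm}$ is weakly approximately harmonic against $C^1_c$ test functions with error $C(\Upsilon)[\frac{h^2}{r^2}\sqrt{ff(u,r)} + ff(u,r)] r^{n-1}\|\phi\|_{C^1}$. The degenerate case where one of $u(0,\pm r/2)=0$ is handled identically, using Lemma \ref{lem:harm1} in place of \ref{lem:harm2}, $L_{\pm}=0$, and $f$ in place of $ff$ throughout.

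I would first solve the Dirichlet problem on $D_{\r}$ for a generic $\r$ near $r/2$: let $w_{\pm}$ be harmonic with $w_{\pm} - (g_{\pm} - L_{\pm}) \in H^1_0(D_{\r})$. Using a mollified version of $w_{\pm} - (g_{\pm} - L_{\pm})$ as a test function in Lemma \ref{lem:harm2}, one derives the $H^1$ closeness $\int_{D_{\r}} |\n (w_{\pm} - g_{\pm} + L_{\pm})|^2 \leq C(\Upsilon)\, ff(u,r) \cdot r^{n-1}$, after absorbing the $(h^2/r^2)\sqrt{ff}$ term into $ff$ plus lower order via \eqref{eq:ffvf} and the smallness hypotheses. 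Combined with the $L^2$ bound $\|g_{\pm} - L_{\pm}\|_{L^2(D_{\r})}^2 \leq C\, ff(u,r)\cdot r^{n+1}$ from Proposition \ref{prop:liptoplane}, this yields $\|w_{\pm}\|_{L^2(D_{\r})}^2 \leq C\, ff(u,r)\cdot r^{n+1}$, and standard interior regularity for harmonic functions then gives
\[
\sup_{D_{r/4}}\bigl(|w_{\pm}| + r|\n w_{\pm}| + r^2|\n^2 w_{\pm}|\bigr) \leq C \sqrt{ff(u,r)}\, r.
\]

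A second-order Taylor expansion of $w_{\pm}$ at the origin yields $|w_{\pm}(y) - w_{\pm}(0) - \n w_{\pm}(0) \cdot y| \leq C\tau^2 \sqrt{ff(u,r)}\, r$ for $|y| \leq \tau r$. I would define the new affine approximants $L_{\pm}^{\mathrm{new}}(y) := L_{\pm}(y) + w_{\pm}(0) + \n w_{\pm}(0) \cdot y$; both $|\n L_{\pm}^{\mathrm{new}} - \n L_{\pm}|$ and $r^{-1}|L_{\pm}^{\mathrm{new}}(0) - L_{\pm}(0)|$ are then $O(\sqrt{ff(u,r)})$. The two planes $\pi_{\pm}^{\mathrm{new}}$ associated to $L_{\pm}^{\mathrm{new}}$ must be put in the reflection-symmetric form required by the definition of $ff$: I take $\nu$ as the normalized bisector of the normals $\nu_{\pm}^{\mathrm{new}}$ (which forces $\nu_-^{\mathrm{new}}$ to be the reflection of $\nu_+^{\mathrm{new}}$ across $\nu$), and choose $x_0 = c\nu$ as the midpoint between $\pi_{\pm}^{\mathrm{new}}$ along the line $\{t\nu\}$. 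Elementary linear algebra, using the above smallness together with $|\n L_{\pm}| \leq h/r$ and $h^2/r^2 \leq C\, ff(u,r)$ via \eqref{eq:ffvf}, gives $|\nu - e_n|^2 \leq C_H\, ff(u,r)$ and $|x_0|^2 \leq C_H\, ff(u,r)\, r^2$, matching the assertion in the theorem.

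Finally, I would bound $ff(u, x_0, \tau r, \nu)$ by splitting $K_u \cap Q_{\tau r, \nu}(x_0)$ into the graph portion $\Theta^+ \cup \Theta^- \ss \G^+ \cup \G^-$ and the exceptional set $K_u \triangle (\G^+ \cup \G^-)$. On the graph portion, $d_*^2(y; \nu_+^{\mathrm{new}}, h^{\mathrm{new}})$ is (modulo second-order tilt corrections absorbed by the choice of $\nu$) dominated by $C|g_{\pm}(y') - L_{\pm}^{\mathrm{new}}(y')|^2$, so changing variables to the disk and applying the Taylor bound gives a graph contribution
\[
\frac{1}{(\tau r)^{n+1}}\int_{D_{\tau r}} C\tau^4\, ff(u,r)\, r^2\, dy' \leq C\tau^2\, ff(u,r).
\]
The exceptional contribution uses the $\cH^{n-1}$ estimate from Remark \ref{rem:lipest} and the pointwise bound $d_* \leq C\, ff^{\frac{1}{n+1}}(u,r)\, r$ on $K_u$ from Proposition \ref{prop:fflatsame}; it is bounded by $C M \cdot ff^{\frac{2}{n+1}}(u,r)\cdot \tau^{-(n+1)}$, which is forced below $\tau^2\, ff(u,r)$ by choosing $\e_H$ small enough in terms of $\tau$ and $\Upsilon$. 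The main obstacle will be step two: since Lemma \ref{lem:harm2} only controls the $C^1$-paired dual, legitimizing $w_{\pm} - (g_{\pm} - L_{\pm})$ as a test function requires a mollification argument in which the $C^1$ norm of the mollified test function blows up as the mollification scale shrinks, so one must carefully trade off the mollification parameter against the known $H^1$ Lipschitz bounds on $g_{\pm}$ from Proposition \ref{prop:liptoplane}.
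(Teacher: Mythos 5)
Your overall strategy (harmonic replacement of $g_\pm-L_\pm$, second\--order Taylor expansion at $0$, new bisected pair of planes) is in the same spirit as the paper, but as a \emph{direct} argument it has a quantitative gap that the paper's compactness proof is specifically designed to avoid. Your harmonic comparison gives at best $\int_{D_\rho}|\nabla(w_\pm-(g_\pm-L_\pm))|^2\le C(\Upsilon)\,ff(u,r)\,r^{n-1}$ with a \emph{fixed} constant, hence by Poincar\'e $\int_{D_\rho}|g_\pm-L_\pm-w_\pm|^2\le C(\Upsilon)\,ff(u,r)\,r^{n+1}$. When you pass to the small cylinder you must divide by $(\tau r)^{n+1}$, and there is no reason the mass of this error is not concentrated in $D_{\tau r}$; so its contribution to $ff(u,x,\tau r,\nu)$ is only bounded by $C(\Upsilon)\,\tau^{-(n+1)}ff(u,r)$, which cannot be forced below $C_H\tau^2\,ff(u,r)$ with a universal $C_H$. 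To run a direct excess-decay one needs the harmonic approximation error to be $o(ff)$, with smallness depending on $\tau$; testing Lemma \ref{lem:harm2} with $w_\pm-(g_\pm-L_\pm)$ (whose $C^1$ norm is of order one) cannot produce that. This is exactly why the paper argues by contradiction along a sequence with $ff(u_k,r_k)=\varepsilon_k^2\to0$: the normalized graphs $f_{\pm,k}$ converge strongly in $L^2$ to a harmonic limit, so the error is $o_k(1)\varepsilon_k^2$ and beats any $\tau$-dependent threshold. Separately, your absorption of the error term $\frac{h^2}{r^2}\sqrt{ff}$ ``into $ff$ via \eqref{eq:ffvf}'' is not legitimate: \eqref{eq:ffvf} bounds $ff$ by $C[h^2/r^2+ff]$, not $h^2/r^2$ by $ff$, and the regime $ff\ll h^2/r^2$ (two well-separated planes) is precisely the case the theorem must cover. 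In the paper this term is harmless only because, after normalizing by $\varepsilon_k$, one merely needs $h_k^2/r_k^2\to0$ (which follows from $f\le\e_H$) to get harmonicity of the limit, and the competitor construction is then split into the cases $ff\lessgtr C_1h^2/r^2$.

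A second genuine omission is the admissibility constraint in the definition \eqref{eq:ffdef}: the pair $(\nu_+^{\mathrm{new}},h^{\mathrm{new}})$ you build must satisfy $|\nu-\nu_+^{\mathrm{new}}|\le h^{\mathrm{new}}/(100\,\tau r)$, i.e.\ the tilt of the new planes must be controlled by their (possibly tiny) separation, so that they do not cross in $Q_{\tau r,\nu}(x)$. In the regime $ff\gtrsim h^2/r^2$ this is not ``elementary linear algebra'': the paper obtains it from Harnack's inequality applied to the nonnegative harmonic function $f_+-f_-$, which yields $|\nabla f_+(0)-\nabla f_-(0)|\le C\,[f_+(0)-f_-(0)]$ and hence non-crossing of $L^*_{\pm,k}$ on $D_{\tau r}$ for $\tau$ small. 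Without this step your candidate pair may simply be inadmissible in the infimum defining $ff(u,x,\tau r,\nu)$, so the final estimate does not follow. (By contrast, the mollification/$C^1$ issue you single out as the main obstacle is a technicality; the substantive obstacles are the $o(ff)$-versus-$O(ff)$ approximation error, the $h^2/r^2$ term, and the admissibility of the new planes.)
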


\begin{remark}\label{rem:flatimp}We note that in the conclusion of the theorem, the fact that $x=|x|\nu$ may be replaced with $x= |x|e_n$ to obtain an equivalent statement; this follows from the fact that the cylinders $Q_{\t r, \nu}$ centered around these two points are contained in each other's double. We will prove the theorem with $x = |x|e_n$, which simplifies notation slightly.
\end{remark}

\begin{proof}
The argument is by contradiction. We assume that \eqref{eq:flatimpc} fails for some $\t$ and $\Upsilon$, which means there are sequences $u_k, r_k, \L_k. \e_k$ with $r_k\searrow 0$, $f(u_k,r_k)\rightarrow 0$,
\[
 ff(u_k,r_k) = \e_k^2 \rightarrow 0,
\]
\[
 E(u_k,r_k) + \sqrt{\L_k r_k^s} + J^{\frac{2}{3}}(u_k,r_k) +r_k^{\frac{1}{7}} \leq \Upsilon \e_k^2,
\]
and
\[
 \inf_{x,\nu}ff(u_k,x,\t r_k, \nu) \geq C_H \t  ff(u_k,r_k).
\]
We reduce to a subsequence for which $\tilde{u}_k(x)=u_k(r_k x)$ converges in $L^1$ to a function $u_\8$ which is locally constant on $Q_1 \sm \pi$. Let $\nu_\pm = \nu_\pm[u_k,r]$, $L_{\pm,k}=L_\pm[u_k,r]$ and $h_k=h(u_k,r)$. We may apply Theorem \ref{thm:exbyflat2} (on $Q_{r_k/4}$) and then Theorem \ref{thm:lip} (on $Q_{r_k/24}$, with $\nu_* = \nu_+$) to this sequence, extracting the Lipschitz functions $g_{\pm,k}$ defined on $D_{r_k/24}$ and satisfying (using Proposition \ref{prop:liptoplane})
\[
 \frac{1}{r_k^{n-1}}\int_{D_{r_k/24}} |\n g_{+,k} - \n L_{+,k}|^2 + |\n g_{-,k}-\n L_{-,k}|^2 d\cL^{n} \leq C \e_k^2,
\]
\[
 \frac{1}{r_k^{n+1}}\int_{D_{r_k/24}}|g_{+,k}-L_{+,k}|^2 + |g_{-,k}- L_{-,k}|^2 d\cL^{n-1} \leq C\e_k^2,
\]
and
\[
 \cH^{n-1}(Q_{r_k/24} \cap \G^\pm_k \sm K_{u_k}) + \cH^{n-1}(Q_{r_k/24} \cap K_{u_k} \sm (\G^+_k \cup \G^-_k)) \leq C\e_k^2 r_k^{k-1}.
\]

Define $f_{+,k}(x) = \frac{(g_{+,k}-L_{+,k})(r_k x)}{\e_k r_k}$, which are defined on $D_{1/24}$, and have
\[
 \int_{D_{1/24}} |f_{+,k}|^2 + |\n f_{+,k}|^2 d\cL^{n-1} \leq C.
\]
It follows that we may extract $f_{+,k} \rightarrow f_{+}$ weakly in $H^1$ and strongly in $L^2$. We will now show that $f_+$ is harmonic.

Indeed, if both $u_\8(0, \pm 1/2)\neq 0$, from Lemma \ref{lem:harm2}, we have that for each nonnegative $\phi\in C^1_c (D_{1/24})$ with $\|\phi\|_{C^1}\leq 1$, we have (after using $r_k\phi(x/r_k)$ as the test function)
\[
 \left|\int_{D_{1/24}} \n f_{+,k} \cdot \n \phi d\cL^{n-1} \right| \leq C(\Upsilon)(\e_k + \frac{h_k^2}{r_k^2})\rightarrow 0. 
\]
Taking the limit and using the weak convergence of the gradient, this gives
\[
 \int_{D_{1/24}}\n f_+ \cdot \n \phi  d\cL^{n-1} = 0,
\]
which implies that $f_+$ is harmonic. If one of $u_\8(0,\pm 1/2)=0$, we obtain the same conclusion from Lemma \ref{lem:harm1} instead (using that $g_- = g_+$). Likewise, $f_-$ is also harmonic.

Define the affine function $q_+(x)=f_+(0)+\n f_+(0)x$. By basic estimates on harmonic functions, we have that
\begin{equation}\label{eq:flatimpi1}
 \int_{D_\t} |q_+ -f_+|^2 d\cL^{n-1} \leq C_0(n)\t^{n+3} \int_{D_{1/24}}|\n f_+|^2 d\cL^{n-1} \leq C_0(n)\t^{n+1}.
\end{equation}
Likewise, we have that
\begin{equation}\label{eq:flatimpi2}
 \int_{D_{\t}} |q_- -f_-|^2 d\cL^{n-1} \leq C_0(n)\t^{n+3} \int_{D_{1/24}}|\n f_-|^2 d\cL^{n-1} \leq C_0(n)\t^{n+1}
\end{equation}
for the corresponding function $q_-$. Notice also that $|f_\pm (0)|,|\n f_\pm (0)|\leq C_0(n)$.

Now we use the fact that the conclusion of the theorem was assumed to fail, beginning with the case of when (along a subsequence) $ff(u_k,r_k)<C_1(n)\frac{h_k^2}{r_k^2}$ for a small but universal $C_1$ to be chosen momentarily. First, observe that for $\e_k$ small, the two affine functions 
\[
L_{\pm,k}^*(x'):= L_{\pm,k}(x') + \e_k r_k q_\pm (x'/r_k) = L_{\pm,k}(x') + \e_k r_k f_{\pm}(0) + \e_k \n f_\pm (0)\cdot x'
\]
 will not intersect over $D_{2 \t r_k}$: indeed,
\begin{align*}
 |\n L_{+,k}^*|&\leq \e_k |\n q_+| + |\n L_{+,k}|\\
&\leq \e_k C_0(n) + |\n L_{+,k}| \\
&\leq 2 \frac{h_k}{r_k},
\end{align*}
where we used that $L_{\pm,k}$ are admissible for the definition of $ff$ and that $C_0^2(n) ff(u,r_k)r_k^2 \leq h_k^2$ if $C_1 \leq C_0^{-2}$. This gives that
\[
 \osc_{D_{2\t r_k}} L_{+,k}^* \leq (2\frac{h_k}{r_k}) (4 \t r_k) \leq  8 \t h_k \leq \frac{h_k}{1000},
\]
provided $\t$ is restricted to be small enough. Then as in addition
\[
 |L_{+,k}^*(0) -h_k| =\e_k r_k |f_+(0)| \leq (\frac{\sqrt{C_1} h_k}{r_k}) r_k C_0  \leq \frac{h_k}{1000}
\]
by choosing $C_1$ small, we see that $L_{+,k}^*$ remains trapped in $(\frac{499}{500}h_k,\frac{501}{500}h_k)$, and so certainly does not cross $L_{-,k}^*$.

\begin{figure}
	\centering
	\def\svgwidth{15cm}
	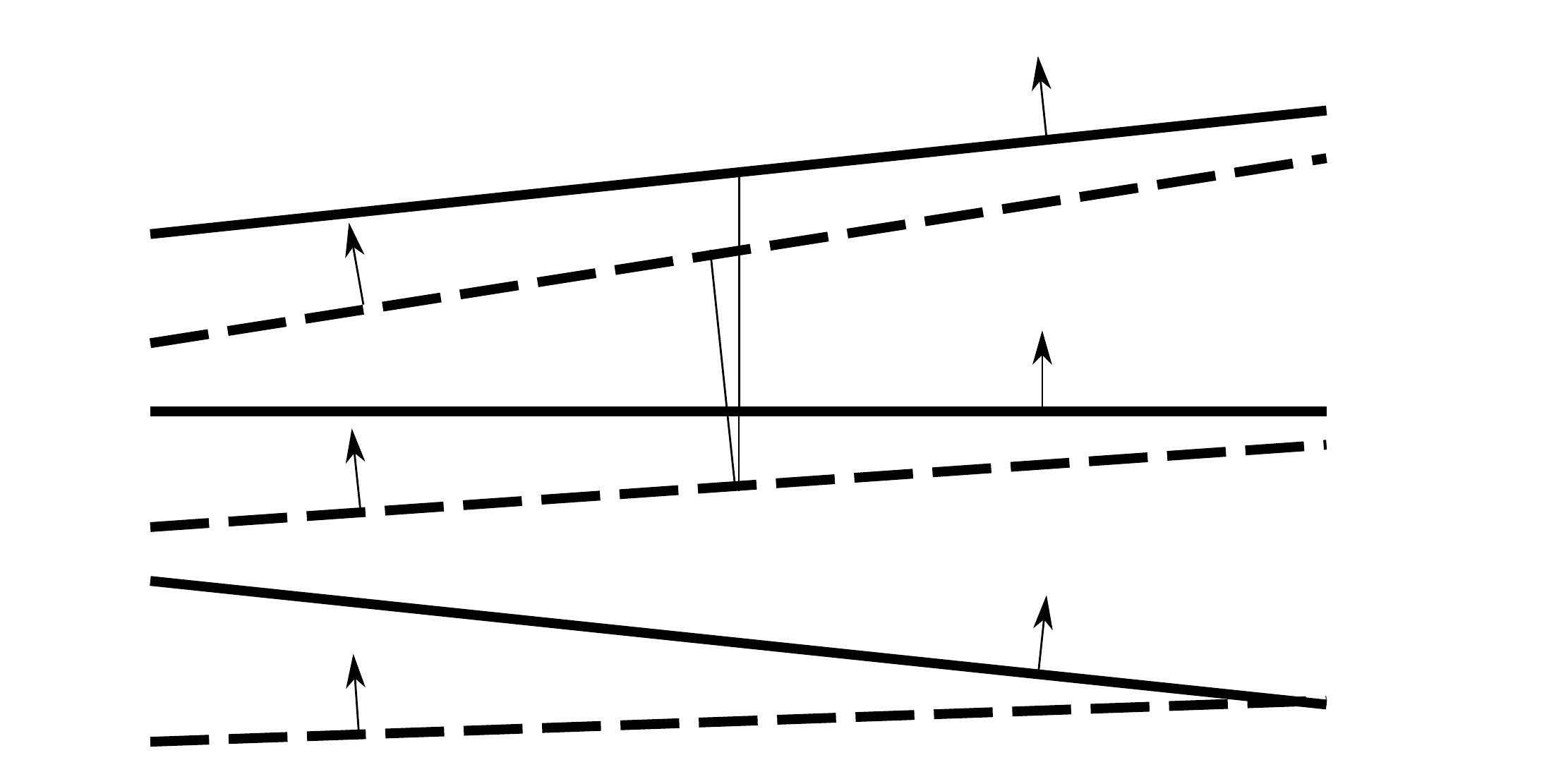
	\caption{The plane $\pi$ and the graphs of $L_{\pm,k}$ are shown in solid bold. The graphs of $L_{\pm,k}^*$, as well as their bisector the graph of $L_k^*$, are shown in dashed bold. The corresponding normal vectors are labeled. The thin lines represent the distances $h_k,h_k^*$, and $h_{+,k}^*$. Note that distances and angles are exaggerated for clarity; in reality the dashed planes would be much closer to the solid ones.} \label{fig:4}
\end{figure}

Define for each $k$ the function $L_k^* = \frac{L_{+,k}^*+L_{-,k}^*}{2}$, the value $h_k^* = L_k^*(0)$, and the corresponding plane $\pi^*_k = \{x=(x',L_k^*(x'))\}$, which is equidistant from the two planes $(x',L_{+,k}^*(x'))$ and $(x',L_{-,k}^*(x'))$. Set  $\nu_k^*$ to be the normal vector to this plane (see Figure \ref{fig:4} for a diagram of all of these planes and related quantities). We have that
\[
 |h^*_k|\leq \frac{1}{2}(|L_{+,k}^*(0) -h_k| + |L_{-,k}^*(0) -h_k|) \leq \frac{h_k}{500},
\]
while
\[
 1 - (e_n \cdot \nu_k^*)^2 = 1 - \1e_n \cdot \frac{(-\n L_k^*,1)}{\sqrt{1+|\n L_k^*|^2}}\2^2\leq C |\n L_k^*|^2 \leq C\e_k^2 (|\n q_+|^2 + |\n q_-|^2) \leq   C\e_k.
\]
It follows from the computation above that $(x',L_{+,k}^*(x'))$ (whose normal vector we denote $\nu_{+,k}^*$) is an admissible plane in the infimum in the definition of $ff(u_k,h^*_k e_n,\t r_k, \nu^*_k)$, which gives (from the contradiction assumption) that
\[
 \int_{Q_{\t r_k,\nu^*_k}(h_k^* e_n)\cap K_{u_k}} d_*^2(x,\nu_{+,k}^*,h^*_{+,k}) d\cH^{n-1} \geq \e_k^2 \t^2 C_H (\t r_k)^{n+1}.
\]
Here $h_{+,k}^*$ is the unique number such that
\[
h_k^* e_n + h_{+,k}^* \nu_{k}^* \in \{(x',L_{+,k}^*(x')) \}.
\]

We also have that $ee(u_k,h_k^* e_n, r_k/8,\nu_k^*,\nu_{k,+}^*)\leq C\e_k^2$ and similarly for $E$ and $J$. We may therefore apply Theorem \ref{thm:lip} (with $\nu_* = \nu_{k,+}^*$ on $Q_{12 \t r_k,\nu_k^*}(h_k^* e_n)$) and Remark \ref{rem:lipest} to $u_k$ to obtain sets $\Theta^{\pm,*}_k$; immediately replace them by $\Theta^{\pm,*}_k \cap \G_\pm^k$ while preserving the same notation. These are then subsets of $\G^\pm_k$,  and have the property that $\cH^{n-1}(Q_{2\t r_k}\cap[K_{u_k}\triangle (\Theta^{+,*}_k\cup \Theta^{-,*}_k)])$ is controlled by $\e_k^2 r_k^{n-1}$. Moreover, they each contain only points that are visible from above (or below) $\pi^*_k$.

Notice that all of $L_{\pm,k}^*, K_{u_k},\G^{\pm}_k,\Theta^{\pm,*}_k$ are contained in an $\e_k^{\frac{2}{n+1}}r_k$ neighborhood of the planes $\pi_{\pm,k}$ (this is from Proposition \ref{prop:fflatsame} and Lemma \ref{lem:gflat}), so we have
\[
 \frac{1}{r_k^{n+1}}\int_{Q_{\t r_k,\nu^*_k}(h_k^* e_n)\cap (\Theta^{+,*}_k\cup \Theta^{-,*}_k)} d_*^2(x,\nu_{+,k}^*,h^*_{+,k}) d\cH^{n-1} \geq \e_k^2 \t^{n+3} C_H - C(\Upsilon)\e_k^{2+\frac{2}{n+1}}.
\]
We now note that, for $x\in \Theta^{+,*}_k \sm \Theta^{-,*}_k$, we have that 
\begin{align*}
d_*(x,\nu_{+,k}^*,h^*_{+,k}) &\leq |\nu_{+,k}^* \cdot (x - L_{+,k}^* (x')e_n)|\\
& \leq  |x - L_{+,k}^* (x')e_n| \\
& =  |g_{+,k}(x') - L_{+,k}^*(x')|
\end{align*}
from the definition. Likewise, for $x\in \Theta^{-,*}_k \sm \Theta^{+,*}_k$ we have 
\[
d_*(x,\nu_{+,k}^*,h^*_{+,k}) \leq |g_{-,k}(x') - L_{-,k}^*(x')|,
\]
while for $x \in \Theta^{-,*}_k \cap \Theta^{+,*}_k$,
\[
d_*(x,\nu_{+,k}^*,h^*_{+,k}) \leq |g_{+,k}(x') - L_{+,k}^*(x')| + |g_{-,k}(x') - L_{-,k}^*(x')|.
\]
After changing variables and accounting for $\G^{\pm}_k\sm \Theta^{\pm,*}_k$ as before, this implies that
\begin{align*}
 \frac{1}{r_k^{n+1}}\int_{D_{2\t r_k}} &|g_{+,k} - L_{+,k}^*|^2 +  |g_{-,k} - L_{-,k}^*|^2 d\cL^{n-1} \\
 &\geq \frac{1}{2 r_k^{n+1}}\int_{D_{2\t r_k}} |g_{+,k} - L_{+,k}^*|^2 \sqrt{1+|\n g_+|^2} +  |g_{-,k} - L_{-,k}^*|^2 \sqrt{1 + |\n g_-|^2} d\cL^{n-1} \\
 &\geq \frac{1}{2 r_k^{n+1}}\int_{Q_{\t r_k,\nu^*_k}(h_k^* e_n)\cap \Theta^{+,*}_k} |g_{+,k}(x') - L_{+,k}^*(x')|^2 d\cH^{n-1}  \\
 &\quad + \frac{1}{2 r_k^{n+1}}\int_{Q_{\t r_k,\nu^*_k}(h_k^* e_n)\cap  \Theta^{-,*}_k}  |g_{-,k} - L_{-,k}^*|^2  d\cH^{n-1} \\
 &\geq \frac{1}{2r_k^{n+1}}\int_{Q_{\t r_k,\nu^*_k}(h_k^* e_n)\cap (\Theta^{+,*}_k\cup \Theta^{-,*}_k)} d_*^2(x,\nu_{+,k}^*,h^*_{+,k}) d\cH^{n-1} \\
 &\e_k^2 \t^{n+3} C_H - C(\Upsilon)\e_k^{2+\frac{2}{n+1}}.
\end{align*}
Rescaling now gives
\[
 \int_{D_{2\t}} |f_{+,k} - q_+|^2 + |f_{-,k} - q_-|^2 d\cL^{n-1} \geq \t^{n+3} C_H  - C(\Upsilon)\e_k^{\frac{2}{n+1}},
\]
and taking the limit as $k\rightarrow \8$ gives a contradiction if $C_H$ is large enough.

We proceed to the case of $ff(u_k,r_k)\geq C_1(n)\frac{h_k^2}{r_k^2}$. We replace $h_k$ by $0$, $\nu_+$ by $e_n$, and $L_{\pm,k}$ by $0$ (essentially $ff$ by $f$) in this case, as now $f(u_k,r_k)$ and $ff(u_k,r_k)$ are comparable. In the limit, we now have the additional information that $f_+\geq f_-$. As before, we consider the tangent planes $q_\pm$, but now we perform an additional estimate to show
\[
 |\n f_+(0) -\n f_-(0)|\leq C(n) [f_+(0)-f_-(0)].
\]
To see this, notice that $f_+ -f_-$ is a nonnegative harmonic function, so from Harnack's inequality,
\[
 \sup_{D_{1/24}} f_+ -f_- \leq C(n)[f_+(0)-f_-(0)].
\]
Then the bound on the gradients comes from interior estimates.

We construct the planes $\pi_{\pm,k}^*$ as previously, with $L_{\pm,k}^*(x')= r_k \e_k f_\pm (0) + \e_k \n f_\pm(0) x'$. We now employ our new estimate to give
\[
 |\n L_{+,k}^* -\n L_{-,k}^*|\leq C(n) \frac{|L_{+,k}^*(0)-L_{-,k}^*(0)|}{r_k}.
\]
In particular, for $\t$ small enough this implies that $L_{k,\pm}^*$ do not cross on $D_{\t r_k}$ (or are both identically $0$):
\begin{align*}
|L{+,k}^*(x) - L_{-,k}^*(x)| &\geq |L{+,k}^*(0) - L_{-,k}^*(0)| -  \t r_k |\n L_{+,k}^* -\n L_{-,k}^*| \\
&\geq |L{+,k}^*(0) - L_{-,k}^*(0)| (1 - C\t).
\end{align*}
We may construct $pi_k^*$, their bisector, as before. We also have that
\[
 |L_{+,k}^*(0)-L_{-,k}^*(0)| \leq C(n) \e_k r_k.
\]
Up to restricting to $\t$ smaller, the vectors $\nu_{\pm,k}^*$ are admissible for the infimum in 
\[
ff(u_k,h_k^* e_n, \t r_k,\nu_k^*),
\]
and so we may obtain the same contradiction.
\end{proof}

\section{The Iteration Procedure}\label{sec:iteration}

The goal of this section is an improvement of flatness lemma, which will be iterated in the following section. In the proof, there will essentially be three possibilities. First, either the relative jump of $u$ or the quasiminimality constant is larger than the energy and flatness, so we simply need to zoom in more. Second, the energy may be much larger than the other quantities, in which case we are in a position to apply Lemma \ref{thm:enimp}. Finally, if none of the above occurs, we will improve the flatness using Theorem \ref{thm:flatimp}. 

First, we discuss the improvement of the jump of $u$.  The following lemma shows how to improve the estimate on the jump:
\begin{lemma}\label{lem:jump}Let $u\in \sQ(Q_{r},\L,2r)$. Then for every $\t$ small enough, there is a constant $\e_J(\t)$ such that if
\[
 f(u,r)+E(u,r)+r + |x| + \|e_n-\nu\| + \L r^s\leq \e_J,
\]
then
\[
 J(u,x,\t r,\nu) \leq 2\sqrt{\t} J(u,r),
\]
\end{lemma}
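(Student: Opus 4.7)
The strategy is to exploit the fact that when $f(u,r)$ is small, $K_u$ is confined to a very thin slab around $\pi$, so $u$ is nearly constant on each of the two halves $\{\pm x_n > r/50\}$ of $Q_{r/2}$. The jump at each scale is then essentially the difference of the values of $u$ at points well above and well below $\pi$, and these values vary only slightly as we zoom in.

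First I would use Proposition~\ref{prop:flatsame} to upgrade the smallness of $f(u,r)$ to the inclusion $K_u \cap Q_{r/4} \subset \{|x_n| \leq Cf(u,r)^{1/(n+1)}r\}$, which for $\e_J$ small enough lies inside $\{|x_n|\leq r/100\}$. Consequently $u \in \sQ_0(U^\pm,\L,r/2,s)$ on each half $U^\pm := Q_{r/4} \cap \{\pm x_n > r/50\}$, and applying \eqref{eq:intreg3} together with a chain of overlapping sub-cylinders yields the oscillation bound $\osc_{U^\pm}u \leq C\sqrt{r(\L r^s + E(u,r))} \leq C\sqrt{r\e_J}$.

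Next, by taking $\e_J = \e_J(\t)$ small enough that both $|x|$ and $|e_n-\nu|$ are small relative to $\t r$, I ensure that the four points $(0,\pm r/2)$ and $x \pm \t r\nu/2$ all lie in the respective halves $U^\pm$. Writing $M = |u(0,r/2)-u(0,-r/2)|$ and $M' = |u(x + \t r\nu/2)-u(x-\t r\nu/2)|$, the H\"older-type estimate then gives
\[
|M - M'| \leq |u(0,r/2)-u(x+\t r\nu/2)| + |u(0,-r/2)-u(x-\t r\nu/2)| \leq 2C\sqrt{r\e_J}.
\]
Finally, I would invoke Lemma~\ref{lem:lowerdensity}, whose hypotheses are met by the smallness of $E+f+\L r^s +r$: when $K_u \cap Q_{r/2}$ is nonempty it produces $M^2 \geq c_0 r$, which for $\e_J$ universally small exceeds $(4C\sqrt{r\e_J})^2$. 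The inequality $M' \geq M/2$ then follows from the triangle inequality above, and gives
\[
J(u,x,\t r,\nu) = \frac{\sqrt{\t r}}{M'} \leq \frac{2\sqrt{\t r}}{M} = 2\sqrt{\t}\,J(u,r),
\]
as required. When $K_u \cap Q_{r/2}$ is empty, $u$ is continuous throughout the connected region $Q_{r/2}$ and takes values either identically $0$ (so $M=M'=0$) or in $[\d,\d^{-1}]$, in which case the same oscillation argument applies directly on $Q_{r/2}$ with the conclusion becoming vacuous (both $M$ and $M'$ are equal up to the same $O(\sqrt{r\e_J})$ error, so either both are zero or $J(u,r)$ is already forced to be large enough).

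The main obstacle is the delicate calibration between the universal jump lower bound $\sqrt{c_0 r}$ from Lemma~\ref{lem:lowerdensity} and the H\"older error $O(\sqrt{r\e_J})$; this is what makes the scaling of the conclusion tight, and it is essentially handled by choosing $\e_J$ universally small enough so that the lower bound dominates the error by a fixed factor.
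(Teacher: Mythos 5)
Your argument is essentially the paper's own: an oscillation bound from Lemma \ref{lem:intreg} on the two parts of the cylinder away from the thin slab containing $K_u$, combined with the universal jump lower bound $|u(0,r/2)-u(0,-r/2)|^2\geq c_0 r$ (equivalently $J(u,r)\leq C$) coming from Lemma \ref{lem:lowerdensity}, and a triangle inequality to get $|u(x+\tfrac{\t r}{2}\nu)-u(x-\tfrac{\t r}{2}\nu)|\geq \tfrac12 |u(0,r/2)-u(0,-r/2)|$, which is exactly the conclusion.

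One step fails as written, though. You define the halves $U^\pm=Q_{r/4}\cap\{\pm x_n>r/50\}$ with a threshold independent of $\t$, and then claim that for $\e_J(\t)$ small the points $x\pm\tfrac{\t r}{2}\nu$ lie in $U^\pm$. They do not: their $e_n$-coordinate is $\approx\pm\tfrac{\t r}{2}$ up to errors of size $\e_J r$, and since the lemma concerns arbitrarily small $\t$ (in particular $\t<1/25$), these points sit inside $\{|x_n|<r/50\}$ no matter how small $\e_J$ is, so the oscillation bound on $U^\pm$ does not control $|u(0,\pm r/2)-u(x\pm\tfrac{\t r}{2}\nu)|$. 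The repair is exactly the paper's device: make $\e_J(\t)$ small enough that $f_\8(u,r)\leq \t/8$ and run the oscillation estimate on $Q_{r/2}\cap\{\pm x_n\geq \t r/4\}$, accepting a $\t$-dependent constant $C(\t)$; the final calibration still works because $\e_J$ is allowed to depend on $\t$ and the jump lower bound $c_0$ is universal. A smaller point: your dismissal of the case $K_u\cap Q_{r/2}=\emptyset$ as ``vacuous'' is not quite right (if both jumps are tiny and comparable to the oscillation error one cannot deduce $M'\geq M/2$); the paper's proof tacitly assumes the jump lower bound, i.e. that $K_u$ meets the relevant cylinder, which is the situation in every application of the lemma, so your proof is no worse off, but the caveat should be stated rather than waved away.
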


\begin{proof}
 Fix $\t$, assume  $\e_J$ is small enough that $f_\8(u,r) \leq \frac{\t}{8}$, and let $x_+,x_-$ be the points on which $J(u,x,\t r,\nu)$ is evaluated. Applying Lemma \ref{lem:intreg}, we have that
 \[
  \osc_{Q_{r/2}\cap \{|x_n|\geq \t r/4\}} u \leq C(\t) \sqrt{r (E(u,r) +\L r^s)}\leq C(\t) \sqrt{r \e_j},
 \]
and likewise for the lower part of the cylinder. Now, choose $\e_J$ small enough that $ \sqrt{\e_j} \leq \frac{1}{4C(\t)J(u,r)}$ (as we know that $J(u,r)$ is bounded by a universal constant from above). Then we have that
\[
 |u(0,r/2)-u(x_+)|\leq C(\t)\sqrt{r\e_J }\leq \frac{\sqrt{r}}{4}J^{-1}(u,r).
\]
Then
\[
|u(x_+)-u(x_-)| \geq sqrt{r}J^{-1}(u,r) -  |u(0,r/2)-u(x_+)|-|u(0,-r/2)-u(x_-)|\geq \frac{\sqrt{r}}{2J(u,r)}.
\]
This gives that
\[
 J(u,x,\t r,\nu) \leq 2\sqrt{\t} J(u,r), 
\]
as claimed.
\end{proof}

We note for future reference that if one of $u(0,\pm r/2)=0$, then trivially $J(u,r)\leq C\sqrt{r}$.

\begin{lemma}\label{lem:iterate} Let $u\in \sQ(Q_{r},\L,2r)$. Then for any $\t>0$ sufficiently small, there are $\e_I=\e_I(\t)$ and $\l_I,C_I$ such that if
\begin{equation}\label{eq:iterateh1}
  f(u,r) + E(u,r) + r  + \L r^s\leq \e_I,
\end{equation}
then either
\begin{equation}\label{eq:iteratec1}
 ff(u,\t r) + \l_I E(u,\t r) \leq C_I \1 \sqrt{\L} r^{s/2} + J^{\frac{2}{3}}(u,r) + r^{\frac{1}{7}}\2,
\end{equation}
\begin{equation}\label{eq:iteratec2}
 ff(u,\t_1 r) + \l_I E(u,\t_1 r) \leq \t_1^{1/2}\1ff(u, r) + \l_I E(u, r)\2,
\end{equation}
or else
\begin{equation}\label{eq:iteratec3}
 ff(u,x,\t_2 r,\nu) + \l_I E(u,x, \t_2 r,\nu) \leq \t_2^{1/2}\1ff(u, r) + \l_I E(u, r)\2,
\end{equation}
where $x = |x|\nu $, $|x|^2 \leq C_I r^2 ff(u,r)$, $1-(e_n \cdot \nu)^2 \leq C_I ff(u,r)$. Here $\t_1 = \t/50$ and $\t_2 = \t^{\frac{1}{3(n-1)}}$. If one if $u(0,\pm r/2)=0$, the same holds with $f$ in place of $ff$.
\end{lemma}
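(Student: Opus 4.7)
The plan is to split into three cases based on the dominant contribution to $\Phi := ff(u,r) + \l_I E(u,r)$. Set $L := \sqrt{\L}r^{s/2} + J^{2/3}(u,r) + r^{1/7}$, and take $\l_I = \l_I(\t) > 0$ sufficiently small, together with a threshold $c_0(\t)$ and a comparability parameter $\a(\t)$ to be specified. All applications of Corollary \ref{cor:enimp} are made at a suitable intermediate scale $r' \in (r/3, r)$ so that the inclusion $Q_{3 r'}\ss Q_r$ gives $u \in \sQ(Q_{3r'},\L,4r')$ (using $4r' \leq 2r = r_0$).

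In the first case $L \geq c_0 \Phi$, the crude scaling bounds $ff(u,\t r)\leq \t^{-(n+1)} ff(u,r)$ and $E(u,\t r)\leq C\t^{-(n-1)} E(u,r)$, both following from $Q_{\t r}\ss Q_r$, yield $\Phi(\t r)\leq C\t^{-(n+1)}c_0^{-1} L$; this gives \eqref{eq:iteratec1}. Otherwise $L<c_0\Phi$, and we further split on whether $E(u,r) > \a\, ff(u,r)$ (\emph{energy dominates}) or not. If energy dominates, the bound $ff(u,\t r)\leq \t^{-(n+1)}\a^{-1} E(u,r)$ verifies the hypothesis of Corollary \ref{cor:enimp} (at $x=0$, $\nu=e_n$ in the infimum) once $\a \geq 2\t^{-(n+1)}/\e_E(\t)$ and $c_0$ is small. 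The corollary then gives $E(u,\t_1 r)\leq C_E \t E(u,2r')\leq C'\t E(u,r)$, so $\l_I E(u,\t_1 r)\leq\tfrac12\t_1^{1/2}\l_I E(u,r)$ for $\t$ small, while $ff(u,\t_1 r)\leq \t_1^{-(n+1)} \a^{-1} E(u,r)\leq \tfrac12 \t_1^{1/2} \l_I E(u,r)$ provided $\a \geq 2\t_1^{-(n+3/2)}/\l_I$; this yields \eqref{eq:iteratec2}.

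The third and hardest case is $L < c_0 \Phi$ and $E\leq\a\, ff$, i.e., \emph{flatness dominates}. The hypothesis $E(u,r)+L\leq \Upsilon ff(u,r)$ of Theorem \ref{thm:flatimp} is then met with $\Upsilon = \Upsilon(\t) := \a + c_0/\l_I$, and invoking that theorem with parameter $\t_2 =\t^{1/(3(n-1))}$, together with Remark \ref{rem:flatimp}, produces $\nu$ and $x = |x|\nu$ satisfying $|x|^2\leq C_H r^2 ff(u,r)$, $1-(e_n\cdot\nu)^2\leq C_H ff(u,r)$, and $ff(u,x,\t_2 r,\nu)\leq C_H\t_2^2 ff(u,r)\leq \tfrac14\t_2^{1/2} ff(u,r)$ once $\t_2$ is small. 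The main obstacle is controlling the energy at this new scale: the crude bound yields only $E(u,x,\t_2 r,\nu)\leq C\t_2^{-(n-1)} E(u,r) = C\t^{-1/3} E(u,r)$, which does not decay. However, combined with $E\leq \a\, ff$ and taking $\l_I$ small enough that $\l_I \a C\t_2^{-(n-1)}\leq \t_2^{1/2}/4$, the quantity $\l_I E(u,x,\t_2 r,\nu)$ is absorbed into $\tfrac14\t_2^{1/2} ff(u,r)$, establishing \eqref{eq:iteratec3}. The exponent $1/(3(n-1))$ of $\t_2$ is precisely what makes this absorption possible while keeping $\l_I$ only polynomially small in $\t$. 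The sub-case where one of $u(0,\pm r/2)=0$ is handled identically upon replacing $ff$ by $f$ throughout, using the final statements of Theorems \ref{thm:exbyflat2} and \ref{thm:flatimp}, and the fact that $J(u,r) \leq C\sqrt{r}$ is then automatic and absorbed into $L$.
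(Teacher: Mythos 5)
Your overall strategy (a three-way split into a lower-order case, an energy-dominant case handled by Corollary \ref{cor:enimp}, and a flatness-dominant case handled by Theorem \ref{thm:flatimp}) is the intended one, and your case 1 correctly captures the paper's remark that failure of the $J$-hypothesis forces \eqref{eq:iteratec1}. However, the parameter bookkeeping does not close, and this is a genuine gap rather than a cosmetic one. In your case 2 you bound $ff(u,\t_1 r)$ only by the crude scaling $\t_1^{-(n+1)} ff(u,r)$ and then absorb it using $ff(u,r)\leq \a^{-1}E(u,r)$, which forces $\a \l_I \geq 2\t_1^{-(n+3/2)} = 2(50/\t)^{n+3/2}\geq 2$. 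In your case 3 the energy does not decay, so you absorb $\l_I E(u,x,\t_2 r,\nu)\leq C\t_2^{-(n-1)}\l_I \a\, ff(u,r)$ into $\tfrac14 \t_2^{1/2} ff(u,r)$, which forces $\a\l_I\leq \t_2^{n-1/2}/(4C)\leq \t^{1/3}/(4C)<1$ for $\t$ small. These two requirements on the product $\a\l_I$ are mutually exclusive for every choice of $\a(\t),\l_I(\t)$ once $\t$ is small, so the dichotomy as you have set it up (a single threshold $E\gtrless \a\, ff$ at scale $r$, with crude scaling used for whichever quantity is not being improved) cannot be made to work, no matter how the constants are tuned.

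The way the actual argument (the paper defers to \cite[Lemma 8.20]{AFP}) avoids this is that the dichotomy is taken with respect to the hypothesis \eqref{eq:enimp2h2} of Corollary \ref{cor:enimp} itself, in which the flatness is measured at the \emph{finer} scale $\t r$ over nearby tilted cylinders, against $\e_E E(u,2r)$. In the energy-dominant branch one then estimates $ff(u,\t_1 r)$ by comparing with the already-small quantity $\inf_{x,\nu} ff(u,x,\t r,\nu)\leq \e_E E$, at a comparable scale, so the factor $\t_1^{-(n+1)}$ never appears and the threshold $\e_E$ may be taken small depending on both $\t$ and $\l_I$; in the complementary branch the comparability constant between $E$ and $ff(u,r)$ may be large, but it only enters through the parameter $\Upsilon$ of Theorem \ref{thm:flatimp}, i.e.\ through $\e_H(\Upsilon,\t)$ and hence $\e_I$, while $\l_I$ is free to be chosen small (depending on $\t$) to absorb the non-decaying energy at scale $\t_2 r$ --- there is then no lower-bound constraint on $\a\l_I$ fighting that choice. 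A secondary issue you should also address: the crude scaling inequality $ff(u,\t r)\leq \t^{-(n+1)} ff(u,r)$, which you use in cases 1 and 2, is not immediate from the definition \eqref{eq:ffdef}, because the optimal height $h(u,r)$ need not satisfy the admissibility constraint $h\leq \t r/2$ for the smaller cylinder, and the visibility-dependent integrand $d_*$ changes when the cylinder changes; one either has to justify this comparison (using Proposition \ref{prop:fflatsame} and Lemma \ref{lem:zerobetween} in the regime $h\gg \t r$) or avoid it, as the corrected dichotomy does.
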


\begin{proof}
The proof is identical to that in \cite[Lemma 8.20]{AFP}, after noting that if the assumption on $J(u,r)$ is not satisfied in Theorem \ref{thm:flatimp} or Corollary  \ref{cor:enimp}, then \eqref{eq:iteratec1} holds automatically.  
\end{proof}

\section{Proof of Main Theorem, With Extra Assumptions}\label{sec:mainpf}

In this section, we use Lemma \ref{lem:iterate} to prove that near a point where the flatness, energy, and relative jump of $u$ are small enough, $K_u$ is given by a pair of $C^{1,\a}$ graphs.

\begin{theorem}\label{thm:badmain}Let $u\in \sQ(Q_{r},\L,2r)$, and assume $0\in K$. Then there are universal $\e_M,\eta_M>0$ such that if
\[
 f(u,r)+E(u,r)+J(u,r)+r + \L r^s\leq \e_M,
\]
then $K\cap Q_{\eta_M r}$ coincides with the graphs of the Lipschitz functions $g_+,g_-$, and moreover these functions are actually $C^{1,\a}$ for some $\a>0$, with
\[
 \|g_+\|_{C^{1,\a}(D_{\eta_M r})} +\|g_-\|_{C^{1,\a}(D_{\eta_M r})} \leq 1.
\]
\end{theorem}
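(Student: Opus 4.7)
The plan is to iterate Lemma \ref{lem:iterate} at every point $y\in K\cap Q_{\eta_M r}$, producing a sequence of cylinders on which $ff+\l_I E$ decays at a geometric rate. Fix a small $\t>0$ and a point $y$, and build inductively a sequence $(x_k,\nu_k,r_k)$ starting from $x_0=y$, $\nu_0=e_n$, and $r_0$ a fixed multiple of $r$. At each step I invoke Lemma \ref{lem:iterate} on $Q_{r_k,\nu_k}(x_k)$: outcome \eqref{eq:iteratec2} keeps the base point fixed and shrinks the radius by $\t_1$, outcome \eqref{eq:iteratec3} shifts/tilts the cylinder by amounts controlled by $\sqrt{ff}$ and shrinks by $\t_2$, and outcome \eqref{eq:iteratec1} terminates the iteration favorably because $J$, $\sqrt{\L}r^{s/2}$, and $r^{1/7}$ are all lower-order quantities. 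The hypothesis on $J(u,r)$ in Theorem \ref{thm:badmain} plus Lemma \ref{lem:jump} ensure that $J(u,x_k,r_k,\nu_k)$ in fact decays by a factor of $2\sqrt{\t}$ at each outcome (2) or (3) step, so it remains a negligible perturbation throughout the iteration.

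I would then verify that the smallness hypothesis \eqref{eq:iterateh1} is preserved at every stage: at step $k$ I need $f(u,x_k,r_k,\nu_k)+E(u,x_k,r_k,\nu_k)+r_k+\L r_k^s\leq\e_I$, which follows from the inductive bound on $ff+\l_I E$ combined with the identity $f\leq C[ff+(h_k/r_k)^2]$ from \eqref{eq:ffvf}, where $h_k/r_k$ is controlled by the telescoping sum $\sum_j \sqrt{ff_j}$ that converges because of the geometric decay. This gives, for every $k$ until termination,
\[
ff(u,x_k,r_k,\nu_k)+\l_I E(u,x_k,r_k,\nu_k)\leq C\rho^{2\a k}
\]
for a fixed $\rho<1$ depending on $\t$, and hence a H\"older-type Campanato bound on all scales between $r_k$ and $r_0$. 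The accumulated shifts $x_{k+1}-x_k$ and rotations $\nu_{k+1}-\nu_k$ are bounded by a constant times $\sqrt{ff(u,x_k,r_k,\nu_k)}\leq C\rho^{\a k}$, so they form summable series; the limits $x_\infty(y)$ and $\nu_\infty(y)$ define, at each $y$, an approximate tangent plane. Combined with Theorem \ref{thm:lip} and Proposition \ref{prop:liptoplane}, which identify the Lipschitz graphs $g_\pm$ from the initial scale with the planes selected at each scale of the iteration (the relevant plane being $L_+$ or $L_-$ depending on whether $y$ was seen from above or below), the decay rate translates into a $C^{1,\a}$ H\"older seminorm bound on $\n g_+$ and $\n g_-$ uniformly in $y\in D_{\eta_M r}$. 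The constants $\eta_M$ and $\e_M$ are chosen small enough that every base cylinder centered at a point of $K\cap Q_{\eta_M r}$ inherits the smallness hypotheses of Lemma \ref{lem:iterate} from the global ones at scale $r$, which follows from straightforward monotonicity/inclusion estimates on $f$, $E$, and $J$ under changes of base point within $Q_{\eta_M r}$.

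The main obstacle will be the bookkeeping around outcome \eqref{eq:iteratec3}: the tilts $\nu_k$ drift away from $e_n$, and I must check that the auxiliary planes $\pi_\pm$ used in the definition of $ff$ at successive scales are consistent enough that the limits above genuinely describe the tangent plane to $g_+$ (respectively $g_-$) at the projection of $y$, rather than some rotated object. This is handled by showing that the optimizers $\nu_\pm(u,x_k,r_k,\nu_k)$ and $h(u,x_k,r_k,\nu_k)$ are also Cauchy with geometric rate, using Lemma \ref{lem:gflat} to compare them to the graphs $g_\pm$ at each scale. A secondary nuisance is the dichotomy in Lemma \ref{lem:iterate} between the $ff$ case and the $f$ case (triggered when $u(0,\pm r/2)=0$), but this case only simplifies matters: the auxiliary planes collapse to $\pi$ and the iteration trivially produces a single graph, matching the $g_+=g_-$ statement in Theorem \ref{thm:intro}.
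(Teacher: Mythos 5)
The central gap is in your claim that the smallness hypothesis \eqref{eq:iterateh1} is preserved because ``$h_k/r_k$ is controlled by the telescoping sum $\sum_j\sqrt{ff_j}$.'' That is not true: $h_k$ is the half-separation of the two optimal planes in the definition of $ff$, and it reflects the actual geometric gap between the two sheets of $K$ near the point; it does not shrink proportionally to $r_k$ as you zoom in. Consequently $h_k/r_k$ can grow under the iteration, and the single-plane flatness $f(u,x_k,r_k,\nu_k)\le C[ff+(h_k/r_k)^2]$ can reach $\e_I$ even while $ff$ decays geometrically, at which point Lemma \ref{lem:iterate} can no longer be applied and your induction stops. This is precisely the paper's ``case 1'': when $f\ge\e_I$ but $ff\le C\rho_k^{\a}\e_M$, Proposition \ref{prop:fflatsame} and Lemma \ref{lem:zerobetween} show that $K$ lies in two disjoint neighborhoods of the planes $\pi_\pm$ with $u\equiv 0$ in the region between them; one then recenters at the point, replaces $\nu_k$ by $\nu_{+,k}$, takes a radius comparable to $|(x_k)_n|$, and restarts the iteration in the regime where $u$ vanishes on one side, using the $f$-versions of all the statements (Part 2 of the paper's proof), with $J\le C\sqrt r$ then holding trivially. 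Your proposal treats the $f$-regime only as the static case $u(0,\pm r/2)=0$ present ``from the start'' and calls it a simplification; the dynamic transition from the two-plane regime to the one-plane regime at an intermediate scale, where only one sheet of $K$ passes near the point, is the crux of the argument and is missing. (The paper also has to rule out a second failure mode, that the recentered point $x_{k+1}$ leaves the cylinder, which it does via Proposition \ref{prop:flatsame}; your bookkeeping of shifts by $C r_k\sqrt{ff}$ is in the right spirit for that part.)

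A second, independent gap: the theorem asserts that $K\cap Q_{\eta_M r}$ \emph{coincides} with $\G^+\cup\G^-$, whereas Theorem \ref{thm:lip} and Remark \ref{rem:lipest} only give agreement up to a set of $\cH^{n-1}$-measure $CMr^{n-1}$. Your Campanato step yields the $C^{1,\a}$ bound on $g_\pm$ but does not exclude points of $K$ off the graphs, nor portions of the graphs that miss $K$. The paper's Part 4 supplies exactly this: first $\pi(K)$ covers $D_{r/40}$, by applying the holes estimate \eqref{eq:ldc2} at a maximal empty disk together with the uniform decay of $e,E,f$ obtained from the iteration at every center; then, for a point of $A^+$ over the base disk, the oscillation of $u$ along the vertical segment is bounded by $C\sum_k\sqrt{\rho_k r\,\e_M}\le C\sqrt{r\e_M}$ by summing Lemma \ref{lem:intreg} over the overlapping nested cylinders, which places the point in $G^+$; combined with the Claim inside the proof of Theorem \ref{thm:lip}, this forces $K=\bar G^+\cup\bar G^-=\G^+\cup\G^-$ on the smaller cylinder. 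Some argument of this kind must be added to your outline before the stated conclusion follows.
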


\begin{proof}
 \textbf{Part 1.}
 
Assume in this part that $u(0,\pm r/2)\neq 0$. We attempt to iterate Lemma \ref{lem:iterate} at $0$, in the following way. Fix $\t$ small enough that if we apply Lemma \ref{lem:iterate} with this $\t$, we have that $\t,\t_1,\t_2\leq T^{-2}$, for a universal $T$ to be chosen, and apply it once. This gives a cylinder $Q_{\r_1 r,\nu_1}(x_1)$, with $\r_1 \in \{\t,\t_1,\t_2\}$, on which one of the conclusions holds. Now if $|x_1|\leq \s \r_1 r$ (with $\s=\frac{1}{10}\min\{\t,\t_1,\t_2\}$), and all of the hypotheses of the lemma are met, apply it again to obtain another cylinder $Q_{\r_2 r,\nu_2}(x_2)$, and so forth. Either we may do this indefinitely (in which case see below), or else for some $k$, we fail to proceed to the $k+1$ step. We explain what to do in this case.
 
 Let us first explain that the jump $J(u,r)$ decays geometrically as long as we can keep applying the lemma. In particular, we have that
 \begin{equation}\label{eq:badmaini5}
  f(u,x_j,\r_j r,\nu_j) + E(u,x_j,\r_j r, \nu_j) \leq \e_I,
 \end{equation}
Then from Lemma \ref{lem:jump}, we have that
\[
 J(u,x_j,\r_j r,\nu_j)\leq \frac{2}{T} J(u,x_{j-1},\r_{j-1} r,\nu_{j-1})\leq \1\frac{2}{T}\2^j \e_M.
\]
Using that $\r_j\geq \s^j$, this gives that
\begin{equation}\label{eq:badmaini1}
 J(u,x_j,\r_j r,\nu_j)\leq \r_j^\a \e_M
\end{equation}
for some small $\a=\a(\t)$.
 
 Likewise, we may verify by induction that as long as the iteration continues,
 \[
  ff(u,x_j,\r_j r,\nu_j) + \eta_I E(u,x_j,\r_j r,\nu_j) \leq C(\t)\r_j^\a \e_M.
 \]
 Indeed, if this holds for $j$, and either \eqref{eq:iteratec2} or \eqref{eq:iteratec3} applies at the $j$-th step, then this follows directly. On the other hand, if it is \eqref{eq:iteratec1} which applies, then
\begin{align*}
ff(u,x_{j+1},\r_{j+1} r,\nu_{j+1}) + \eta_I E(u,x_{j+1},\r_{j+1} r,\nu_{j+1}) &\leq C (\sqrt{\L \r_j^s r^s} + J^{\frac{2}{3}}(u,x_j,\r_j r,\nu_j)  + (\r_j r)^{\frac{1}{7}} ) \\
& \leq C \r_j^\a \e_M,
\end{align*}
using \eqref{eq:badmaini1}.

 Now, to obstruct us from going further, one of two things must have happened: either $f(u,x_k,\r_k r,\nu_k)\geq \e_I$ (case 1), or we could apply Lemma \ref{lem:iterate}, but the resulting $x_{k+1}$ has $|x_{k+1}|\geq \s \r_{k+1}r$ (case 2). If we are in case 1, we may choose $\e_M$ so small that the two properties
 \[
  ff(u,x_k,\r_k r,\nu_k)\leq C(\t)\r_k^\a \e_M \leq C\e_M
 \]
and
 \[
 C(\t) \e_I > f(u,x_k,\r_k r,\nu_k) \geq  \e_I
 \]
imply that $K \cap Q_{\r_k r/2,\nu_k}(x_k)$ lies in two separate, disjoint neighborhoods of the planes $\pi_\pm (u,x_k,\r_k r, \nu_k)$ (optimal for $ff(u,x_k,\r_k r,\nu_k)$), with the distance between these neighborhoods at least $\frac{1}{4} \r_k r \e_I^{\frac{1}{2}}$ (this is possible by using Proposition \ref{prop:fflatsame}). We also have that $u=0$ on $\{x:\nu_k\cdot (x-x_k)=0\} \cap Q_{\r_k r/2,\nu_k}(x_k)$ (this is by Lemma \ref{lem:zerobetween}). Finally, $0$ lies in one of these two neighborhoods (say the upper one), and so $|(x_k)_n|$ is bounded from above and below by $\r_{k}r \e_I^{\frac{1}{2}}$. Then if we set $x_{k+1}=0$, $\nu_{k+1}=\nu_{+,k}=\nu_+[u,x_k,\r_k r,\nu_k]$, and $\r_{k+1}r = \frac{1}{2} |(x_k)_n|$, we have that
\[
 f(u,x_{k+1},\r_{k+1}r,\nu_{k+1}) + E(u,x_{k+1},\r_{k+1}r,\nu_{k+1})+ J(u,x_{k+1},\r_{k+1}r,\nu_{k+1}) \leq C(\t)\r_{k+1}^\a \e_M
\]
and $u$ vanishes on one side of the cylinder. Take this information and proceed to Part 2.

Finally, we deal with case 2. In fact, this case never happens: if it did, we would have (from the fact that Lemma \ref{lem:iterate} could be applied) that
\[
 f(u,x_k,\r_k r,\nu_k)\leq \e_I,
\]
which means that by Proposition \ref{prop:flatsame} $K\cap Q_{\r_k r/2, \nu_k}(x_k)$ is contained in a $C\e_I^{\frac{1}{n+1}} \r_k r$ neighborhood of the plane $\pi_k$ normal to $\nu_k$ and passing through $x_k$. Yet we know that $0\in Q_{\r_k r/5,\nu_k}(x_k)\cap K$ by the definition of $\s$. Together with the fact that $x_k = |x_k| \nu_k$, this implies that $|x_k|\leq C\e_I^{\frac{1}{n+1}} \r_k r$. Then when we apply Lemma \ref{lem:iterate}, we would obtain that
\begin{align*}
|x_{k+1}| &\leq |x_{k+1}-x_k| + |x_k|\\
&\leq C\r_k r ff(u,x_k,\r_k r, \nu_k) + C\e_I^{\frac{1}{n+1}} \r_k r\\
& \leq C (\e_M \r_k^\a +\e_I^{\frac{1}{n+1}}) \r_k r \\
& < \s \r_{k+1}r
\end{align*}
provided $\e_M,\e_I$ are small enough; this is a contradiction. 

We summarize the state of the iteration: we have obtained a sequence of numbers $\r_k$ with ratio uniformly between $0<\frac{\r_{k+1}}{\r_{k}}<1$, as well as vectors $\nu_k$ and points $x_k$, such that the cylinders $Q_k:=Q_{\r_k r ,\nu_k}(x_k)$ contain $0$ well inside of them, and have $ff+E+J\leq C\r_k^\a \e_M$. It is possible that this sequence terminated after finitely many steps in the manner described in case 1 (we will consider this further in Part 2), or that it is infinite. In the latter case, we note the following: say that
\[
 \frac{ff^{\frac{1}{n+1}}(u,x_k,\r_k r, \nu_k)}{f^{1/2}(u,x_k,\r_k r, \nu_k)}.
\]
is sufficiently small. This implies that $K$ is contained in two neighborhoods of the planes $\pi_{\pm,k}$ which do not intersect, and $u$ vanishes on a region between them (as in case 1). It is now elementary to check that after a finite number of further iterations, we will arrive in case 1. Hence if there is an infinite sequence as described which avoids Part 2, it must also enjoy the geometric decay of the flatness:
\begin{equation}\label{eq:badmaini2}
 f(u,x_k,\r_k r, \nu_k) \leq C ff^{\frac{2}{n+1}}(u,x_k,\r_k r, \nu_k) \leq C\r_k^\frac{2\a}{n+1}\e_M^{\frac{2}{n+1}},
\end{equation}
where this constant is universal.

\textbf{Part 2.}

Now we consider what happens if on some cylinder in the iteration, $u(x_k\pm \r_k r/2 \nu_{ k})$ becomes zero. This may happen from the start, or it might be from case 1 in part 1. Let us recall that in any event, we have
\[
 f(u,x_k,\r_k r, \nu_k) + E(u,x_k,\r_k r, \nu_k) + J(u,x_k,\r_k r, \nu_k) \leq C\r_k^\a \e_M.
\]
Indeed, we now proceed as before, simply noting two things: first, $u$ will stay $0$ on one side of the cylinder in the next phase of the iteration, and second, we may now use the variants of all of our theorems that involve just the ordinary flatness $f$. In particular, there is no possibility of case 1, as we are directly controlling the flatness, or of case 2, by the same argument as above. We arrive at the same conclusion, namely that 
\begin{equation}\label{eq:badmaini6}
 f(u,x_k,\r_k r, \nu_k) + E(u,x_k,\r_k r, \nu_k) + J(u,x_k,\r_k r, \nu_k) \leq C\r_k^\a \e_M
\end{equation}
for all larger $k$.

\textbf{Part 3.}

We estimate the tilt $1 - (\nu_k,\nu_{k+1})^2$ incurred in each step of the iteration. 
%
Except in the event that $k$ falls into case 1 in Part 1, we have directly from Lemma \ref{lem:iterate} that
\[
 1 - (\nu_k,\nu_{k+1})^2 \leq C ff(u,x_k,\r_k r, \nu_k)\leq C \r_k^\a \e_M
\]
if in Part 1,
or
\[
 1 - (\nu_k,\nu_{k+1})^2 \leq C f(u,x_k,\r_k r, \nu_k)\leq C \r_k^\a \e_M
\]
if we are in Part 2. If we are in case 1 (recall this happens at most once) then
\[
 1 - (\nu_k,\nu_{k+1})^2=1 - (\nu_k,\nu_{+,k})^2 \leq C f(u,x_k,\r_k r, \nu_k)\leq C \e_I.
\]
We may sum this to obtain that
\begin{equation}\label{eq:badmaini4}
 1 - (\nu_k,e_n)^2\leq C(\e_M+\e_I).
\end{equation}
Finally, we deduce that (using the fact that $\r_{k+1}/\r_k$ are bounded, and $0$ is well inside each $Q_k$)
\begin{equation}\label{eq:badmaini3}
 e(u,\r r) + E(u,\r r) + J(u,\r r) \leq C(\e_M+\e_I)
\end{equation}
for every $\r\leq \frac{1}{2}$, say. Using \eqref{eq:badmaini2} if Part 2 is never reached, or else \eqref{eq:badmaini5} and \eqref{eq:badmaini6} if it is, we also have
\begin{equation}\label{eq:badmaini7}
f(u,\r r)\leq C(\e_M^{\frac{2}{n+1}}+\e_I).
\end{equation}

We also note that for each fixed $x\in Q_{r/2}\cap K$, applying this argument centered at $x$ generates a unique limiting vector $\nu_x$, with $1 - (\nu_x,e_n)^2\leq C(e_M+\e_I)$, such that
\[
 \lim_{\r} f(u,x,\r,\nu_x) + E(u,x,\r,\nu_x) + e(u,x,\r,\nu_x) = 0.
\]
Carefully observe that while after some $\r_x$ the rate becomes geometric, we have not established any uniform control over this $\r_x$. 

\textbf{Part 4.} 

We show that $K\cap Q_{\eta_M r}$ is the union of the two Lipschitz graphs $\G_\pm$. Indeed, apply Theorem \ref{thm:lip} to $Q_{r/4}$, and recall the definitions of the sets $G$ and $G^{\pm}$ from there. We have just shown in Part 3 that $K\cap Q_{r/24} = G \cap Q_{r/24}$. We will now show that $\pi(\bar{G}^+\cap Q_{r/50})=D_{r/50}$; together with the corresponding fact for $\bar{G}^-$, this implies the conclusion (from the claim in the proof of Theorem \ref{thm:lip}, this would mean that $\pi(G\sm (\bar{G}^+\cup \bar{G}^-))$ is empty). It suffices to show that $\cL^{n-1}(D_{r/50}\sm \pi(G^+))=0$. 

We first show that $\pi(K)$ covers all of $D_{r/40}$. Indeed, take any $x\in D_{r/40} \sm \pi(K)$ and choose $t$ so that $D_t(x)$ is the maximal disk in $D_{r/24}\sm \pi(K)$, and $z\in K$ is a point on the boundary of the cylinder $D_t(x)\times (-r,r)$. If $\e_M$ is small enough, there must be such a $z$ (i.e. as $t$ is increased, $D_t(x)\times (-r,r)$ will intersect $K$ before $\p D_{r/24}$), for otherwise if $H$ is the ``holes set" from \eqref{eq:ldc2} of Lemma \ref{lem:lowerdensity}, we have $D_t(x) \ss H$ and $t>\frac{r}{100}$, violating that estimate. But applying \eqref{eq:badmaini3} and \eqref{eq:badmaini7} to $z$ with $\r = 2 t /r$, we still obtain a contradiction to \eqref{eq:ldc2}: on one hand, from Lemma \ref{lem:lowerdensity}, we have that $\cL^{n-1}(H) \leq C(\e_M^{\frac{2}{n+1}} + \e_I) t^{n-1}$, but on the other hand, $D_t \ss H$.

Now we show the main claim. It suffices after the preceding argument to show that $\pi(G^+)$ contains $0$ if $0\in A^+$; this follows from the density of $\pi(A^+)$ and then applying to translations of $u$. Let $Q_k$ be the nested cylinders of Parts 1 and 2, based around $0$. Let $Q^+_k$ denote the upper half of each such cylinder, meaning the set $\{(x-x_k)\cdot \nu_k >\theta\r_k r/2\}$, where $\theta$ is the lower bound on $\r_{k+1}/\r_k$; we know that from Lemma \ref{lem:intreg},
\[
 \osc_{Q_k^+} u \leq C (\r_k r \e_M)^{1/2} .
\]
Also, we claim that the $Q_k^+$ are pairwise overlapping and cover the line segment $\{0\}\times (0,r/2)$. This is obvious except for the $k$ at which case 1 occurs (if it does), by \eqref{eq:badmaini4}. If case 1 does occur, the cylinder is shifted in the $\nu_{+,k}$ direction from its original position: recall that in case 1, $K$ is segregated into two disjoint portions on $Q_k$, and there is a region where $u=0$ between them; the claim here is that $0$ is in the upper portion. As $0\in A^+$, the line segment $\{0\}\times (0,r/2)$ is disjoint from $K$, and hence $u$ is nonzero along the entire segment. If $0$ were in the lower portion of $K$, this would result in a contradiction.

We may now compute the oscillation of $u$ along this segment by summing the oscillations on the overlapping cylinders. This gives that
\[
 \osc_{\{0\}\times (0,r/2)} u \leq C \sum_k \sqrt{\r_k r \e_M}   \leq C \sqrt{ r \e_M}.
\]
Choosing $\e_M$ sufficiently small, this implies $0\in G^+$, as desired.

\textbf{Part 5.}

We finally show that $g_\pm$ are $C^{1,\a}$. Indeed, we may apply Proposition \ref{prop:liptoplane} on every cylinder $Q_k$ in the sequence obtained from Parts 1 and 2,  based at point $z$. After applying Part 3, changing coordinates from the plane $\pi_k = \{x:(x - x_k)\cdot \nu_k\}$ to $\pi$, and reducing the domain of integration, we obtain that on every disk $D_t(z')\ss D_{r/50}$
\[
 \frac{1}{t^{n-1}}\int_{D_t(z')}| \n g_+ - m_{z,t}|^2 d\cL^{n-1}\leq C t^{\a} \e_M
\]
for some vectors $m_{z,t}$ (these are $\n L_{+,t}$, where $L_{+,t}$ parametrize the optimal planes $\{x:(x-x_k)\cdot \nu_{+,k}\}$ as graphs over $\pi$). The conclusion is now immediate from applying Campanato's embedding.
\end{proof}

\section{Weakening the Hypotheses: Blow-ups}\label{sec:blowup}

We finally prove the main theorem in the generality promised. The main step remaining is to ensure that if the flatness is small enough, the energy is automatically small. We will proceed via a blow-up argument with homogeneous blow-ups, which is different from our prior compactness arguments. We note that there is some overlap in the proof with arguments given in \cite{CK}, but in an effort to keep the proof self-contained, we repeat them anyways.

\begin{theorem}\label{thm:main}Let $u\in \sQ(Q_{2r},\L,4r)$, and assume $0\in K$. Then there are universal $\e_*>0$ such that if
\[
 f(u,2r)+r +\L r^s\leq \e_*,
\]
then $K\cap Q_{r/2}$ coincides with the graphs of the Lipschitz functions $g_+,g_-$, and moreover these functions are actually $C^{1,\a}$ for some $\a>0$, with
\[
 \|g_+\|_{C^{1,\a}(D_{ r/2})} +\|g_-\|_{C^{1,\a}(D_{r/2})} \leq 1.
\]
If either of $u(0,\pm r)=0$, we may take $g_+=g_-$; if not, $u$ vanishes on $\{g_-<g_+\}\cap Q_{r/2}$.
\end{theorem}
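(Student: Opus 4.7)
The plan is to argue by contradiction via a blow-up argument. Suppose the theorem fails: there is a sequence $u_k\in\sQ(Q_{2r_k},\L_k,4r_k)$ with $0\in K_{u_k}$ and $f(u_k,2r_k)+r_k+\L_k r_k^s\to 0$, for which the conclusion fails at each $k$. The rescaled functions $\tilde u_k(x)=u_k(r_k x)$ on $Q_2$ have $f(\tilde u_k,2)\to 0$, so Proposition~\ref{prop:flatsame} traps $K_{\tilde u_k}\cap Q_{7/4}$ in an arbitrarily thin neighborhood of $\pi$. The crucial observation, specific to this functional, is that the rescaled total energy
\[
\int_{Q_2}|\nabla \tilde u_k|^2\,d\cL^n = 2^{n-1}r_k\,E(u_k,2r_k) \leq Cr_k \to 0
\]
by \eqref{eq:remball}. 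Combined with \eqref{eq:dregden}, \eqref{eq:dregsize}, and Proposition~\ref{prop:SBVcpt}, there is a subsequential $L^1$ limit $u_\infty\in SBV(Q_2)$ with $\nabla u_\infty\equiv 0$, $J_{u_\infty}\subset\pi$, and $u_\infty=a_+1_{\{x_n>0\}}+a_-1_{\{x_n<0\}}$ for some $a_\pm\in\{0\}\cup[\d,\d^{-1}]$. Applying Lemma~\ref{lem:harmacon} on compact subsets of $Q_2\setminus\pi$ (using $\L_k r_k^s\to 0$) together with Lemma~\ref{lem:intreg}, the convergence is in fact locally uniform on $Q_{7/4}\cap\{\pm x_n>\sigma\}$ for each $\sigma>0$, and in particular $u_k(0,\pm r_k/2)\to a_\pm$.

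The degenerate case $a_+=a_-$ is excluded by a direct ``bubble-removal'' competitor argument. If both equal a common value $a$, the locally uniform convergence and the discreteness of admissible values from \eqref{eq:dregsize} force $\tilde u_k\equiv a$ on the two unbounded connected components of $Q_{7/4}\setminus K_{\tilde u_k}$ for $k$ large, so any residual component $C$ lies in the vanishing strip around $\pi$ and carries a constant value $a'\neq a$ with $|a-a'|\geq\d$. Replacing $\tilde u_k$ by the constant $a$ on $C$ (a valid perturbation contained in a ball $\cc Q_2$) eliminates a surface contribution of at least $\d^2\cH^{n-1}(\partial C)$ at zero gradient cost, contradicting quasi-minimality since $\L_k r_k^s\to 0$. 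Hence $\tilde u_k\equiv a$ on $Q_{7/4}$, so $K_{\tilde u_k}\cap Q_{7/4}=\emptyset$, contradicting $0\in K_{\tilde u_k}$. Therefore $a_+\neq a_-$, which gives $J(u_k,\rho r_k)\leq 2\sqrt{\rho r_k}/|a_+-a_-|\to 0$ for each fixed $\rho>0$.

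The main step is to show that $E(u_k,\rho r_k)\to 0$ for some small fixed $\rho$, using the non-triviality $a_+\neq a_-$. The upper density bound Lemma~\ref{lem:upperdensity} applied with $\gamma\to 1$ to $u_k$ on $Q_{\rho r_k}$ (whose hypothesis $f(u_k,2\rho r_k)+\L_k(\rho r_k)^s+\rho r_k\to 0$ holds by the trapping of $K_{u_k}$ in the thin strip) combines with the lower semicontinuity in Proposition~\ref{prop:SBVcpt} to yield
\[
\mu_{u_k}(Q_{\rho r_k})/r_k^{n-1} \to (a_+^2+a_-^2)\w_{n-1}\rho^{n-1}.
\]
Using as competitor $v_k$ the piecewise-constant function equal to $a_\pm 1_{\{\pm x_n>0\}}$ inside $Q_{\rho r_k}$ and to $u_k$ outside, the boundary jump of $v_k$ on $\p Q_{\rho r_k}$ contributes only $o(r_k^{n-1})$: by the locally uniform convergence of $\tilde u_k$ the ``bulk'' portions of $\p Q_{\rho r_k}\cap\{|x_n|>\e_k r_k\}$ have matching traces $\tilde u_k\approx a_\pm$, while the residual part near $\pi$ has $\cH^{n-1}$ measure $O(\e_k r_k^{n-1})=o(r_k^{n-1})$. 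Plugging $v_k$ into the quasi-minimality inequality gives $F(u_k;Q_{\rho r_k})\leq(a_+^2+a_-^2)\w_{n-1}(\rho r_k)^{n-1}+o(r_k^{n-1})$, and subtracting the surface density convergence yields $\int_{Q_{\rho r_k}}|\nabla u_k|^2=o(r_k^{n-1})$, i.e.\ $E(u_k,\rho r_k)\to 0$.

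With $E$, $f$, $J$, $\rho r_k$, and $\L_k(\rho r_k)^s$ all tending to zero, Theorem~\ref{thm:badmain} applies to $u_k$ on $Q_{\rho r_k}$ for $k$ large, yielding that $K_{u_k}\cap Q_{\eta_M\rho r_k}$ is a pair of $C^{1,\a}$ graphs. To propagate this to all of $Q_{r_k/2}$, the identical blow-up analysis is performed centered at each $x_0\in K_{u_k}\cap Q_{r_k/2}$: since $K_{u_k}\cap Q_{2r_k}$ lies in a common $o(r_k)$ neighborhood of $\pi$, the translated flatness $f(u_k,x_0,r_k,e_n)\to 0$ uniformly in $x_0$, and by the locally uniform convergence of $\tilde u_k$ the blow-up limit at each $x_0$ has the same constants $a_\pm$, so both the exclusion of degeneracy and the energy decay $E(u_k,x_0,\rho r_k,e_n)\to 0$ hold uniformly. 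A finite uniform cover of $K_{u_k}\cap Q_{r_k/2}$ by cylinders $Q_{\eta_M\rho r_k}(x_0)$ thus assembles the local graphs---by uniqueness of the pair of graph representations on overlaps---into the global pair $g_\pm:D_{r_k/2}\to\R$ with $\|g_\pm\|_{C^{1,\a}}\leq 1$, contradicting the failure of the conclusion at each $k$. The hardest step will be the quantitative boundary jump estimate in the competitor construction, particularly verifying that the residual region of $\p Q_{\rho r_k}$ near $\pi$ contributes only $o(r_k^{n-1})$, which requires combining the flatness hypothesis with the locally uniform convergence away from $\pi$.
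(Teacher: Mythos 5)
Your proposal correctly sets up the blow-up and correctly computes that the \emph{unnormalized} rescaled energy $\int_{Q_2}|\n\tilde u_k|^2=2^{n-1}r_kE(u_k,2r_k)\to0$, but the heart of the theorem is the \emph{normalized} decay $E(u_k,\r r_k)\to0$, and your argument for it fails. The piecewise-constant competitor $v_k=a_\pm 1_{\{\pm x_n>0\}}$ in $Q_{\r r_k}$ glued to $u_k$ outside creates a new interface along the lateral boundary $\p D_{\r r_k}\times(-\r r_k,\r r_k)$, and the surface term of this functional charges each such point at $\uu^2+\ud^2\geq 2\d^2$, \emph{not} at the size of the mismatch: unless the inside constant and the outside trace of $u_k$ agree exactly (they will not, since $u_k$ is not constant), every point of that lateral wall lies in $J_{v_k}$ at full price. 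Since the wall has $\cH^{n-1}$-measure of order $(\r r_k)^{n-1}$, the competitor's surface energy exceeds the budget by $c\d^2(\r r_k)^{n-1}$, which is the same order as the quantity $\b r_k^{n-1}$ you are trying to beat; "approximately matching traces" buys nothing here. Nor can this be patched by a cutoff interpolation: the oscillation of $u_k$ in the bulk is only $O(\sqrt{r_k})$, so the interpolation's gradient cost is again of order $r_k^{n-1}$ with a constant blowing up as the transition layer thins. This is exactly why the paper does not argue at the level of $u_k$ at all, but passes to the $\sqrt{r_k}$-normalized functions $v_k=(u_k(r_k\cdot)-u_{\pm,k})/\sqrt{r_k}$, proves (Parts 2--3) that they converge to a harmonic limit $v$ with a Neumann condition via a delicate competitor (an interpolation plus a truncation set $T_\g$ whose perimeter cost $C(\g-1)$ is discarded only after $k\to\8$, together with the sharp lower semicontinuity of $\m_k$ from Lemma \ref{lem:lowerdensity2}), and then (Part 4) kills $\n v$ by a first-variation/Rellich/unique-continuation argument that uses Part 1 on a good subcylinder. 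None of these ingredients appears in your proposal, and without them the step $E(u_k,\r r_k)\to0$ is unproven.

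Two further problems. First, your exclusion of the case $a_+=a_-$ is both incorrectly argued and not available: \eqref{eq:dregsize} says $u\in\{0\}\cup[\d,\d^{-1}]$ a.e., which does not make $\tilde u_k$ locally constant, so connected components of $Q_2\sm K_{\tilde u_k}$ do not "carry" single values and the bubble-removal competitor is not defined as described; moreover the degenerate case genuinely occurs (e.g.\ a thin slab where $u=0$, or two positive phases touching, collapsing onto $\pi$ with $u_{+,k}-u_{-,k}\to0$), which is why the paper treats the "connected complement" case separately rather than excluding it, and gets smallness of $J$ from Lemma \ref{lem:jump} plus the universal bound $J\leq C$ instead of from $a_+\neq a_-$. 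Second, your final assembly (uniform-in-$x_0$ blow-ups along a contradiction sequence, plus gluing the local graphs) is asserted rather than proved, and the statement's last sentence --- that $u$ vanishes on $\{g_-<g_+\}$ and that $g_+=g_-$ when one of $u(0,\pm r)=0$ --- is not addressed at all; in the paper this requires a separate density argument with Lemma \ref{lem:upperdensity} at a touching point of the two graphs.
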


\begin{proof}
\textbf{Part 1.}

First, we show that if the hypotheses included the statement
 \[
  E(u,r)\leq \b,
 \]
for some (small) universal $\b$, the theorem would follow. Then we will check that this is always true if $\e_*$ is small enough.

First, by choosing $\b,\e_*$ very small in terms of $\e_M$ and applying Lemma \ref{lem:jump}, we may obtain that
\[
 J(u,x,\t r) \leq 2\t^{1/2}C\leq \e_M/4
\]
for some small $\t$ and any $x\in Q_{3r/4}\cap K$, and that moreover
\[
 E(u,x,\t r) + f(u,x,\t r)\leq \e_M/2.
\]
Applying Theorem \ref{thm:badmain} gives that $Q_{\eta_M \t r}\cap K$ is the union of two $C^{1,\a}$ graphs. Repeating for all $z$, we obtain that in fact $Q_{5r/8}\cap K$ is a union of two such graphs. 

Finally, we need to check that $u=0$ in the region between the graphs. Take a component $V$ of $\{g_-<g_+\}\cap Q_{5r/8}$ which intersects $Q_{r/2}$, and suppose that $u$ is positive on $V$. If $\pi(V)$ contains $D_{r/2}$, we easily reach a contradiction to Lemma \ref{lem:upperdensity}, as we would have
\[
 \mu (Q_{r/2})\geq \w_{n-1}\1\frac{r}{2}\2^{n-1} \1 u^2(0,r)+u^2(0,-r) +2\d^2 - o(\e_*) \2.
\]
The last term accounts for the oscillation of $u$ over the Lipschitz domains $x_n\geq g_+$ and $x_n\leq g_-$; this is at most $C\sqrt{r}$ arguing as in the first lines of the proof of Lemma \ref{lem:cut}. 

Now, take a point $z'$ in $\p \pi(V)\cap D_{r/2}$ with the property that there is a small disk $D_\r(y')\ss \pi(V)$ with $z$ in the boundary of this disk and $\r< r/10$. It is clear that such a point exists, as we may start with a point $y'\in \pi(V)$ which has $d(y',\p \pi (V))< \min\{\frac{r}{2} -|y'|,\frac{r}{10}\}$, take the largest disk $D_\r(y')$ which remains in $\pi(V)$, and choose an appropriate point on the boundary. Now, we may also find a (unique) point $z\in \p V$ which has $z=(z',z_n)$, and we will have that $z\in \G^+ \cap \G^-$.

At $z'$, the continuously differentiable functions $g_\pm$ coincide, and as $g_+\geq g_-$, we must have $\n g_+(z')=\n g_-(z')$, and $|\n g_+|\leq C(\e_*+\b)$. In particular, this implies that
\[
 f(u,z,4\r,\nu)+\L (4\r)^s +\r \leq C(\e_*+\b)^2\ll \e_1(\g),
\]
where $\e_1$ is the constant of Lemma \ref{lem:upperdensity} for a $\g$ about to be determined. On the other hand, we have an elementary estimate on $\mu$:
\[
 \mu (Q_{2\r}(z))\geq \w_{n-1}\12\r\2^{n-1} \1 u^2(0,r)+u^2(0,-r)- o(\e_*)\2 +2\d^2\cL^{n-1}(\pi(V)\cap D_{2\r}(z')).
\]
By construction, the last term is at least $2\d^2\cL^{n-1}(D_\r(y'))= 2\d^2\w_{n-1}\r^{n-1}$, and so for a small $\g$ we obtain a contradiction with Lemma \ref{lem:upperdensity} applied to $Q_{4\r}(z)$.

We note that this argument required extra work entirely because of our very weak notion of quasiminimality. For the minimizers of Section \ref{sec:highreg}, this would be a trivial consequence of the uniform lower bound on the Lebesgue density of each component of $\{u>0\}$.

\textbf{Part 2.}

 In the remainder of the proof, we show that if $\e_*$ is small enough, then $E(u,r)\leq \b$. Assume this is false: then we may find a sequence $u_k\in \sQ(Q_{2r_k},\L_k,4r_k)$ with $r_k+\L_k r_k^s \searrow 0$ and
 \[
  f_\8(u_k,2r_k)=\e_k \rightarrow 0,
 \]
but $E(u_k,r_k)\geq \b$. Set $u_{+,k}= u_k(0,r_k)$, and likewise $u_{-,k}= u_k(0,-r_k)$. First, we have that along a subsequence $K_{u_k}/r_k$ converges in the local Hausdorff sense to $K_\8\ss \pi \cap Q_{2r}$ (with $0\in K_\8$), and $u_k(r_k \cdot) \rightarrow u_\8$ in $L^1$, with $u_\8$ locally constant on $Q_2 \sm K_\8$. The fact that $u_\8$ is constant away from its jump set follows because
\begin{equation}\label{eq:maini1}
 E(u_k,2 r_k) = \frac{1}{(2r_k)^{n-1}}\int_{Q_{2 r_k}} |\n u_k|^2 d\cL^n \leq C
\end{equation}
from \eqref{eq:remball}, so $\n u_\8=0$. We also have that for any open cylinder $V \cc Q_2 \sm K_\8$, for large $k$, $d(V,K_{u_k}/r_k)>c$, and so from Lemma \ref{lem:intreg},
\[
  [u_k(r_k \cdot)]_{C^{\frac{1}{2}}(V)} \leq  C(V) r_k^{1/2},
\]
and so $u_\8$ is continuous (and locally constant) away from $Q_{2} \cap K_\8$. Let $u_\pm$ be the two values of $u_\8$ on $Q_2\sm \pi$ (with $u_+$ the value on $\{x_n>0\}$ and $u_-$ the value on $\{x_n<0\}$; note that they may be the same).

There are now two possibilities: either $K_\8 = \pi$ and disconnects $Q_2$, or else $Q_2 \sm K_\8$ is connected. In the latter case, note that $|u_{+,k}-u_{-,k}|\rightarrow 0$. We begin with the case that $Q_2\cap K_\8$ is disconnected. Define
\[
 v_k(z) = \begin{cases}
           \frac{u_k(r_k z)- u_{+,k}}{\sqrt{r_k}} & z\in Q_{2}\cap \{z_n \in (2\e_k,2)\}\\
           \frac{u_k(r_k z)- u_{-,k}}{\sqrt{r_k}} & z\in Q_{2}\cap \{z_n \in (-2,-2\e_k)\}\\
           0 & z\in Q_{2}\cap \{z_n \in (-2\e_k,2\e_k)\}
          \end{cases}
\]
Then we have that
\[
 \int_{Q_2}|\n v_k|^2 d\cL^n\leq C
\]
from \eqref{eq:maini1}, and that
\[
 \| v_k\|_{C^{1/2}(Q_2 \cap \{z_n \in (t,2))\})}+\| v_k\|_{C^{1/2}(Q_2 \cap \{z_n \in (-2,-t))\})} \leq C(t).
\]
It follows that along a subsequence, $v_k\rightarrow v$ weakly in $H^1_{\text{loc}}(Q_2\sm \pi)$ and locally uniformly on $Q_2 \sm \pi$. If $Q_2 \sm K_\8$ is connected, then we proceed similarly, but with
\[
 v_k(z) = \begin{cases}
           \frac{u_k(r_k z)- u_{+,k}}{\sqrt{r_k}} & z\in Q_{2}\cap (  \{d(z,K_\8/r_k)> 2 d(K_{u_k}/r_k, K_\8)\}\cup \{|x_n| > 2\e_k\})\\
           0 &  z\in Q_{2}\cap \{d(z,K_\8/r_k)\leq 2 d(K_{u_k}/r_k, K_\8)\} \cap \{|x_n|\leq 2\e_k\}
          \end{cases}
\]
It may be checked that the set $\{d(z,K_\8/r_k)> 2 d(K_{u_k}/r_k, K_\8)\}\cup \{|x_n| > 2\e_k\}$ is connected and devoid of $K_\{u_k\}$ for large $k$ (using that $K_\8 \ss \pi$, that $K_{u_k}\r_k$ lies in a $2\e_k$ neighborhood of $\pi$, and that $ d(K_{u_k}/r_k, K_\8)$ goes to $0$). We then obtain the conclusion that $v_k\rightarrow v$ weakly in $H^1_{\text{loc}}(Q_2\sm K_\8)$ and locally uniformly on $Q_2 \sm K_\8$.

Next, note that as
\[
  \int_{Q_t(z)} |\n v|^2 d\cL^n\leq \liminf_k \int_{Q_t(z)} |\n v_k|^2 d\cL^n \leq Ct^{n-1}
\]
for any cylinder $Q_t(z)\ss Q_2$. we may apply the Poincar\'e inequality and Campanato's criterion on $Q_2 \cap \{x_n> 0(x_n<0)\}$ to obtain that $v$ is uniformly H\"older-$1/2$ on each half-cylinder, and so in particular is bounded (a similar argument gives that $v_k$ is uniformly H\"older-$1/2$ on $\{|x_n|\geq 2\e_k\}$). Applying \cite[Proposition 4.4]{AFP} gives that $v$ is in $SBV$. Then applying \cite[Proposition 7.9]{AFP}, we have that
\begin{equation}\label{eq:main}
 \limsup_{\r\searrow 0}\frac{1}{\r^{1-n}}\int_{Q_\r(z)}|\n v|^2 d\cL^n =0
\end{equation}
for $\cH^{n-1}$-a.e. $z\in Q_2$. Finally, Lemma \ref{lem:harmacon} gives that $v$ is harmonic on $Q_2\sm K_\8$, and $v_k\rightarrow v$ strongly in $H^1_{\text{loc}}(Q_2\sm K_\8)$.

\textbf{Part 3.}

We now establish a lower semicontinuity property for the measures $\m_k:=r_k^{1-n}\m_{u_k}(r_k\cdot)$: we claim that for any $x\in \pi$ and $Q_t(x)\cc Q_2$,
\begin{equation}\label{eq:maini3}
 2u_+^2\cH^{n-1}(Q_t(x)\cap K_\8)\leq \liminf \m_k(\bar{Q}_t(x))
\end{equation}
if $Q_2\sm K_\8$ is connected, and
\begin{equation}\label{eq:maini4}
 (u_+^2+u_-^2)\cH^{n-1}(Q_t(x)\cap \pi)\leq \liminf \m_k(\bar{Q}_t(x))
\end{equation}
otherwise. Fix a subsequence along which the liminfs above are attained. Let   
\[
 S_{a,\s} = \3y\in Q_t(x)\cap K_\8: \limsup_{k\rightarrow \8}  \frac{1}{a^{n-1}}\int_{Q_{a}(y)\cap \{|z_n|\geq a^2\}}|\n v_k|^2 d\cL^n \leq \e_2'(\s) \4,
\]
where $\e_2'(\s)$ is the constant from Lemma \ref{lem:lowerdensity2}. Now take $y\in S=\cap_{\a>0}\cup_{a\leq\a ,1-\s \leq \a} S_{a,\s}$, the set of points belonging to $S_{a,\s}$ for arbitrarily small $a$ and $1-\s$. For any $a,\s$ for which $y\in S_{a,\s}$, we may apply Lemma \ref{lem:lowerdensity2} to see that
\begin{equation}\label{eq:maini2}
 \s (u_+^2+u_-^2)(a/2)^{n-1}\w_{n-1} \leq \liminf \m_k(Q_{a/2}(y)).
\end{equation}
Fix $\a>0$; then disks $D_a(y)$ with $a<\a$ satisfying \eqref{eq:maini2} for $\s$ with $1-\s <\a$ form a fine cover of $S$, so using the Vitali covering lemma we may find a disjoint cover of $S$ by disks $\bar{D}_{a(y)}(y)$ with 
\[
(1-\a) (u_+^2+u_-^2)(a(y))^{n-1}\w_{n-1}\leq \s(y) (u_+^2+u_-^2)(a(y))^{n-1}\w_{n-1} \leq \liminf \m_k(Q_{a(y)}(y))
\]
such that $\cL^{n-1}(S\sm \cup_y D_{a(y)}(y))=0$. Summing, we obtain that
\[
 (1-\a) (u_+^2+u_-^2)\cL^{n-1}(S) \leq \liminf \m_k(Q_{t+\a})(y)),
\]
where we used that the disks have radii bounded by $\a$. Now send $\a\rightarrow 0$ to get
\[
 (u_+^2+u_-^2)\cL^{n-1}(S) \leq \liminf \m_k(\bar{Q}_{t})(y)).
\]
It remains to show that $\cL^{n-1}$-a.e. point $y$ in $K_\8$ is in $S_{a,\s}$ for arbitrarily small $a$ and $\s$ close to $1$. This, however, is true for any $y$ at which \eqref{eq:main} holds, by applying the strong convergence property of Lemma \ref{lem:harmacon} to $v_k$.

In principle, it is now possible to establish that $v$ is a minimizer of a certain functional (this is explored in \cite{CK}). For ease of exposition, however, we content ourselves with two related properties of $v$. First, we check that
\[
 \lim_k \frac{1}{r_k^{n-1}}\int_{Q_{t r_k}(z r_k)} |\n u_k|^2 d\cL^n = \int_{Q_t(z)} |\n v|^2 d\cL^n
\]
for any $z\in \pi$ and $Q_t (z)\cc Q_2$. To see this, we construct competitors for $u_k$ based on $v$. Indeed, fix $\eta$ a smooth cutoff function equal to $1$ on $Q_t(z)$ and vanishing outside $Q_{\g t}(z)$, with $\g>1$ small, and let $\eta_k (y) =\eta (y/r_k)$. Pick a bounded function $q\in H^1(Q_2\sm K_\8)$ and coinciding with $v$ outside of $Q_{t}(z)$, and set
\[
 \tilde{v}_k(y) =\begin{cases}
          \sqrt{r_k}[q(y/r_k)+ u_{+,k}]  & \{y_n >0\}\\
          \sqrt{r_k}[q(y/r_k)+ u_{-,k}]  & \{y_n <0\},
         \end{cases}
\]
and then 
\[
 w_k(y) = \begin{cases}
	0 & y\in r_k T_\g\\
        \tilde{v}_k \eta_k + u_k (1-\eta_k) & y\notin r_k T_\g,
       \end{cases}
\]
where 
\[
T_\g := \{(x',x_n)\in Q_2: t\leq |x' - z'|\leq \g t, |x_n|\leq \g-1\} = (\bar{Q}_{t\g}(z)\sm Q_t(z))\cap \{|x_n \leq \g-1\}.
\]

The energy of $w_k$ may be estimated as
\begin{align*}
 \int_{Q_{2 r_k}}&|\n w_k|^2 d\cL^n \leq \int_{Q_{2r_k}\sm r_k T_\g} \eta_k^2 |\n \tilde{v}_k|^2 + (1-\eta_k)^2 |\n u_k|^2 d\cL^n \\
 &+ \int_{Q_{2r_k}\sm r_k T_\g} (\tilde{v_k} -u_k)^2 |\n \eta_k|^2 d\cL^n\\
 &+ \int_{Q_{2r_k} \sm r_k T_\g}2 \eta_k (1-\eta_k) \n \tilde{v}_k\cdot \n u_k + 2 (\tilde{v}_k -u_k) \n \eta_k \cdot [\eta_k \n \tilde{v}_k +(1-\eta_k) \n u_k]  d\cL^n. \\
\end{align*}
We change variables in the right-hand side, to see that
\begin{align}
 \frac{1}{r_k^{n-1}}\int_{Q_{2 r_k}}|\n w_k|^2 d\cL^n \leq & \frac{1}{r_k^{n-1}} \int_{Q_{2 r_k}} (1-\eta)^2 |\n u_k|^2 d\cL^n \label{eq:mains1}\\
 &+ \int_{Q_{2}} \eta^2 |\n q|^2 d\cL^n \label{eq:mains2}\\
 &+ \int_{Q_{2}\sm T_\g} (q -v_k)^2 |\n \eta|^2 d\cL^n\label{eq:mains3}\\
 &+ \int_{Q_{2} \sm T_\g}2 \eta (1-\eta) \n q \cdot \n v_k d\cL^n \label{eq:mains4}\\
 & +\int_{Q_2\sm T_\g} 2 (q -v_k) \n \eta \cdot [\eta \n q +(1-\eta) \n v_k]  d\cL^n. \label{eq:mains5}
\end{align}
We used that \eqref{eq:mains3}, \eqref{eq:mains4}, and \eqref{eq:mains5} are supported on $\supp(\n \eta)\sm T_\g\ss \{|x_n|\geq \g-1\}$ in order to replace $\frac{u_k(r_k \cdot) - u_{\pm,k}}{\sqrt{r_k}}$ by $v_k$ and $\sqrt{r_k} \n u_k (r_k \cdot)$ by $\n v_k$. Note that we may use the convergence of $v_k\rightarrow v$ in $H^1$ of this region to see that the limit as $k\rightarrow \8$ of \eqref{eq:mains3} and \eqref{eq:mains5} vanishes, while the \eqref{eq:mains4} passes to a (nonzero) limit. 

Now we consider the surface term associated to $w_k$. There are three contributions:
\begin{align}
 \frac{1}{r_k^{1-n}}\int_{Q_{2r_k}\cap J_{w_k}} &\overline{w}_k^2 +\underline{w}_k^2 d\cH^{n-1} \leq \m_k (Q_{2}\sm \bar{Q}_{\g t}(z)) \label{eq:mains6}\\
 &+ C (\g-1) \label{eq:mains7}\\
 &+ \int_{Q_t(z)\cap K_\8} (\sqrt{r_k}q + u_{+,k})^2 + (\sqrt{r_k}q + u_{-,k})^2 d\cH^{n-1} \label{eq:mains8}.
\end{align}
The second term is an estimate of the perimeter of $T_\g/r_k$, and uses that $w_k$ is uniformly bounded. Note that we also have $\sqrt{r_k}q\rightarrow 0$ uniformly.  We use $w_k$ as a competitor for $u_k$, to obtain that
\begin{align*}
\frac{1}{r_{k}^{n-1}}&\int_{Q_{2r_k}} |\n u_k|^2 d\cL^n + \m_k(Q_{2}) \\
&\leq \frac{1}{r_k^{n-1}} \1 \int_{Q_{2 r_k}} |\n w_k|^2 d\cL^n + \int_{Q_{2r_k}\cap J_{w_k}}  \overline{w}_k^2 +\underline{w}_k^2 d\cH^{n-1} \2 + \L_k r^s_k\\
&\leq \eqref{eq:mains1} + \eqref{eq:mains2} + \eqref{eq:mains3} + \eqref{eq:mains4} +\eqref{eq:mains5} \\
&\quad +\eqref{eq:mains6} +\eqref{eq:mains7} + \eqref{eq:mains8} + \L_k r_k^s.
\end{align*}
Subtracting \eqref{eq:mains1} and \eqref{eq:mains6} from both sides and taking limsup gives
\begin{align*}
 &\limsup  \1\frac{1}{r_k^{n-1}}\int_{Q_{2 t r_k}}  |\n u_k  |^2 d\cL^n + \m_k (\bar{Q}_{ t r_k}(r_k z)) \2 \\
&\leq \limsup  \1\frac{1}{r_k^{n-1}}\int_{Q_{2 r_k}}[1-(1-\eta)^2]  |\n u_k  |^2 d\cL^n + \m_k (\bar{Q}_{\g t r_k}(r_k z)) \2 \\
 &\leq \int_{Q_2} \eta^2 |\n q|^2 +\eta (1-\eta)\n v\cdot \n q d\cL^n + C (\g-1)r_k^{n-1}  + (u_{+}^2+u_{-}^2)\cL^{n-1}(K_\8 \cap Q_t(z)).
\end{align*}
We now send $\g\rightarrow 1$:
\begin{align*}
 \limsup & \1\frac{1}{r_k^{n-1}}\int_{Q_{2 tr_k}(r_k z)}|\n u_k  |^2 d\cL^n + \m_k (\bar{Q}_{t r_k}(r_k z)) \2\\
 &\leq \int_{Q_t(z)} |\n q|^2 d\cL^n + (u_{+}^2+u_{-}^2)\cL^{n-1}(K_\8 \cap Q_t(z)).
\end{align*}
Together with the lower semicontinuity properties \eqref{eq:maini3} and \eqref{eq:maini4}, setting $q=v$ gives the convergence
\[
 \lim  \frac{1}{r_k^{n-1}}\int_{Q_{tr_k}(r_k z)}|\n u_k  |^2 d\cL^n = \int_{Q_t(z)} |\n v|^2 d\cL^n,
\]
as claimed, while taking other $q$ we see that
\[
 \int_{Q_t(z)} |\n v|^2 d\cL^n \leq  \int_{Q_t(z)} |\n q|^2 d\cL^n
\]
for all $q\in H^{1}(Q_2\sm K_\8)$ coinciding with $u$ on the boundary of $Q_t(z)$. In particular, if $K_\8\cap Q_t(z) = \pi\cap Q_t(z)$, we have that $u$ satisfies a Neumann condition on either side of $\pi$ on this cylinder.

\textbf{Part 4.}

We now choose a point $y\in K_\8\cap Q_1$ for which \eqref{eq:main} holds. Then for a small $\r>0$,
\[
 \int_{Q_{2\r}(y)} |\n v|^2 d\cL^n \leq \r^{n-1} \b/2,
\]
with $\b$ as chosen in Part 1. Then from Part 3,
\[
 E(u_k,r_k y, 2\r r_k)\leq  \b/2,
\]
and so for sufficiently large $k$ we may apply Part 1 to this cylinder. This means that $K_{u_k}\cap Q_{\r r_k/2}(r_k y)$ is given by a pair of $C^{1,\a}$ graphs over $\pi$, with uniform constant. We fix a vector field $T=\phi(x')e_n\in C^1_c(Q_{\r/4}(y):\R^n)$ with $\phi$ nonnegative, and apply Lemma \ref{lem:EL} to $T(r_k\cdot)/r_k$, to obtain that
\[
 \lim \1\int_{\G^+_k/r_k} u_k(r_k x)^2 \dvg\nolimits^{\nu_x} T d\cH^{n-1} + \int_{\{x_n\geq 0\}} |\n v_k|^2 \dvg T - 2\n v_k \cdot \n T\n v_kd\cL^n\2\geq0.
\]
From the uniform bound on the $C^1$ norms of $\G^+_k/r_k$ (and Remark \ref{rem:lipest} to explain the convergence of $u_k$), the first term converges to
\[
 \int_{\pi} u_{+,\8}^2 \dvg\nolimits^{e_n} T d\cL^{n-1}.
\]
For our choice of vector field $T=\phi(x')e_n$, we have that $\dvg\nolimits^{e_n} T=0$, and hence this term vanishes. Thus we are left with
\[
 \int_{\{x_n\geq 0\}} |\n v|^2 \dvg T - 2\n v \cdot \n T\n v d\cL^n \geq 0.
\]
Recalling Rellich's identity for harmonic functions
\[
 \dvg ( T |\n v|^2 -2 \n v T \n v) = |\n v|^2 \dvg T - 2\n v \cdot \n T\n v
\]
and applying the divergence theorem, we have that
\[
 \int_{\pi} |\n v|^2 T\cdot e_n - 2 (\n v \cdot T)( \n v \cdot e_n )d\cL^{n-1}\leq 0 
\]
where we abuse notation and use $\n v$ to refer to the trace from the $\{x_n>0\}$ side. As we saw in Part 3, $v$ satisfies $\n v \cdot e_n =0$ along $\pi \cap Q_{t/4}(z)$, and so we have
\[
 \int_{\pi} |\n v|^2 \phi(x)d\cL^{n-1}\leq 0.
\]
This implies that $\n v$ vanishes along $\pi \cap Q_{t/4}(z)$, which from unique continuation means that $v$ is constant. We may apply the same argument to the other side, to obtain that $\n v=0$ on $Q_1$.

Finally, from Part 3 we have that
\[
 \lim \frac{1}{r_k^{n-1}}\int_{Q_{r_k}}|\n u_k|^2 d\cL^n = \int_{Q_1}|\n v|^2 d\cL^n =0,
\]
which is in clear contradiction of 
\[
 \frac{1}{r_k^{n-1}}\int_{Q_{r_k}}|\n u_k|^2 d\cL^n\geq \b>0.
\]
\end{proof}

\section{Strengthening the Conclusion: Higher Regularity}\label{sec:highreg}

Under the assumption that $u$ is a quasiminimizer, the $C^{1,\a}$ regularity of the graphs is likely optimal, up to finding the best $\a$ in terms of $s$. We are more interested, however, in the minimizers studied in \cite{CK}, which minimize the following functional locally:
\[
 F_1(u;B_r)= \int_{B_r \sm K}|\n u|^2 d\cL^n + \int_{B_r \cap K_u} \uu^2 + \ud^2 d\cH^{n-1} + \bar{C}\cL^n (B_r \cap \{u>0\}).
\]
Here we assume that we are in the $\d$-regular configuration. It will be helpful to know that for those flat points that are on the boundary of $\{u=0\}$, we have that $K$ is smooth.

\begin{theorem}\label{thm:higherreg}Let $u$ be as in Theorem \ref{thm:main}, and also be a local minimizer of $F_1$. If $u(x,-r)=0$, then $g$, the graph parametrizing $K$, is $C^\8$.
\end{theorem}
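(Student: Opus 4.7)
The plan is a standard elliptic bootstrap based on the Euler-Lagrange equations for $F_1$ and a partial flattening of the free boundary. Theorem \ref{thm:main} already gives $g\in C^{1,\a}$ (with $g_+=g_-=g$), with $u\equiv 0$ on $\{x_n<g(x')\}$ and $u\geq \d>0$ just above the graph. Since $u$ is a local minimizer of $F_1$, inner variations compactly supported in $\{u>0\}$ give that $u$ is harmonic in $A:=\{u>0\}\cap Q_{r/2}$, while varying $u$ along the surface (as in Section \ref{sec:EL1}) yields the Robin identity $u_\nu + u = 0$ on $\p A$, where $\nu=(\n g,-1)/\sqrt{1+|\n g|^2}$ is the outward normal to $A$. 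Domain variations of the type considered in Lemma \ref{lem:EL} (applied on the single side where $u>0$) give the free-boundary identity
\[
 |\n u|^2 + u^2(H - 2) = -\bar C \qquad \text{on } \p A,
\]
with $H$ the mean curvature of $\p A$.

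I would then flatten the boundary by $T(y)=(y',y_n+g(y'))$, setting $v(y)=u(T(y))$ on $\{y_n>0\}$. A direct computation shows that harmonicity of $u$ becomes a linear divergence-form elliptic equation $\dvg_y(A(\n g)\n v) = 0$ with matrix $A(\n g) = I + (\text{bilinear in } \n g)$, and the Robin condition transforms to the oblique boundary condition
\[
 (1+|\n g|^2)\,\p_{y_n}v \;=\; \n g\cdot \n_{y'} v \;+\; \sqrt{1+|\n g|^2}\, v \qquad\text{on }\{y_n=0\},
\]
which is genuinely oblique since the coefficient of $\p_{y_n}v$ is $\geq 1$. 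If $g\in C^{k,\a}$, then both the PDE coefficients and the coefficients of the boundary condition lie in $C^{k-1,\a}$, and Schauder theory for oblique derivative problems (e.g.\ Gilbarg-Trudinger, Ch.\ 6) yields $v\in C^{k,\a}$ up to $\{y_n=0\}$ (locally).

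The free-boundary identity, expressed in $y$-coordinates and solved for $H(g)$, takes the form
\[
 H(g) = \Phi\bigl(\n g,\; v,\; \n v\bigr)\Big|_{y_n=0},
\]
where $\Phi$ is a smooth function of its arguments on the set where $v\geq \d$; this positivity is inherited from the $\d$-regularity assumption and is preserved throughout the iteration. Since $H(g)=\dvg\bigl(\n g/\sqrt{1+|\n g|^2}\bigr)$ is quasilinear uniformly elliptic (the gradient stays bounded since $\|g\|_{C^{1,\a}}\leq 1$), and the right-hand side is in $C^{k-1,\a}$ once $v\in C^{k,\a}$ up to the boundary, classical quasilinear Schauder estimates give $g\in C^{k+1,\a}$.

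Starting from $k=1$ (provided by Theorem \ref{thm:main}) and alternating the two steps yields $g\in C^{k,\a}$ for every $k\geq 1$, hence $g\in C^\infty$. The only genuinely delicate step is the first round of the oblique Schauder estimate, since at that stage the coefficients are only $C^{0,\a}$ and the positivity $v\geq \d$ is needed in order to interpret the free-boundary relation as an elliptic equation for $g$ rather than a degenerate one; once one iteration has been carried out, the subsequent ones are routine and the bootstrap terminates at $C^\infty$.
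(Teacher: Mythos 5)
Your proposal is correct and follows essentially the same route as the paper: extract from the Euler-Lagrange equations the harmonicity of $u$, the Robin condition on $\Gamma$, and the mean-curvature free-boundary relation, then bootstrap by alternating boundary Schauder estimates for the Robin/oblique problem (so $u\in C^{k,\alpha}$ up to a $C^{k,\alpha}$ graph) with quasilinear Schauder for the prescribed-mean-curvature equation satisfied by $g$, using $u\geq\delta$ on $\Gamma$ to keep that equation non-degenerate. The only step you compress is the derivation of the free-boundary identity itself, which the paper carries out from the first variation via Rellich's identity, the divergence theorem, and a tangential integration by parts producing the distributional mean curvature; this is the same computation you invoke in one line, so the argument matches.
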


\begin{proof}
 We may apply Theorem \ref{thm:main} to learn that $K\cap Q_{r/2}$ is given by a  $C^{1,\a}$ graph, and that $u=0$ in $\{x_n\leq g(x')\}$. Let $\nu_x$ be the upward-oriented unit normal to $\G$ (i.e. pointing into $\{u>0\}$). As $u$ minimizes $F_1$, it is harmonic on $Q_{r/2}$ and satisfies a Robin condition along $\G$ ($\p_{\nu_x} u = u$ on $\G$). It follows that $u\in C^{1,\a}(Q_{1/4})$, with universal constant. 
 
From a computation completely analogous to that in Lemma \ref{lem:EL} (as there is only one graph, we may perform perturbations with $t<0$ as well), we have that for any vector field $T\in C_c^1 (Q_{r/4};\R^n)$,
\[
 \int_{\G} u^2 \dvg\nolimits^{\nu_x} T - \bar{C}T\cdot \nu_x d\cH^{n-1} + \int |\n u|^2 \dvg T - 2\n u \cdot \n T \n u d\cL^n = 0.
\]
We use Rellich's identity
\[
 \dvg ( T |\n u|^2 -2 \n u \cdot T \n u) = |\n u|^2 \dvg T - 2\n u \cdot \n T\n u, 
\]
the divergence theorem, and the Robin condition to rewrite this as
\[
 \int_{\G} u^2 \dvg\nolimits^{\nu_x} T - \bar{C}T\cdot \nu_x - |\n u|^2 T\cdot \nu_x + 2 u \n u \cdot T d\cH^{n-1}  = 0.
\]
We may integrate the first term by parts to obtain that
\[
 \int_{\G} u^2 H_\G  T\cdot \nu_x - 2 u (\n u\cdot T - \p_{\nu_x} u T\cdot \nu_x) - \bar{C}T\cdot \nu_x - |\n u|^2 T\cdot \nu_x + 2 u \n u \cdot T d\cH^{n-1}  = 0,
\]
where $H_\G$ is the distributional mean curvature of $\G$ as oriented by $\nu_x$. Combining terms and using the Robin condition,
\[
 \int_{\G} u^2 H_\G  T\cdot \nu_x -\bar{C} T\cdot \nu_x - |\n u|^2 T\cdot \nu_x + 2 u^2 T\cdot \nu_x d\cH^{n-1}  = 0
\]
This means that $\G$ is a distributional solution of the mean curvature equation
\[
 u^2 H_\G -\bar{C} - |\n u|^2 +2 u^2=0
\]
Now it follows from standard Schauder theory that a $C^{1,\a}$ solution to this equation is $C^{\b+1}$ if $u\in C^\b$, where $\b\notin \N$. On the other hand, a harmonic function solving the Robin problem on a $C^\b$ domain is $C^\b$ itself, so a bootstrap argument gives that $\G,u\in C^\8$.
\end{proof}

\section*{Acknowledgments}

The author is grateful for all of the help and encouragement he was given Luis Caffarelli, without whom this project would not have been possible. He was supported by NSF grant DMS-1065926 and the NSF MSPRF fellowship DMS-1502852.

\bibliographystyle{plain}
\bibliography{Paper2}

\end{document}